\documentclass[a4paper]{article}

\usepackage[utf8]{inputenc}
\usepackage[width=15cm,height=22cm]{geometry}
\usepackage{amsmath,amsfonts,amssymb,amsthm,mathrsfs}

\usepackage[pdfusetitle]{hyperref}
\hypersetup{bookmarksdepth=subsection}
\usepackage[nameinlink,capitalise,noabbrev]{cleveref}
\hypersetup{
  colorlinks=true,
  linkcolor=blue!70!black,
  citecolor=red!70!black,
  urlcolor=blue!50!black,
}

\usepackage[dvipsnames]{xcolor}
\usepackage{bbm}
\usepackage{stmaryrd}
\usepackage{microtype}
\usepackage{enumitem}

\newtheorem{proposition}{Proposition}[section]
\newtheorem{theorem}[proposition]{Theorem}

\newtheorem{definition}[proposition]{Definition}

\newtheorem{corollary}[proposition]{Corollary}
\newtheorem{lemma}[proposition]{Lemma}
\newtheorem{remark}[proposition]{Remark}
\numberwithin{equation}{section}

\newcommand{\N}{\mathbb{N}}
\newcommand{\R}{\mathbb{R}}
\newcommand{\dd}{\,\mathrm{d}}
\newcommand{\B}{\mathcal{B}}
\newcommand{\Fac}{\operatorname{Fac}}
\newcommand{\Id}{\operatorname{Id}}
\newcommand{\norm}[1]{\lVert#1\rVert}
\newcommand{\bad}{\mathrm{bad}}
\newcommand{\good}{\mathrm{good}}

\newcommand{\irr}{\mathrm{irr}}

\newcommand{\Lie}{\operatorname{Lie}}
\newcommand{\id}{\operatorname{id}}
\newcommand{\eval}{\textnormal{\textsc{e}}}
\DeclareMathOperator{\Br}{Br}
\DeclareMathOperator{\ad}{ad}

\DeclareMathOperator{\vect}{span}
\newcommand{\supp}{\operatorname{supp}}
\newcommand{\A}{\mathcal{A}}
\newcommand{\seed}{\operatorname{seed}}
\newcommand{\shuff}{\mathbin{\sqcup\!\sqcup}}
\newcommand{\one}{\mathbf{1}}

\newcommand{\abs}[1]{\lvert #1 \rvert}
\newcommand{\pair}[2]{\langle #1,#2\rangle}
\newcommand{\intset}[1]{\llbracket #1 \rrbracket}

\newcommand{\Rg}{R_\good}
\newcommand{\Rb}{R_\bad}

\usepackage{etoolbox}
\newcounter{stepcount}
\newcommand{\step}[1]{\medskip

    \noindent\refstepcounter{stepcount}\emph{\textbf{Step~\arabic{stepcount}.} #1.}}
\AtBeginEnvironment{proof}{\setcounter{stepcount}{0}}

\title{Factor-parity Hall sets and controllability: \\ 
a classification of good and bad brackets}

\author{
Karine Beauchard\texorpdfstring{\thanks{Univ Rennes, CNRS, IRMAR - UMR 6625, F-35000 Rennes, France}}{},
Fr\'ed\'eric Marbach\texorpdfstring{\thanks{DMA, École normale supérieure, Université PSL, CNRS, 75005 Paris, France}}{}
}

\begin{document}

\maketitle

\begin{abstract}
    We introduce a class of Hall sets, which we call \emph{factor-parity} Hall sets, whose elements split into good and bad brackets.
    We prove that the good brackets can be steered simultaneously and arbitrarily in small time, which yields sufficient conditions for the small-time local controllability of control-affine systems.
    This positive result draws on constructions of Kawski, Agrachev--Gamkrelidze and Krastanov.
    Conversely, we prove that each bad bracket generates an obstruction to controllability, hence a family of necessary conditions.
\end{abstract}

\setcounter{tocdepth}{1}
\tableofcontents

\newpage

\section{Introduction}

\subsection{Small-time local controllability of control-affine systems}

In this article, we consider multi-input control-affine systems of the form
\begin{equation} 
    \label{eq:syst}
    \dot{x}(t) = f_0(x(t)) + u_1(t) f_1(x(t))  + \dotsb + u_q(t) f_q(x(t))
\end{equation}
where the state $x(t)$ belongs to $\R^d$ ($d \geq 1$), $u : [0,T] \to \R^q$ is the control, $f_0, f_1, \dotsc, f_q$ ($q \ge 1$) are real-analytic vector fields on a fixed open neighborhood $\Omega$ of $0$, and we assume that $f_0(0)=0$.

For each $T > 0$, each $u \in L^1((0,T);\R^q)$ and each $x^\circ \in \Omega$, the Cauchy problem \eqref{eq:syst} with initial condition $x(0)=x^\circ$ admits a unique maximal absolutely continuous solution.
We will consider small enough data so that this solution is defined up to time $T$.

In this article, we study the following standard notion of small-time local controllability of system \eqref{eq:syst} with controls small in $L^\infty$ (see e.g.~\cite[Definition 3.2]{Coron2007} or STLC$_\varepsilon$ in \cite{Kawski1987_Survey}).

\begin{definition}
    We say that \eqref{eq:syst} is \emph{$L^\infty$-STLC} when, for all $T,\rho > 0$, there exists $\delta > 0$ such that, for any $x^\circ, x^* \in \R^d$ with $\abs{x^\circ} + \abs{x^*} \leq \delta$, there exists $u \in L^\infty((0,T);\R^q)$ with $\norm{u}_{L^\infty} \le \rho$ such that the associated solution to \eqref{eq:syst} with initial condition $x(0) = x^\circ$ satisfies $x(T) = x^*$.
\end{definition}

\subsection{Algebraic notations and Lie brackets}

Let $X$ be an arbitrary set of unknowns.
We denote by $\Br(X)$ the free magma on $X$, by $\mathcal{L}(X)$ the free Lie algebra over $\R$ generated by $X$, and by $\eval$ the unique magma morphism $\Br(X) \to (\mathcal{L}(X), [\cdot,\cdot])$ extending $\id_X$.
In the sequel, implicitly, $\R$ is the base field of all vector spaces, algebras, Lie algebras, etc.
We refer to \cref{sec:def} or \cite[Section 2.1]{BeauchardLeBorgneMarbach2023} for more details on these standard objects.

In this paper, the notion of \emph{Hall set} plays a central role.
There are different conventions in the literature: one may decide to swap left and right factors, or swap the order, or both.
We follow Viennot's convention of \cite{Viennot1978} (also used in control theory by Sussmann \cite{Sussmann1986}).

\begin{definition}[Hall set]
    \label{def:Hall}
    A \emph{Hall set} on $X$ is a totally ordered subset $(\B,<)$ of $\Br(X)$ such that
    \begin{itemize}
        \item $X \subset \B$,
        \item for $a,b \in \Br(X)$, $(a,b) \in \B$ iff $a, b \in \B$, $a < b$ and either $b \in X$ or $b = (b', b'')$ with $b' \leq a$, 
        \item for every $a, b \in \B$ such that $(a,b) \in \B$, one has $a < (a,b)$.
    \end{itemize}
\end{definition}

The main interest of Hall sets is that their images under $\eval$ yield algebraic bases of $\mathcal{L}(X)$, called Hall bases, as proved in \cite[Corollary 1.1, Proposition 1.1 and Theorem 1.1]{Viennot1978}. 

\begin{proposition}
    \label{thm:viennot}
    Let $\B$ be a Hall set on $X$. 
    Then $\eval(\B)$ is a basis of $\mathcal{L}(X)$.
\end{proposition}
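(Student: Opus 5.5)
We prove the two halves, spanning and linear independence, by the classical route through the free associative algebra and Lazard elimination, doing induction on degree.

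\textbf{Reduction and grading.} Every element of $\Br(X)$ has a degree (its number of leaves), and $\mathcal{L}(X)$ is graded by degree. Both properties can therefore be checked in each multidegree separately. In a fixed multidegree only finitely many brackets occur, so we may assume that $X$ is finite and argue by induction on the degree.

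\textbf{Spanning (rewriting).} We show by induction that $\eval(t)\in\vect\,\eval(\B)$ for every $t\in\Br(X)$. It suffices to handle $\eval((a,b))$ with $a,b\in\B$.
\begin{itemize}
\item If $a=b$, the bracket is $0$.
\item If $a>b$, we use antisymmetry to swap the factors.
\item If $a<b$ and either $b\in X$ or $b=(b',b'')$ with $b'\le a$, then $(a,b)\in\B$ and there is nothing to do.
\item Otherwise $b=(b',b'')$ with $a<b'$. The Jacobi identity gives $[a,[b',b'']]=[[a,b'],b'']+[b',[a,b'']]$.
\end{itemize}
In the last case the new brackets are again rewritten recursively. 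Termination needs a well-founded measure. We attach to each expression in Hall elements the multiset of its "standard sequence" factors. The Hall axiom $a<(a,b)$ ensures that each Jacobi step strictly decreases this measure in a suitable lexicographic order on finite sequences of Hall elements of bounded degree. This is Viennot's standard-sequence rewriting.

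\textbf{Independence (the main obstacle).} Linear independence is the hard part. We plan to use Lazard elimination. Let $x_0=\min X$ (or, more generally, the smallest element of $\B$ in the relevant degree range). Lazard elimination gives the vector space decomposition
\[\mathcal{L}(X)=\R x_0\oplus \mathcal{L}(Y),\qquad Y=\{(\cdots((y,x_0),x_0)\cdots,x_0): y\in X\setminus\{x_0\}\},\]
where $\mathcal{L}(Y)$ is free on $Y$. The convention here appends $x_0$ on the right, matching Viennot's order.

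We then check that $\B\setminus\{x_0\}$ is in bijection with a Hall set on the alphabet $Y$, once brackets are reinterpreted with the elements of $Y$ as letters. This is a combinatorial verification using the Hall-set axioms, in particular the condition $b'\le a$. Along the way we must check that each Hall element containing $x_0$ factors uniquely through the $Y$-letters, and that the induced order satisfies the axioms.

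Iterating in a fixed multidegree strictly reduces the problem each time. It terminates with a free Lie algebra on letters of degree exceeding the target, where the statement is trivial.

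\textbf{Conclusion and alternative.} Combining spanning with a dimension count yields the basis property. The dimension of each multihomogeneous piece is given by Witt's formula, and Lazard elimination shows that Hall elements of that multidegree are counted by the same formula, by the unique-factorization argument. If the elimination bookkeeping proves too painful, we would instead go through the free associative algebra. There we show that the decreasing products $\eval(h_1)\cdots\eval(h_k)$ with $h_1\ge\dots\ge h_k$ in $\B$ have distinct leading words (unique factorization of words into Hall words). Independence then follows from PBW.
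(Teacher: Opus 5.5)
The paper does not prove this proposition; it only cites Viennot. Your outline (Jacobi rewriting for spanning, Lazard elimination with induction on degree for independence) is one of the standard classical proofs. The elimination it needs is the one the paper records later in \cref{lem:lazard}, but only parts (i)--(ii), since part (iii) presupposes the present proposition.

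One step is wrong as written. Under \cref{def:Hall}, $(y,x_0)\in\B$ would require $y<x_0$, which fails for $x_0=\min X$. So the trees $(\cdots((y,x_0),x_0)\cdots,x_0)$ are not Hall elements, and elements of $\B\setminus\{x_0\}$ cannot factor through them. In this convention $x_0=\min\B$ can only appear as the first factor $a_1$ of a factorization \eqref{eq:b-factorization}. The new letters must therefore be $\ad_{x_0}^n(y)=(x_0,(x_0,\dots,(x_0,y)))$, with $x_0$ added on the left, as in \cref{lem:lazard}. Your elements equal $(-1)^n\ad_{x_0}^n(y)$ in $\mathcal{L}(X)$, so the decomposition $\mathcal{L}(X)=\R x_0\oplus\mathcal{L}(Y)$ and the freeness of $\mathcal{L}(Y)$ are unaffected. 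Only the combinatorial bijection between $\B\setminus\{x_0\}$ and a Hall set on $Y$ needs the correct orientation.

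There are also some smaller points.

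\emph{Redundancy.} Once the elimination step is in place, it gives spanning as well as independence. By induction $\eval(\B_Y)$ is a basis of $\mathcal{L}(Y)$ in the relevant multidegrees, $\Phi$ is injective, and $\mathcal{L}(X)=\R x_0\oplus\Phi(\mathcal{L}(Y))$. So the separate rewriting argument and the Witt count are not needed.

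\emph{Termination of the rewriting.} If you keep the rewriting, your termination claim is only asserted. The clean argument is to prove that $[a,b]\in\vect\{c\in\B \mid \lambda(c)\geq a\}$ for $a<b$ in $\B$. Induct on $\abs{a}+\abs{b}$, then use decreasing induction on $a$ among the finitely many Hall elements of smaller degree. The Jacobi step produces $[c,b'']$, with $c$ from the expansion of $[a,b']$, and $[b',c]$, with $c$ from that of $[a,b'']$. Both pairs have entries strictly above $a$, because $c>\lambda(c)\geq a$ and $b''>b'>a$.

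\emph{Iteration.} After one elimination the alphabet $Y$ is infinite, and the induced Hall set $\B_Y$ need not have a minimum. For instance, one may have $\cdots<\ad_{x_0}^2(x_1)<\ad_{x_0}(x_1)<x_1$ in $\B$. So at each iteration, first restrict as in \cref{lem:restriction} to the letters occurring in the multidegree under study. The eliminated letter then actually occurs, and the total degree drops by its multiplicity.
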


Our controllability conditions will be expressed using the iterated Lie brackets of the vector fields involved in \eqref{eq:syst}.
We use the following convention and notation.

\begin{definition}[Lie brackets of vector fields]
    We endow the vector subspace $C^\omega(\Omega;\R^d)$ of real-analytic vector fields on $\Omega$ with a Lie algebra structure by setting $[f,g] := (Dg) f - (Df) g$.
\end{definition}

\begin{definition}[Evaluation of Lie brackets]
    Let $X = \{X_0, X_1, \dotsc, X_q\}$.
    For $B \in \mathcal{L}(X)$, we denote by $f_B \in C^\omega(\Omega;\R^d)$ the image of $B$ under the unique Lie algebra homomorphism $\mathcal{L}(X) \to C^\omega(\Omega;\R^d)$ mapping $X_i$ to $f_i$. 
    For $b \in \Br(X)$, we write $f_b := f_{\eval(b)}$, so that $f_{X_i} = f_i$ and $f_{(a,b)} = [f_a, f_b]$ for $a, b \in \Br(X)$.
    Finally, $f_b(0) \in \R^d$ denotes its value at $x = 0$.
\end{definition}

\subsection{Statement of the main results}

In \cref{sec:def}, we define a class of Hall sets, which we call \emph{factor-parity} Hall sets.
Let us already mention that, starting from a partition $X = X_\good \sqcup X_\bad$ of the alphabet $X$, a factor-parity Hall set $\B$ on $X$ comes with a partition into \emph{good} and \emph{bad} brackets of the form $\B = \B_\good \sqcup \B_\bad$.

We consider a multi-input control-affine system of the form \eqref{eq:syst}.
We assume that it satisfies the Lie algebra rank condition:
\begin{equation}
    \label{eq:LARC}
    \Lie(f_0,f_1,\dotsc,f_q)(0) = \R^d.
\end{equation}

In the following statements, $\B$ is a factor-parity Hall set on $X = X_\good \sqcup X_\bad$ where $X_\good = \{ X_1, \dotsc, X_q \}$ and $X_\bad = \{ X_0 \}$.
Our main results are the following conditions.

\begin{theorem}[Sufficient condition]
    \label{thm:sufficient}
    If, for all $b \in \B_\bad$, $f_b(0) = 0$, then \eqref{eq:syst} is $L^\infty$-STLC.
\end{theorem}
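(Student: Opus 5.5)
The plan is to go through a coordinates-of-the-second-kind representation of the state along a factor-parity Hall set, and then to build a family of controls that realize an open neighbourhood of $0$ in the good directions.

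\step{Representation formula}
First I will use the Sussmann infinite product expansion. Since the vector fields are analytic, the flow of \eqref{eq:syst} started at $0$ is approximated, up to a remainder that is small in small time and for small controls, by
\begin{equation*}
x(T) \approx \sum_{b \in \B} \xi_b(T,u)\, f_b(0),
\end{equation*}
where the $\xi_b(t,u)$ are the coordinates of the second kind attached to the Hall set $\B$. These are iterated integrals of $u_1,\dots,u_q$ and of the drift letter $X_0$. In the sum I will keep only the brackets up to a finite length $N$, chosen so that the good brackets of length at most $N$ already span $\R^d$ once evaluated at $0$. This is possible by \eqref{eq:LARC}, since $\eval(\B)$ spans $\mathcal{L}(X)$ by \cref{thm:viennot} and, by hypothesis, every bad $f_b(0)$ vanishes. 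The hypothesis $f_b(0)=0$ for $b\in\B_\bad$ removes all bad coordinates from the leading part. The remainder must be handled by an estimate of the form $O(\|u\|_{L^\infty}^{}T)^{N+1}$ in suitable weighted norms.

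\step{Independent steering of good brackets}
This is the main step and the one I expect to be hardest. For each good bracket $b$ of length at most $N$, I want a control $u^{b}$, supported in a short time interval and of small amplitude, such that $\xi_b(u^b)$ is of a prescribed sign and size, while $\xi_{b'}(u^b)$ is negligible for the other good $b'$ of comparable or lower weight. The factor-parity structure is meant to make this possible. Roughly speaking, a good bracket has an even number of $X_0$ factors in each relevant position, so one can use oscillating controls built from iterated primitives, in the style of Kawski and Agrachev--Gamkrelidze. For such controls the leading coordinate is a positive or signed square-type integral, and sign changes of the oscillation pattern reverse it.

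I would construct these by induction on the Hall factorization $b=(b',b'')$: use controls realizing $b'$ and $b''$ at different frequencies, and read off the commutator term. Krastanov-type arguments then control the error coming from the drift.

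\step{Concatenation and degree argument}
Finally I concatenate the elementary controls $u^{b}$ with amplitudes $\lambda_b$ in a small ball. This gives a continuous map $\lambda\mapsto x(T)$ whose principal part is a homogeneous, invertible map onto a neighbourhood of $0$ in $\R^d$, using the spanning property from Step 1. A Brouwer degree argument, together with the weighted remainder estimates, then gives reachability of a neighbourhood of $0$ from $0$ in time $T$ with $\|u\|_{L^\infty}\le\rho$.

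To pass from $0$ to arbitrary $x^\circ, x^*$, I will use time reversibility, applying the same argument to the backward system, which has the same bracket structure up to signs, and continuity of the flow with respect to the initial data. This yields $L^\infty$-STLC.
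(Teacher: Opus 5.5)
\textbf{There is a genuine gap in Step~2, which carries the whole theorem.} Your Steps~1 and~3 are, up to details, the classical reduction that the paper also takes for granted: controllability of the canonical system $\Sigma_G$ on a finite factor-stable $G\subset\B_\good$, followed by Sussmann's rescaling and a topological argument. One caveat there: your remainder cannot be a power of $\norm{u}_{L^\infty}T$ alone. Brackets with many occurrences of $X_0$, such as $((X_1,X_0),X_0)$, are only linear in $u$, so the weights $\omega_\theta$ are needed.

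For Step~2 you give no mechanism. Your description of goodness is also inaccurate: by \eqref{eq:of}, a bracket is good iff all its factors are good and either its seed is a good letter or some multiplicity $m_j$ is odd. Each ingredient you propose fails on some good bracket.
\begin{itemize}
\item[(a)] If ``sign changes of the oscillation pattern'' means flipping the signs of controls or reversing time, this is Sussmann's symmetry argument. It cannot reach good brackets lying in $S_\bad$. Take $M_1=(X_1,X_0)$, $P=\ad_{X_1}^3(X_0)$, and let $a$ be whichever of $(M_1,P)$, $(P,M_1)$ belongs to $\B$. Then $b=\ad_a\ad_{X_1}^2(X_0)$ is good, yet $n_0(b)=3$ and $n_1(b)=6$. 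These are exactly the brackets on which \cref{thm:sufficient} improves on \cref{thm:sussmann}.
\item[(b)] An induction on $b=(b',b'')$ that ``realizes'' $b'$ and $b''$ breaks down when $b''$ is bad, as $b''=\ad_{X_1}^2(X_0)$ is here. A bad coordinate is nondecreasing in time (see the proof of \cref{lem:xibad>0}). So it can neither take an arbitrary sign nor return to $0$, yet it drives the good coordinate through $\dot\xi_b=\xi_a\,\dot\xi_{b''}$.
\item[(c)] Requiring $\xi_{b'}(u^b)$ to be negligible for good $b'$ of lower weight cannot come from smallness, because lower-weight coordinates dominate. You need exact loops that return all lower coordinates to $0$. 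This is also what makes your concatenation in Step~3 additive (\cref{lem:concatenation-factor-null}). Producing such loops is precisely the content of \cref{prop:vertical-loops}.
\end{itemize}

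\textbf{The missing idea is the paper's recursive reduction.} One inducts on $\abs{\Fac(G)}$ and singles out $p=\max\Fac(G)$.
\begin{itemize}
\item The brackets of $G$ not having $p$ as a factor are driven exactly by the induction hypothesis.
\item Those having it, $b=\ad_p^{m_b}(\rho(b))$, are steered through a reduced system. In it, $\xi_p$ becomes a state $y_p$ with a virtual control, and the roots $\rho(b)$ become virtual controls.
\item A bad root can only be moved in a small box around some $\gamma$, not around $0$ (\cref{lem:normal-box}). So it contributes a drift $\sum_c\gamma_cV_c$.
\item Factor parity forces $m_b$ to be odd whenever $\rho(b)$ is bad (\cref{lem:rho-b-bad}). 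Hence this drift is odd in $y_p$, and Sussmann's theorem applies to the reduced system.
\item Its trajectories are then realized by concatenating $N$ normal-box blocks, in the manner of Kawski's repetition, with $\xi_p\approx Ny_p$ and $\xi_b\approx N^{m_b+1}z_b$.
\item Brouwer's theorem plus homogeneity then turns these approximations into exact loops.
\end{itemize}
Without an argument of this kind, your Step~2 restates the theorem rather than proving it.
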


\begin{theorem}[Necessary condition]
    \label{thm:necessary}
    Assume that there exists $b \in \B_\bad$ such that
    \begin{equation}
        \label{eq:syst-b-notin-supp-K}
        f_b(0) \notin \vect \{ f_a(0) \mid a \in \B_\good \}
    \end{equation}
    and
    \begin{equation}
        \label{eq:syst-no-other-bad}
        \forall a \in \B_\bad \setminus \{ b \}, \quad f_a(0) = 0.
    \end{equation}
    Then \eqref{eq:syst} is not $L^\infty$-STLC.
\end{theorem}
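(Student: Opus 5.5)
We argue by contradiction, building a drift along $f_b(0)$ that no admissible control can reverse. The main tool is a representation of the state, in canonical coordinates of the second kind associated with the Hall set $\B$ (a Sussmann-type infinite product expansion), as
\begin{equation*}
    x(t) \approx \sum_{a \in \B} \xi_a(t,u) f_a(0).
\end{equation*}
Here the $\xi_a$ are the coordinates of the second kind, i.e.\ explicit iterated integrals of the control.

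\step{Reduction to a one-dimensional obstruction}
Let $\pi$ be a linear form on $\R^d$ vanishing on $\vect\{ f_a(0) \mid a \in \B_\good \}$ with $\pi(f_b(0)) = 1$; it exists by \eqref{eq:syst-b-notin-supp-K}. By \eqref{eq:syst-no-other-bad}, every bad bracket other than $b$ contributes nothing at $0$. Hence, starting from $x^\circ = 0$, we get
\begin{equation*}
    \pi(x(T)) = \xi_b(T,u) + \text{remainder}.
\end{equation*}
The remainder comes from the truncation of the expansion. It must be controlled by terms of higher weight than $b$, so we work with a truncation at weight $|b|$ plus a smallness estimate. Because of the nonzero base point $f_0(0)=0$ and the drift, the estimate should use the weighted sizes $T^{n_0(a)}\norm{u_1}^{\dots}$ of the brackets.

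\step{Sign of the bad coordinate}
The factor-parity structure should give that for a bad bracket $b$, the coordinate $\xi_b(T,u)$ is, up to sign, a coercive quadratic-type quantity in some iterated primitive of $u$. The typical model is $\int_0^T u_{\gamma}(t)^2 \dd t$ or $\int_0^T (\text{primitive})^2 \dd t$ times a nonnegative kernel. Concretely, we write $b$ through its factorization structure, since by definition its bad factors have even multiplicity structure. We then show by induction on the Hall structure that $\xi_b$ equals $\pm \frac{1}{2}\int_0^T \bigl(\xi_{c}(t)\bigr)^2 \dd t$ for the appropriate good half-bracket $c$, plus lower-order cross terms. This identity is where the ``factor-parity'' definition in \cref{sec:def} must be used, and I expect it to be the main obstacle. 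Showing the exact coercive form, and not merely a sign-indefinite polynomial, requires a careful combinatorial analysis of coordinates of the second kind along the Hall set.

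\step{Interpolation estimate on the remainder}
Next we show that every term of the remainder that survives under $\pi$ is bounded by $o(1)$ times the coercive quantity $\int_0^T \xi_c^2$ as $T,\norm{u}_{L^\infty}\to 0$. Such terms are either higher-order brackets or good brackets paired with the error in the expansion. We use Gagliardo--Nirenberg type interpolation inequalities between iterated primitives of $u$, as in earlier work of the authors on quadratic obstructions. Here \eqref{eq:syst-no-other-bad} is essential: it removes the competing bad brackets, which could otherwise have the same or lower weight and compensate.

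\step{Conclusion}
For $T$ and $\rho$ small, we then get $\pi(x(T)) \ge \frac14 \int_0^T \xi_c^2 \ge 0$ (after choosing the sign of $\pi$) for every control with $x(0)=0$ and $\norm{u}_{L^\infty}\le\rho$. Therefore the targets $x^* = -\varepsilon v$, with $\pi(v)=1$, are unreachable for every $\varepsilon>0$. This contradicts $L^\infty$-STLC, and in particular contradicts the $\delta$ in its definition.
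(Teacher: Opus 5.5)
Your proposal has a genuine gap in Steps 1, 3 and 4. After you project by a linear form $\pi$, the remainder is \emph{not} of higher weight than $b$. The nonlinear terms of the expansion contain products of coordinates of \emph{good} brackets of lower weight, which have no sign and can dominate $\xi_b$.

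Take the system $\dot x_1 = u$, $\dot x_2 = x_1^2 - u x_1$, i.e.\ $f_0 = x_1^2\partial_{x_2}$ and $f_1 = \partial_{x_1} - x_1\partial_{x_2}$. The generated Lie algebra is spanned by $f_0, f_1, x_1\partial_{x_2}, \partial_{x_2}$. The only Hall elements with nonzero value at $0$ are $X_1$ (value $\partial_{x_1}$) and $b = \ad_{X_1}^2(X_0) \in \B_\bad$ (value $2\partial_{x_2}$), so both hypotheses of the theorem hold. With $\pi(x) = x_2/2$ one gets exactly
\[
    \pi(x(T)) = \xi_b(T,u) - \tfrac14\,\xi_{X_1}(T,u)^2 .
\]
For $u \equiv \rho$ this equals $\rho^2 T^2 (T/6 - 1/4) < 0$ whenever $T < 3/2$. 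So the remainder is not $o(1)$ times $\xi_b$, and the claim of Step 4 that $\pi(x(T)) \ge 0$ for every small control is false. The obstruction is carried by the nonlinear coordinate $(x_2 + x_1^2/2)/2 = \xi_b$, not by any linear functional.

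To repair an estimate-based argument, you would have to use that the target has vanishing good components. You would then need to bound, by the final state, every good coordinate entering the remainder, in particular those of all elements of $\Fac^*(b)$. That requires knowing that no element of $\Fac^*(b)$ is compensated, whereas your hypotheses only concern $b$ and the other bad brackets. Deriving this is the heart of the paper. The observability result (\cref{prop:observability}) shows that $\Fac^*(b) \cap \supp_\B(K_f) = \varnothing$ for $K_f = \{ z \in \mathcal{L}(X) \mid f_z(0) = 0 \}$. It is proved via Lazard elimination of $p = \min\Fac^*(b)$ and a detection/annihilation argument in the dual PBW basis. Nothing in your proposal plays this role.

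With that result in hand, the paper needs no estimate at all. Since $K_f$ is contained in the Lie subalgebra $\vect(\B \setminus \Fac^*(b))$, the canonical system $\Sigma_{\Fac^*(b)}$ embeds into \eqref{eq:syst} via a local submersion $\theta$ with $\theta_b(x(t;u)) = \xi_b(t,u)$ exactly. In the example, $\theta_b(x) = (x_2 + x_1^2/2)/2$. Non-STLC then follows from $\xi_b \ge 0$.

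You have also misplaced the difficulty in Step 2, because the sign is exact and elementary. Write $b = \ad_a^m(c)$ with $a$ its largest factor. Then $c \in \B_\bad$, and either $a \in \B_\bad$ or $m$ is even, so $\dot\xi_b = \xi_a^m \dot\xi_c / m! \ge 0$ by induction. It is the \emph{good} factors, not the bad ones, that carry even multiplicities. Moreover, $\xi_b$ is in general not quadratic: for $b = \ad_{(X_1,X_0)}^4(X_0)$, $\xi_b = \frac{1}{24}\int_0^t \xi_{(X_1,X_0)}^4 \dd s$. No cross terms or coercivity are involved. Finally, the interpolation estimates of Step 3 are not available for arbitrary bad brackets with several inputs, and the example above shows they fail in the unconstrained form you state.
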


The complementarity between \cref{thm:sufficient,thm:necessary} settles an open question in control theory concerning the classification of brackets (see \cite[Section 4]{Kawski1987_Survey}).
Heuristically, they can be rephrased as follows:
\begin{itemize}
    \item if all bad brackets vanish, then the system is controllable;
    \item if all but one bad brackets vanish, and this single bad bracket is not compensated, then the system is not controllable.
\end{itemize}

\subsubsection{Relaxed sufficient condition}

The condition that all bad brackets vanish is unnecessarily strong.
Using rescaled controls of the form $\varepsilon^{1-\theta} u(\cdot/\varepsilon^\theta)$ as in \cite{Sussmann1987}, one easily obtains the following relaxed version where one allows bad brackets to be \emph{compensated} by good ones of lower \emph{weight}.
For a given $b \in \Br(X)$ and $\theta \in (0,1)$, set
\begin{equation}
    \label{eq:wtheta}
    \omega_\theta(b) := \theta n_0(b) + (n_1(b) + \dotsb + n_q(b))
\end{equation}
where $n_i(b)$ denotes the number of occurrences of the letter $X_i$ in $b$.

We will prove the following better sufficient condition.

\begin{theorem}
    \label{thm:relaxed-sufficient}
    Assume that there exists $\theta \in (0,1)$ such that, for all $b \in \B_\bad$,
    \begin{equation}
        \label{eq:fb0-compensated}
        f_b(0) \in \vect \{ f_a(0) \mid a \in \B_\good, \ \omega_\theta(a) < \omega_\theta(b) \}.
    \end{equation}
    Then \eqref{eq:syst} is $L^\infty$-STLC.
\end{theorem}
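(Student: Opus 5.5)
The plan is to deduce \cref{thm:relaxed-sufficient} from the construction behind \cref{thm:sufficient}, combined with the rescaling of Sussmann. I will use the statement that is actually established in the proof of \cref{thm:sufficient}, not \cref{thm:sufficient} itself. That statement is a quantitative steering result: for every truncation order $M$, and small time $T$, the good coordinates $\{\xi_a : a \in \B_\good,\ |a| \le M\}$ of the logarithm of the flow (coordinates of the second kind in the chronological/Magnus expansion) can be prescribed arbitrarily near $0$. This is done with controls of size $\rho$ on $[0,T]$, and the good coordinates depend continuously (in fact homogeneously) on the target. Meanwhile the bad coordinates and the higher-order remainders are controlled by explicit bounds in terms of $T$ and $\norm{u}_{L^\infty}$.

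The first step is to substitute $u_\varepsilon(t) := \varepsilon^{1-\theta} u(t/\varepsilon^\theta)$ on $[0,\varepsilon^\theta T]$. After the time change $s = t/\varepsilon^\theta$, the system becomes $\dot x = \varepsilon^\theta f_0 + \varepsilon\sum_i u_i f_i$. Equivalently, $X_0$ is weighted by $\varepsilon^\theta$ and each $X_i$ ($i \geq 1$) by $\varepsilon$, so the bracket $b$ appears with the factor $\varepsilon^{\omega_\theta(b)}$.

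The second step is to write the endpoint as
\[
x(\varepsilon^\theta T) = \sum_{b \in \B,\ \omega_\theta(b)\le N} \varepsilon^{\omega_\theta(b)} \xi_b(T,u) f_b(0) + O(\varepsilon^{N'}),
\]
with $N' > N$, using a truncated Magnus/Sussmann expansion of the flow around $0$. This is legitimate because $f_0(0)=0$ and the vector fields are analytic.

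The third step uses hypothesis \eqref{eq:fb0-compensated}. For each bad $b$, write $f_b(0) = \sum_a c_{b,a} f_a(0)$ over good $a$ with $\omega_\theta(a) < \omega_\theta(b)$. Then $\varepsilon^{\omega_\theta(b)}\xi_b f_b(0)$ can be absorbed into the good coordinates as
\[
\sum_a \varepsilon^{\omega_\theta(a)}\bigl(\xi_a + \varepsilon^{\omega_\theta(b)-\omega_\theta(a)} c_{b,a}\xi_b\bigr) f_a(0),
\]
which is a perturbation of relative size $o(1)$ as $\varepsilon \to 0$.

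Next, choose a finite family of good brackets $a_1,\dots,a_d$ such that the $f_{a_j}(0)$ span $\R^d$. This uses \eqref{eq:LARC}, \cref{thm:viennot}, and the compensation again: every element of $\Lie(f)(0)$ is a combination of $f_b(0)$ for $b\in\B$, and the bad ones reduce to good ones. With this family, the map $\eta \mapsto x(\varepsilon^\theta T)$, where $\eta$ denotes the prescribed good coordinates, is a small perturbation of a surjective linear map after diagonal rescaling by $\varepsilon^{\omega_\theta(a_j)}$. A Brouwer degree (or fixed-point) argument then gives local surjectivity onto a ball of radius $c\,\varepsilon^{\max_j \omega_\theta(a_j)}$. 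Since $\norm{u_\varepsilon}_{L^\infty} \le \varepsilon^{1-\theta}\norm{u}_{L^\infty} \to 0$ and the time $\varepsilon^\theta T \to 0$, this yields the small-time, small-control requirement. Nonzero initial data $x^\circ$ are handled in the standard way, by reaching $0$ from $x^\circ$ and then $x^*$, using the continuity of the construction (or reversibility of the time-reversed system, which satisfies the same hypotheses after $f_0 \to -f_0$, since the partition is unchanged).

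The main obstacle I expect is the third step: making the absorption uniform. The bad coordinates $\xi_b$ are not controlled, only bounded, and there are infinitely many bad brackets. I would first truncate at a weight $N$ larger than every $\omega_\theta(a_j)$, so that only finitely many bad brackets matter, and bound the tail by $O(\varepsilon^{N})$ using analytic estimates on iterated brackets. I would then require $\xi_b(T,u)$ to be bounded uniformly over the compact set of controls produced by the steering construction, which should follow from the continuity of that construction.
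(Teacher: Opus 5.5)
Your overall plan (controllability of the good canonical system, Sussmann's rescaling $\varepsilon^{1-\theta}u(\cdot/\varepsilon^\theta)$, then a topological surjectivity argument) matches the paper's proof. The paper obtains \cref{thm:bss-good}, extracts a dual family from it, and delegates the passage to \eqref{eq:syst} to \cite[Theorem A.14]{BeauchardMarbach2026_Quartic}.

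Your hand-made version of that passage breaks at Step~2, because the state is not linear in the coordinates of the second kind. Rewriting the Chen--Fliess series on the PBW basis gives, for analytic $\phi$,
\[
    \phi(x(t))=\sum_{P}\xi_P(t,u)\,(f_{c_1}\cdots f_{c_k}\phi)(0),
    \qquad
    \xi_P:=\prod_{c\in\B}\frac{\xi_c^{e_c(P)}}{e_c(P)!},
\]
where the sum runs over decreasing PBW-monomials $P=c_1\cdots c_k$ (compositions of derivations, $f_{c_1}$ outermost). For $k\geq2$ the vectors $(f_{c_1}\cdots f_{c_k}\Id)(0)$ are not brackets at $0$, and in general coordinates they need not lie in $\vect\{f_c(0)\mid\omega_\theta(c)\leq\sum_j\omega_\theta(c_j)\}$.

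A concrete example is $\dot y_1=u$, $\dot y_2=y_1$, $\dot y_3=y_2+2y_1u$, a chain of integrators written in the coordinate $y_3=x_3+x_1^2$. Every bad bracket vanishes at $0$. The only Hall brackets not vanishing at $0$ are $X_1$, $(X_1,X_0)$, $((X_1,X_0),X_0)$, with values $e_1,e_2,e_3$ and weights $1,1+\theta,1+2\theta$. Yet
\[
y_3(\varepsilon^\theta T)=\varepsilon^{1+2\theta}\xi_{((X_1,X_0),X_0)}(T,u)+\varepsilon^2\xi_{X_1}(T,u)^2,
\]
whereas your Step~2 formula omits the second term. For $\theta>1/2$, the $e_3$-component of your diagonally rescaled map is $\eta_{((X_1,X_0),X_0)}+\varepsilon^{1-2\theta}\eta_{X_1}^2$. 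It blows up instead of converging to a surjective linear map, so Brouwer cannot be applied as you describe. Here you could shrink $\theta$, but in general $\theta$ is dictated by \eqref{eq:fb0-compensated} and the coordinates are arbitrary. These product terms also carry the uncontrolled bad coordinates, so the purely linear absorption of Step~3 is not enough either. The real obstacle is this nonlinearity, not the uniformity issue you point to.

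The missing ingredient is to work in coordinates adapted to the flag $V_w:=\vect\{f_c(0)\mid\omega_\theta(c)\leq w\}$, i.e.\ privileged coordinates $y_i$ of order $w_i$; your spanning family $a_1,\dots,a_d$ must likewise be adapted to this flag. In these coordinates,
\[
\varepsilon^{-w_i}y_i(\varepsilon^\theta T)=\sum_{\omega_\theta(P)=w_i}\xi_P(T,u)\,(\hat f_{c_1}\cdots\hat f_{c_k}y_i)(0)+O(\varepsilon^\delta),
\]
with $\hat f$ the homogeneous approximation. Since $dy_i(0)$ vanishes on $V_{<w_i}$, hypothesis \eqref{eq:fb0-compensated} gives $\hat f_b(0)=0$ for every $b\in\B_\bad$. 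Since $\B_\good<\B_\bad$, the outermost factor $c_1$ of any decreasing PBW-monomial containing a bad bracket is bad, so all such monomials drop out of the limit.

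The limit is then a polynomial map of finitely many good coordinates, triangular in the weight. At weight $w$, its linear part comes from good brackets of weight $w$, which span $V_w$ modulo $V_{<w}$ by the hypothesis; the remaining terms involve only good coordinates of smaller weight. Hence it is surjective with a continuous right inverse, and Brouwer concludes once you also have a continuous selection of controls for $\Sigma_G$. \Cref{thm:bss-good} only gives existence of such controls. Continuity requires an extra normal-accessibility and inverse-function argument, as in \cref{lem:normal-box}. This is the step where the ordering $\B_\good<\B_\bad$ does the work, and your Step~3 never uses it.
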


\subsubsection{Relaxed necessary condition}

The condition that all bad brackets except the considered one vanish is probably too stringent and might be somewhat relaxed.

However, there is an underlying difficulty which cannot be overlooked concerning the possibility to obtain $L^\infty$-STLC through the \emph{competition} of bad brackets.
This possibility had already been identified in \cite[Section 5]{Kawski1987_Survey}.
We introduced the following example, with a scalar-input control, in \cite[Proposition 1.21]{BeauchardMarbach2026_Quartic}:
\begin{equation} \label{eq:syst-magnitude}
    \begin{cases}
        \dot{x}_1 = u, \\
        \dot{x}_2 = x_1, \\
        \dot{x}_3 = x_2, \\
        \dot{x}_4 = x_1^2 x_3^2 - \lambda x_2^4.
    \end{cases}
\end{equation}
It is proved in \cite[Section 6]{BeauchardMarbach2026_Quartic} that there exists $\lambda^* > 0$ such that, for all $\lambda > \lambda^*$, this system is $L^\infty$-STLC.
Let $M_1 := (X_1,X_0)$, $M_2 := (M_1, X_0)$, $M_3 := (M_2, X_0)$, $Q_{1,3,3} := \ad_{M_2}^2 \ad_{X_1}^2(X_0)$ and $Q_{2,2,2} := \ad_{M_1}^4 (X_0)$.
Any factor-parity Hall set contains all these brackets, the first 3 being good, and the last two bad.
System \eqref{eq:syst-magnitude} is controllable thanks to a competition between these two bad quartic brackets.
Hence, a condition of the form \eqref{eq:syst-b-notin-supp-K} is insufficient to deny STLC.
It is mandatory to add some kind of assumption on the other bad brackets.
As a first step, we choose here the blunt formulation \eqref{eq:syst-no-other-bad}.

\subsection{Comparison with known results}

\subsubsection{Sufficient conditions}

We start with a famous sufficient condition due to Sussmann \cite{Sussmann1987}.
Recalling the notation $n_i(b)$ for the number of occurrences of the letter $X_i$ in $b$, we define a set of ``Sussmann-bad'' brackets as follows:
\begin{equation}
    S_\bad := \{ b \in \Br(X) \mid n_0(b) \text{ is odd and } n_i(b) \text{ is even for all } 1 \leq i \leq q \}.
\end{equation}
Sussmann proved the following result in  \cite{Sussmann1987}.

\begin{theorem}
    \label{thm:sussmann}
    Assume that $ \left\{ f_0, f_1, \dotsc, f_q \right\}$ satisfies the Lie algebra rank condition \eqref{eq:LARC}.
    Assume that there exists $\theta \in (0,1)$ such that, for the weight \eqref{eq:wtheta}, for all 
    $b \in S_\bad$,
    \begin{equation}
        f_b(0) \in \operatorname{span} \left\{ f_a(0) \mid a \in \Br(X),\ \omega_\theta(a) < \omega_\theta(b) \right\}.
    \end{equation}
    Then system \eqref{eq:syst} is $L^\infty$-STLC.
\end{theorem}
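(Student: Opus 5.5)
We want to prove Sussmann's theorem (\cref{thm:sussmann}). Rather than redo Sussmann's original argument, the plan is to derive it from \cref{thm:relaxed-sufficient}, through a comparison between Sussmann-bad brackets and the bad elements of a factor-parity Hall set. If no such comparison is available in the paper, the fallback is Sussmann's own strategy, sketched in the last paragraph.

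\textbf{Step 1: reduce bad Hall elements to Sussmann-bad brackets.} The key claim is that, for every factor-parity Hall set $\B$ on $X$ with $X_\bad=\{X_0\}$, one has $\B_\bad \subset S_\bad$ up to the relevant parity information. That is, every bad Hall bracket has $n_0$ odd and all $n_i$ ($i\ge1$) even. This should follow from the recursive definition of bad/good in \cref{sec:def}. Presumably a bracket $(a,b)$ is declared bad according to the parities of its factors, so that badness tracks the parity vector $(n_0 \bmod 2,\dots,n_q\bmod 2)$. Accepting that, take $b\in\B_\bad$. Then $b\in S_\bad$, and Sussmann's hypothesis gives
\[ f_b(0)\in\vect\{f_a(0)\mid a\in\Br(X),\ \omega_\theta(a)<\omega_\theta(b)\}. \]

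\textbf{Step 2: re-express compensating brackets in the Hall basis.} Each $a\in\Br(X)$ with $\omega_\theta(a)<\omega_\theta(b)$ can be written, by \cref{thm:viennot}, as a combination of elements of $\eval(\B)$. Since the Hall relations are homogeneous in each letter, these elements have the same multidegree and hence the same weight $\omega_\theta$. This yields
\[ f_b(0)\in\vect\{f_c(0)\mid c\in\B,\ \omega_\theta(c)<\omega_\theta(b)\}. \]
Some of these $c$ may themselves be bad. In that case we apply the hypothesis to them again and induct downward on $\omega_\theta$. The set of weights below a given value is finite, and the induction bottoms out at brackets of minimal weight, which have no compensators. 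After finitely many steps only good brackets of smaller weight remain, which is exactly \eqref{eq:fb0-compensated}. \Cref{thm:relaxed-sufficient} then gives $L^\infty$-STLC. The LARC assumption \eqref{eq:LARC} is common to both statements.

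\textbf{Main obstacle.} The delicate point is Step 1. Factor-parity badness might be defined through the parities of the Hall factorization rather than through global letter counts. If so, some bad Hall brackets could fail to lie in $S_\bad$. A bracket with $n_1$ odd, for instance, is Sussmann-good but might still be factor-parity bad. For such brackets Sussmann's hypothesis provides no compensation, and the reduction breaks down.

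\textbf{Fallback.} If the inclusion fails, we prove \cref{thm:sussmann} directly along Sussmann's lines. Sussmann's hypothesis is invariant under the symmetries $u_i\mapsto -u_i$. Averaging over this finite group of sign changes kills the contributions of all brackets with some odd $n_i$ in the Chen--Fliess expansion. Combined with the rescaling $\varepsilon^{1-\theta}u(\cdot/\varepsilon^\theta)$ and a degree argument on the image of the endpoint map, this yields STLC from compensation of the Sussmann-bad brackets alone.
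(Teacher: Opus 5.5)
\textbf{Step 1 of your main route fails, and you have misdiagnosed the obstacle.} By \cref{lem:bad-parity}, every bad Hall element already has $n_i$ even for all $i\geq1$, so a factor-parity bad bracket never has $n_1$ odd. What can fail is the parity of $n_0$. Bracketing two bad elements stays bad (\cref{prop:Hall-dichotomic-bis}). For example, whichever of $(X_0,\ad_{X_1}^2(X_0))$ and $(\ad_{X_1}^2(X_0),X_0)$ lies in $\B$ is bad with $n_0=2$, hence not in $S_\bad$. Sussmann's hypothesis says nothing about such brackets, so Step~2 (\emph{apply the hypothesis again} to the bad compensators) breaks down exactly on them. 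The missing idea is the absorption argument of \cref{lem:absorption}:
\begin{itemize}
\item Only the irreducible bad elements $\B_\bad^\irr$ need direct compensation. These satisfy $\Fac(b)\subset\B_\good$ and therefore lie in $S_\bad$ (\cref{lem:bad-irreducible-decomposition,cor:bad-in-Sbad}).
\item A reducible $b=[a,c]$ with $a,c\in\B_\bad$ is handled by writing $a=k_a+g_a$ and $c=k_c+g_c$, with $k_a,k_c\in K_f$ and $g_a,g_c$ good of lower weight.
\item Since $K_f$ is a Lie subalgebra, $[k_a,k_c]\in K_f$. The other three brackets have weight $<\omega_\theta(b)$ by additivity, and a minimal-weight counterexample argument concludes.
\end{itemize}

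\textbf{Even once repaired, the route is circular inside this paper.} The proof of \cref{thm:relaxed-sufficient} rests on \cref{thm:bss-good}. Its induction step uses \cref{prop:averaged-system-STLC}, which is itself deduced from \cref{thm:sussmann} applied to the reduced system, in the case where every Sussmann-bad bracket vanishes at $0$. The comparison you describe is exactly what the paper does in \cref{cor:sussmann-implies-ours}, but only to show that \cref{thm:relaxed-sufficient} covers every system covered by Sussmann's theorem. At best it reduces the compensated statement to its uncompensated special case, which must still be imported. The paper's proof of \cref{thm:sussmann} is simply a citation of Sussmann's general theorem (\cite[Section 7.3]{Sussmann1987}, with the group generated by the sign changes $\sigma_i$ only).

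\textbf{Your fallback points that way but, as sketched, misses an ingredient.} Averaging over the sign changes $u_i\mapsto-u_i$ only removes the brackets with some odd $n_i$. It leaves those with all $n_i$ even and $n_0$ even, which your hypothesis does not compensate. In Sussmann's argument these are disposed of by the time reversal $u\mapsto\check u$, as recalled in the introduction. Composed with the total sign change, it reverses the sign of their components, and only then are the invariant brackets exactly those of $S_\bad$.
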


\begin{proof}
    See \cite[Section 7.3]{Sussmann1987} choosing $\Lambda_0$ to be the group of automorphisms generated only by the~$\sigma_i$ (not the $\pi$, $\tilde{\pi}$).
\end{proof}

Sussmann's sufficient condition exploits the input symmetries $u_i \mapsto -u_i$ (which dates back to the Hermes sufficient condition \cite{Hermes1982,Sussmann1983}) and $u \mapsto \check{u}$ (time reversal, which had been observed in particular cases by Stefani in \cite{Stefani1985}).
The brackets $b \in S_\bad$ are the ones whose coordinates are invariant under these natural substitutions.
Our sufficient condition \cref{thm:relaxed-sufficient} requires the compensation of far fewer brackets (see \cref{sec:compare-sussmann} for a detailed comparison).

In \cite{Kawski1987_Necessary}, Kawski observed that some brackets of $S_\bad$ could lead to controllable systems.
His proof introduced a new argument, which can be seen as the \emph{repetition} of a given control pattern concatenated multiple times with itself.
This idea started a fruitful line of work, including \cite{AgrachevGamkrelidze1993_Semigroups} by Agrachev and Gamkrelidze, \cite{Krastanov2009} by Krastanov, and the current work, which pushes the approach recursively, encoding it into a Hall set.

When expressed in a Hall set, our sufficient conditions \cref{thm:sufficient,thm:relaxed-sufficient} require the compensation of fewer bad brackets than Agrachev--Gamkrelidze or Krastanov (see \cref{sec:compare-AGK} for a detailed comparison).
Nevertheless, the core proof mechanism is similar, and our sufficient condition can be seen as an automatically-expanded version of their conditions.

\subsubsection{Necessary conditions}

Comparatively, less attention had been devoted to the derivation of necessary conditions for STLC, and one only knew obstructions related to very specific brackets or small families of brackets.
Some of these obstructions to controllability have also been observed for PDEs (see the survey \cite{Beauchard2026}).

\paragraph{Scalar-input case $q = 1$.}

Most known necessary conditions concern the scalar-input case $q = 1$.
Let us survey this case.
We fix some notation.
Let $M_0 := X_1$ and $M_{\nu+1} := (M_\nu, X_0)$ for $\nu \ge 0$.

Historically, the first obstruction to controllability discovered is the one associated with the bad bracket $W_1 := (X_1, (X_1, X_0)) = \ad_{X_1}^2(X_0) = \ad_{M_0}^2(X_0)$.
It is associated with the following necessary condition for $L^\infty$-STLC:
\begin{equation}
    f_{W_1}(0) \in \vect \{ f_a(0) \mid n_1(a) = 1 \}.
\end{equation}
See \cite[Proposition 6.3]{Sussmann1983} for the historical proof.

A first generalization of this condition was proved by Stefani in \cite{Stefani1985}.
She proved that, if \eqref{eq:syst} is $L^\infty$-STLC, then, for all $k \in \N^*$,
\begin{equation}
    f_{\ad^{2k}_{X_1}(X_0)}(0) \in \vect \{ f_a(0) \mid n_1(a) < 2k \}.
\end{equation}

In \cite{Kawski1987_Necessary}, Kawski proved that, if \eqref{eq:syst} is $L^\infty$-STLC, then, for $W_2 := \ad_{M_1}^2(X_0)$ and $P = \ad_{X_1}^3(X_0)$,
\begin{equation}
    f_{W_2}(0) \in \vect \{ f_a(0) \mid n_1(a) = 1 \text{ or } a = \ad_{X_0}^\nu \ad_{X_1}^3(X_0) \}.
\end{equation}

We generalized this condition by proving in \cite[Theorem 1.11]{BeauchardMarbach2026} the following necessary condition for $L^\infty$-STLC, conjectured by Kawski in \cite[p.\ 63]{Kawski1986}, for $W_k := \ad_{M_{k-1}}^2(X_0)$,
\begin{equation}
    f_{W_k}(0) \in \vect \{ f_a(0) \mid n_1(a) = 1 \text{ or } 2 < n_1(a) < 2k \}.
\end{equation}

In \cite[Section 3]{BeauchardMarbach2026}, we also constructed a Hall set $\B^\star$ for which we proved in \cite[Theorem 1.14]{BeauchardMarbach2026} the following necessary condition for $L^\infty$-STLC, with $P = \ad_{X_1}^3(X_0)$,
\begin{equation}
    f_{\ad^2_P(X_0)}(0) \in \vect \{ f_a(0) \mid a \in \B^\star,\ n_1(a) \leq 7 \text{ and } a \neq \ad_P^2(X_0) \}.
\end{equation}
For the same basis $\B^\star$, we proved in \cite[Section 7.1]{BeauchardMarbach2026_Quartic} many necessary conditions for STLC based on quartic brackets, i.e.\ brackets with $n_1 = 4$.
For example, we proved that, with $Q_{j,k,k} := \ad_{M_{k-1}}^2 \ad_{M_{j-1}}^2(X_0)$ for $1 \leq j \leq k$, and $k \leq 2j$, a necessary condition for $L^\infty$-STLC is that
\begin{equation}
    f_{Q_{j,k,k}}(0) \in \vect \{ f_a(0) \mid a \in \B^\star,\ n_1(a) \leq 2j+2k-1, \text{ and } a \neq Q_{j,k,k} \}.
\end{equation}

\emph{Note that all the brackets above belong to all Hall sets of the factor-parity class, and are all indeed classified as bad by these Hall sets.}

\paragraph{Multi-input case $q > 1$.}
Generally speaking, it is much harder to prove obstructions in the multi-input case $q > 1$ and the conditions are harder to state.
Let us mention two possible approaches.
In~\cite{GiraldiLissyMoreau2019}, the authors explore, when $q = 2$ and $f_2(0) = 0$, obstructions caused by the brackets $W_1$ and~$W_2$ above, by enlarging the compensating sets.
In~\cite{Gherdaoui2025_Obs_ODE}, the author explores obstructions linked with the simultaneous presence of the brackets $\ad_{X_i}^2(X_0)$ for $i = 1,\dotsc,q$.

\paragraph{Comparison.}
A strong advantage of our result \cref{thm:necessary} is that we have a classification of the full Hall set on $\{ X_0, X_1, \dotsc, X_q \}$, giving a general framework to identify bad brackets.
However, the non-compensation assumption \eqref{eq:syst-b-notin-supp-K} is very far from sharp.

\subsubsection{Classification results}

In \cite[Section 5]{BeauchardMarbach2026_Quartic}, we discussed the classification problem, dating back to \cite[Section 4]{Kawski1987_Survey}.
Using the Hall set $\B^\star$ introduced in \cite[Section 3]{BeauchardMarbach2026}, we proved that it could classify (in some precise sense) the brackets of $\Br(X)$ containing up to 4 occurrences of $X_1$.
The main strength of our current work is to produce such a classification for a full Hall set.

\subsection{Organization of the paper}

In \cref{sec:def}, we give definitions building up to the notion of \emph{factor-parity} Hall set.
In \cref{sec:good}, we prove the sufficient conditions of \cref{thm:sufficient,thm:relaxed-sufficient}.
In \cref{sec:observability,sec:support}, we prove the necessary condition of \cref{thm:necessary}.

\section{Definitions}
\label{sec:def}

Throughout this section, $X$ is an arbitrary set of unknowns.

\subsection{Free magma and factorization} 
\label{s:hall-sets}

Recall that we denote by $\Br(X)$ the free magma on $X$.
We will use the following associated notions.

For $b \in \Br(X)$, $|b|$ denotes the length of $b$.
If $|b| > 1$, $b$ can be written in a unique way as $b = (b', b'')$, with $b', b'' \in \Br(X)$. 
We use the notations $\lambda(b) = b'$ and $\mu(b) = b''$, which define maps $\lambda,\mu: \Br(X)\setminus X \to \Br(X)$.
We also denote by $\deg_X(b) \in \N^{(X)}$ the multidegree of $b$, $\supp \deg_X(b) \subset X$ the set of letters occurring in~$b$ and $\deg_x(b) \in \N$ the number of occurrences of the letter $x$ in $b$.
For $a \in \Br(X)$, let $\ad_a : \Br(X) \to \Br(X)$ be the map defined by $\ad_a(b) := (a,b)$.

For example, when $X = \{ X_0, X_1, X_2 \}$, $a = \ad_{X_1}^2(X_0) = (X_1,(X_1,X_0)) \in \Br(X)$ satisfies $|a| = 3$, $\lambda(a) = X_1$, $\mu(a) = (X_1, X_0)$ and $\deg_X(a) = \{ X_0 \mapsto 1, X_1 \mapsto 2, X_2 \mapsto 0 \}$ so that $\supp \deg_X(a) = \{ X_0, X_1 \}$ and $\deg_{X_1}(a) = 2$.

\begin{definition}[Factors]
    \label{def:factors}
    For $b \in \Br(X)$, we define by induction on length its \emph{factors} as
    \begin{equation}
        \Fac(b) :=
        \begin{cases}
            \varnothing, & \text{if } b \in X, \\
            \{ a \} \cup \Fac(c), & \text{if } b = (a,c).
        \end{cases}
    \end{equation}
    For $B \subset \Br(X)$, we write $\Fac(B) := \bigcup_{b \in B} \Fac(b)$ and we call $B$ \emph{factor-stable} when $\Fac(B) \subset B$.
\end{definition}

\begin{definition}[Factor closure]
    Given $B \subset \Br(X)$, let $\Fac^+(B)$ denote the smallest factor-stable subset of $\Br(X)$ containing $\Fac(B)$.
    We also let $\Fac^*(B) := B \cup \Fac^+(B)$.
    For a singleton, $\Fac^+(b) := \Fac^+(\{b\})$ and $\Fac^*(b) := \Fac^*(\{b\})$.
\end{definition}

\begin{lemma}
    \label{lem:Fac+ES}
    If $S, E \subset \Br(X)$ are such that $S$ is factor-stable and $\Fac(E) \subset S$, then $\Fac^+(E) \subset S$.
\end{lemma}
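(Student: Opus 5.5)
The claim follows almost directly from the minimality in the definition of $\Fac^+(E)$. By definition, $\Fac^+(E)$ is the smallest factor-stable subset of $\Br(X)$ containing $\Fac(E)$. Implicitly, this means it is the intersection of all factor-stable subsets of $\Br(X)$ that contain $\Fac(E)$. So the plan is to check two things: that this intersection is well defined and is itself factor-stable, and that $S$ is one of the sets being intersected.

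First I will check that the intersection makes sense and is factor-stable. The family of factor-stable sets containing $\Fac(E)$ is nonempty, since $\Br(X)$ itself belongs to it. Now let $(S_i)_{i \in I}$ be any nonempty family of factor-stable sets and let $b \in \bigcap_i S_i$. For each $i$ we have $b \in S_i$, so $\Fac(b) \subset S_i$ because $\Fac(S_i) \subset S_i$. Hence $\Fac(b) \subset \bigcap_i S_i$, and the intersection is factor-stable. Since each $S_i$ in our family contains $\Fac(E)$, so does the intersection. Therefore $\Fac^+(E)$ is well defined and equals this intersection.

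Second, $S$ belongs to the family: it is factor-stable by hypothesis and contains $\Fac(E)$ by hypothesis. It follows that $\Fac^+(E) = \bigcap \{ S' \text{ factor-stable} \mid \Fac(E) \subset S' \} \subset S$.

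If instead one prefers a constructive description of $\Fac^+(E)$, the argument is an induction. Set $F_0 := \Fac(E)$ and $F_{n+1} := F_n \cup \Fac(F_n)$, so that $\Fac^+(E) = \bigcup_n F_n$. The base case $F_0 \subset S$ is the hypothesis. For the inductive step, if $F_n \subset S$, then $\Fac(F_n) \subset \Fac(S) \subset S$, hence $F_{n+1} \subset S$. Taking the union over $n$ gives $\Fac^+(E) \subset S$.

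No step is a genuine obstacle. The only point needing care is confirming that the "smallest" set in the definition exists, which is exactly the stability of factor-stable sets under intersection established in the first step.
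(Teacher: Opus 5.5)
Your proposal is correct and is essentially the paper's reasoning: the paper gives no written proof, treating the lemma as immediate from the minimality in the definition of $\Fac^+(E)$, since $S$ is a factor-stable set containing $\Fac(E)$. Your check that factor-stable sets are closed under intersection, which guarantees the smallest such set exists, and your alternative description of $\Fac^+(E)$ as the increasing union $\bigcup_n F_n$ are valid extra details.
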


\begin{lemma}[Factorization and seed]
    \label{lem:factorization}
    For $b \in \Br(X)$, there exists a unique $r \in \N$, $m_1, \dotsc, m_r \geq 1$, $a_1, \dotsc, a_r \in \Br(X)$ with $a_j \neq a_{j+1}$ for $1 \leq j < r$, and $X_i \in X$ such that
    \begin{equation}
        \label{eq:b-factorization}
        b = \ad_{a_r}^{m_r} \dotsb \ad_{a_1}^{m_1} (X_i).
    \end{equation}
    We call this expression of $b$ its \emph{factorization}, and one has 
    \begin{equation}
        \label{eq:nested}
        \Fac(b) = \{ a_1, \dotsc, a_r \}
        \quad \text{and} \quad
        \Fac^*(b) = \{ b \} \cup \bigcup_{1 \leq j \leq r} \Fac^*(a_j).
    \end{equation}
    We call $X_i$ the \emph{seed} of $b$ and we use the notation $\seed(b) := X_i \in X$.
    
    When $b \in X$, its factorization is just $b = X_i$, so $r = 0$, $\Fac(b) = \varnothing$ and $\seed(b) = b$.
\end{lemma}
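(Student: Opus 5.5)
Both parts go by strong induction on the length $|b|$, following the recursive structure $b = (a,c)$.

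\emph{Existence.} If $b \in X$, take $r = 0$ and $X_i = b$. Otherwise write $b = (a,c)$ with $|c| < |b|$. By induction, $c = \ad_{a'_s}^{m'_s} \dotsb \ad_{a'_1}^{m'_1}(X_i)$.
\begin{itemize}
\item If $s \geq 1$ and $a = a'_s$, set $r = s$, keep the same data, and replace $m'_s$ by $m'_s+1$.
\item Otherwise, set $r = s+1$, $a_{r} = a$ and $m_r = 1$. The constraint $a_{r-1} \neq a_r$ then holds by construction.
\end{itemize}

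\emph{Uniqueness.} Suppose $b$ has two factorizations. If $b \in X$, then $r$ must be $0$, since any $r \geq 1$ produces a bracket of length at least $2$. If $b \notin X$, any factorization has $r \geq 1$. Its outermost operation is then $\ad_{a_r}$, so $a_r = \lambda(b)$, and peeling one $\ad$ gives a factorization of $\mu(b)$. That factorization is $\ad_{a_r}^{m_r-1}\ad_{a_{r-1}}^{m_{r-1}} \dotsb$, with the $\ad_{a_r}^{0}$ term dropped when $m_r = 1$.

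The only delicate point is that peeling must produce a sequence still satisfying the non-repetition constraint. It does: if $m_r = 1$, the remaining sequence is $a_1,\dots,a_{r-1}$, which already satisfies it. By induction, the factorization of $\mu(b)$ is unique. From it we recover the original data:
\begin{itemize}
\item If its last factor equals $\lambda(b)$, the original had $m_r \geq 2$. Indeed, if $m_r = 1$, the last factor of $\mu(b)$ would be $a_{r-1} \neq a_r = \lambda(b)$.
\item Otherwise, $m_r = 1$.
\end{itemize}
In both cases the data of $b$ is determined uniquely, so the two factorizations coincide. I expect this bookkeeping to be the main (minor) obstacle.

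\emph{Formula for $\Fac(b)$.} By \cref{def:factors}, $\Fac(\ad_a(c)) = \{a\} \cup \Fac(c)$. Unrolling this along the factorization gives $\Fac(b) = \{a_1,\dots,a_r\}$, with repetitions of $a_j$ collapsing in the set.

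\emph{Formula for $\Fac^*(b)$.} Set $S := \{b\} \cup \bigcup_j \Fac^*(a_j)$. I show both inclusions.

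First, $\Fac^*(b) \subset S$. It suffices to show $\Fac^+(b) \subset S' := \bigcup_j \Fac^*(a_j)$, and I use \cref{lem:Fac+ES} with $E = \{b\}$. The hypothesis $\Fac(b) = \{a_j\} \subset S'$ holds by the formula for $\Fac(b)$. It remains to check that $S'$ is factor-stable. Each $\Fac^*(a) = \{a\} \cup \Fac^+(a)$ is factor-stable, because $\Fac(a) \subset \Fac^+(a)$ and $\Fac^+(a)$ is factor-stable by definition. A union of factor-stable sets is factor-stable, so $S'$ is factor-stable.

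Second, $S \subset \Fac^*(b)$. Each $a_j$ lies in $\Fac(b) \subset \Fac^+(b)$. Since $\Fac^+(b)$ is factor-stable and contains $a_j$, it contains $\Fac(a_j)$. By minimality of $\Fac^+(a_j)$, this gives $\Fac^+(a_j) \subset \Fac^+(b)$, hence $\Fac^*(a_j) \subset \Fac^*(b)$. Together with $b \in \Fac^*(b)$, this yields $S \subset \Fac^*(b)$, which completes the proof.
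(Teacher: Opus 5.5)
Your proof is correct. The paper states this lemma without proof as an elementary fact, and your argument is the routine verification it leaves implicit: the length induction through $b=(\lambda(b),\mu(b))$ gives existence and uniqueness (the edge case $r=m_r=1$, where $\mu(b)$ is a letter with no last factor, falls correctly into your ``otherwise'' branch), and the two inclusions for $\Fac^*(b)$ follow from \cref{lem:Fac+ES} and the minimality of $\Fac^+$.
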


\subsection{Factorization in Hall sets}

Let $\B$ be a Hall set on $X$.

\begin{lemma}[Factorization in a Hall set]
    \label{lem:facto-Hall}
    For all $b \in \B$, the factors of $b$ given in \eqref{eq:b-factorization} belong to~$\B$ and satisfy $a_1 < \dotsb < a_r < b$ and $a_1 < \seed(b)$.
\end{lemma}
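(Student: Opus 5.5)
We argue by induction on $r$, the number of distinct factors in the factorization \eqref{eq:b-factorization}, or equivalently by peeling the outermost factor. Write $b = \ad_{a_r}^{m_r} \dotsb \ad_{a_1}^{m_1}(X_i)$ and set $b_0 := X_i$. Introduce the intermediate brackets obtained by removing outer layers one at a time: $c_0 := X_i$, and each subsequent $c$ is obtained from the previous one by applying $\ad_{a_j}$, going through $\ad_{a_1}$ $m_1$ times, then $\ad_{a_2}$ $m_2$ times, and so on, up to $b$. So $b = (a_r, c)$ with $c = \ad_{a_r}^{m_r-1}\dotsb(X_i)$.

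\textbf{Membership.} Since $b \in \B$ and $b = (a_r, c)$, the second axiom of \cref{def:Hall} gives $a_r, c \in \B$. Iterating downward along the chain of intermediate brackets, each of them lies in $\B$ and each outer factor (one of the $a_j$) lies in $\B$. Hence all $a_j \in \B$, and also $X_i \in \B$.

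\textbf{Orderings.} For each step $(a_j, c)$ in the chain, the axiom gives $a_j < c$. The third axiom gives $a_j < (a_j, c)$; applied to the outermost step, this yields $a_r < b$.

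For consecutive factors we use the condition on the right factor. Consider the step where $\ad_{a_{j+1}}$ is applied for the first time, to $c = \ad_{a_j}^{m_j}(\cdots)$. Then $c = (a_j, c')$ is not a letter, so the Hall condition requires $\lambda(c) = a_j \le a_{j+1}$. Combined with $a_j \ne a_{j+1}$, this gives $a_j < a_{j+1}$.

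Finally, at the innermost step $(a_1, X_i)$, the condition $a < b$ in the axiom gives $a_1 < X_i = \seed(b)$. The case $r = 0$ is vacuous.

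The only delicate point is bookkeeping. The strict inequality $a_j < a_{j+1}$ needs the distinctness $a_j \neq a_{j+1}$ from \cref{lem:factorization}, and the weak inequality must be extracted at the precise transition step. Repeated factors ($m_j \ge 2$) only produce the harmless $a_j \le a_j$. I do not expect a real obstacle.
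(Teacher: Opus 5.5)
Your proof is correct and follows the same route as the paper. Membership and the weak inequalities $a_j \le a_{j+1}$ come from the second Hall axiom applied along the chain of intermediate brackets, with strictness from $a_j \neq a_{j+1}$; $a_r < b$ comes from the third axiom; and $a_1 < \seed(b)$ comes from the innermost bracket $(a_1,\seed(b)) \in \B$. The announced induction on $r$ is never actually used, since the direct peeling argument you give already suffices.
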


\begin{proof}
    The fact that $a_i \in \B$ and the inequality chain $a_1 < \dotsb < a_r$ follow from the second Hall axiom.
    By the third Hall axiom, $a_r < b$.
    Since the innermost bracket $(a_1, \seed(b)) \in \B$, $a_1 < \seed(b)$ by the second Hall axiom.
\end{proof}

\begin{lemma}
    \label{lem:B<p-stable}
    Let $p \in \B$.
    Then $\B_{<p} := \{ b \in \B \mid b < p \}$ is factor-stable.
\end{lemma}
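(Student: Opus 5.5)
We must show: if $b \in \B$ with $b < p$, then $\Fac(b) \subset \B_{<p}$. Unfolding \cref{def:factors}, factor-stability of $\B_{<p}$ means exactly this condition, since $\Fac(\B_{<p}) = \bigcup_{b \in \B_{<p}} \Fac(b)$. So the plan is simply to fix $b \in \B_{<p}$ and check that every factor of $b$ lies in $\B$ and is strictly smaller than $p$.

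If $b \in X$, then $\Fac(b) = \varnothing$ and there is nothing to prove. Otherwise, write the factorization of $b$ given by \cref{lem:factorization}, namely $b = \ad_{a_r}^{m_r} \dotsb \ad_{a_1}^{m_1}(\seed(b))$. By \eqref{eq:nested}, $\Fac(b) = \{a_1, \dotsc, a_r\}$. We then invoke \cref{lem:facto-Hall}, which applies since $b \in \B$. It gives $a_j \in \B$ for every $j$, together with the chain $a_1 < \dotsb < a_r < b$. Hence $a_j < b < p$ for each $j$, and by transitivity of the total order on $\B$, $a_j \in \B_{<p}$.

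There is no real obstacle here. The only point requiring care is that \cref{lem:facto-Hall} already supplies both membership in $\B$ and the strict inequality $a_r < b$, the latter coming from the third Hall axiom. Consequently no induction on length is needed: factors of factors are handled automatically, because each element of $\B_{<p}$ is itself treated directly by the argument above.
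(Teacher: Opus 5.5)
Your proposal is correct and follows the paper's argument: both take $b \in \B_{<p}$ and a factor $a \in \Fac(b)$, use \cref{lem:facto-Hall} to get $a \in \B$ and $a < b$, and conclude $a < p$ by transitivity. Your remark that no induction on length is needed is also right, since factor-stability only requires $\Fac(\B_{<p}) \subset \B_{<p}$.
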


\begin{proof}
    Let $b \in \B_{<p}$ and $a \in \Fac(b)$.
    By \cref{lem:facto-Hall}, $a < b$. 
    Hence $a < p$ and $a \in \B_{<p}$.
\end{proof}

\begin{lemma}[Minimum of the factor closure]
    \label{lem:min-fac}
    Let $b \in \B$.
    Then
    \begin{enumerate}[label=\textup{(\roman*)},leftmargin=2em]
        \item \label{it:min-fac-1}
        \(\supp\deg_X(a)\subset\supp\deg_X(b)\) for every \(a\in\Fac^*(b)\);
        \item \label{it:min-fac-2}
        \(\min\Fac^*(b)\in X\);
        \item \label{it:min-fac-3}
        \(\min\Fac^*(b)=\min\supp\deg_X(b)\).
    \end{enumerate}
\end{lemma}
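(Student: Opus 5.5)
We argue by strong induction on the length $|b|$, using the factorization of \cref{lem:factorization} together with \cref{lem:facto-Hall}. For $b \in X$ everything is immediate: $\Fac^*(b) = \{b\}$ and $\supp\deg_X(b) = \{b\}$. So let $b \in \B \setminus X$, written as $b = \ad_{a_r}^{m_r} \dotsb \ad_{a_1}^{m_1}(X_i)$ with $X_i = \seed(b)$. By \cref{lem:facto-Hall}, each $a_j$ lies in $\B$ and is strictly shorter than $b$, so the induction hypothesis applies to every $a_j$. By \eqref{eq:nested}, $\Fac^*(b) = \{b\} \cup \bigcup_j \Fac^*(a_j)$.

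For \ref{it:min-fac-1}, note that each $a_j$ is a sub-bracket of $b$. Hence $\supp\deg_X(a_j) \subset \supp\deg_X(b)$, and the induction hypothesis gives the same inclusion for every element of $\Fac^*(a_j)$. This part is purely combinatorial and does not use the Hall structure; the induction could equally be run on all of $\Br(X)$.

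For \ref{it:min-fac-2}, set $m := \min \Fac^*(b)$, which exists since $\Fac^*(b)$ is finite. Note that $X_i$ itself need not belong to $\Fac^*(b)$, since a seed is not a factor. The element $b$ is not the minimum, because $a_r < b$ by \cref{lem:facto-Hall}. So $m = \min \Fac^*(a_j)$ for some $j$, and by induction this lies in $X$. Concretely, I expect $m = \min\Fac^*(a_1)$, since $a_1 < \dotsb < a_r$, but the argument does not need this.

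For \ref{it:min-fac-3}, which is the main point, let $x := \min\supp\deg_X(b)$. By \ref{it:min-fac-1} and \ref{it:min-fac-2}, $m$ is a letter occurring in $b$, so $x \le m$. It remains to show $m \le x$, i.e. that some element of $\Fac^*(b)$ is $\le x$.

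Every letter of $b$ is either the seed $X_i$ or occurs in some $a_j$.
\begin{itemize}
\item If $x$ occurs in $a_j$, then by induction \ref{it:min-fac-3} gives $\min\Fac^*(a_j) = \min\supp\deg_X(a_j) \le x$, and $\Fac^*(a_j) \subset \Fac^*(b)$, so $m \le x$.
\item If $x$ occurs only as the seed, then $x = X_i$. By \cref{lem:facto-Hall}, $a_1 < \seed(b) = x$, and $a_1 \in \Fac^*(b)$, so $m \le a_1 < x$.
\end{itemize}
In both cases $m \le x$, hence $m = x$.

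The obstacle I anticipated is exactly this seed case, since the seed is not in $\Fac^*(b)$. It is resolved by the inequality $a_1 < \seed(b)$, which is why \cref{lem:facto-Hall} records it. Combined with $x \le m$, this case is in fact impossible, since it would give $m < x \le m$. So every minimal letter of $b$ occurs in some factor $a_j$, which also confirms the consistency of the argument.
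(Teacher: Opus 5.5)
Your proof is correct and essentially follows the paper's: induction on $\abs{b}$ through the decomposition $\Fac^*(b)=\{b\}\cup\bigcup_j\Fac^*(a_j)$, with the Hall inequality $a_1<\seed(b)$ disposing of the seed in (iii). You organize (iii) as a two-sided inequality, while the paper computes both minima directly. The only small divergence is in (ii): you induct through the factors using $a_r<b$, whereas the paper argues without induction that a non-letter minimum of the factor-stable set $\Fac^*(b)$ would have a strictly smaller factor inside that set.
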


\begin{proof}
    (i) By additivity of the multidegree, $\deg_x(a) \leq \deg_x(b)$ for any $x \in X$.

    (ii) By contradiction, assume that $x := \min \Fac^*(b)$ is not a letter.
    Then $\Fac(x) \neq \varnothing$. 
    Let $a \in \Fac(x)$.
    Since $\Fac^*(b)$ is factor-stable, $a \in \Fac^*(b)$.
    By \cref{lem:facto-Hall}, $a < x$, a contradiction.
    
    (iii) Proceed by induction on \(\abs{b}\), the case \(b\in X\) being trivial.  Let \(b\) be as in \eqref{eq:b-factorization}.
    Since \(\min\Fac^*(a_j)\leq a_j<b\) by \cref{lem:facto-Hall}, \eqref{eq:nested} gives
    \begin{equation}
        \min\Fac^*(b)=\min_j\min\Fac^*(a_j)=\min_j\min\supp\deg_X(a_j),
    \end{equation}
    the second equality by the induction assumption.
    
    By \cref{lem:facto-Hall}, $a_1 < \seed(b)$, whence \(\min\Fac^*(b)\leq a_1<\seed(b)\).
    Therefore
    \begin{equation}
        \min\supp\deg_X(b)
        =\min\Bigl(\{\seed(b)\}\cup\bigcup_j\supp\deg_X(a_j)\Bigr)
        =\min_j\min\supp\deg_X(a_j),
    \end{equation}
    which concludes the proof.
\end{proof}

\subsection{Expansion in a basis given by a Hall set} 

\begin{definition}[Support]
    Let $\B$ be a Hall set on $X$.
    For $z \in \mathcal{L}(X)$ and $b \in \B$, we write $\pair{z}{b}_\B$ for the coordinate of $z$ along $b$ in the basis $\eval(\B)$. 
    We define
    \begin{equation}
        \supp_\B (z) := \left\{ b \in \B \mid \langle z, b \rangle_{\B} \neq 0 \right\}.
    \end{equation}
    For a subset $V \subset \mathcal{L}(X)$, we write $\supp_\B(V) := \bigcup_{z \in V} \supp_\B(z)$.
\end{definition}

The following structural property (proved in \cite[Theorem 2.1]{BeauchardLeBorgneMarbach2022}) follows from the classical recursive rewriting algorithm on Hall sets, described for example in \cite[Section 2.1]{BeauchardLeBorgneMarbach2022} or \cite[Section 9]{Reutenauer2003}.

\begin{lemma} 
    \label{lem:left_fact}
    Let $\B$ be a Hall set on $X$ and $a < b \in \B$. 
    Then, either $(a,b)\in\B$, or all elements $c \in \supp_{\B} [a,b]$ satisfy $\lambda(c) > a$.
    In both cases, for all $c \in \supp_\B [a,b]$, $\lambda(c) \geq a$.
\end{lemma}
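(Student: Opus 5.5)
We argue by strong induction on $\abs{a}+\abs{b}$, following the classical rewriting algorithm that expresses $[a,b]$ in the Hall basis $\eval(\B)$. Let $a<b$ in $\B$. If $(a,b)\in\B$, then $\supp_\B[a,b]=\{(a,b)\}$ and $\lambda((a,b))=a$, so the second claim holds and nothing more is needed. Otherwise, since $a<b$, the second Hall axiom fails only through the last condition. Hence $b\notin X$, $b=(b',b'')$ with $b'<b''$ (as $b\in\B$), and $b'>a$. We then use the Jacobi identity:
\begin{equation*}
[a,[b',b'']]=[[a,b'],b'']+[b',[a,b'']].
\end{equation*}
The goal is to show that every Hall element appearing in the expansions of the two terms has left factor $>a$. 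Note that $b'>a$ and, by the Hall axioms, $b''>b'>a$.

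For the term $[b',[a,b'']]$, we first expand $[a,b'']$ by induction, since $\abs{a}+\abs{b''}<\abs{a}+\abs{b}$. It is a combination of Hall elements $c$ with $c\geq$ something. We want these $c$ to satisfy $c>b'$, or else to handle them by further rewriting. Then $[b',c]$ (or $-[c,b']$) is expanded by induction again, which gives left factors $\geq\min(b',c)$. The same scheme applies to $[[a,b'],b'']$: expand $[a,b']$ into Hall elements $d$, then bracket with $b''$. The key quantitative input is a lower bound on the elements of $\supp_\B[x,y]$ in the order, not just on their left factors. This is the standard invariant: every $c\in\supp_\B[x,y]$ with $x<y$ satisfies $c>x$, and in fact $\lambda(c)\geq x$ and $c$ has multidegree $\deg(x)+\deg(y)$. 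I would therefore strengthen the induction hypothesis to include both statements, $c>\min(x,y)$ and $\lambda(c)\geq\min(x,y)$. The inequality $c>x$ for Hall elements with $\lambda(c)\geq x$ follows from the third axiom, since $c>\lambda(c)\geq x$.

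With this strengthened hypothesis, every Hall element arising in either term has left factor $\geq\min$ of two elements, each of which is $>a$. The elements involved are $b',b''$ and the elements $c$, $d$ produced from brackets with $a$, and these satisfy $c,d>a$. Hence all left factors are $>a$, strictly. The delicate point is the bracket $[a,b']$, whose outputs $d$ only satisfy $d>a$ via $d>\lambda(d)\geq a$. Bracketing $d$ with $b''$ yields left factors $\geq\min(d,b'')>a$, which is strict as required. Finally, cancellations during the linear recombination cannot create new support elements, because the support of a sum lies in the union of the supports.

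The main obstacle is making the induction well-founded. Lengths increase in the inner steps: $[b',c]$ has $\abs{b'}+\abs{c}=\abs{a}+\abs{b}$. So induction on total length fails, and I would instead induct on the pair (multidegree, then $\min$ of the two arguments in reverse Hall order), as in Reutenauer's termination proof. Alternatively, I would simply cite \cite[Theorem 2.1]{BeauchardLeBorgneMarbach2022}, which is exactly this statement.
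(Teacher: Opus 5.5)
Your argument is correct and is the classical Jacobi rewriting $[a,[b',b'']]=[[a,b'],b'']+[b',[a,b'']]$ that the paper invokes; the paper gives no proof of its own and only cites \cite[Theorem 2.1]{BeauchardLeBorgneMarbach2022}. One detail is worth making explicit: why your second induction key is well-founded. The equal-length recursive calls $[b',c]$ and $[d,b'']$ keep the multidegree $\deg_X(a)+\deg_X(b)$ while strictly raising the minimum above $a$, and only finitely many Hall elements have multidegree bounded by it, so the minimum cannot increase indefinitely.
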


\subsection{Hall order}

As in \cite[Section 1.4.2]{BeauchardLeBorgneMarbach2022}, one can construct Hall sets from given orders on subsets of $\Br(X)$.

\begin{definition}
    We say that $H \subset \Br(X)$ is $\lambda$-stable when, for all $b \in H \setminus X$, $\lambda(b) \in H$.
\end{definition}

\begin{definition}
    \label{def:Hall-order}
    Let $H$ be a $\lambda$-stable subset of $\Br(X)$.
    We say that $<$ is a \emph{Hall order} on $H$ when it is a total order such that, for all $b \in H \setminus X$, one has $\lambda(b) < b$.
\end{definition}

\begin{lemma}
    \label{lem:Hall-exists-from-order}
    Let $H$ be a $\lambda$-stable subset of $\Br(X)$ with $X \subset H$, endowed with a Hall order $<$ such that, for all $a < b \in H$, one has $(a, b) \in H$.
    There exists a unique Hall set $\B \subset H$ over~$X$ associated with this order.
\end{lemma}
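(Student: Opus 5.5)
We define the candidate set explicitly and check the three axioms of \cref{def:Hall}, then prove uniqueness by induction on length. Set $\B_1 := X$ and, for $n \geq 2$, let $\B_n$ be the set of $t = (a,b) \in H$ with $\abs{t} = n$ such that $a, b \in \B_{<n} := \bigcup_{k<n} \B_k$, $a < b$, and either $b \in X$ or $b = (b',b'')$ with $b' \leq a$. Put $\B := \bigcup_n \B_n$. This recursion is well defined because the conditions on $t$ only involve strictly shorter brackets. The hypothesis that $(a,b) \in H$ whenever $a < b \in H$ is exactly what makes this restriction to $H$ harmless: every pair allowed by the Hall axiom already lies in $H$. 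Since $X \subset H$, we get $\B \subset H$ and $X \subset \B$.

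\textbf{Second axiom.} Take $a, b \in \Br(X)$. If $(a,b) \in \B$, then by construction $a, b \in \B$, $a < b$, and the condition on $b$ holds. Conversely, suppose $a, b \in \B$, $a < b$, and the condition on $b$ holds. Then $a, b \in H$, so $(a,b) \in H$ by hypothesis. Also $a$ and $b$ are shorter than $(a,b)$, so they lie in $\B_{<\abs{(a,b)}}$. Hence $(a,b) \in \B_{\abs{(a,b)}}$.

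\textbf{Third axiom.} If $(a,b) \in \B$, then $(a,b) \in H \setminus X$ and $\lambda((a,b)) = a$. Since $<$ is a Hall order on $H$ (\cref{def:Hall-order}), this gives $a < (a,b)$. Note that $\lambda$-stability of $H$ is used only to make sense of the Hall-order condition; the real input is the inequality $\lambda(t) < t$ on $H \setminus X$.

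\textbf{Uniqueness.} Let $\B' \subset H$ be another Hall set on $X$ for the same order. We show $\B' \cap \{\abs{t} = n\} = \B_n$ by induction on $n$. For $n = 1$, both sets equal $X$: we have $X \subset \B'$, and $\B'$ contains no other letters. For $n \geq 2$, the second axiom characterizes membership of $t = (a,b)$ in $\B'$ purely in terms of the shorter elements $a, b$, their membership in $\B'$, the order, and the shape of $b$. By the induction hypothesis, $a, b \in \B'$ iff $a, b \in \B$. So the criteria for $\B'$ and $\B$ coincide, and $\B' \subset H$ ensures we compare within $H$ on both sides.

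The only subtle step is the existence direction of the second axiom. There we must check that the pair $(a,b)$ produced by the axiom actually lies in $H$, which is where the closure hypothesis on $H$ is indispensable. Everything else is a routine unwinding of the definitions.
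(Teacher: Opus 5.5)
Your proof is correct and follows essentially the same route as the paper. In both, the length-$\ell$ elements are forced by the second Hall axiom from shorter ones, which gives existence and uniqueness by induction on length, and the third axiom comes from the Hall-order property $\lambda(t)<t$ on $H\setminus X$. You are slightly more explicit than the paper in noting that the closure hypothesis ($(a,b)\in H$ whenever $a<b$) is what places the newly formed pairs inside $H$, where the order is defined.
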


\begin{proof}
    We show by induction that, for each $\ell \geq 1$, the set $\B_\ell$ of elements of length $\ell$ of $\B$ is uniquely determined by the Hall axioms and the given order.
    By the first item of \cref{def:Hall}, $\B_1 = X$.
    Then, for $\ell \geq 2$, by the second item of \cref{def:Hall},
    \begin{equation}
        \B_\ell = \{ (a,b) \mid a \in \B_{\ell-1}, b \in \B_1, a < b \}
        \cup \bigcup_{1 \leq j \leq \ell - 2} \{ (a,b) \mid a \in \B_j, b \in \B_{\ell-j}, \lambda(b) \leq a < b \}. 
    \end{equation}
    The assumption that $<$ is a Hall order yields the third item of \cref{def:Hall}.
\end{proof}

\subsection{Dichotomic Hall set}

For our applications, it is helpful to construct Hall sets in which all ``bad'' brackets are ordered after all ``good'' brackets.
Given a set $A \subset \Br(X)$ of brackets identified as potentially bad, we prove the existence of such a Hall set.
We define $(A,A) := \{ (a, b) \mid  a, b \in A \} \subset \Br(X)$.

\begin{proposition} 
    \label{prop:Hall-dichotomic}
    Let $A \subset \Br(X)$ such that $(A, A) \subset A$.
    Let $\prec$ be a Hall order on $\Br(X)$.
    There exists a unique Hall set $(\B,<)$ such that, for all $a, b \in \B$, $a < b$ if and only if
    \begin{enumerate}[label=(\roman*)]
        \item \label{it:dicho-1}
        either $a \notin A$ and $b \in A$,
        
        \item \label{it:dicho-2}
        or both $a, b \in A$ (or both $a, b \notin A$) and $a \prec b$.
    \end{enumerate}
\end{proposition}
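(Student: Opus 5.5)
We define a new order $<$ on all of $\Br(X)$ by the rule of items (i)--(ii): $a < b$ iff either $a \notin A$ and $b \in A$, or $a,b$ lie on the same side of the partition $\Br(X) = (\Br(X)\setminus A) \sqcup A$ and $a \prec b$. We then apply \cref{lem:Hall-exists-from-order} with $H = \Br(X)$. Since $H = \Br(X)$, it is trivially $\lambda$-stable, contains $X$, and is closed under $(a,b)$ for every pair, so the extra closure hypothesis of that lemma holds automatically.

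It remains to check two things.

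First, $<$ is a total order. It is the lexicographic combination of the two-block order (non-$A$ before $A$) with the total order $\prec$ inside each block. Irreflexivity, transitivity and totality follow by a short case check on which blocks $a,b,c$ belong to.

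Second, $<$ is a Hall order, i.e.\ $\lambda(b) < b$ for every $b \in \Br(X)\setminus X$. Write $b = (b',b'')$; we must show $b' < b$. Since $\prec$ is a Hall order, $b' \prec b$. There are three cases:
\begin{itemize}
    \item If $b'$ and $b$ lie on the same side of the partition, then $b' < b$ by item (ii).
    \item If $b' \notin A$ and $b \in A$, then $b' < b$ by item (i).
    \item The remaining case, $b' \in A$ and $b \notin A$, must be excluded. Here the hypothesis $(A,A) \subset A$ alone is not enough, because $b''$ need not be in $A$.
\end{itemize}
This third case is the main obstacle. Closure under pairs is a condition on both factors, whereas the Hall order condition only involves the left factor $\lambda(b)$.

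The way around it is to give up the Hall order property on all of $\Br(X)$ and use it only where it is needed. The uniqueness part of the proof of \cref{lem:Hall-exists-from-order} only uses the third Hall axiom, $a < (a,b)$, for pairs $(a,b)$ that are actually produced in $\B$. So I would redo that inductive construction directly. Set $\B_1 = X$, and build $\B_\ell$ from the displayed formula using the order $<$; this determines $\B$ uniquely. Then check the third axiom for each $(a,c) \in \B$, where $a, c \in \B$, $a < c$, and either $c \in X$ or $\lambda(c) \le a$. If $a \in A$, then $c \in A$, since $a < c$ forces $c$ into the upper block (or into the same block $A$). Hence $(a,c) \in (A,A) \subset A$, so $a$ and $(a,c)$ lie in the same block, and $a \prec (a,c)$ gives $a < (a,c)$ by item (ii). If $a \notin A$, then either $(a,c) \in A$, in which case item (i) gives $a < (a,c)$, or $(a,c) \notin A$ and $a \prec (a,c)$ gives $a < (a,c)$ by item (ii). So the bad case never occurs for elements of $\B$.

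Both existence and uniqueness therefore follow from this length-by-length construction. Uniqueness is forced because the second Hall axiom determines $\B_\ell$ from $\B_{<\ell}$ and the prescribed order, exactly as in the proof of \cref{lem:Hall-exists-from-order}. The order on $\B$ is then by definition the restriction of $<$, which is characterized by items (i)--(ii).
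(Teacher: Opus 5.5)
Your argument is correct, but it is organized differently from the paper's. The paper treats \cref{lem:Hall-exists-from-order} as a black box and builds an auxiliary set $H$ around it. This $H$ consists of the brackets $b$ such that, along the left spine, $\lambda^{k+1}(b)\in A$ implies $\mu\lambda^k(b)\in A$. The paper then proves three things:
\begin{itemize}
    \item $<$ is a genuine Hall order on $H$: if $\lambda(b)\in A$, then $\mu(b)\in A$, so $b\in(A,A)\subset A$;
    \item $a<c$ in $H$ implies $(a,c)\in H$;
    \item for uniqueness, every Hall set satisfying (i)--(ii) lies in $H$.
\end{itemize}

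You instead define $<$ on all of $\Br(X)$ and accept that it is not a Hall order there. You then rerun the length-by-length construction yourself, checking the third axiom only on the brackets $(a,c)$ actually produced. For those brackets, the requirement $a<c$ already forces $c\in A$ whenever $a\in A$. This is exactly the $k=0$ information that the paper encodes in $H$ and uses in its Step~2.

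Your route is shorter and self-contained. Uniqueness is immediate because the order on $\Br(X)$ is fixed in advance, so there is no need to prove $\B\subset H$. The paper's route reuses the general Hall-order lemma and exhibits an explicit set on which $<$ is a true Hall order. The price is the two closure checks and the inclusion argument.

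Two small corrections for a write-up. First, drop the false start with $H=\Br(X)$ and present only the direct construction. Second, in that lemma the Hall-order hypothesis is used for existence, to verify the third axiom. It is not used for uniqueness, which relies only on the first two axioms and the prescribed order.
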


\begin{proof}
    Let $H$ be the subset of $\Br(X)$ whose elements are the $b \in \Br(X)$ such that, for all $k \in \N$ such that $\lambda^k(b) \notin X$ and $\lambda^{k+1}(b) \in A$, one has $\mu \lambda^k(b) \in A$.
    Then $H$ is a $\lambda$-stable subset of $\Br(X)$ with $X \subset H$.
    We endow $H$ with a total order by setting $a < b$ if and only if \ref{it:dicho-1} or \ref{it:dicho-2} holds.

    \medskip \noindent 
    \emph{Step 1: We prove that $<$ is a Hall order on $H$.} 
    Let $b \in H \setminus X$. 
    Since $\prec$ is a Hall order, $\lambda(b) \prec b$.
    If $\lambda(b) \notin A$ and $b \in A$, then $\lambda(b)<b$ by \ref{it:dicho-1}.
    If $\lambda(b) \notin A$ and $b \notin A$, then $\lambda(b)<b$ by \ref{it:dicho-2}.
    If $\lambda(b) \in A$ then $\mu(b) \in A$ because $b \in H$, thus $b \in (A,A) \subset A$ and $\lambda(b) < b$ by \ref{it:dicho-2}.
    
    \medskip \noindent 
    \emph{Step 2: We prove that, for all $a < c \in H$, $(a, c)\in H$.}
    Let $a < c \in H$ and $b = (a, c)$.
    We want to prove that for all $k \in \N$ such that $\lambda^k(b) \notin X$ and $\lambda^{k+1}(b) \in A$, one has $\mu \lambda^k(b) \in A$. 
    This holds for $k \geq 1$ because $a \in H$. 
    It also holds for $k=0$: indeed, if $a \in A$ then $c \in A$ by \ref{it:dicho-1} because $a < c$.

    \medskip \noindent 
    \emph{Step 3: Conclusion.} The existence and uniqueness follow by \cref{lem:Hall-exists-from-order}.

    Indeed, any Hall set satisfying \ref{it:dicho-1} and \ref{it:dicho-2} must be included in $H$.
    Let $\B$ be such a Hall set, $b \in \B$ and $k \in \N$ such that $\lambda^k(b) \notin X$ and $\lambda^{k+1}(b) \in A$.
    Since $\lambda^k(b) \in \B$, by the second Hall axiom, $\lambda^{k+1}(b) < \mu \lambda^k(b)$.
    By \ref{it:dicho-1}, $\mu \lambda^k(b) \in A$.
    So $b \in H$.
\end{proof}

\begin{proposition}
    \label{prop:Hall-dichotomic-bis}
    Let $A \subset \Br(X)$ such that $(A, A) \subset A$.
    Let $\prec$ be a Hall order on $\Br(X)$.
    Let~$\B$ be the Hall set given by \cref{prop:Hall-dichotomic}.
    Define $\B_\bad := \B \cap A$ and $\B_\good := \B \setminus A$.
    Then 
    \begin{itemize}
        \item $\B_\good < \B_\bad$, i.e.\ $a < b$ for all $a \in \B_\good$ and $b \in \B_\bad$,
        \item $\B_\good$ is factor-stable,
        \item if $a, b \in \B_\bad$, then $\supp_\B [a,b] \subset \B_\bad$.
    \end{itemize}
\end{proposition}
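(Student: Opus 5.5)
The three items follow from the order defining $\B$ in \cref{prop:Hall-dichotomic} and from the structural results on Hall sets proved earlier. I treat them in order.

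\emph{First item.} If $a \in \B_\good$ and $b \in \B_\bad$, then $a \notin A$ and $b \in A$. Case \ref{it:dicho-1} of \cref{prop:Hall-dichotomic} then gives $a < b$ directly.

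\emph{Second item.} Let $b \in \B_\good$ and $a \in \Fac(b)$. By \cref{lem:factorization}, $a$ is one of the $a_j$ in the factorization of $b$, and by \cref{lem:facto-Hall} we have $a \in \B$ and $a < b$. Suppose $a \in \B_\bad$. By the first item, every element greater than $a$ is in $A$, so $b > a$ would force $b \in A$, contradicting $b \in \B_\good$. Hence $a \in \B_\good$, and $\B_\good$ is factor-stable.

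\emph{Third item.} Take $a, b \in \B_\bad$. If $a = b$ then $[a,b]=0$ and there is nothing to prove; by antisymmetry of the bracket we may assume $a < b$. I use \cref{lem:left_fact}: every $c \in \supp_\B[a,b]$ satisfies $\lambda(c) \geq a$. Since $a \in \B_\bad$, the first item gives that every element $\geq a$ is bad, so $\lambda(c) \in \B_\bad \subset A$.

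It remains to pass from $\lambda(c) \in A$ to $c \in A$. Write $c = (\lambda(c), \mu(c))$ with $c \in \B$. By the second Hall axiom, $\lambda(c) < \mu(c)$. Since $\lambda(c)$ is bad, the first item again gives $\mu(c) \in A$. Then $c \in (A,A) \subset A$, so $c \in \B_\bad$.

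The only delicate point is the case $c \in X$, where $\lambda(c)$ is undefined. This case cannot occur. Every $c \in \supp_\B[a,b]$ is homogeneous with multidegree $\deg_X(a)+\deg_X(b)$, which has length at least $2$, while letters have length $1$. So every such $c$ lies in $\B \setminus X$ and has a well-defined $\lambda(c)$.

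The main step is the third item, and it rests on combining \cref{lem:left_fact} with the closure hypothesis $(A,A) \subset A$. The key observation is that badness propagates upward in the order $<$, from $\lambda(c)$ to $\mu(c)$ and hence to $c$.
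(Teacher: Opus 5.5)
Your proof is correct and follows the paper's argument: the first item is read off from \ref{it:dicho-1}, the second combines \cref{lem:facto-Hall} with the first item, and the third rests on \cref{lem:left_fact}. The only difference is at the end of the third item. The paper concludes directly from $c > \lambda(c) \geq \min(a,b) \in \B_\bad$ using the third Hall axiom, so your detour through $\mu(c)$ and the closure $(A,A)\subset A$ is valid but not needed.
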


\begin{proof}
    Let us prove each claim.
    \begin{itemize}
        \item The fact that $\B_\good < \B_\bad$ is a mere rewriting of item \ref{it:dicho-1}.
        
        \item Let $b \in \B_\good$. 
        Write its factorization \eqref{eq:b-factorization}.
        By \cref{lem:facto-Hall}, $a_1 < \dotsb < a_r < b$.
        Since $b \in \B_\good$, by the first item, $a_1, \dotsc, a_r \in \B_\good$.
        
        \item Let $a, b \in \B_\bad$ and $c \in \supp_\B [a,b]$.
        By \cref{lem:left_fact}, $\lambda(c) \geq \min (a,b)$.
        By the third Hall set axiom of \cref{def:Hall}, $c > \lambda(c)$.
        By the first item, $c \in \B_\bad$.
        \qedhere
    \end{itemize}
\end{proof}

\subsection{Factor-parity Hall sets}
\label{sec:of}

Motivated by the analysis of the positivity properties of coordinates of the second kind (see \cref{Subsec:Coord2_Bstar}), we introduce the following family of dichotomic Hall sets.
We start from a partition
\begin{equation} 
    \label{eq:partition}
    X = X_\good \sqcup X_\bad.
\end{equation}

\begin{definition}[Factor-parity Hall set]\label{def:of}
    We say that a Hall set \(\B\) over \(X\) is of the \emph{factor-parity} class for \eqref{eq:partition} when it splits as \(\B=\B_\good\sqcup\B_\bad\) with
    \begin{itemize}
        \item \(\B_\good<\B_\bad\);
        \item for every \(b\in\B\), with the notation of \eqref{eq:b-factorization},
        \begin{equation}\label{eq:of}
            b\in\B_\good
            \iff
            \Fac(b)\subset\B_\good
            \text{ and }
            \bigl(\seed(b)\in X_\good \text{ or } m_j \text{ odd for some } j\bigr).
        \end{equation}
    \end{itemize}
\end{definition}

\begin{remark}
    \label{rem:of}
    Applying \eqref{eq:of} to letters gives $X \cap \B_\good = X_\good$ and $X \cap \B_\bad = X_\bad$.
\end{remark}

\begin{proposition}[Existence]
    \label{prop:of-existence}
    Let \eqref{eq:partition} be a partition of $X$ and $\prec$ be a Hall order on $\Br(X)$.
    There exists a unique factor-parity Hall set $(\B,<)$ on $X$ such that for all $a,b\in\B_\good$ (or both in $\B_\bad$), $a<b$ if and only if $a \prec b$.
\end{proposition}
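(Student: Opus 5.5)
The plan is to apply \cref{prop:Hall-dichotomic} with a well-chosen set $A$ of ``bad'' brackets, defined by recursion on length so as to mirror \eqref{eq:of}, but on all of $\Br(X)$ rather than on a Hall set. For $b \in \Br(X)$ with factorization \eqref{eq:b-factorization}, set $b \notin A$ if and only if $\Fac(b) \cap A = \varnothing$ and ($\seed(b) \in X_\good$ or some $m_j$ is odd). This is well defined by induction on $\abs{b}$, since the factors are strictly shorter. Then $\B$ will be the dichotomic Hall set of \cref{prop:Hall-dichotomic}, and we set $\B_\bad := \B \cap A$, $\B_\good := \B \setminus A$.

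\textbf{Closure under brackets.} The first check is $(A,A) \subset A$. Let $a, c \in A$ and $b = (a,c)$. If $c \in X$, then $c \in X_\bad$, and $b = \ad_a(c)$ has factor $a \in A$, so $b \in A$. Otherwise write $c = \ad_{a_r}^{m_r}\dotsb\ad_{a_1}^{m_1}(X_i)$. Then the factorization of $b$ is obtained from that of $c$ either by incrementing $m_r$ (if $a = a_r$) or by appending $\ad_a^1$. In the second case $a \in \Fac(b) \cap A$, so $b \in A$. In the first case $\Fac(b) = \Fac(c)$ and $a = a_r \in \Fac(b)\cap A$, so again $b \in A$. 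So in every case $b$ has a factor in $A$, except when $c \in X$, which was handled separately. Hence $(A,A)\subset A$, and \cref{prop:Hall-dichotomic} yields a unique $\B$ with $\B_\good < \B_\bad$ and ordered by $\prec$ within each class. By construction, for $b \in \B$ we have $\Fac(b) \subset \B$ by \cref{lem:facto-Hall}. So $\Fac(b) \subset \B_\good$ if and only if $\Fac(b) \cap A = \varnothing$, and \eqref{eq:of} holds. This gives existence.

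\textbf{Uniqueness.} Let $(\B',<')$ be another factor-parity Hall set with the stated order property. We show by induction on length that $\B'_\ell = \B_\ell$ and that the classifications agree. The key observation is that \eqref{eq:of} determines the good/bad label of $b \in \B'$ from the labels of its factors, which are shorter and in $\B'$. So the labels coincide with membership in $A$: $\B'_\good = \B' \setminus A$ and $\B'_\bad = \B' \cap A$. Then $<'$ restricted to $\B'$ coincides with the order described in \cref{prop:Hall-dichotomic}, items \ref{it:dicho-1}--\ref{it:dicho-2}. The uniqueness part of that proposition then gives $\B' = \B$ with the same order.

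\textbf{Expected obstacle.} The only delicate point is the bracket-closure verification, in particular the case analysis on how the factorization of $(a,c)$ relates to that of $c$. This includes making sure that the parity clause cannot turn $(a,c)$ good, which is excluded because the factor $a$ is already bad.
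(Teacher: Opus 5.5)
Your proof is correct and follows the paper's route: your set $A$ is exactly the complement of the paper's recursively defined set $H$, and existence follows from \cref{prop:Hall-dichotomic} once $(A,A)\subset A$ is checked. Your case analysis for that check is unnecessary, since $a\in\Fac((a,c))$ holds directly by \cref{def:factors}. Your uniqueness paragraph is also sound: by induction on length, the labels of any competing factor-parity Hall set are forced to coincide with membership in $A$, and then the uniqueness clause of \cref{prop:Hall-dichotomic} applies; this makes explicit a step the paper leaves implicit.
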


\begin{proof}
    We define a subset $H\subset\Br(X)$ by induction on length as follows.
    For $b\in\Br(X)$, writing its factorization as in \eqref{eq:b-factorization}, declare that
    \begin{equation}
        \label{eq:intrinsic-good}
        b\in H
        \quad\Longleftrightarrow\quad
        a_1,\dotsc,a_r\in H
        \quad\text{and}\quad
        (\seed(b) \in X_\good) \lor
        (\exists j
        \text{ such that }m_j\text{ is odd}).
    \end{equation}
    Set $A := \Br(X) \setminus H$.
    If $b = (a, c) \in (A, A)$ then $a \in \Fac(b)$ and $a \notin H$, so $b \notin H$ by \eqref{eq:intrinsic-good}.
    Hence $(A, A) \subset A$.
    Let $(\B,<)$ be the Hall set given by \cref{prop:Hall-dichotomic,prop:Hall-dichotomic-bis}.
    Let $\B_\good := \B \setminus A$ and $\B_\bad := \B \cap A$.
    Since $\B_\good = \B \cap H$, \eqref{eq:intrinsic-good} yields \eqref{eq:of}.
\end{proof}

\begin{corollary}
    There exists a factor-parity Hall set for any partition \eqref{eq:partition}.
\end{corollary}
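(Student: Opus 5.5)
The corollary follows from \cref{prop:of-existence} once we produce a Hall order $\prec$ on the whole of $\Br(X)$. Given such an order, \cref{prop:of-existence} applied to the partition \eqref{eq:partition} and to $\prec$ yields a factor-parity Hall set. So the plan has a single real step: build a total order $\prec$ on $\Br(X)$ with $\lambda(b) \prec b$ for every $b \in \Br(X) \setminus X$, as required by \cref{def:Hall-order}.

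To construct $\prec$, I would first fix an arbitrary total order on $X$, using the axiom of choice (well-ordering theorem) if $X$ is infinite. I would then order $\Br(X)$ by length first: $a \prec b$ whenever $\abs{a} < \abs{b}$. Within a fixed length, I would compare lexicographically, recursively on length. For $a = (a', a'')$ and $b = (b', b'')$ of equal length, compare first $\abs{a'}$ versus $\abs{b'}$, then $a'$ versus $b'$, then $a''$ versus $b''$. Each of these comparisons involves strictly shorter brackets, so the recursion is well defined. By induction on length, this gives a total order on each length class, hence a total order on $\Br(X)$.

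The Hall order property is then immediate. For $b \in \Br(X) \setminus X$ one has $\abs{\lambda(b)} < \abs{b}$, so $\lambda(b) \prec b$. In addition, $\Br(X)$ itself is $\lambda$-stable and contains $X$, as \cref{def:Hall-order} requires. Applying \cref{prop:of-existence} concludes the proof.

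There is no genuine obstacle here. The only point needing care is checking that the recursive comparison really defines a total order, that is, that it is antisymmetric and transitive. This is a standard induction on length. If one prefers to avoid it, one can instead fix any total order on each finite-or-infinite length class and concatenate these classes by increasing length, since only the length comparison is used to verify $\lambda(b) \prec b$.
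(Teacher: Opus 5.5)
Your proof is correct and follows the paper's argument: the paper also takes an arbitrary Hall order on $\Br(X)$ and applies \cref{prop:of-existence}. Your length-first construction is a valid justification that such a Hall order exists, a point the paper takes for granted.
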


\begin{proof}
    It suffices to choose an arbitrary Hall order on $\Br(X)$ and apply \cref{prop:of-existence}.
\end{proof}

\subsection{Coordinates of the second kind} \label{Subsec:Coord2_Bstar}

Here $X = \{ X_0, X_1, \dotsc, X_q \}$ and for $b \in \Br(X)$, $n_i(b)$ is the number of occurrences of $X_i$ in $b$.

\begin{definition}[Drift control]
    To lighten some formulas, we use the notation $u_0 \equiv 1$ to denote a virtual constant control.
\end{definition}

\begin{definition}\label{Def:Coord2}
    Let $\B$ be a Hall set on $X$.
    The coordinates of the second kind associated to~$\B$ are the unique family $(\xi_b)_{b\in\B}$ of functionals $\R_+ \times L^1_{\mathrm{loc}}(\R_+;\R^q) \rightarrow \R$ defined by induction in the following way: for all $t \ge 0$ and $u \in L^1_{\mathrm{loc}}(\R_+;\R^q)$,
    \begin{itemize}
        \item $\xi_{X_i}(t,u) = \int_0^t u_i(\tau) \dd \tau$ for $0 \leq i \leq q$,  
        \item for $b \in \B \setminus X$, there is a unique pair $(a,c)$ of elements of $\B$ such that $a < c$ and a unique maximal integer $m \in\N^*$ such that $b = \ad_a^m (c)$ and then
        \begin{equation} \label{eq:def:coord2}
            \xi_b(t,u) := \frac{1}{m!} \int_0^t \xi_a^m(\tau,u) \dot{\xi}_c(\tau,u) \dd\tau.
        \end{equation}
    \end{itemize}
    The coordinates $(\xi_b)_{b \in \B}$ are absolutely continuous. 
    In the sequel, equalities involving $(\dot{\xi}_b)_{b \in \B}$ are implicitly understood to hold almost everywhere.
\end{definition}

\begin{lemma}
    \label{lem:coord2-rec}
    Let $\B$ be a Hall set on $X$.
    For all $b \in \B$,
    \begin{equation}
        \xi_b(t,u) = \int_0^t \frac{(\xi_{a_r}(s,u))^{m_r}}{m_r!} \dotsb \frac{(\xi_{a_1}(s,u))^{m_1}}{m_1!} u_i(s) \dd s
    \end{equation}
    where $b$ is given by the factorization \eqref{eq:b-factorization} and $X_i = \seed(b)$.
\end{lemma}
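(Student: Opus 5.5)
We argue by strong induction on the length $\abs{b}$ (equivalently, on the number $r$ of adjoint operators in the factorization \eqref{eq:b-factorization}), unfolding the definition \eqref{eq:def:coord2} one layer at a time. Throughout we fix $u$ and drop it from the notation.

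\textbf{Base case.} If $b = X_i \in X$, then $r=0$, the product in the statement is empty, and the formula reduces to $\xi_{X_i}(t) = \int_0^t u_i(s)\dd s$, which is the first item of \cref{Def:Coord2}. Here $u_0 \equiv 1$, so this also covers the drift letter $X_0$.

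\textbf{Inductive step.} Let $b \in \B \setminus X$ with factorization $b = \ad_{a_r}^{m_r} \dotsb \ad_{a_1}^{m_1}(X_i)$.

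1. We first identify the pair $(a,c)$ and the maximal $m$ used in \cref{Def:Coord2}. Set $a := a_r$, $m := m_r$ and $c := \ad_{a_{r-1}}^{m_{r-1}}\dotsb\ad_{a_1}^{m_1}(X_i)$.
2. By the second Hall axiom, iterated along the nested brackets, every intermediate bracket $\ad_{a_r}^{k}(c)$ lies in $\B$, and so do $c$ and $a_r$.
3. We check $a_r < c$. If $k < m_r$, then $\ad_{a_r}^{k}(c) \in \B$ and $(a_r, \ad_{a_r}^{k}(c)) \in \B$, so the second axiom gives $a_r < \ad_{a_r}^{k}(c)$; taking $k=0$ gives $a_r < c$.
4. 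We check maximality of $m$. Since $a_{r-1} \neq a_r$, the left factor $\lambda(c)$ of $c$ (when $c \notin X$) is $a_{r-1} \neq a_r$. Hence $c$ is not of the form $(a_r,\cdot)$, and $m_r$ is the maximal exponent.
5. By uniqueness of the decomposition, $(a,c,m) = (a_r, c, m_r)$. So \eqref{eq:def:coord2} gives $\xi_b(t) = \int_0^t \frac{\xi_{a_r}^{m_r}(s)}{m_r!}\,\dot\xi_c(s)\dd s$.
6. We compute $\dot\xi_c$. The bracket $c \in \B$ is strictly shorter than $b$ and has factorization $\ad_{a_{r-1}}^{m_{r-1}}\dotsb\ad_{a_1}^{m_1}(X_i)$ with the same seed $X_i$. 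The induction hypothesis gives $\xi_c(s) = \int_0^s \prod_{j<r}\frac{\xi_{a_j}^{m_j}}{m_j!}\,u_i$. This is the integral of an $L^1_{\mathrm{loc}}$ function, since each $\xi_{a_j}$ is continuous. Therefore, almost everywhere, $\dot\xi_c(s) = \prod_{j<r}\frac{\xi_{a_j}(s)^{m_j}}{m_j!}\,u_i(s)$.
7. Substituting this into the formula of step 5 gives the claimed expression for $\xi_b$.

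\textbf{Expected obstacle.} The computation itself is routine. The only delicate point is steps 1–5: matching the factorization of \cref{lem:factorization} with the unique decomposition $b=\ad_a^m(c)$, $a<c$, $m$ maximal, of \cref{Def:Coord2}. We treat it as above: $a_r<c$ comes from the Hall axioms (as in \cref{lem:facto-Hall}), and maximality comes from the condition $a_{r-1}\neq a_r$ in the factorization.
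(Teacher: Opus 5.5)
Your proof is correct: the paper states this lemma without proof, treating it as an immediate unfolding of \cref{Def:Coord2}, and your induction on $\abs{b}$ is exactly that argument. Your identification of the triple in \eqref{eq:def:coord2} is the same one the paper uses explicitly in the proof of \cref{lem:xibad>0}: you take $a = a_r$, $m = m_r$ and $c = \ad_{a_{r-1}}^{m_{r-1}}\dotsb\ad_{a_1}^{m_1}(X_i)$, with $a_r < c$ coming from the Hall axioms and the maximality of $m_r$ from $a_{r-1}\neq a_r$.
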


\begin{lemma}[Homogeneity]
    \label{lem:xi-homogeneity}
    Let $\B$ be a Hall set on $X$.
    Let $b\in\B$, $T>0$ and $u\in L^1((0,T);\R^q)$.
    For $\lambda \in \R^q$, set $\lambda u:=(\lambda_1u_1,\dotsc,\lambda_qu_q)$. 
    Then, for all $0 \leq t \leq T$,
    \begin{equation}
        \xi_b(t,\lambda u)
        =
        \left(\prod_{i=1}^q\lambda_i^{n_i(b)}\right)\xi_b(t,u).
        \label{eq:xi-control-homogeneity}
    \end{equation}
    Moreover, for $T' > 0$, letting $u^{T,T'} := \frac{T}{T'} u(\frac{T}{T'}\cdot)$, one has, for $0 \leq t' \leq T'$,
    \begin{equation}
        \xi_b(t', u^{T,T'})
        =
        \left(\frac{T'}{T}\right)^{n_0(b)}
        \xi_b\left(\frac{T}{T'}t',u\right).
        \label{eq:xi-time-homogeneity}
    \end{equation}
\end{lemma}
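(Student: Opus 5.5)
The plan is an induction on the length $\abs{b}$, using the explicit formula of \cref{lem:coord2-rec} rather than the recursive definition \eqref{eq:def:coord2} directly. This is cleaner because the formula expresses $\xi_b$ as a single integral of a product of powers of the factor coordinates $\xi_{a_j}$ against the seed control $u_i$. Both claims are homogeneity statements, and both degrees are additive along the factorization \eqref{eq:b-factorization}. For $b = \ad_{a_r}^{m_r}\dotsb\ad_{a_1}^{m_1}(X_i)$ one has
\begin{equation}
    n_k(b) = \delta_{ik} + \sum_{j=1}^r m_j\, n_k(a_j)
\end{equation}
for every $0 \leq k \leq q$.

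\emph{Base case.} For a letter $b = X_i$ we have $\xi_{X_i}(t,u) = \int_0^t u_i$.
\begin{itemize}
    \item For $1 \leq i \leq q$, replacing $u$ by $\lambda u$ multiplies this by $\lambda_i = \prod_k \lambda_k^{n_k(X_i)}$.
    \item For $i = 0$, the virtual control $u_0 \equiv 1$ is unaffected by $\lambda$, which is consistent with $n_k(X_0)=0$ for $k\geq 1$.
\end{itemize}
For time scaling, the substitution $s = \frac{T}{T'}\sigma$ gives
\begin{equation}
    \int_0^{t'} \tfrac{T}{T'} u_i\bigl(\tfrac{T}{T'}\sigma\bigr)\dd\sigma = \int_0^{\frac{T}{T'}t'} u_i(s)\dd s,
\end{equation}
so the factor is $1$ for $i\geq1$. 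For $i=0$, the drift is not rescaled ($u_0\equiv1$ stays $1$), so $\int_0^{t'} 1 = t' = \frac{T'}{T}\cdot\frac{T}{T'}t'$, which gives the factor $(T'/T)^{1} = (T'/T)^{n_0(X_0)}$.

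\emph{Induction step.} Let $b\in\B\setminus X$. By \cref{lem:facto-Hall}, its factors $a_j$ are in $\B$ and are strictly shorter than $b$, so the induction hypothesis applies to them.

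For \eqref{eq:xi-control-homogeneity}, insert the hypothesis into \cref{lem:coord2-rec}. Each $\xi_{a_j}(s,\lambda u)^{m_j}$ contributes the factor $\prod_k \lambda_k^{m_j n_k(a_j)}$. The seed contributes $\lambda_i$ if $i\geq1$ and $1$ if $i=0$. Pulling these constants out of the integral, additivity of the degrees gives exactly $\prod_k\lambda_k^{n_k(b)}$.

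For \eqref{eq:xi-time-homogeneity}, write $\xi_b(t',u^{T,T'})$ by \cref{lem:coord2-rec}. By the hypothesis, each integrand factor equals $(T'/T)^{m_j n_0(a_j)}\,\xi_{a_j}(\frac{T}{T'}\sigma,u)^{m_j}$. Now change variables $s = \frac{T}{T'}\sigma$.
\begin{itemize}
    \item If $i\geq1$, the seed term $\frac{T}{T'}u_i(\frac{T}{T'}\sigma)\dd\sigma$ becomes $u_i(s)\dd s$ with no extra factor.
    \item If $i=0$, the seed term is $\dd\sigma = \frac{T'}{T}\dd s$, which contributes one extra factor $T'/T$.
\end{itemize}
In both cases the total exponent is $\delta_{i0}+\sum_j m_j n_0(a_j) = n_0(b)$. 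The upper limit becomes $\frac{T}{T'}t'$, which lies in $[0,T]$, so $u$ is only evaluated where it is defined.

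No real obstacle is expected. The only point needing care is the drift letter: $u_0\equiv1$ is not multiplied by $\lambda$ and is not rescaled by the map $u\mapsto u^{T,T'}$. This asymmetry is exactly why time scaling counts only $n_0$ and control scaling counts only $n_1,\dotsc,n_q$. One also has to check that $\lambda u$ and $u^{T,T'}$ remain in $L^1$ on the relevant intervals, which is immediate.
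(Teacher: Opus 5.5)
Your proof is correct and follows essentially the paper's route: induction on $\abs{b}$ using additivity of the multidegree. The only cosmetic difference is that you unfold the full factorization via \cref{lem:coord2-rec}, whereas the paper uses the one-step recursion \eqref{eq:def:coord2} with $n_i(b)=m\,n_i(a)+n_i(c)$; your explicit handling of the drift letter ($u_0\equiv1$ being neither multiplied by $\lambda$ nor time-rescaled, so that an extra factor $T'/T$ appears exactly when the seed is $X_0$) spells out what the paper leaves implicit.
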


\begin{proof}
    Both identities follow by induction on length. 
    They are immediate for the letters. 
    If $b=\ad_a^m(c)$, the defining identity \eqref{eq:def:coord2} and the equalities $n_i(b)=mn_i(a)+n_i(c)$ for $0 \le i \le q$ give the induction step.
\end{proof}

\begin{lemma}[Concatenation]
    \label{lem:concatenation-factor-null}
    Let $\B$ be a Hall set on $X$.
    Let $S$ be a factor-stable subset of $\B$.
    Let $T > 0$ and $u \in L^1((0,T);\R^q)$ such that, for all $a \in S$, $\xi_a(T,u) = 0$.
    
    Let $b \in \B$ such that $\Fac(b) \subset S$.
    For all $T' > 0$, $v \in L^1((0,T');\R^q)$ and $t' \in [0,T']$,
    \begin{equation}
        \label{eq:xi-concatenation}
        \xi_b(T + t', u \diamond v) = \xi_b(T, u) + \xi_b(t', v)
        \quad \text{and} \quad
        \dot{\xi}_b(T + t', u \diamond v) = \dot{\xi}_b(t', v).
    \end{equation}
\end{lemma}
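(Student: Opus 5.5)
We argue by strong induction on the length $\abs{b}$, using the product formula of \cref{lem:coord2-rec}. Throughout, $u \diamond v$ denotes the concatenation: $u$ on $(0,T)$ followed by $v(\cdot - T)$ on $(T,T+T')$. For every $c \in \B$ and $s \in (0,T')$ we write
\begin{equation*}
    \dot\xi_c(T+s, u \diamond v) = \prod_{j} \frac{\xi_{a_j}(T+s,u\diamond v)^{m_j}}{m_j!}\,(u\diamond v)_i(T+s),
\end{equation*}
where $X_i = \seed(c)$ and $(u\diamond v)_i(T+s) = v_i(s)$ (with $v_0 \equiv 1$ for the drift).

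First, we prove an auxiliary claim: for every $a \in S$ and every $s \in [0,T']$, one has $\xi_a(T+s, u\diamond v) = \xi_a(s,v)$. We proceed by induction on $\abs{a}$. For a letter $a = X_i \in S$, we have $\xi_{X_i}(T+s) = \xi_{X_i}(T,u) + \xi_{X_i}(s,v) = \xi_{X_i}(s,v)$, since $\xi_{X_i}(T,u)=0$ by hypothesis. For $a \in S \setminus X$, factor-stability gives $\Fac(a) \subset S$. The induction hypothesis applied to the factors $a_j$ then yields $\xi_{a_j}(T+s,u\diamond v) = \xi_{a_j}(s,v)$. Plugging this into the product formula gives $\dot\xi_a(T+s,u\diamond v) = \dot\xi_a(s,v)$ for a.e. $s$. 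Integrating from $T$ to $T+s$ and using $\xi_a(T,u\diamond v) = \xi_a(T,u) = 0$ (the restriction of $u\diamond v$ to $(0,T)$ is $u$) yields the claim.

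Next, let $b \in \B$ with $\Fac(b) \subset S$. Each factor $a_j$ of $b$ lies in $S$, so the auxiliary claim gives $\xi_{a_j}(T+s,u\diamond v) = \xi_{a_j}(s,v)$. Therefore the integrand for $\dot\xi_b$ at time $T+s$ coincides with that of $\dot\xi_b(s,v)$, which is the second identity of \eqref{eq:xi-concatenation}. Integrating over $[T,T+t']$ and adding $\xi_b(T,u\diamond v) = \xi_b(T,u)$ gives the first identity. Note that $b$ itself need not belong to $S$, so $\xi_b(T,u)$ survives as an additive constant. This is exactly why the statement separates $S$ from $b$, and it is also why the induction must be carried out on $S$ first.

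The only delicate point is organizing the induction correctly. The vanishing $\xi_a(T,u)=0$ is needed for every element of $S$ so that \emph{all} nested factors reset, and factor-stability of $S$ is what guarantees that these nested factors stay in $S$. Beyond that, the argument is routine bookkeeping with \cref{lem:coord2-rec}, including the a.e. interpretation of the derivatives for absolutely continuous functions.
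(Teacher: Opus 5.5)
Your proposal is correct and follows the paper's proof. The paper likewise first shows $\xi_a(T+s,u\diamond v)=\xi_a(s,v)$ for all $a\in S$, by induction on $\abs{a}$ (letters via the definition, higher brackets via \cref{lem:coord2-rec}), and then applies \cref{lem:coord2-rec} to $b$, whose factors lie in $S$. One small point of presentation: the induction is only needed for the auxiliary claim on $S$, not on $\abs{b}$ as your opening sentence says; the step for $b$ is direct.
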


\begin{proof}
    Start by proving that $\xi_a(T+t',u\diamond v) = \xi_a(t',v)$ for all $a \in S$ and $t' \in [0,T']$.
    One can proceed by induction on $|a|$, using \cref{Def:Coord2} for the base case $a \in X$ and \cref{lem:coord2-rec} for the induction.
    Then \eqref{eq:xi-concatenation} follows from \cref{lem:coord2-rec}.
\end{proof}

\begin{definition}[Canonical system]
    \label{def:canonical}
    Let $\B$ be a Hall set on $X$ and $H \subset \B \setminus \{ X_0 \}$ a finite factor-stable subset.
    We denote by~$\Sigma_H$ the polynomial system on $\R^H$ defined by
    \begin{equation}
        \dot{x}_b
        =
        \frac{x_{a_r}^{m_r}}{m_r!}\dotsb
        \frac{x_{a_1}^{m_1}}{m_1!}u_i
        \qquad
        \text{ for } \quad
        b=\ad_{a_r}^{m_r}\dotsb\ad_{a_1}^{m_1}(X_i)
        \quad 
        \text{ as in \eqref{eq:b-factorization}}.
        \label{eq:Hall-H}
    \end{equation}
    Its solution starting from the origin is the family $(\xi_b(t,u))_{b\in H}$.
    
    When $\Sigma_H$ is written as a control-affine system of the form \eqref{eq:syst}, one has the canonical identity (see \cite[Proposition~4.25]{BeauchardLeBorgneMarbach2026}):
    \begin{equation}
        \label{eq:Hall-H-Ev}
        \forall b \in \B, \quad 
        f_b(0) = 
        \begin{cases}
            \partial_{x_b} & \text{if } b \in H, \\
            0 & \text{otherwise.}
        \end{cases}
    \end{equation}
\end{definition}

\newpage 

\section{Proof of the sufficient condition}
\label{sec:good}

Let $\B$ be a factor-parity Hall set on $X = \{ X_0, X_1, \dotsc, X_q \}$ with $X_\good = \{ X_1, \dotsc, X_q \}$ and $X_\bad = \{ X_0 \}$.
We prove \cref{thm:relaxed-sufficient} (which of course implies \cref{thm:sufficient}).

By classical arguments, it suffices to prove the following result.

\begin{theorem}
    \label{thm:bss-good}
    Let $G \subset \B_\good$ be finite and factor-stable.
    Then the canonical system $\Sigma_G$ of \cref{def:canonical} is small-time globally controllable.
    More precisely, for all $T > 0$ and $x^\circ, x^* \in \R^G$, there exists $u \in L^\infty((0,T);\R^q)$ such that the associated solution to \eqref{eq:Hall-H} satisfies $x(T;u,x^\circ) = x^*$.
\end{theorem}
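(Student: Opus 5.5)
We argue by induction on $\abs{G}$, adding elements in increasing Hall order. The case $G = \varnothing$ is trivial. Note that $X_0 \notin G$, since $X_0 \in \B_\bad$ by \cref{rem:of}. Let $p := \max G$. By \cref{lem:B<p-stable}, $G' := G \setminus \{p\}$ is factor-stable; it is finite and contained in $\B_\good$. Moreover $\Sigma_{G'}$ is the projection of $\Sigma_G$ forgetting the coordinate $x_p$, and the equation for $x_p$ depends only on the coordinates $x_{a_j}$, $a_j \in \Fac(p) \subset G'$.

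By the induction hypothesis, $\Sigma_{G'}$ is small-time globally controllable. It therefore suffices to show the following: for every $T>0$ and every $\eta \in \R$, there is a control $w$ on $[0,T]$ with $\xi_a(T,w) = 0$ for all $a \in G'$ and $\xi_p(T,w) = \eta$. Given such $w$, we steer $\Sigma_{G'}$ from $x^\circ$ to $0$ on $[0,T/3]$, apply a correcting loop, and then steer from $0$ to $x^*_{G'}$ on $[2T/3,T]$. Since $G'$ is factor-stable and $\Fac(p) \subset G'$, \cref{lem:concatenation-factor-null} shows that $x_p$ shifts additively by $\xi_p(T/3,w)$ along the middle piece, while the later pieces add amounts that are independent of the middle piece. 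Choosing $\eta$ to fix the discrepancy gives $x(T) = x^*$. Here the concatenation lemma is applied with the canonical system starting at a general state; I would use its flow form, or translate first. By the time rescaling of \cref{lem:xi-homogeneity}, it is also enough to treat a single $T$.

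The heart of the proof is the construction of loops. Write $p = \ad_{a_r}^{m_r}\dotsb\ad_{a_1}^{m_1}(X_i)$. By \eqref{eq:of}, either $X_i \in X_\good$ or some $m_j$ is odd. I would first produce a control $w_0$ on $G'$-loops with $\xi_p(T,w_0) \neq 0$, using the induction hypothesis to realise intermediate states. The idea is to steer to a state where $x_{a_j} = s$ for the relevant $a_j$ (the others suitably fixed), then to act on the seed direction, then to return. By \cref{lem:coord2-rec}, $\dot x_p$ equals the monomial times $u_i$.

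\textbf{Case $X_i \in X_\good$.} We can apply $u_i$ of either sign while the monomial is frozen at a nonzero value; the resulting change in $x_p$ can be given either sign. The returning path restores $G'$ but may change $x_p$; this is handled below.

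\textbf{Case $X_i = X_0$ and some $m_j$ odd.} The monomial $\prod_j x_{a_j}^{m_j}/m_j!$ changes sign when $x_{a_j}$ is replaced by $-x_{a_j}$. Since $\dot x_{X_0} = 1$ always, we drift at two opposite states. This yields two loops, $w_+$ and $w_-$, whose $\xi_p$ values differ by a nonzero amount of either sign.

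The main obstacle is that while the state moves between configurations, $u_0 \equiv 1$ keeps acting, so the transit paths contribute uncontrolled amounts to $x_p$. I would handle this by concatenating: compare $w_+$ with $w_-$, where $w_\pm$ share the same transit segments but the plateau value $s$ is replaced by $\pm s$. Then $\xi_p(w_+) - \xi_p(w_-) \neq 0$. By \cref{lem:concatenation-factor-null}, $\xi_p$ is additive on $G'$-loops, so the set $\{\xi_p(T,w) : w \text{ a } G'\text{-loop}\}$ is an additive semigroup. It contains both $\xi_p(w_+)$ and $\xi_p(w_-)$ with distinct values. Using \cref{lem:xi-homogeneity}, scaling the amplitude multiplies $\xi_p$ by $\lambda^{n(p)}$, and the plateau length can be varied continuously. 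This should give loop values forming an interval around a value of each sign. By additivity, together with the time rescaling to fit into $[0,T]$, the reachable set of $\xi_p$ is then all of $\R$. Checking that the plateau monomial's sign genuinely flips, and that the value depends continuously on the parameters, is where care is needed.
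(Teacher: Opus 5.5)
\textbf{There is a genuine gap: your loop construction never shows that $\xi_p$ takes both signs.}

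Your outer reduction is sound. $G'=G\setminus\{p\}$ is factor-stable and contains $\Fac(p)$, so it suffices to build $G'$-loops with arbitrary $\xi_p$. Moreover, \cref{lem:concatenation-factor-null,lem:xi-homogeneity} show that the set $\Xi$ of such loop values is a convex cone in $\R$: it is closed under addition (after time rescaling) and under multiplication by $\lambda^{n(p)}$ for $\lambda>0$. The entire difficulty of the theorem is to show that this cone is not a half-line, and your construction does not do this.

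There is no ``plateau''. The drift is always on, so while the monomial is nonzero the coordinates with seed $X_0$ keep moving, possibly including factors of $p$ themselves. Flipping one $x_{a_j}$ while keeping the others is not a symmetry of $\Sigma_{G'}$, so $w_+$ and $w_-$ cannot share their transit segments. The only available symmetries are $u_i\mapsto -u_i$, and they multiply $\xi_p$ by $\prod_i\lambda_i^{n_i(p)}$.

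Even granting two loops with $\xi_p(w_+)\neq\xi_p(w_-)$, or a whole interval of loop values, this says nothing about signs. Loops cannot be reversed under drift, so differences of elements of $\Xi$ need not lie in $\Xi$, and $\Xi=[0,\infty)$ is compatible with everything you establish. Your framework does close the case where some $n_i(p)$, $i\geq1$, is odd: then $\Xi=-\Xi$, and normal accessibility of $\Sigma_G$ plus controllability of $\Sigma_{G'}$ give $\Xi\neq\{0\}$. It fails when all these $n_i(p)$ are even.

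\emph{Test case.} Take $q=1$, a length-refining order, $M_1=(X_1,X_0)$, $M_2=(M_1,X_0)$, and $p=\ad_{M_2}\ad_{X_1}^3(X_0)$. This $p$ is good since $m_1=3$, but $n_1(p)=4$ and $n_0(p)=3$. Take $G=\{X_1,M_1,M_2,p\}$ and $y:=\xi_{M_2}$. Then:
\begin{itemize}
\item the $G'$-loops are the $y$ with $y,y',y''$ vanishing at $0$ and $T$;
\item $\xi_p=\frac16\int_0^T y\,(y'')^3$, which is invariant under $u\mapsto-u$;
\item holding $x_1=y''=s\neq0$ makes $y$ move quadratically, so no plateau exists.
\end{itemize}
Nothing in your plan decides whether this functional takes both signs.

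\emph{Missing idea.} What is missing is the repetition/averaging mechanism of the paper. Single out the largest \emph{factor} $\pi:=\max\Fac(G)$ (here $M_2$) and treat simultaneously all $b\in G$ having $\pi$ as a factor. Use $\xi_\pi$ together with the coordinates of the \emph{roots} $\rho(b)$ as virtual controls; here the root is $\ad_{X_1}^3(X_0)$, which is not in $G$. Each of $N$ blocks sets these coordinates to prescribed small values, up to a fixed offset $\gamma$ for bad roots, while returning the other coordinates to $0$ (\cref{lem:normal-box}). After $N$ blocks, $\xi_\pi\approx Ny_\pi$ and $\xi_b\approx N^{m_b+1}z_b$, where $(y,z)$ solves the reduced system $\dot y=v$, $\dot z_b=y_\pi^{m_b}g_b(v)/m_b!$. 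In the test case this is $\dot y_{M_2}=v_{M_2}$, $\dot y_\rho=v_\rho$, $\dot z=y_{M_2}v_\rho$.

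The reduced system is STLC by \cref{thm:sussmann} because bad roots come with odd $m_b$ (\cref{lem:rho-b-bad}). This is where your intuition that an odd multiplicity flips a sign becomes legitimate: only after the reduction is $y_\pi$ a freely controlled variable.

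Finally, this mechanism needs the induction hypothesis for $\{X_1,M_1,M_2,\ad_{X_1}^3(X_0)\}$. That set is not a subset of $G$ and has the same cardinality, so induction on $\abs{G}$ cannot supply it. This is why the paper inducts on $\abs{\Fac(G)}$ instead.
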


\begin{proof}[Proof of \cref{thm:relaxed-sufficient}]
    We assumed that \eqref{eq:syst} satisfies the Lie algebra rank condition \eqref{eq:LARC}.
    Since bad brackets are compensated by assumption \eqref{eq:fb0-compensated}, one has $\vect \{ f_b(0) \mid b \in \B_\good \} = \R^d$.
    Hence there exists a finite subset $G \subset \B_\good$ such that $\vect \{ f_b(0) \mid b \in G \} = \R^d$ and, for all $b \in \B \setminus G$, the compensation \eqref{eq:fb0-compensated} holds.
    By \eqref{eq:of}, $\B_\good$ is factor-stable, so, up to replacing $G$ with $\Fac^*(G)$, we can assume that $G$ is both finite and factor-stable.
    By \cref{thm:bss-good}, $\Sigma_G$ is small-time globally controllable.
    In particular, it admits a \emph{dual family} in the sense of \cite[Section A.2]{BeauchardMarbach2026_Quartic}.
    Thus, using the classical Sussmann control rescaling as in \cite[Theorem A.14]{BeauchardMarbach2026_Quartic}, we conclude that \eqref{eq:syst} is $L^\infty$-STLC. 
\end{proof}

In this section, we therefore focus on proving \cref{thm:bss-good}.
We recall the following classical result from control theory concerning normal accessibility.

\begin{proposition}
    \label{prop:normal-accessibility}
    Assume that $\{ f_0, f_1, \dotsc, f_q \}$ satisfies the Lie algebra rank condition \eqref{eq:LARC}.
    
    For all $T,\rho,\delta > 0$, there exists $v \in L^\infty((0,T);\R^q)$ such that $\norm{v}_{L^\infty} < \rho$, $\abs{x(t;v)} < \delta$ for all $t \in [0,T]$, and such that the end-point map 
    \begin{equation}
        \mathcal{E}_T :
        \begin{cases}
            L^\infty((0,T);\R^q) & \to \R^d, \\
            u & \mapsto x(T;u)
        \end{cases}
    \end{equation}
    has a surjective differential at $v$, where $x(t;u)$ denotes the solution to \eqref{eq:syst} with initial condition $x(0) = 0$ and control $u$.
\end{proposition}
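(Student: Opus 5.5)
We have to prove \cref{prop:normal-accessibility}, the classical normal accessibility result of Sussmann. The plan is to argue by contradiction on the image of the differential, using the analyticity of the vector fields and the Lie algebra rank condition \eqref{eq:LARC}.

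First, we reduce the statement to a rank-maximization problem. Fix $T,\rho,\delta>0$. Let $\mathcal{U}$ be the set of controls $v \in L^\infty((0,T);\R^q)$ with $\norm{v}_{L^\infty}<\rho$ and $\abs{x(t;v)}<\delta$ on $[0,T]$; it contains $v=0$, since $f_0(0)=0$ gives $x(t;0)\equiv 0$. To keep the construction flexible, we restrict to piecewise-constant controls $v$ with finitely many switching times $0<t_1<\dots<t_N<T$ and values in the open ball of radius $\rho$. We then choose $v\in\mathcal{U}$ maximizing $k:=\operatorname{rank} D\mathcal{E}_T(v)$, which is possible since ranks are bounded by $d$. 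By lower semicontinuity of the rank, $k$ stays maximal on a neighborhood of $v$ within the class of piecewise-constant controls. The goal is to show $k=d$.

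Second, we compute the differential. For a needle or piecewise perturbation at time $s$, the variational formula gives
\[
D\mathcal{E}_T(v)\cdot w=\int_0^T \Phi_{s\to T}\,\textstyle\sum_i w_i(s) f_i(x(s))\,\mathrm{d}s,
\]
where $\Phi_{s\to T}$ is the linearized flow. Suppose $k<d$. Then there exists a covector $\lambda\neq 0$ annihilating the image. By maximality and local constancy of the rank on a neighborhood, the same property persists under perturbations. Since $w$ is arbitrary, we get $\lambda\cdot\Phi_{s\to T} f_i(x(s))=0$ for all $i\geq 1$ and all $s$. Setting $p(s)=\lambda\Phi_{s\to T}$ (the adjoint vector), we obtain $p(s)\cdot f_i(x(s))\equiv 0$. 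Differentiating in $s$ gives $p(s)\cdot[f_0+\sum_j v_j f_j, f_i](x(s))=0$. Varying the constant values of $v$ on each piece, which is allowed because maximality holds in a neighborhood, separates $[f_0,f_i]$ from $[f_j,f_i]$. Iterating yields $p(s)\cdot f_B(x(s))=0$ for every bracket with at least one $f_i$, $i\geq 1$.

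Third, we conclude. The main obstacle is to capture the brackets involving $f_0$ alone, or to handle the possibility that $\operatorname{span}\{f_B(x(s))\}$ omits $f_0$. At $x=0$ we have $f_0(0)=0$, so \eqref{eq:LARC} gives that brackets containing some $f_i$, $i\geq1$, already span $\R^d$ at $0$. By continuity and analyticity they also span near $0$, and $x(s)$ stays within $\delta$; we may shrink $\delta$ first, since the statement for small $\delta$ implies it for larger $\delta$. This forces $p(s)=0$, a contradiction with $\lambda\neq 0$. A subtle point is justifying the "for all $s$" identities and the separation by varying $v$. This is where I would be careful: work on an interval where $v$ is constant, apply analyticity in $s$ there, and use that the maximal rank is attained on an open set of parameter values, namely the switching times and values, as in Sussmann's proof of normal accessibility. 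Since the resulting $v$ is piecewise constant, it lies in $L^\infty$, and the surjectivity of the differential on $L^\infty$ follows because $L^\infty$ contains the finite-dimensional perturbations used.
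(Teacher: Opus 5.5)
The paper gives no proof of this proposition; it is recalled as a classical normal-accessibility result, so your sketch has to stand on its own. Its ingredients are the right ones: a control of maximal rank, an annihilating covector transported backwards, differentiation along the trajectory, and the remark that $f_0(0)=0$ makes the brackets containing some $f_i$, $i\geq1$, span $\R^d$ near $0$. However, the step you flag as ``subtle'' is a genuine gap, and the justification you give for it does not work.

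When you change the constant value $\bar u$ of $v$ on a piece, several things change: the trajectory on that piece and afterwards, the transport $\Phi_{s\to T}$, the endpoint, and therefore the annihilating covector $\lambda$. The identities $p(s)\cdot[f_{\bar u},f_i](x(s))=0$ obtained for different values of $\bar u$ thus involve different covectors at different points. They cannot be subtracted to isolate $[f_0,f_i]$ and $[f_j,f_i]$. Local constancy of $\operatorname{rank}D\mathcal{E}_T$ says nothing about the image, hence the annihilator, being unchanged. Your ``iterating'' step inherits the problem, since it differentiates in $s$ identities that were never established for one fixed pair $(p,x)$. Even with a fixed covector, varying a single value $\bar u$ in $\ad_{f_{\bar u}}^m f_i$ only produces symmetrized coefficients, not individual brackets: the coefficient of $\bar u_1\bar u_2$ in $\ad^2_{f_{\bar u}}f_i$ is $[f_1,[f_2,f_i]]+[f_2,[f_1,f_i]]$.

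The missing idea is to freeze the past.

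\begin{itemize}
\item Since $\operatorname{Im}D\mathcal{E}_T(v)$ is spanned by finitely many vectors $\Phi_{s\to T}f_i(x(s))$, which can be taken with $s<T$, there is $s_0<T$ such that $A:=\vect\{\Phi_{s\to s_0}f_i(x(s)) \mid s\leq s_0,\ 1\leq i\leq q\}$ has dimension $k$.
\item For every admissible piecewise-constant $v'$ equal to $v$ on $[0,s_0]$, the image of $D\mathcal{E}_T(v')$ transported back to time $s_0$ contains $A$. By maximality it has dimension at most $k$, so it equals $A$. Hence the annihilator of $A$ at time $s_0$ no longer depends on the control after $s_0$.
\item Now replace $v$ on $[s_0,s_0+\tau_1+\dots+\tau_n]$ by consecutive constant values $u^1,\dots,u^n$ in the open $\rho$-ball; this is admissible for small $\tau_j\geq0$. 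Transporting $f_i$ from the end of the $n$-th piece back to $s_0$ gives the pullback of $f_i$ by the composed flow $e^{\tau_nf_{u^n}}\circ\dots\circ e^{\tau_1f_{u^1}}$, evaluated at $x(s_0)$, and this lies in $A$.
\item Its mixed derivative $\partial_{\tau_1}\dotsb\partial_{\tau_n}$ at $0$ is $\ad_{f_{u^1}}\dotsb\ad_{f_{u^n}}f_i(x(s_0))$, where $f_u:=f_0+\sum_j u_jf_j$. By multi-affinity in $(u^1,\dots,u^n)$, every $\ad_{f_{j_1}}\dotsb\ad_{f_{j_n}}f_i(x(s_0))$ with $j_\ell\in\{0,\dots,q\}$ and $i\geq1$ lies in $A$.
\item These right-normed brackets span the ideal generated by $f_1,\dots,f_q$ in $\Lie(f_0,\dots,f_q)$. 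That ideal spans $\R^d$ at $x(s_0)$ once $\delta$ is small, contradicting $\dim A=k<d$.
\end{itemize}

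Only smoothness is used in this argument, not analyticity.
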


\subsection{Description of the induction mechanism}

To prove \cref{thm:bss-good}, we proceed by induction on the number of distinct factors $|\Fac(G)| \ge 0$ involved in $G$.
The assumption that $G$ is factor-stable implies that
\begin{equation}
    \Fac^+(G) = \Fac(G) \subset G.
\end{equation}
The initialization is immediate.

\begin{lemma}
    \label{lem:bof-init}
    For $G \subset \B_\good$ with $|\Fac(G)| = 0$, $\Sigma_G$ is small-time globally controllable.
\end{lemma}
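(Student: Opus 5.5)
If $|\Fac(G)| = 0$, then every $b \in G$ has an empty factorization, so $r = 0$ and $b \in X$ by \cref{lem:factorization}. By \cref{def:canonical}, $G$ excludes $X_0$. Hence $G \subset X \cap \B_\good \setminus \{X_0\}$, and \cref{rem:of} gives $X \cap \B_\good = X_\good = \{X_1,\dotsc,X_q\}$. So $G = \{X_i \mid i \in I\}$ for some subset $I \subset \{1,\dotsc,q\}$.

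In this case the canonical system \eqref{eq:Hall-H} decouples into pure integrators:
\begin{equation*}
\dot{x}_{X_i} = u_i, \qquad i \in I,
\end{equation*}
with no dependence on the state and no drift.

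Given $T>0$ and $x^\circ, x^* \in \R^G$, I take the constant control
\begin{equation*}
u_i(t) := \frac{x^*_{X_i} - x^\circ_{X_i}}{T} \quad \text{for } i \in I, \qquad u_i := 0 \quad \text{for } i \notin I.
\end{equation*}
This control lies in $L^\infty((0,T);\R^q)$, and integrating gives
\begin{equation*}
x_{X_i}(T) = x^\circ_{X_i} + \int_0^T u_i = x^*_{X_i}.
\end{equation*}
Since $T$ is arbitrary, this is small-time global controllability.

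There is no real obstacle. The only point needing care is the identification $G \subset X_\good$: this uses both the exclusion of $X_0$ in \cref{def:canonical} and the fact that letters have no factors. The case $G = \varnothing$ is trivially included.
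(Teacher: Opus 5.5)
Your proof is correct and matches the paper's argument: $\Fac(G)=\varnothing$ forces $G\subset X\cap\B_\good=X_\good$, so $\Sigma_G$ reduces to the integrators $\dot{x}_{X_i}=u_i$, which the constant control $u_i=(x^*_{X_i}-x^\circ_{X_i})/T$ steers exactly in any time $T$. The only difference is that the paper excludes $X_0$ using $X_0\in\B_\bad$ rather than \cref{def:canonical}, which is equivalent and already contained in your use of \cref{rem:of}.
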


\begin{proof}
    Since $\Fac(G) = \varnothing$ and $X_0 \in \B_\bad$, one has $G \subset X_\good$.
    Let $T > 0$, $x^\circ, x^* \in \R^G$.
    For each $X_i \in G$, use the explicit constant control $u_i := (x^*_i - x^\circ_i) / T$, and $u_i = 0$ when $X_i \notin G$.
\end{proof}

We now assume that $\Fac(G) \neq \varnothing$.
To perform the induction, we single out the largest factor of elements of $G$, according to the underlying Hall order.
We set
\begin{equation}
    \label{eq:bof-p}
    p := \max \Fac(G) \in G.
\end{equation}
We partition $G$ as follows:
\begin{equation}
    \label{eq:G-partition}
    G = L \sqcup Z,
    \quad \text{where} \quad
    Z := \{ b \in G \mid p \in \Fac(b) \}
    \quad \text{and} \quad 
    L := G \setminus Z.
\end{equation}
The Hall order axioms entail the following elementary consequences.

\begin{lemma}
    \label{lem:bof-G-part}
    One has:
    \begin{enumerate}[label=(\roman*)]
        \item \label{it:bof-G-part-1} for all $b \in Z$, $p < b$;
        \item \label{it:bof-G-part-2} $p \in L$;
        \item \label{it:bof-G-part-3} $\Fac(L) \subset L \cap \B_{<p}$.
    \end{enumerate}
\end{lemma}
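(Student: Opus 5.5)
The three items follow directly from the definition of $p$ as $\max \Fac(G)$ and from \cref{lem:facto-Hall}. We treat them in order.

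For \ref{it:bof-G-part-1}, let $b \in Z$, so that $p \in \Fac(b)$. Writing the factorization \eqref{eq:b-factorization} of $b$, the element $p$ is one of the $a_j$. \Cref{lem:facto-Hall} gives $a_j \leq a_r < b$, hence $p < b$.

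For \ref{it:bof-G-part-2}, since $G$ is factor-stable, $p \in \Fac(G) \subset G$. Suppose, for contradiction, that $p \in Z$, i.e.\ $p \in \Fac(p)$. By \cref{lem:facto-Hall}, every factor of $p$ is strictly smaller than $p$, so we would get $p < p$. This is absurd, so $p \in L$.

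For \ref{it:bof-G-part-3}, let $b \in L$ and $a \in \Fac(b)$. Then $a \in \Fac(G)$, so $a \leq p$ by maximality of $p$. It remains to show that $a \neq p$ and that $a \in L$. First, $a \neq p$ because $b \notin Z$ means exactly $p \notin \Fac(b)$; hence $a < p$, i.e.\ $a \in \B_{<p}$. Second, $a \in G$ because $G$ is factor-stable. If $a$ were in $Z$, then by \ref{it:bof-G-part-1} we would have $p < a$, contradicting $a < p$. Alternatively, $\B_{<p}$ is factor-stable by \cref{lem:B<p-stable}, so $\Fac(a) \subset \B_{<p}$ and therefore $p \notin \Fac(a)$. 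Either way $a \in L$, so $a \in L \cap \B_{<p}$.

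No step is a genuine obstacle. The only point requiring care is that \ref{it:bof-G-part-3} needs the strict inequality $a < p$, which comes from $p \notin \Fac(b)$ rather than from maximality alone; that strict inequality is exactly what excludes $a \in Z$ via \ref{it:bof-G-part-1}.
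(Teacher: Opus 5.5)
Your proof is correct and follows essentially the same route as the paper's. Item (i) comes from \cref{lem:facto-Hall}, item (ii) from irreflexivity (your direct appeal to \cref{lem:facto-Hall} for $p\in\Fac(p)$ is just item (i) with $b=p$), and item (iii) from maximality of $p$, strictness via $p\notin\Fac(b)$, and item (i) to exclude $a\in Z$; your alternative through \cref{lem:B<p-stable} is also valid.
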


\begin{proof}
    First, for $b \in Z$, since $p \in \Fac(b)$, \cref{lem:facto-Hall} yields $p < b$.
    Second, since $p \in G = Z \sqcup L$, $p \in L$ because $p \not < p$. 
    Third, let $\ell \in L$ and $a \in \Fac(\ell) \subset \Fac(G) \subset G$.
    By \eqref{eq:bof-p}, $a \leq p$.
    Moreover $a \neq p$ (otherwise $\ell \in Z$ by \eqref{eq:G-partition}).
    Thus $a < p$ and $a \in \B_{<p}$.
    By \ref{it:bof-G-part-1}, $a \notin Z$.
    So $a \in L$.
\end{proof}

\begin{corollary}
    \label{lem:L}
    $L \subset \B_\good$ is factor-stable, $\Fac(L) \subset \Fac(G) \setminus \{ p \}$ and $|\Fac(L)| < |\Fac(G)|$.
\end{corollary}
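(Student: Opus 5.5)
The three claims follow directly from \cref{lem:bof-G-part} together with the facts that $L \subset G \subset \B_\good$ and that $p \in \Fac(G)$.

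First, $L \subset G \subset \B_\good$ holds by definition. Factor-stability of $L$ means $\Fac(L) \subset L$, which is exactly the content of \cref{lem:bof-G-part}~\ref{it:bof-G-part-3}.

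Second, we show $\Fac(L) \subset \Fac(G) \setminus \{p\}$. Since $L \subset G$, we have $\Fac(L) \subset \Fac(G)$, as $\Fac$ of a set is the union of the $\Fac$ of its elements. Next, suppose some $\ell \in L$ satisfied $p \in \Fac(\ell)$. Then $\ell \in Z$ by the definition \eqref{eq:G-partition}, contradicting $L \cap Z = \varnothing$. Alternatively, \cref{lem:bof-G-part}~\ref{it:bof-G-part-3} gives $\Fac(L) \subset \B_{<p}$, which does not contain $p$. Hence $p \notin \Fac(L)$.

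Finally, $p \in \Fac(G)$ by its definition \eqref{eq:bof-p}, and $\Fac(G)$ is finite because $G$ is finite. So $\Fac(L)$ is contained in the strictly smaller finite set $\Fac(G) \setminus \{p\}$, which gives $|\Fac(L)| \le |\Fac(G)| - 1 < |\Fac(G)|$.

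There is no real obstacle here; the only point needing care is the finiteness of $\Fac(G)$, which is used for the cardinality comparison.
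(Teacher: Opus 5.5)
Your proof is correct and follows the paper's approach: the paper states this corollary without proof, as an immediate consequence of \cref{lem:bof-G-part}~\ref{it:bof-G-part-3} combined with $L \subset G \subset \B_\good$ and $p = \max \Fac(G) \in \Fac(G)$. Your derivation of the three claims, including the finiteness used for the cardinality comparison (e.g.\ via $\Fac(G) \subset G$ with $G$ finite), fills in exactly those details.
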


The heart of the induction mechanism is the following result proved in \cref{subsec:vertical-loops}.

\begin{proposition}
    \label{prop:vertical-loops}
    Let $\kappa \geq 1$. 
    Assume that \cref{thm:bss-good} holds for all sets satisfying $|\Fac(G)| < \kappa$. 
    Let $G\subset\B_\good$ be finite, factor-stable, and such that $|\Fac(G)| = \kappa$.
    Then, for all $T>0$,
    \begin{equation}
        \label{eq:vertical-loops}
        \Big\{
            (\xi_b(T,u))_{b \in Z}
            \mid
            u \in L^\infty((0,T);\R^q) \text{ s.t., for all }
            a \in G \setminus Z, \enskip
            \xi_a(T,u)=0
        \Big\}
        = \R^{Z}.
    \end{equation}
\end{proposition}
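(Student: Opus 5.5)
We will build the desired controls by concatenating many short ``vertical loops'' in the spirit of Kawski, Agrachev--Gamkrelidze and Krastanov: each loop returns the coordinates of $L = G\setminus Z$ to zero, while moving the coordinates of $Z$ in a prescribed direction.

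\emph{Structure of $Z$.} For $b\in Z$ we have $p\in\Fac(b)$ and $p=\max\Fac(G)$. Since the factors in \eqref{eq:b-factorization} are increasing, $p$ is the outermost factor. Hence $b=\ad_p^{m}(c)$, where $c\in L$ has all its factors $<p$ and $m\ge1$ is maximal. By \cref{lem:coord2-rec},
\begin{equation*}
\xi_b(T,u)=\int_0^T \frac{\xi_p(s,u)^m}{m!}\,\dot\xi_c(s,u)\dd s .
\end{equation*}
We use the induction hypothesis (\cref{thm:bss-good}) for the factor-stable set $L$, to which \cref{lem:L} applies. It lets us steer $(\xi_a)_{a\in L}$ arbitrarily in any small time. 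In particular, on each subinterval we can hold $\xi_p$ near a prescribed value, or transport it between values, while the other $L$-coordinates follow a prescribed path and finally return to $0$.

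\emph{Elementary loops.} Fix $c$. A loop consists of three phases.
\begin{enumerate}[label=\textup{(\roman*)}]
\item Steer the $L$-coordinates from $0$ to a state where $\xi_p=\alpha$.
\item Run a control that changes $\xi_c$ by $\beta$ (and possibly moves other $L$-coordinates).
\item Steer all $L$-coordinates back to $0$.
\end{enumerate}
At the end every $a\in L$ satisfies $\xi_a(T,u)=0$. By \cref{lem:concatenation-factor-null}, with $S=L\cap\B_{<p}$ factor-stable, loops can be concatenated additively in all coordinates whose factors lie in $L$. For $b\in Z$ this needs care, since $p$ is a factor. However, after a complete loop $\xi_p=0$, so the contributions of successive loops to $\xi_b$ simply add.

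Next we show that $Z$ is reachable. The map sending a loop to its $Z$-increment is continuous. The reachable increments form a set $\mathcal R$ that is stable under sums, by concatenation, and under the time/amplitude rescalings of \cref{lem:xi-homogeneity}. It therefore suffices to show that the convex cone generated by $\mathcal R$ is all of $\R^Z$. The standard argument is that a closed convex cone that is also a semigroup and equals $\R^Z$ gives surjectivity, using an open-mapping / normal-reachability argument with \cref{prop:normal-accessibility} applied to the canonical system.

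\emph{Sign freedom, which is where goodness enters.} Each $b\in Z$ is good, so $\Fac(b)\subset\B_\good$ and either $\seed(b)\in X_\good$ or some $m_j$ is odd. We distinguish three cases.
\begin{itemize}
\item If the exponent $m$ of $p$ is odd, replacing $\alpha$ by $-\alpha$ flips the sign of the leading contribution $\alpha^m\beta/m!$.
\item If $m$ is even, then $c$ itself is good, so by induction $\xi_c$ can be pushed in either direction, i.e.\ $\beta$ has either sign.
\item If $c=X_i$ is a good letter, we use $u_i\mapsto -u_i$.
\end{itemize}
Within a loop, the scalings of $\alpha$ and $\beta$ allow us to isolate one $b=\ad_p^m(c)$ at leading order: different $(m,c)$ have different homogeneity in $\alpha$ and in the scaling of $c$'s controls. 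So we obtain increments $\approx \pm e_b$ plus higher-order terms. A triangular ordering of $Z$ (by $m$, then by $c$) combined with these approximate directions $\pm e_b$ then gives that the cone is $\R^Z$.

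\emph{Main obstacle.} The hardest step is controlling cross-contamination. During phase (ii), moving $\xi_c$ also moves other $L$-coordinates $c'$, hence other $\ad_p^{m'}(c')$. Phases (i) and (iii) may also contribute to $\xi_b$ through the integrals, since $\xi_p$ is nonzero there. I would first try to make phases (i) and (iii) symmetric time-reversed paths, so that their contributions cancel to leading order. Then I would choose the $\beta$-motion via the induction hypothesis so that only $\xi_c$ changes at the endpoints, with the intermediate path making the remaining errors of strictly higher homogeneity, to be absorbed by the triangular argument.
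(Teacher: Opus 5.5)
There is a genuine gap, at two levels. First, your claim that the root $c$ in $b=\ad_p^m(c)$ lies in $L$ is false. $c$ is not a factor of $b$, so factor-stability of $G$ says nothing about it. It may be a good bracket outside $G$: for $G=\{X_1,\ad_{X_1}^2(X_2)\}$ with $X_1<X_2$, the root is $X_2$. This is why the induction hypothesis must be applied to $L\cup\Rg$ and not to $L$. It may also be bad, e.g.\ $X_0$, or $\ad_{X_1}^2(X_0)$ for $b=\ad_{(X_1,X_0)}\ad_{X_1}^2(X_0)$ with $p=(X_1,X_0)$. For a bad root, $\xi_c$ is nondecreasing (see the proof of \cref{lem:xibad>0}), and for $c=X_0$ it cannot be stopped at all. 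So ``changing $\xi_c$ by $\beta$'' is not something the induction hypothesis provides; the paper needs the non-centered normal box of \cref{lem:normal-box}, with its shift $\gamma$, for exactly this.

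Second, and more seriously, the assembly is not a proof. The set $\mathcal R$ of loop increments is a semigroup stable under the anisotropic dilations $z\mapsto(\lambda^{n_1(b)+\dotsb+n_q(b)}z_b)_b$, not a convex cone. So knowing that the convex cone it generates is $\R^Z$ does not give $\mathcal R=\R^Z$ without a quantitative open-mapping argument with continuous dependence on parameters, which you do not supply. The cross-contamination you leave as the ``main obstacle'' is precisely the content of the proposition, and your proposed remedy fails. $\Sigma_G$ has a drift (through $X_0$-seeded elements of $L$ and through the root $X_0$), so time-reversed paths are not trajectories. Moreover, for $b=\ad_p^m(X_0)$ the contributions $\int\xi_p^m\dd t$ of a path and of its reversal add instead of cancelling. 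Endpoint controllability of $\Sigma_L$ also does not by itself let you hold $\xi_p$ near $\alpha$ while $\xi_c$ moves.

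The missing idea is to handle all of $Z$ simultaneously through the reduced system \eqref{eq:reduced}. There, $y_p$ (a rescaled $\xi_p$) and the good roots become virtual controls, bad roots enter as $\gamma_c+v_c$, and the root $X_0$ gives the drift. This system is $L^\infty$-STLC by Sussmann's theorem, since its only potentially bad brackets $\ad_{F_p}^{2m}(F_0)$ vanish at $0$. Indeed, bad roots come with odd $m_b$ (\cref{lem:rho-b-bad}), so $F_0$ is odd in $y_p$; this is where your parity observation really enters. A reduced trajectory is then realized by concatenating $N$ normal-box blocks whose parameters are the averages of $v^\theta$ on $[k/N,(k+1)/N)$. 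Inside each block $\xi_p=Ny_p(k/N)+O(1)$, so all intra-block effects (your cross-contamination) are $O(N^{m_b})$ against the main term $N^{m_b+1}\theta_b$. The continuous maps $\theta\mapsto(\xi_b(N,u^{\theta,N})/N^{m_b+1})_{b\in Z}$ converge uniformly to the identity, so Brouwer's theorem yields a ball. \cref{lem:xi-homogeneity} then gives all of $\R^Z$ for every $T>0$.
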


\cref{prop:vertical-loops} expresses that, for any $T > 0$, starting from the origin, one can find \emph{control loops} reaching any desired target along the coordinates of $Z$ at time $T$, while driving the coordinates of $L$ back to~$0$.
Once such loops are available, the induction follows easily.

\begin{proof}[Proof of \cref{thm:bss-good}]
    We proceed by induction on $|\Fac(G)| \geq 0$.
    The case $|\Fac(G)| = 0$ is covered in \cref{lem:bof-init}.
    Let $\kappa \geq 1$, and assume that the result holds when $|\Fac(G)| < \kappa$. 
        
    Let $G \subset \B_\good$ with $|\Fac(G)| = \kappa$.
    We decompose the state $x \in \R^G = \R^L \times \R^Z$ as $x = (x_L, x_Z)$.
    Let $T > 0$, $x^\circ, x^* \in \R^G$.
    By \cref{lem:L}, we can apply the induction hypothesis to $L$.
    Thus there exist controls $u^\circ, u^* \in L^\infty((0,T);\R^q)$ such that
    \begin{equation}
        x_L(T;u^\circ,x^\circ_L) = 0_L
        \quad \text{and} \quad 
        x_L(T;u^*,0_L) = x^*_L.
    \end{equation}
    Given a control $u^{loop} \in L^\infty((0,T);\R^q)$ such that $x_L(T;u^{loop},0_L) = 0_L$, define the concatenation $u := u^\circ \diamond u^{loop} \diamond u^* \in L^\infty((0,3T);\R^q)$.
    The triangular nature of the system \eqref{eq:Hall-H} and the concatenation property \cref{lem:concatenation-factor-null} imply that
    \begin{equation}
        x_L(3T;u,x^\circ_L) = x^*_L
        \quad \text{and} \quad 
        x_Z(3T;u,x^\circ) = x_Z(T;u^\circ,x^\circ) + x_Z(T;u^{loop},0) + x_Z(T;u^*,0).
    \end{equation}
    By \cref{prop:vertical-loops}, there exists $u^{loop} \in L^\infty((0,T);\R^q)$ such that $x_L(T;u^{loop},0_L) = 0_L$ and 
    \begin{equation}
        x_Z(T;u^{loop},0) = x^*_Z - x_Z(T;u^\circ,x^\circ) - x_Z(T;u^*,0).
    \end{equation}
    We have thus found a control such that $x(3T;u,x^\circ) = x^*$, which proves the result by induction.
\end{proof}

\subsection{Definition of the reduced system}

We describe in this section a ``reduced system'' in which we use the coordinate associated with $p$ as a virtual control for the coordinates along $Z$.
The reduced system is based on the following consequence of \cref{def:Hall} for elements of $Z$.

\begin{definition}
    Let $b \in Z$.
    There exists a unique maximal $m_b \geq 1$ and $\rho(b) \in \B$, which we call the \emph{root} of $b$, such that $b = \ad_p^{m_b} (\rho(b))$.
\end{definition}

\begin{lemma}
    \label{lem:bof-root}
    For all $b \in Z$, one has $p < \rho(b)$, $\Fac(\rho(b)) = \Fac(b) \setminus \{p\} = \{ a \in \Fac(b) \mid a < p \}$.
    Finally $\Fac(\rho(b)) \subset L \cap \B_{<p}$.
\end{lemma}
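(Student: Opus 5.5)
The argument is a direct unpacking of the factorization of \cref{lem:factorization} combined with the ordering facts of \cref{lem:facto-Hall}.

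\step{Locate $p$ in the factorization}
Let $b \in Z$ and write $b = \ad_{a_r}^{m_r} \dotsb \ad_{a_1}^{m_1}(X_i)$ as in \eqref{eq:b-factorization}. By \cref{lem:facto-Hall}, $a_1 < \dotsb < a_r$. Since $b \in Z$, we have $p \in \Fac(b) = \{a_1,\dotsc,a_r\}$. On the other hand, $b \in G$ gives $\Fac(b) \subset \Fac(G)$, and $p = \max \Fac(G)$ by \eqref{eq:bof-p}. Hence $p$ is the largest factor, that is, $p = a_r$.

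\step{Identify the root}
Because $a_{r-1} \neq a_r$ (the consecutive distinctness in \cref{lem:factorization}), the maximal exponent is $m_b = m_r$. The root is therefore $\rho(b) = \ad_{a_{r-1}}^{m_{r-1}} \dotsb \ad_{a_1}^{m_1}(X_i)$. This expression is already the factorization of $\rho(b)$, so by uniqueness in \cref{lem:factorization} we get $\Fac(\rho(b)) = \{a_1,\dotsc,a_{r-1}\}$. By the strict chain $a_1 < \dotsb < a_r = p$, this set equals $\Fac(b) \setminus \{p\} = \{ a \in \Fac(b) \mid a < p \}$.

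\step{Show $\rho(b) \in \B$ and $p < \rho(b)$}
Since $b \in \B$, the Hall axioms applied to $b = (p, \ad_p^{m_b - 1}(\rho(b)))$ give $\mu(b) \in \B$. Iterating, each $\ad_p^{k}(\rho(b))$ lies in $\B$, and in particular $\rho(b) \in \B$. The second Hall axiom applied to the innermost bracket $(p, \rho(b)) \in \B$ then yields $p < \rho(b)$.

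\step{Conclude $\Fac(\rho(b)) \subset L \cap \B_{<p}$}
Each $a_j$ with $j < r$ satisfies $a_j < p$, so $a_j \in \B_{<p}$. Moreover $a_j \in \Fac(G) \subset G$ because $G$ is factor-stable. By \cref{lem:bof-G-part}\ref{it:bof-G-part-1}, every element of $Z$ is $> p$, so $a_j \notin Z$, and hence $a_j \in L$.

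Nothing here is a real obstacle. The only points needing care are the use of the distinctness $a_{r-1} \neq a_r$ to pin down the maximal exponent $m_b$, and the observation that removing the outer $\ad_p^{m_r}$ leaves a valid factorization, so that the uniqueness in \cref{lem:factorization} gives $\Fac(\rho(b))$ exactly.
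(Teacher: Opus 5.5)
Your proof is correct and follows essentially the same route as the paper. The inner bracket $(p,\rho(b))\in\B$ gives $p<\rho(b)$ by the second Hall axiom, the factorization with $a_r=p$ gives $\Fac(\rho(b))=\{a_1,\dotsc,a_{r-1}\}$, and item (i) of \cref{lem:bof-G-part} places these factors in $L$.

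Your Step 1 derives $a_r=\lambda(b)=p$ from $p=\max\Fac(G)$, and you use $a_{r-1}\neq a_r$ to get $m_b=m_r$. In the case $r=1$ this second point is immediate, since $\rho(b)=X_i$ is a letter. Both points make explicit what the paper leaves implicit in the definition of the root.
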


\begin{proof}
    First, since $b \in \B$ and $m_b \geq 1$, the inner bracket $(p, \rho(b)) \in \B$ so, $p < \rho(b)$ by the second Hall axiom.
    Second, the equality $\Fac(\rho(b)) = \Fac(b) \setminus \{p\}$ follows from the full factorization \eqref{eq:b-factorization} and the maximality of $m_b$.
    Third, by \cref{lem:facto-Hall}, all factors of $\rho(b)$ are strictly smaller than $p$.
    Since they belong to $G$, by \ref{it:bof-G-part-1} of \cref{lem:bof-G-part}, they belong to $L$.
\end{proof}

\begin{lemma}
    \label{lem:rho-b-bad}
    Let $b \in Z$.
    If $\rho(b) \in \B_\bad$, then $m_b$ is odd.
\end{lemma}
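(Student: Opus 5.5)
We argue by contradiction, using the characterization \eqref{eq:of} of good brackets applied to $b \in Z \subset G \subset \B_\good$. Write the factorization \eqref{eq:b-factorization} of $b$. Since $b = \ad_p^{m_b}(\rho(b))$ with $m_b$ maximal and $\rho(b)\in\B$, the factorization of $b$ is obtained from that of $\rho(b)$ by appending the outermost block $\ad_p^{m_b}$. If $\rho(b)\in X$, the factorization of $b$ is just $\ad_p^{m_b}(\rho(b))$. Otherwise, writing $\rho(b) = \ad_{a_{s}}^{m_{s}}\dotsb\ad_{a_1}^{m_1}(X_i)$, we get $a_s \neq p$ by maximality of $m_b$ (indeed $a_s<p$ by \cref{lem:bof-root}). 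Hence the factorization of $b$ is
\begin{equation}
    b = \ad_p^{m_b}\ad_{a_s}^{m_s}\dotsb\ad_{a_1}^{m_1}(X_i),
\end{equation}
with $r = s+1$, $a_r = p$ and $m_r = m_b$. In both cases $\seed(b)=\seed(\rho(b))$, and the multiplicity family of $b$ is that of $\rho(b)$ together with $m_b$.

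Now assume $\rho(b)\in\B_\bad$. By \cref{lem:bof-root}, $\Fac(\rho(b))\subset L\subset G\subset\B_\good$. Applying \eqref{eq:of} to $\rho(b)\in\B$, the condition $\Fac(\rho(b))\subset\B_\good$ holds, so $\rho(b)\in\B_\bad$ forces the second condition to fail. That is, $\seed(\rho(b))\in X_\bad$ and all $m_1,\dotsc,m_s$ are even. This also covers the case $\rho(b)\in X$, by \cref{rem:of}, where the condition reduces to $\rho(b)\in X_\bad$.

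Next apply \eqref{eq:of} to $b\in\B_\good$. The seed of $b$ equals $\seed(\rho(b))\in X_\bad$, so some multiplicity in the factorization of $b$ must be odd. All the $m_j$ with $j\le s$ are even, so the odd one must be $m_r = m_b$, which is the claim.

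The only delicate point is the bookkeeping in the first paragraph: one must check that the factorization of $b$ is exactly that of $\rho(b)$ extended by the block $\ad_p^{m_b}$, with no merging of blocks. This follows from the maximality of $m_b$, which guarantees the outer factor of $\rho(b)$ differs from $p$, together with the uniqueness in \cref{lem:factorization}.
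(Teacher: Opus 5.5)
Your proof is correct and follows essentially the same route as the paper's. In both, the factorization of $b$ is that of $\rho(b)$ extended by the outer block $\ad_p^{m_b}$. Since $\rho(b)$ has only good factors, \eqref{eq:of} forces its seed to be $X_0$ and all its multiplicities to be even, and the goodness of $b$ then leaves $m_b$ as the only possible odd multiplicity. The one small difference is that you get the goodness of $\Fac(\rho(b))$ from \cref{lem:bof-root}, whereas the paper reads it off directly from $b \in \B_\good$; either justification works.
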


\begin{proof}
    Write the full factorization \eqref{eq:b-factorization} of $b$.
    Since $b \in Z \subset G \subset \B_\good$, all its factors are good.
    Thus all factors of $\rho(b)$ are good.
    By \cref{def:of}, if $\rho(b)$ is bad, $X_i = X_0$ and $m_1, \dotsc, m_{r-1}$ are even.
    Since $b$ is good, $m_r = m_b$ is odd.
\end{proof}

The definition \eqref{eq:def:coord2} of $\xi_b$ for $b \in Z$ yields:
\begin{equation}
    \dot{\xi}_b = \frac{\xi_p^{m_b}}{m_b!} \dot{\xi}_{\rho(b)}.
    \label{eq:xib-xip}
\end{equation}
We will need to distinguish good and bad roots.
We define the disjoint sets:
\begin{equation}
    \label{eq:bof-RA}
    \Rg := \rho(Z) \cap \B_\good
    \quad \text{and} \quad
    \Rb := \rho(Z) \cap (\B_\bad \setminus \{ X_0 \}).
\end{equation}
Thus $\Rg \cap \Rb = \varnothing$ and $\Rg \sqcup \Rb \subset \B \setminus \{ X_0 \}$ (we treat the possible root $X_0$ on its own).

To introduce the reduced system, we need the sets:
\begin{equation}
    Y := \{ p \} \sqcup \Rg,
    \quad \text{and} \quad 
    C := Y \sqcup \Rb = \{ p \} \sqcup \Rg \sqcup \Rb.
\end{equation}
The unions are indeed disjoint because any $c \in \Rg \sqcup \Rb$ satisfies $p < c$ by \cref{lem:bof-root}.

\begin{lemma}
    One has $Y \cap Z = \varnothing$.
\end{lemma}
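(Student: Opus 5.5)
We must show $Y \cap Z = \varnothing$, where $Y = \{p\} \sqcup \Rg$. The plan is to treat the two pieces of $Y$ separately: $p$ itself, and the good roots $\rho(b)$ for $b \in Z$.

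\emph{The element $p$.} By \ref{it:bof-G-part-2} of \cref{lem:bof-G-part}, $p \in L = G \setminus Z$, so $p \notin Z$. Equivalently, $p \in Z$ would mean $p \in \Fac(p)$, which is impossible since every factor of $p$ is strictly smaller than $p$ by \cref{lem:facto-Hall}.

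\emph{A good root $c \in \Rg$.} Write $c = \rho(b)$ with $b \in Z$, and suppose for contradiction that $c \in Z$, that is, $p \in \Fac(c)$. By \cref{lem:bof-root}, $\Fac(\rho(b)) = \Fac(b) \setminus \{p\}$, so $p \notin \Fac(c)$. This already gives the contradiction. A second route is also available: \cref{lem:bof-root} states $\Fac(\rho(b)) \subset \B_{<p}$, which again excludes $p$ as a factor. Since \cref{lem:bof-root} also applies to bad roots, the same argument shows $C \cap Z = \varnothing$, although only $Y$ is required here.

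There is no real obstacle in this argument. The one point to watch is that membership in $Z$ is defined through $\Fac$, meaning the set of outer factors from the factorization \eqref{eq:b-factorization}, and not through $\Fac^+$. With that reading, the maximality of $m_b$ in the definition of the root guarantees that $p$ does not reappear as an outer factor of $\rho(b)$, which is exactly the content of \cref{lem:bof-root}.
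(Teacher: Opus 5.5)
Your proof is correct and is essentially the paper's argument: $p \notin Z$ because $p \in L$ by \ref{it:bof-G-part-2} of \cref{lem:bof-G-part}. A good root $c = \rho(b)$ is not in $Z$ because \cref{lem:bof-root} gives $p \notin \Fac(\rho(b))$.
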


\begin{proof}
    First $p \in L$ (not in $Z$) by \ref{it:bof-G-part-2} of \cref{lem:bof-G-part}.
    Second, any $c \in \Rg$ is of the form $\rho(b)$ for some $b \in Z$, so $p \notin \Fac(\rho(b))$ by  \cref{lem:bof-root}, so $c \notin Z$.
\end{proof}

\begin{definition}[Reduced system]
    Fix $\gamma \in \R^{\Rb}$. 
    We define the \emph{reduced system} associated with~$G$ and $\gamma$ as the following system with states $(y,z) \in \R^Y \times \R^Z$ and controls $v \in \R^C$:
    \begin{equation}
        \label{eq:reduced}
        \dot{y}_b = v_b, 
        \quad \forall b \in Y,
        \qquad \text{and} \qquad
        \dot{z}_b = \frac{y_p^{m_b}}{m_b!} g_b(v),
        \quad \forall b \in Z,
    \end{equation}
    where we set
    \begin{equation}
        g_b(v)
        :=
        \begin{cases}
            1,&\text{if }\rho(b)=X_0,\\
            v_{\rho(b)},&\text{if }\rho(b)\in \Rg,\\
            \gamma_{\rho(b)}+v_{\rho(b)},&\text{if }\rho(b)\in \Rb.
        \end{cases}
        \label{eq:averaged-root-control}
    \end{equation}
\end{definition}

\begin{remark}
    When $\rho(b) \in \Rb$ is a bad root, this reduced system involves a control shifted by $\gamma_{\rho(b)}$ to reflect the fact that the associated coordinate might not be controllable to $0$ (see \cref{lem:normal-box}).
\end{remark}

We rewrite the ODE \eqref{eq:reduced} as a control-affine system.
For each possible root $c \in \Rg \sqcup \Rb \sqcup \{ X_0 \}$, define the vector field
\begin{equation}
    \label{eq:Vc}
    V_c(y_p)
    :=
    \sum_{\substack{b\in Z\\\rho(b)=c}}
    \frac{y_p^{m_b}}{m_b!} \partial_{z_b}.
\end{equation}
Then define the drift as
\begin{equation}
    \label{eq:F0-VC}
    F_0 := F_{X_0} := V_{X_0}+\sum_{c\in \Rb}\gamma_cV_c,
\end{equation}
and the controlled vector fields as
\begin{equation}
    F_p := \partial_{y_p},
    \quad \text{and} \quad
    F_c := \partial_{y_c}+V_c \text{ for } c \in \Rg, 
    \quad \text{and} \quad
    F_c := V_c \text{ for } c \in \Rb.
    \label{eq:FpFc}
\end{equation}
With these notations, \eqref{eq:reduced} can be rephrased as
\begin{equation}
    \label{eq:reduced-F0Fc}
    \frac{\dd}{\dd t} (y,z) = F_0(y_p) + \sum_{c \in C} v_c F_c(y_p).
\end{equation}

We will need the following structural facts in the sequel.

\begin{lemma}
    \label{lem:fac+C}
    One has $Y \cap \B_{<p} = \varnothing$, $\Fac^+(C) \subset L \setminus Y$ and $L \setminus Y$ is factor-stable.
\end{lemma}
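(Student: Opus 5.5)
We prove the three claims in turn, using only the ordering facts collected in \cref{lem:bof-G-part,lem:bof-root} together with \cref{lem:Fac+ES}.

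\emph{First claim: $Y \cap \B_{<p} = \varnothing$.} Since $Y = \{p\} \sqcup \Rg$, it suffices to note that $p \not< p$, and that every $c \in \Rg$ is of the form $\rho(b)$ with $b \in Z$. By \cref{lem:bof-root}, such a $c$ satisfies $p < c$, so $c \notin \B_{<p}$.

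\emph{Second claim: $\Fac^+(C) \subset L \setminus Y$.} We apply \cref{lem:Fac+ES} with $E := C$ and $S := L \cap \B_{<p}$. The set $S$ is factor-stable. Indeed, by \cref{lem:bof-G-part}\ref{it:bof-G-part-3}, $\Fac(L) \subset L \cap \B_{<p}$, hence $\Fac(S) \subset \Fac(L) \subset S$. It remains to check $\Fac(C) \subset S$, which we do element by element.
\begin{itemize}
    \item For $p$: since $p \in L$ by \cref{lem:bof-G-part}\ref{it:bof-G-part-2}, \ref{it:bof-G-part-3} gives $\Fac(p) \subset L \cap \B_{<p}$.
    \item For $c \in \Rg \sqcup \Rb$: write $c = \rho(b)$ with $b \in Z$. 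Then \cref{lem:bof-root} gives $\Fac(c) \subset L \cap \B_{<p}$.
\end{itemize}
Hence \cref{lem:Fac+ES} yields $\Fac^+(C) \subset L \cap \B_{<p}$. By the first claim, $\B_{<p}$ is disjoint from $Y$, so $L \cap \B_{<p} \subset L \setminus Y$, which proves the second claim.

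\emph{Third claim: $L \setminus Y$ is factor-stable.} Let $\ell \in L \setminus Y$ and $a \in \Fac(\ell)$. By \cref{lem:bof-G-part}\ref{it:bof-G-part-3}, $a \in L \cap \B_{<p}$. By the first claim, $a \notin Y$, so $a \in L \setminus Y$.

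The only subtle point is the choice of the intermediate factor-stable set $L \cap \B_{<p}$, which is what makes the bad roots $\Rb$ harmless. The roots in $\Rb$ need not belong to $L$ or even to $G$, but their factors do lie in $L \cap \B_{<p}$, and that is all \cref{lem:Fac+ES} requires. The rest is direct bookkeeping from the lemmas cited above.
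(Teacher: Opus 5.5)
Your proof is correct and follows the paper's argument essentially step for step. It uses the same intermediate factor-stable set $L \cap \B_{<p}$, the same application of \cref{lem:Fac+ES}, and the same disjointness $Y \cap \B_{<p} = \varnothing$ for the last two claims. The only cosmetic difference is how you get factor-stability of $L \cap \B_{<p}$: you derive it directly from $\Fac(L) \subset L \cap \B_{<p}$, whereas the paper intersects the two factor-stable sets $L$ (\cref{lem:L}) and $\B_{<p}$ (\cref{lem:B<p-stable}).
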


\begin{proof}
    First, since $Y = \{ p \} \sqcup \Rg$, $Y \cap \B_{<p} = \varnothing$ because $p \not < p$ and $p < c$ for all $c \in \Rg$ by \cref{lem:bof-root}.
    
    Second, recall that $C = \{ p \} \sqcup \Rg \sqcup \Rb$.
    By \cref{lem:bof-G-part}, $\Fac(p) \subset L \cap \B_{<p}$.
    By \cref{lem:bof-root}, $\Fac(\Rg \sqcup \Rb) \subset L \cap \B_{<p}$.
    By \cref{lem:L}, $L$ is factor-stable.
    By \cref{lem:B<p-stable}, $\B_{<p}$ too.
    Thus $L \cap \B_{<p}$ too.
    By \cref{lem:Fac+ES}, $\Fac^+(C) \subset L \cap \B_{<p}$.
    Finally $L \cap \B_{<p} \subset L \setminus Y$ since $Y \cap \B_{<p} = \varnothing$.

    Third, by \cref{lem:bof-G-part}, $\Fac(L) \subset L \cap \B_{<p}$.
    So $\Fac(L\setminus Y) \subset \Fac(L) \subset L \setminus Y$ since $Y \cap \B_{<p} = \varnothing$.
\end{proof}

\subsection{Controllability of the reduced system}

We investigate the controllability of the reduced system \eqref{eq:reduced-F0Fc}.
We start with computations of the Lie brackets of the involved vector fields.
Let $\mathfrak{L}$ denote the Lie algebra generated by the vector fields in \eqref{eq:F0-VC} and \eqref{eq:FpFc}.

\medskip

The triangular structure of the vector fields $V_c$ of \eqref{eq:Vc} motivates the introduction of $\mathcal{V}$ as the space of polynomial vector fields which depend only on $y_p$ and are supported in the $z$ directions. 
This is an abelian Lie algebra. 
Moreover, for all $V \in \mathcal{V}$,
\begin{equation}
    \label{eq:F0Fc-vertical-ideal}
    [F_c, V] = 0 \quad \text{and} \quad
    [F_c, F_{c'}] = 0, \quad \text{ for all } c, c' \in \Rg \sqcup \Rb \sqcup \{ X_0 \}.
\end{equation}
The only non-trivial brackets are those involving $F_p$.

\begin{lemma}
    \label{lem:reduced-monom-extract}
    For all $c\in \Rg \sqcup \Rb \sqcup\{X_0\}$ and $m \in \N$,
    \begin{equation}
        \bigl(\partial_{y_p}^m V_c\bigr)(0)
        =
        \begin{cases}
            \partial_{z_b},
            &\text{if there exists $b\in Z$ such that }
             \rho(b)=c\text{ and }m_b=m, \\
            0,
            &\text{otherwise}.
        \end{cases}
    \end{equation}
    In particular, the bracket $b$ in the first case is unique.
\end{lemma}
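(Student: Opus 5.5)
The proof is a direct computation from the definition \eqref{eq:Vc} of $V_c$, followed by a uniqueness argument that uses the uniqueness of the factorization. We differentiate termwise. By \eqref{eq:Vc}, $V_c$ is a finite sum, since $Z \subset G$ is finite, of monomials $\frac{y_p^{m_b}}{m_b!}\partial_{z_b}$ over those $b \in Z$ with $\rho(b) = c$. The coefficient vector fields $\partial_{z_b}$ are constant, so
\begin{equation*}
    \partial_{y_p}^m V_c = \sum_{\substack{b \in Z \\ \rho(b) = c}} \partial_{y_p}^m\Bigl(\frac{y_p^{m_b}}{m_b!}\Bigr)\partial_{z_b}.
\end{equation*}
Evaluating at $0$, the scalar factor $\partial_{y_p}^m (y_p^{m_b}/m_b!)$ vanishes at $y_p = 0$ unless $m = m_b$, in which case it equals $1$. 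Hence
\begin{equation*}
    (\partial_{y_p}^m V_c)(0) = \sum_{\substack{b \in Z \\ \rho(b) = c,\ m_b = m}} \partial_{z_b}.
\end{equation*}

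It remains to show that this sum has at most one term, which is the uniqueness claim. Suppose $b, b' \in Z$ satisfy $\rho(b) = \rho(b') = c$ and $m_b = m_{b'} = m$. By the definition of the root, $b = \ad_p^{m_b}(\rho(b)) = \ad_p^{m}(c) = b'$. The bracket is therefore completely determined by the pair $(c, m)$. So the sum is either empty, giving $0$, or consists of the single vector $\partial_{z_b}$, which yields exactly the two cases of the statement.

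The only point requiring care is that $m_b$ and $\rho(b)$ are well defined, so that the indexing in \eqref{eq:Vc} is unambiguous. This follows from the uniqueness of the factorization in \cref{lem:factorization} together with the maximality of $m_b$ in the definition of the root. I do not expect any real obstacle: once this well-definedness is in place, the whole argument is bookkeeping on monomials.
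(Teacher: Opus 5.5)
Your proposal is correct and follows the paper's proof: termwise differentiation of the finite sum \eqref{eq:Vc} gives the indicator $\mathbf{1}_{\{m_b=m\}}$ thanks to the factorial normalization. Uniqueness then comes from the identity $b=\ad_p^{m}(c)$, which shows that the pair $(\rho(b),m_b)$ determines $b$.
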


\begin{proof}
    By the definition of $V_c$ and the factorial normalization,
    \begin{equation}
        \bigl(\partial_{y_p}^m V_c\bigr)(0)
        =
        \sum_{\substack{b\in Z\\\rho(b)=c}}
        \mathbf{1}_{\{m_b=m\}} \partial_{z_b}.
    \end{equation}
    Moreover, the pair $(\rho(b),m_b)$ determines $b$ uniquely.
\end{proof}

\begin{proposition}
    \label{prop:reduced-LARC}
    For any $\gamma \in \R^{\Rb}$, the reduced system \eqref{eq:reduced-F0Fc} satisfies the Lie algebra rank condition at the origin, i.e.\ $\mathfrak{L}(0) = \R^Y \times \R^Z$.
\end{proposition}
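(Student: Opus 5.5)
We will exhibit, for each coordinate direction of $\R^Y \times \R^Z$, an element of $\mathfrak{L}$ whose value at the origin is that direction. The whole computation rests on one identity. For every $V \in \mathcal{V}$ we have $[F_p, V] = \partial_{y_p} V$, since $F_p = \partial_{y_p}$ has constant coefficients and $V$ depends only on $y_p$ and points in the $z$ directions. Iterating gives
\begin{equation*}
    \ad_{F_p}^m (V) = \partial_{y_p}^m V \quad \text{for all } m \in \N,\ V \in \mathcal{V}.
\end{equation*}
Recall also the bracket convention $[f,g] = (Dg)f - (Df)g$. With it, $[F_p, V_c] = (DV_c)\partial_{y_p} - 0 = \partial_{y_p} V_c$, and the sign is indeed $+$.

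The $Y$ directions come directly from the generators. First, $F_p(0) = \partial_{y_p}$. Second, for $c \in \Rg$ we have $F_c(0) = \partial_{y_c} + V_c(0)$. Since every $m_b \geq 1$, each $V_c$ vanishes at $y_p = 0$, so $F_c(0) = \partial_{y_c}$.

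For the $Z$ directions, fix $b \in Z$ and set $c := \rho(b)$ and $m := m_b \geq 1$. We produce an element of $\mathfrak{L}$ equal to $\partial_{z_b}$ at $0$, treating the three possible kinds of root separately.
\begin{itemize}
    \item If $c \in \Rb$, then $F_c = V_c \in \mathcal{V}$. Hence $\ad_{F_p}^m(F_c) = \partial_{y_p}^m V_c$, and by \cref{lem:reduced-monom-extract} its value at $0$ is $\partial_{z_b}$.
    \item If $c \in \Rg$, then $F_c = \partial_{y_c} + V_c$. The summand $\partial_{y_c}$ commutes with $F_p$ and with every element of $\mathcal{V}$. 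Since $m \geq 1$, this gives $\ad_{F_p}^m(F_c) = \partial_{y_p}^m V_c$, with the same conclusion.
    \item If $c = X_0$, then $F_0 = V_{X_0} + \sum_{c' \in \Rb} \gamma_{c'} V_{c'}$, and we first subtract the $\gamma$-part, which is legitimate because each $V_{c'} = F_{c'}$ for $c' \in \Rb$ is itself a generator. Thus $V_{X_0} = F_0 - \sum_{c'} \gamma_{c'} F_{c'} \in \mathfrak{L}$, and $\ad_{F_p}^m(V_{X_0})$ evaluates at $0$ to $\partial_{z_b}$, again by \cref{lem:reduced-monom-extract}.
\end{itemize}

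The only subtle point is uniqueness. We must ensure that no other $\partial_{z_{b'}}$ contaminates the value at $0$. This is exactly the uniqueness statement of \cref{lem:reduced-monom-extract}: for fixed $c$ and $m$, only the $b'$ with $\rho(b') = c$ and $m_{b'} = m$ survives, and this $b'$ is unique.

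It follows that $\mathfrak{L}(0)$ contains every $\partial_{y_a}$ for $a \in Y$ and every $\partial_{z_b}$ for $b \in Z$. Therefore $\mathfrak{L}(0) = \R^Y \times \R^Z$, independently of $\gamma$.
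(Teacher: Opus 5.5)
Your proof is correct and follows essentially the same route as the paper. You read off the $Y$ directions from the generators at $0$, and you obtain each $\partial_{z_b}$ as $\ad_{F_p}^{m_b}$ of the appropriate generator, evaluated at $0$ via \cref{lem:reduced-monom-extract}. The only difference is cosmetic and concerns the case $\rho(b)=X_0$: you remove the $\gamma$-terms inside $\mathfrak{L}$ by forming $V_{X_0}=F_0-\sum_{c'\in\Rb}\gamma_{c'}F_{c'}$, whereas the paper keeps $F_0$ and notes that the extra terms in $(\partial_{y_p}^{m_b}F_0)(0)$ are bad-root directions already obtained; the two are equivalent.
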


\begin{proof}
    We obtain the different directions in successive steps.

    \begin{itemize}
        \item 
        First, one has $F_p(0) = \partial_{y_p}$.
        So $\partial_{y_p} \in \mathfrak{L}(0)$.
    
        \item 
        Second, for any $b \in Z$, $m_b \ge 1$, thus, for any $c \in \Rg \sqcup \Rb \sqcup \{ X_0 \}$, $V_c(0) = 0$.
        Hence, for any $a \in \Rg$, $F_a(0) = \partial_{y_a}$. So $\partial_{y_a} \in \mathfrak{L}(0)$ for all $a \in \Rg$.

        \item Third, let $b \in Z$ with a root $c := \rho(b) \in \Rg \sqcup \Rb$ (not equal to $X_0$).
        Then $\ad_{F_p}^{m_b}(F_c) = \partial_{y_p}^{m_b} V_c$, and \cref{lem:reduced-monom-extract} gives $(\partial_{y_p}^{m_b}V_c)(0) = \partial_{z_b}$.
        Hence $\partial_{z_b} \in \mathfrak{L}(0)$.

        \item Fourth, let $b \in Z$ with $\rho(b) = X_0$.
        From \eqref{eq:F0-VC}, we obtain
        \begin{equation}
            \bigl(\partial_{y_p}^{m_b}F_0\bigr)(0)
            =
            \partial_{z_b}
            +
            \sum_{c\in \Rb}
            \gamma_c\bigl(\partial_{y_p}^{m_b}V_c\bigr)(0).
            \label{eq:reduced-drift-root-extraction}
        \end{equation}
        By \cref{lem:reduced-monom-extract}, each term in the sum is either zero or a coordinate direction with a bad root.
        These directions have already been obtained. 
        Hence $\partial_{z_b}\in\mathfrak{L}(0)$.
    \end{itemize}
    We have therefore proved that the canonical basis of $\R^Y \times \R^Z$ is contained in $\mathfrak{L}(0)$.
\end{proof}

We can now prove the main result of this section.

\begin{proposition}
    \label{prop:averaged-system-STLC}
    For any $\gamma\in\R^{\Rb}$, the reduced system \eqref{eq:reduced-F0Fc} is $L^\infty$-STLC at the origin.
\end{proposition}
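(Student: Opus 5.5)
The plan is to deduce the result from Sussmann's sufficient condition (\cref{thm:sussmann}), applied to the reduced system \eqref{eq:reduced-F0Fc}. We view it as a control-affine system with drift $F_0$ and controlled fields $(F_c)_{c \in C}$, on the unknowns $X_0$ and $(X_c)_{c\in C}$.

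First, the drift vanishes at the origin. Indeed, $m_b \geq 1$ for every $b \in Z$, so every $V_c(0) = 0$, and hence $F_0(0) = 0$. The Lie algebra rank condition is exactly \cref{prop:reduced-LARC}. So it remains to show that every bracket in $S_\bad$ evaluates to zero at the origin. If so, the compensation hypothesis of \cref{thm:sussmann} holds trivially for any $\theta \in (0,1)$, and $L^\infty$-STLC follows.

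The main step is a description of all brackets of the fields in \eqref{eq:F0-VC}--\eqref{eq:FpFc}. Let $\mathcal{V}$ be the abelian algebra of polynomial vector fields depending only on $y_p$ and pointing in the $z$ directions. I will show the following by induction on length.
\begin{itemize}
\item A bracket with a single occurrence of a letter $X_c$, $c \neq p$ (including $c = X_0$), and $m \geq 1$ occurrences of $X_p$ is either zero or $\pm \partial_{y_p}^m V$ with $V \in \{V_c, F_0\}$. This uses $[F_p, F_c] = \partial_{y_p} V_c$ for $c \ne p$, together with $[F_p,F_0]=\partial_{y_p}F_0$ and the fact that $[F_p, \cdot]$ acts as $\partial_{y_p}$ on $\mathcal{V}$.
\item A bracket with at least two occurrences of letters other than $X_p$ vanishes identically. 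This follows from \eqref{eq:F0Fc-vertical-ideal}: any such bracket is a bracket between elements of $\mathcal{V} + \vect\{F_c\}$, possibly interleaved with $\ad_{F_p}$, and all of these commute.
\item A bracket involving only $X_p$ is $F_p$ itself or zero.
\end{itemize}
I expect this bookkeeping to be the main obstacle. One must track how nested brackets distribute and check that an $\ad_{F_p}$ applied after two non-$p$ letters have already met still gives zero. Using the Jacobi identity together with the invariance of $\mathcal{V}$ under $\ad_{F_p}$ should settle it.

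With this description, the only candidates in $S_\bad$ with nonzero value at $0$ are the brackets with exactly one $X_0$, no $X_c$ for $c \in C\setminus\{p\}$, and an even number $m$ of $X_p$. By the above, such a bracket equals $\pm\partial_{y_p}^m F_0$. Indeed, any $c \neq p$ with $n_c = 1$ is odd, so that bracket is not in $S_\bad$, and two or more non-$p$ letters force vanishing. By \eqref{eq:reduced-drift-root-extraction} and \cref{lem:reduced-monom-extract}, $(\partial_{y_p}^m F_0)(0)$ is a combination of the $\partial_{z_b}$ with $m_b = m$ and $\rho(b) \in \{X_0\} \sqcup \Rb$, i.e.\ with $\rho(b) \in \B_\bad$. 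By \cref{lem:rho-b-bad}, each such $m_b$ is odd. Hence there is no such $b$ for even $m$, and $(\partial_{y_p}^m F_0)(0) = 0$.

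Therefore every bracket of $S_\bad$ vanishes at $0$, independently of $\gamma$. \cref{thm:sussmann} then yields that \eqref{eq:reduced-F0Fc} is $L^\infty$-STLC at the origin.
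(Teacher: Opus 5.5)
Your proof is correct and is essentially the paper's argument. You apply \cref{thm:sussmann} with the rank condition from \cref{prop:reduced-LARC}, and observe that, since $F_0$, the $F_c$ ($c \neq p$) and $\mathcal{V}$ all commute while $\ad_{F_p}$ acts as $\partial_{y_p}$, only the brackets $\ad_{F_p}^m(F_c)$ survive; the Sussmann-bad ones $\partial_{y_p}^{2m}F_0$ then vanish at $0$ because every $m_b$ with $\rho(b)\in\B_\bad$ is odd by \cref{lem:rho-b-bad}, which the paper phrases as $F_0$ being odd in $y_p$. The bookkeeping you flag as the main obstacle is in fact immediate by induction on the bracket structure, since a bracket containing two non-$p$ letters already vanishes before any further $\ad_{F_p}$ is applied.
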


\begin{proof}
    We plan to apply Sussmann's theorem to the control-affine system \eqref{eq:reduced-F0Fc}.
    By \cref{prop:reduced-LARC}, the Lie algebra rank condition at the origin is satisfied.

    Let $\mathcal{W} := \{ F_0 \} \cup \{ F_c \mid c \in \Rg \sqcup \Rb \}$.
    Any two elements of $\mathcal{W}$ commute.
    Moreover $[\mathcal{W},\mathcal{V}] = 0$ and, for any $W \in \mathcal{W}$, $[F_p, W] = \partial_{y_p} W \in \mathcal{V}$.

    Thus, the only potentially non-zero brackets are the $\ad_{F_p}^m(F_c)$, for $c \in \Rg \sqcup \Rb \sqcup \{X_0\}$.
    Among these, the only Sussmann-bad ones are the $\ad_{F_p}^{2m}(F_0) = \partial_{y_p}^{2m} F_0$ for $m \geq 0$.
    By \eqref{eq:F0-VC} and \eqref{eq:Vc}, the powers of $y_p$ occurring in $F_0$ are the $y_p^{m_b}$ for those $b \in Z$ whose root $\rho(b)$ is bad, and such $m_b$ is odd by \cref{lem:rho-b-bad}.
    So $F_0$ is an odd function of $y_p$, hence so is $\partial_{y_p}^{2m} F_0$, which therefore vanishes at the origin.
    
    Therefore, the conclusion follows from \cref{thm:sussmann} (with any $\theta$).
\end{proof}

Keeping in mind that our goal is to prove \cref{prop:vertical-loops}, i.e.\ to control the components along~$Z$ while bringing the ones along other brackets back to $0$, we will need the following continuous corollary.
The continuous dependence on the target will be required in \cref{subsec:vertical-loops} to absorb the approximation errors which occur in the reduction process.

\begin{corollary}
    \label{lem:reduced-loop}
    Let $\eta > 0$.
    There exists a neighborhood $\Theta$ of $0$ in $\R^Z$ and a continuous map $v : \Theta \to L^\infty((0,1);\R^C)$ such that, for all $\theta \in \Theta$,
    \begin{enumerate}
        \item the control $v^\theta$ takes its values in $[-\eta,\eta]^C$;
        \item the associated trajectory $(y^\theta,z^\theta)$ remains in $[-1,1]^{Y \sqcup Z}$ for $t \in [0,1]$;
        \item its final state is $(y^\theta(1), z^\theta(1)) = (0,\theta)$.
    \end{enumerate}
\end{corollary}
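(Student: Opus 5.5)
We derive the corollary from \cref{prop:averaged-system-STLC} through a standard normal-accessibility and implicit-function argument, which supplies both the continuous dependence on the target and the smallness of controls and trajectories. Let $\mathcal{E}_1 : L^\infty((0,1);\R^C) \to \R^Y \times \R^Z$ denote the end-point map of \eqref{eq:reduced-F0Fc} from the origin. It is smooth, since the system is polynomial and hence analytic. The plan is to build a reference control $\bar v$ that is small, with $\mathcal{E}_1(\bar v) = 0$, and at which the differential of $\mathcal{E}_1$ is onto. Then we perturb $\bar v$ inside a finite-dimensional family.

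\step{Reference control with surjective differential}
By \cref{prop:reduced-LARC} and \cref{prop:normal-accessibility} applied on $[0,1/2]$, there is a control $w$ with $\norm{w}_{L^\infty} < \eta/2$ whose trajectory stays in $[-1/2,1/2]^{Y\sqcup Z}$ and at which $\mathcal{E}_{1/2}$ has a surjective differential. Its end state $x_w := \mathcal{E}_{1/2}(w)$ can be made arbitrarily small by shrinking $\delta$ in \cref{prop:normal-accessibility}. By \cref{prop:averaged-system-STLC} (the $L^\infty$-STLC property with $T=1/2$ and $\rho = \eta/2$), there is a control $w'$ on $[1/2,1]$ with $\norm{w'}_{L^\infty}\le \eta/2$ steering $x_w$ back to $0$. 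Choosing $\delta$ small makes the trajectory of $w'$ stay in $[-1,1]^{Y\sqcup Z}$, by continuity of the flow and uniform bounds on small controls over a bounded time interval. Set $\bar v := w \diamond w'$.

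The differential of $\mathcal{E}_1$ at $\bar v$ is surjective. Indeed, it contains the image of the differential at $w$ on the first half, transported by the linearized flow along $w'$, which is an invertible linear map. Hence there exist finitely many directions $e_1,\dots,e_N \in L^\infty((0,1);\R^C)$, with $N = |Y|+|Z|$, such that $\lambda \mapsto \mathcal{E}_1(\bar v + \sum_k \lambda_k e_k)$ is a local diffeomorphism near $\lambda = 0$.

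\step{Inversion and conclusion}
By the inverse function theorem, there is a continuous (indeed smooth) local inverse $\Lambda$ defined on a neighborhood of $0 \in \R^Y\times\R^Z$ with $\Lambda(0)=0$. Set $v^\theta := \bar v + \sum_k \Lambda_k(0,\theta)\, e_k$ for $\theta$ in a small neighborhood $\Theta$. Then $\theta\mapsto v^\theta$ is continuous into $L^\infty$, and $\mathcal{E}_1(v^\theta) = (0,\theta)$.

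The remaining requirements follow by shrinking $\Theta$. Since $\norm{\bar v}_{L^\infty} \le \eta/2$ and $\Lambda(0,\theta)\to 0$ as $\theta \to 0$, we get $\norm{v^\theta}_{L^\infty}\le \eta$. Continuity of trajectories in $C^0$ with respect to $L^\infty$ perturbations of the control then keeps the trajectories inside $[-1,1]^{Y\sqcup Z}$.

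\step{Main obstacle}
The only delicate point is step 1. We must keep the trajectory of the return control $w'$ inside the box while still having surjectivity at the concatenated control.

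We handle this by taking the strictly small margins $\eta/2$ and $1/2$ for $w$. We then use the quantitative form of STLC: for targets within $\delta$, the control has norm at most $\rho$. For small $\rho$ over unit time, this forces the trajectory to stay close to the origin.

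An alternative, if one prefers to avoid the concatenation argument, is to note that STLC together with analyticity gives surjectivity at some small control reaching $0$ directly (Sussmann's normal reachability). The concatenation route above is more elementary, so we adopt it.
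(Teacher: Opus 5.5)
Your proposal is correct and follows essentially the same route as the paper: a normal control on $[0,\frac12]$ from \cref{prop:normal-accessibility}, a return to the origin on $[\frac12,1]$ via \cref{prop:averaged-system-STLC}, surjectivity preserved because the appended flow is a local diffeomorphism, and the inverse function theorem on finitely many directions. One point needs tidying: in Step~1 the STLC bound $\rho$ must be taken small rather than $\rho=\eta/2$, as you note yourself at the end, since shrinking $\delta$ alone does not control the excursion of the return trajectory; with $\rho$ small that trajectory stays in a strictly smaller box, which gives the margin needed for the perturbed trajectories to remain in $[-1,1]^{Y\sqcup Z}$.
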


\begin{proof}
    By \cref{prop:reduced-LARC}, the reduced system satisfies the Lie algebra rank condition.
    Thus, by \cref{prop:normal-accessibility}, the reduced system has a normal trajectory in arbitrary state and control neighborhoods in time $\frac 12$.
    Let $(\bar{y},\bar{z})$ be its endpoint. 
    By \cref{prop:averaged-system-STLC}, there is a control driving $(\bar{y},\bar{z})$ to $(0,0)$ in time $\frac 12$.
    If the first trajectory is small, this correction remains in the prescribed neighborhoods.
    Normality is preserved, because the appended flow is a local diffeomorphism.
    The inverse function theorem then gives the result.
\end{proof}

\subsection{Normal boxes around bad brackets}

To perform the induction, we will also need to use \emph{bad} brackets of $\Rb$ as virtual controls.
The difference with good brackets is that the available values for these brackets may not be centered at $0$.
Standard accessibility theory yields the following result.

\begin{lemma}[Normal box]
    \label{lem:normal-box}
    Let $H\subset\B\setminus\{X_0\}$ be finite and factor-stable. 
    Set $H_\good := H \cap \B_\good$ and $H_\bad := H \cap \B_\bad$.
    Assume that $\Sigma_{H_\good}$ is small-time globally controllable.
    There exist $\gamma\in\R^{H_\bad}$, a neighborhood $\Omega_H$ of $0$ in $\R^H$, and a $C^1$ map $\bar{u} : \Omega_H \to L^\infty((0,1);\R^q)$ such that, for all $w \in \Omega_H$,
    \begin{alignat}{2}
        & \forall b \in H_\good, & \qquad \xi_b(1,\bar{u}[w]) & = w_b, \\
        & \forall b \in H_\bad, & \qquad \xi_b(1,\bar{u}[w]) & = \gamma_b + w_b.
    \end{alignat}
\end{lemma}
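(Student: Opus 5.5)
The plan is a standard normal-accessibility argument: first build a normal point for the full end-point map $u \mapsto (\xi_b(1,u))_{b\in H}$ and read off $\gamma$, then use the controllability of the good part to pin the good coordinates of that point to $0$ while keeping normality, and finally invert locally.

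\emph{Step 1: a normal point for $\Sigma_H$.} Consider the canonical system $\Sigma_H$ on $\R^H$, whose solution from the origin is $(\xi_b(t,u))_{b\in H}$. By \eqref{eq:Hall-H-Ev}, its vector fields satisfy $f_b(0)=\partial_{x_b}$ for $b\in H$, so $\Sigma_H$ satisfies the Lie algebra rank condition at $0$. Since $H \subset \B\setminus\{X_0\}$, $\Sigma_H$ is a genuine control-affine system of the form \eqref{eq:syst}, with drift given by the components of seed $X_0$. \cref{prop:normal-accessibility} then gives a control $v$ on $(0,\tfrac12)$ at which the end-point map $E_{1/2}: u\mapsto (\xi_b(\tfrac12,u))_{b\in H}$ has surjective differential. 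Let $\bar x := E_{1/2}(v)$.

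\emph{Step 2: move the good coordinates to $0$.}
1. Since $\B_\good$ is factor-stable and $H$ is factor-stable, $H_\good$ is factor-stable. Hence $\Sigma_{H_\good}$ is well defined, and it is a subsystem of $\Sigma_H$ by the triangular structure \eqref{eq:Hall-H}.
2. By assumption, $\Sigma_{H_\good}$ is small-time globally controllable. So there is a control $w$ on $(0,\tfrac12)$ steering the $H_\good$-components from $\bar x_{H_\good}$ to $0$.
3. Set $u_0 := v\diamond w$. Then $\xi_b(1,u_0)=0$ for $b\in H_\good$.
4. Define $\gamma_b := \xi_b(1,u_0)$ for $b\in H_\bad$.

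\emph{Step 3: normality is preserved.} Write $E_1(v'\diamond w) = \Phi_w(E_{1/2}(v'))$, where $\Phi_w$ is the time-$\tfrac12$ flow of $\Sigma_H$ under the fixed control $w$. This flow is a diffeomorphism of $\R^H$. Hence $dE_1(u_0)$, restricted to variations of the first half of the control, equals $d\Phi_w \circ dE_{1/2}(v)$, which is surjective.

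\emph{Step 4: local inversion.} Choose a finite-dimensional subspace $\mathcal{U}\subset L^\infty((0,1);\R^q)$ of dimension $|H|$ on which $dE_1(u_0)$ is an isomorphism. The map $\alpha\mapsto E_1(u_0+\alpha)$ on $\mathcal{U}$ is $C^1$ in $\alpha$, because the ODE \eqref{eq:Hall-H} is polynomial and the end-point map is smooth in $L^\infty$. The inverse function theorem then yields a neighborhood $\Omega_H$ of $0$ and a $C^1$ inverse $w\mapsto \alpha(w)$ with $E_1(u_0+\alpha(w)) = (0,\gamma)+w$. Setting $\bar u[w] := u_0+\alpha(w)$ gives both identities of \cref{lem:normal-box}.

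\emph{Main obstacle.} The delicate point is the claim in Step 1 that $\Sigma_H$ is a control-affine system of the form \eqref{eq:syst} to which \cref{prop:normal-accessibility} and the rank condition apply, with $f_0$ the drift built from the seed-$X_0$ components. This requires checking that \eqref{eq:Hall-H-Ev} indeed gives the full canonical basis in $\Lie(f_0,\dots,f_q)(0)$. Note also that $f_0(0)=0$ holds automatically, since every $b\in H$ with seed $X_0$ has $r\ge1$, so its right-hand side is a product of at least one coordinate and vanishes at the origin. Everything after Step 1 is standard.
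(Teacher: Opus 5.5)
Your proposal is correct and follows the paper's proof essentially step for step. The steps are: a normal control for $\Sigma_H$ on the first half-interval (the rank condition coming from \eqref{eq:Hall-H-Ev}); an appended control, given by the controllability of the closed subsystem $\Sigma_{H_\good}$, that returns the good coordinates to $0$ and thereby defines $\gamma$; preservation of surjectivity of the differential through the flow of the appended control; and the inverse function theorem on finitely many control directions. Your extra checks, that $f_0(0)=0$ because $X_0\notin H$ and that $H_\good$ is factor-stable, are details the paper states only briefly or leaves implicit.
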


\begin{proof}
    By \eqref{eq:Hall-H-Ev}, $\Sigma_H$ satisfies the Lie algebra rank condition.
    Apply \cref{prop:normal-accessibility} to $\Sigma_H$ on the first half of the time interval. 
    Let $(x_\good,x_\bad)$ be the endpoint of the resulting normal control. 
    Since both $H$ and $\B_\good$ are factor-stable, their intersection is too. 
    Hence the subsystem indexed by $H_\good$ is closed.
    On the second half of the interval, append a fixed control which drives $x_\good$ back to~$0$. 
    The endpoint of the full system is then of the form $(0,\gamma)$.

    The flow associated with the appended control is a local diffeomorphism.
    Hence the concatenated endpoint map is still submersive. Choose finitely many directions in $L^\infty$ on which its differential is an isomorphism. 
    The inverse function theorem gives the conclusion.
\end{proof}

\subsection{Approximation argument}
\label{subsec:averaging}

Let $\kappa \geq 1$.
We explain the link between the reduced system and the initial one.

\paragraph{Application of the induction hypothesis.}
Since we proceed by induction on the number of distinct factors $|\Fac(G)|$, we assume that \cref{thm:bss-good} holds for all $G$ such that $|\Fac(G)| < \kappa$.

Let $G \subset \B_\good$ be finite and factor-stable with $|\Fac(G)| = \kappa$.
Consider the finite subset $H := L \cup \Rg \cup \Rb$ of $\B \setminus \{ X_0 \}$. 
By \cref{lem:bof-G-part,lem:fac+C}, $\Fac(H) \subset L \setminus Y \subset H$, so $H$ is factor-stable.
Let $H_\good := H \cap \B_\good = L \cup \Rg = (L \setminus Y) \sqcup Y$.
By the same lemmas, $\Fac(H_\good) \subset \Fac(G) \setminus \{ p \}$, so $|\Fac(H_\good)| < \kappa$.
By the induction hypothesis, $\Sigma_{H_\good}$ is small-time globally controllable.
Apply \cref{lem:normal-box} to $H$. 
Up to restricting the resulting map, we obtain $\gamma\in\R^{\Rb}$, $\eta > 0$ and a $C^1$ map $\bar{u} : [-\eta,\eta]^C \to L^\infty((0,1);\R^q)$ such that, for all $w \in [-\eta,\eta]^C$,
\begin{alignat}{2}
    \label{eq:baru-g0}
    &\forall b \in L \setminus Y,  &\qquad \xi_b(1,\bar{u}[w]) &= 0, \\
    \label{eq:baru-gwa}
    &\forall b \in Y, &\qquad \xi_b(1,\bar{u}[w]) &= w_b, \\
    \label{eq:baru-gwc}
    &\forall b \in \Rb,       &\qquad \xi_b(1,\bar{u}[w]) &= \gamma_b + w_b.
\end{alignat}
From now on, $\gamma \in \R^{\Rb}$, $\eta > 0$ and the $C^1$ map $\bar{u}$ are fixed.

\paragraph{Controllability of the reduced system.}
By \cref{prop:averaged-system-STLC}, the reduced system \eqref{eq:reduced} associated with $\gamma$ is $L^\infty$-STLC.
More precisely, by \cref{lem:reduced-loop}, there exist a neighborhood $\Theta$ of~$0$ in $\R^Z$ and a continuous map $v : \Theta \to L^\infty((0,1);\R^C)$ such that, for all $\theta \in \Theta$, $v^\theta$ takes values in~$[-\eta,\eta]^C$, the associated trajectories $(y^\theta,z^\theta)$ remain in $[-1,1]^{Y \sqcup Z}$, and the final state satisfies $(y^\theta,z^\theta)(1) = (0,\theta)$. 

\paragraph{Construction of approximate trajectories.}
Let $\theta \in \Theta$.
We construct a sequence of trajectories of the system $\Sigma_G$ approximating the trajectory $(y^\theta,z^\theta)$ of the reduced system driven by~$v^\theta$.

Let $N \geq 1$.
For $0 \leq k < N$, set $I_{N,k}:=[\frac{k}{N},\frac{k+1}{N})$ and
\begin{equation}
    \label{eq:wNktheta}
    w^{\theta,N,k}
    :=
    N\int_{I_{N,k}}v^\theta(t)\dd t
    \in [-\eta,\eta]^C,
\end{equation}
using the fact that $v^\theta$ takes values in $[-\eta,\eta]^C$.

We define a control $u^{\theta,N} \in L^\infty((0,N);\R^q)$ by concatenation as follows:
\begin{equation}
    u^{\theta,N} := \bar{u}[w^{\theta,N,0}] \diamond \dotsb \diamond \bar{u}[w^{\theta,N,N-1}].
\end{equation}
By \cref{lem:fac+C}, $L\setminus Y$ is factor-stable.
Thus \cref{lem:concatenation-factor-null} and \eqref{eq:baru-g0} entail that
\begin{equation}
    \label{eq:xi-a-Glow-0}
    \forall b \in L \setminus Y, 
    \quad 
    \forall 0 \leq k \leq N, 
    \qquad
    \xi_b(k, u^{\theta,N}) = 0
\end{equation}
By \cref{lem:fac+C}, $\Fac(Y \sqcup \Rb) \subset L \setminus Y$.
Thus \cref{lem:concatenation-factor-null} and \eqref{eq:xi-a-Glow-0} entail that
\begin{equation}
    \forall k \in \intset{0,N-1}, \forall s \in [0,1],
    \forall b \in Y \sqcup \Rb \cup \{ X_0 \},
    \quad
    \dot{\xi}_b(k+s,u^{\theta,N}) = \dot{\xi}_b(s,\bar{u}[w^{\theta,N,k}]).
\end{equation}
Thus, \eqref{eq:baru-gwa} and \eqref{eq:wNktheta} entail that:
\begin{equation}
    \forall b \in Y, \forall 0 \leq k \leq N, \qquad
    \xi_b(k, u^{\theta,N}) 
    = N \int_0^{\frac{k}{N}} v^\theta_b 
    = N y^\theta_b \left(\frac{k}{N}\right).
\end{equation}
In particular, one has $\xi_b(N,u^{\theta,N}) = 0$ for $b \in L \setminus Y$ and $\xi_b(N, u^{\theta,N}) = N y^\theta_b(1) = 0$ for $b \in Y$.

\paragraph{Convergence of the approximation.}
Fix $b\in Z$.
We now prove that, uniformly for $\theta \in \Theta$,
\begin{equation}
    \frac{\xi_b(N,u^{\theta,N})}{N^{m_b+1}}
    =
    z_b^\theta(1)+O\left(\frac1N\right).
    \label{eq:higher-depth-convergence}
\end{equation}
Since $[-\eta,\eta]^C$ is compact and $\bar{u}$ is continuous on $[-\eta,\eta]^C$ with values in $L^\infty$, there exists $M > 0$ such that, for all $a \in Y \sqcup \Rb \sqcup \{ X_0 \}$, and all $w \in [-\eta,\eta]^C$,
\begin{equation}
    \label{eq:block-bounds}
    \sup_{s \in [0,1]} | \xi_a(s,\bar{u}[w]) | \leq M
    \quad \text{and} \quad
    \int_0^1 | \dot{\xi}_a(s,\bar{u}[w]) | \dd s \leq M.
\end{equation}
To lighten the computations, we also set $w^k:=w^{\theta,N,k}$.
During the $k$-th block,
\begin{equation}
    \xi_b(k+1,u^{\theta,N})-\xi_b(k,u^{\theta,N})
    =
    \frac{1}{m_b!}
    \int_0^1
    \left(N y^\theta_p\left(\frac{k}{N}\right)+\xi_p(s,\bar{u}[w^k])\right)^{m_b}
    \dot{\xi}_{\rho(b)}(s,\bar{u}[w^k])\dd s.
\end{equation}
By the binomial theorem, estimates \eqref{eq:block-bounds} yield
\begin{equation}
    \left(Ny^\theta_p\left(\frac{k}{N}\right)+\xi_p(s,\bar{u}[w^k])\right)^{m_b}
    =
    N^{m_b} \left(y^\theta_p\left(\frac{k}{N}\right)\right)^{m_b} +O(N^{m_b-1}).
\end{equation}
Moreover, recalling \eqref{eq:averaged-root-control}, the identities \eqref{eq:baru-gwa} and \eqref{eq:baru-gwc} imply that, for all $w \in [-\eta,\eta]^C$,
\begin{equation}
    \int_0^1\dot{\xi}_{\rho(b)}(s,\bar{u}[w])\dd s
    = \xi_{\rho(b)}(1, \bar{u}[w]) = g_b(w).
\end{equation}
Consequently,
\begin{equation}
    \frac{
    \xi_b(k+1,u^{\theta,N})-\xi_b(k,u^{\theta,N})
    }{N^{m_b+1}}
    =
    \frac1N \frac{(y^\theta_p(\frac{k}{N}))^{m_b}} {m_b!} g_b(w^k)
    +O\left(\frac1{N^2}\right).
\end{equation}
Moreover, since $g_b$ is affine, by the definition of $w^k = w^{\theta,N,k}$ in \eqref{eq:wNktheta},
\begin{equation}
    g_b(w^k)
    =
    N\int_{I_{N,k}}g_b(v^\theta(t))\dd t.
\end{equation}
Summing the block increments therefore gives
\begin{equation}
    \frac{\xi_b(N,u^{\theta,N})}{N^{m_b+1}}
    =
    \sum_{k=0}^{N-1}
    \int_{I_{N,k}}
    \frac{(y^\theta_p(\frac{k}{N}))^{m_b}} {m_b!} g_b(v^\theta(t))\dd t
    +O\left(\frac1N\right).
\end{equation}
Finally, the trajectories $y^\theta$ are uniformly Lipschitz, so $y^\theta_p(t)=y^\theta_p(\frac{k}{N})+O(N^{-1})$ for $t\in I_{N,k}$.
Since $g_b(v^\theta)$ is uniformly bounded, the preceding sum equals
\begin{equation}
    \int_0^1
    \frac{(y^\theta_p(t))^{m_b}} {m_b!}g_b(v^\theta(t))\dd t
    +O\left(\frac1N\right).
\end{equation}
The integral is $z_b^\theta(1) = \theta_b$ by the reduced equation, proving
\eqref{eq:higher-depth-convergence}.

\bigskip
We have proved the following statement.

\begin{proposition}
    \label{lem:C0-averaging-d>1}
    For each $N \ge 1$, there exists a continuous map $\theta \mapsto u^{\theta,N}$ from the neighborhood $\Theta \subset \R^Z$ to $L^\infty((0,N);\R^q)$ such that, uniformly for $\theta \in \Theta$,
    \begin{equation}
        \forall b \in G \setminus Z, \enskip
        \xi_b(N,u^{\theta,N}) = 0
        \qquad \text{and} \qquad
        \forall b \in Z, \enskip
        \frac{\xi_b(N,u^{\theta,N})}{N^{m_b+1}} = \theta_b + O(N^{-1}).
    \end{equation}
\end{proposition}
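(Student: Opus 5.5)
This statement collects the construction just carried out, so the plan is to assemble the pieces already established rather than to introduce new arguments. For fixed $N \geq 1$, define $u^{\theta,N} := \bar{u}[w^{\theta,N,0}] \diamond \dotsb \diamond \bar{u}[w^{\theta,N,N-1}]$ with $w^{\theta,N,k}$ given by \eqref{eq:wNktheta}. The proof then has three parts: continuity of the map, the exact identities on $G \setminus Z = L$, and the asymptotics on $Z$.

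\emph{Continuity.} The map $\theta \mapsto v^\theta \in L^\infty((0,1);\R^C)$ is continuous by \cref{lem:reduced-loop}. The averaging map $v \mapsto N\int_{I_{N,k}} v$ is linear and bounded from $L^\infty$ (even from $L^1$) into $\R^C$, and it takes values in $[-\eta,\eta]^C$. The map $\bar{u}$ is $C^1$ on $[-\eta,\eta]^C$. Concatenating $N$ controls is continuous into $L^\infty((0,N);\R^q)$. Composing these maps gives continuity of $\theta \mapsto u^{\theta,N}$.

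\emph{Exact vanishing on $G \setminus Z$.} Recall that $G \setminus Z = L = (L\setminus Y) \sqcup Y$. For $b \in L \setminus Y$, use \eqref{eq:xi-a-Glow-0} at $k = N$; this rests on \cref{lem:concatenation-factor-null} applied to the factor-stable set $L \setminus Y$ from \cref{lem:fac+C}, together with \eqref{eq:baru-g0}. For $b \in Y$, the computation above gives $\xi_b(N,u^{\theta,N}) = N y^\theta_b(1)$. This vanishes because the final state of the reduced loop is $(0,\theta)$ by \cref{lem:reduced-loop}. The key check is that the block-wise increment of $\xi_b$ for $b\in Y$ equals exactly $w_b^k$. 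This holds by \eqref{eq:baru-gwa} and the fact that $\Fac(Y) \subset L \setminus Y$ vanishes at block endpoints.

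\emph{Asymptotics on $Z$.} Follow the derivation of \eqref{eq:higher-depth-convergence}. The $k$-th block increment of $\xi_b$ is $\frac{1}{m_b!}\int_0^1 (\xi_p(k)+\xi_p(s,\bar{u}[w^k]))^{m_b}\dot{\xi}_{\rho(b)}(s,\bar{u}[w^k])\dd s$, which follows from \eqref{eq:xib-xip} and \cref{lem:concatenation-factor-null}. Here $\xi_p(k) = N y_p^\theta(k/N)$. Expanding binomially with the uniform bounds \eqref{eq:block-bounds} isolates the leading term $N^{m_b}(y_p^\theta(k/N))^{m_b}\,g_b(w^k)/m_b!$. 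The identity $g_b(w^k) = \xi_{\rho(b)}(1,\bar{u}[w^k])$ holds in all three root cases, including $\rho(b) = X_0$, where $\xi_{X_0}(1)=1=g_b$. Since $g_b$ is affine, it commutes with block averaging. A Riemann-sum comparison then uses that $y^\theta$ is $\eta$-Lipschitz uniformly in $\theta$ and that $g_b(v^\theta)$ is uniformly bounded. This yields $\theta_b + O(N^{-1})$, because $z^\theta_b(1) = \theta_b$.

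The main point to watch is \emph{uniformity in $\theta$}. Every constant must be independent of $\theta$ and $k$: the bound $M$ from compactness of $[-\eta,\eta]^C$, the Lipschitz constant $\eta$ of $y^\theta$, and the bound $\abs{y^\theta}\le 1$ from the reduced trajectories staying in $[-1,1]^{Y\sqcup Z}$. Each of the $N$ blocks contributes an $O(N^{m_b-1})$ error, so the total error is $O(N^{m_b})$, which gives $O(N^{-1})$ after dividing by $N^{m_b+1}$. Checking that these constants really do not depend on $\theta$ is the only delicate point; the rest is bookkeeping.
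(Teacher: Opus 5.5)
Your proposal is correct and follows essentially the same route as the paper: block-averaged reduced controls $w^{\theta,N,k}$ fed into the normal box $\bar u$, and concatenation to keep $L\setminus Y$ at zero and to get $\xi_b(k,u^{\theta,N}) = N y^\theta_b(k/N)$ on $Y$. On $Z$ you then use, like the paper, a binomial expansion with the bounds \eqref{eq:block-bounds}, affinity of $g_b$, and a Riemann-sum comparison, and you additionally spell out the continuity of $\theta\mapsto u^{\theta,N}$, which the paper leaves implicit. One harmless slip: $G\setminus Z = L$ is in general only \emph{contained} in $(L\setminus Y)\sqcup Y$ (roots in $\Rg$ need not belong to $G$), but since you prove vanishing on all of $(L\setminus Y)\sqcup Y$, nothing is lost.
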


\subsection{From uniform approximation to control loops}
\label{subsec:vertical-loops}

We now use the uniform approximation obtained in \cref{lem:C0-averaging-d>1} to prove \cref{prop:vertical-loops}.
We first recall a standard topological argument.

\begin{lemma}
    \label{lem:C0-stability-surjectivity}
    Let $\Theta\subset\R^n$ be a neighborhood of $0$ and $F_N\in C^0(\Theta;\R^n)$ converging uniformly to the identity map.
    Then there exist $\varepsilon>0$ and $N\in\N$ such that $B(0,\varepsilon) \subset F_N(\Theta)$.
\end{lemma}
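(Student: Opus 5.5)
This is a degree-theoretic (Brouwer) argument, and we use it in the following form. Pick $\varepsilon_0>0$ such that the closed ball $\bar B := \bar B(0,2\varepsilon_0)$ is contained in $\Theta$; this is possible because $\Theta$ is a neighborhood of $0$. By uniform convergence there is $N$ with
\[
\sup_{\theta\in\bar B}\abs{F_N(\theta)-\theta}<\varepsilon_0 .
\]
We set $\varepsilon:=\varepsilon_0$ and fix a target $y\in B(0,\varepsilon)$. The aim is to find $\theta\in\bar B$ with $F_N(\theta)=y$.

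\textbf{Fixed-point approach.} We look for a fixed point of the map
\[
\Phi(\theta):=\theta-F_N(\theta)+y .
\]
For $\theta\in\bar B$ we have $\abs{\Phi(\theta)}\le \abs{\theta-F_N(\theta)}+\abs{y}<2\varepsilon_0$. Hence $\Phi$ is a continuous self-map of the compact convex set $\bar B$. Brouwer's fixed point theorem gives $\theta^\ast\in\bar B$ with $\Phi(\theta^\ast)=\theta^\ast$, that is, $F_N(\theta^\ast)=y$. Since $y\in B(0,\varepsilon)$ was arbitrary, $B(0,\varepsilon)\subset F_N(\bar B)\subset F_N(\Theta)$.

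\textbf{Homotopy alternative.} One can argue instead with the homotopy $H_t(\theta)=\theta+t(F_N(\theta)-\theta)$. For $\abs{\theta}=2\varepsilon_0$ we get $\abs{H_t(\theta)}>\varepsilon_0>\abs{y}$, so $y\notin H_t(\partial\bar B)$ for all $t\in[0,1]$. By homotopy invariance, $\deg(F_N,B(0,2\varepsilon_0),y)=\deg(\Id,B(0,2\varepsilon_0),y)=1$, and nonzero degree gives a solution. The Brouwer version is shorter, so I would present that one.

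\textbf{Where care is needed.} The only delicate point is the choice of constants. The ball radius must be at least twice the error bound, so that $\Phi$ maps $\bar B$ into itself, and $\bar B$ must lie inside $\Theta$. Everything else is routine. Note that continuity of $F_N$ alone is used; no differentiability is needed. This matters because the map $\theta\mapsto u^{\theta,N}$ of \cref{lem:C0-averaging-d>1} is only continuous.
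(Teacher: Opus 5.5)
Your proof is correct and is essentially the paper's argument: choose $\overline{B}(0,2\varepsilon)\subset\Theta$, take $N$ so the uniform error is at most $\varepsilon$, and apply Brouwer's fixed-point theorem to $\theta\mapsto y+\theta-F_N(\theta)$ on that closed ball. The degree-theoretic alternative you sketch is also valid but not needed.
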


\begin{proof}
    Let $\varepsilon > 0$ such that $\overline{B}(0,2\varepsilon) \subset \Theta$.
    For $N$ large enough, $|F_N(y) - y| \leq \varepsilon$ for all $y \in \Theta$.
    Fix $z\in B(0,\varepsilon)$ and define $J_{N,z}(y):=z+y-F_N(y)$. 
    This continuous map sends $\overline{B}(0,2\varepsilon)$ into itself. 
    Brouwer's fixed-point theorem yields $y \in \Theta$ such that $J_{N,z}(y)=y$, and therefore $F_N(y)=z$.
\end{proof}

We now conclude the proof.

\begin{proof}[Proof of \cref{prop:vertical-loops}]
    Let $\Xi_G^+(T)$ denote the set on the left-hand side of
    \eqref{eq:vertical-loops}.

    \medskip
    \noindent \emph{Step 1: We prove that there exists $N \geq 1$ and $\varepsilon > 0$ such that $B(0,\varepsilon) \subset \Xi_G^+(N)$.}
    
    For $N\geq 1$, define $F_N : \Theta \to \R^Z$ by
    \begin{equation}
        F_N(\theta) := \left(\frac{\xi_b(N,u^{\theta,N})}{N^{m_b+1}}\right)_{b\in Z}.
    \end{equation}
    By \cref{lem:C0-averaging-d>1}, the maps $F_N$ are continuous and converge uniformly to the identity on $\Theta$.
    Hence \cref{lem:C0-stability-surjectivity} provides $\varepsilon>0$ and $N\geq 1$ such that $B(0,\varepsilon) \subset F_N(\Theta)$.    Since each control $u^{\theta,N}$ satisfies
    \begin{equation}
        \xi_a(N,u^{\theta,N})=0
        \qquad
        \text{for every }a\in G\setminus Z,
    \end{equation}
    it follows that
    \begin{equation}
        \left\{
            \left(N^{m_b+1}z_b\right)_{b\in Z}
            \mid
            z\in B_{\R^Z}(0,\varepsilon)
        \right\}
        \subset \Xi_G^+(N).
    \end{equation}
    In particular, because $N^{m_b+1}\geq 1$ for every $b\in Z$, $B(0,\varepsilon) \subset \Xi_G^+(N)$.
    
    \medskip
    \noindent \emph{Step 2: We next prove that $\Xi_G^+(N)=\R^Z$.}
    For $b \in \Br(X)$, set $n(b) := n_1(b) + \dotsb + n_q(b)$.
    For $\lambda>0$ and $z \in \R^Z$, set $\Lambda_\lambda(z):=\left(\lambda^{n(b)}z_b\right)_{b\in Z}$.
    Applying \eqref{eq:xi-control-homogeneity} with
    $\lambda_1=\dotsb=\lambda_q=\lambda$ shows that $\Lambda_\lambda\bigl(\Xi_G^+(N)\bigr) \subset \Xi_G^+(N)$.
    Moreover, $n(b)\geq 1$ for every $b\in Z$, since $Z\subset G\subset\B_\good$ and $X_0\notin\B_\good$.
    Therefore, for any $z\in\R^Z$, one can choose $\lambda \geq 1$ sufficiently large that $\Lambda_{1/\lambda}(z) \in B(0,\varepsilon)$.
    Since $B(0,\varepsilon)\subset\Xi_G^+(N)$, we obtain $z=\Lambda_\lambda\bigl(\Lambda_{1/\lambda}(z)\bigr)\in\Xi_G^+(N)$.
    Thus $\Xi_G^+(N)=\R^Z$.
    
    \medskip
    \noindent \emph{Step 3: We conclude that, for any $T > 0$, $\Xi_G^+(T)=\R^Z$.}
    Let $T > 0$ and $z\in\R^Z$.
    Since $\Xi_G^+(N)=\R^Z$, there exists a control $u\in L^\infty((0,N);\R^q)$ such that
    \begin{equation}
        \xi_b(N,u)=0
        \quad\text{for every }b\in G\setminus Z,
        \qquad
        \xi_b(N,u)
        =
        \left(\frac{N}{T}\right)^{n_0(b)}z_b
        \quad\text{for every }b\in Z.
    \end{equation}
    Define $u^{N,T}:=\frac{N}{T}u\left(\frac{N}{T}\cdot\right)$.
    By \eqref{eq:xi-time-homogeneity},
    \begin{equation}
        \xi_b(T,u^{N,T})=0
        \quad\text{for every }b\in G\setminus Z,
        \qquad
        \xi_b(T,u^{N,T})=z_b
        \quad\text{for every }b\in Z.
    \end{equation}
    This concludes the proof.
\end{proof}

\newpage

\section{Proof of the necessary condition}
\label{sec:observability}

We prove \cref{thm:necessary} by reducing it to the following purely algebraic ``observability result'', which does not involve vector fields, or controls.
The idea is that, under the conditions of \cref{thm:necessary}, one can find an \emph{embedded system} within \eqref{eq:syst} which is not $L^\infty$-STLC.
This embedded system is not STLC because one of its coordinates is $\xi_b$, the coordinate of the second kind associated with the isolated bad bracket $b$, which satisfies $\xi_b(t,u) \ge 0$.

\begin{lemma}
    \label{lem:xibad>0}
    Let $\B$ be a factor-parity Hall set on $X = X_\good \sqcup X_\bad$ where $X_\good = \{ X_1, \dotsc, X_q \}$ and $X_\bad = \{ X_0 \}$.
    For any $b \in \B_\bad$, $T > 0$, $u \in L^1((0,T);\R^q)$ and $t \in [0,T]$,
    \begin{equation}
        \xi_b(t,u) \ge 0.
    \end{equation}
\end{lemma}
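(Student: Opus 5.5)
We argue by strong induction on the length $\abs{b}$, treating all $b \in \B_\bad$ simultaneously and using the explicit formula of \cref{lem:coord2-rec}. For $b \in \B_\bad$, write its factorization \eqref{eq:b-factorization}, $b = \ad_{a_r}^{m_r}\dotsb\ad_{a_1}^{m_1}(X_i)$. By \eqref{eq:of}, $b$ is bad if and only if either some factor $a_j$ lies in $\B_\bad$, or all factors are good, $\seed(b) = X_0$ and all $m_j$ are even. The base case is $b = X_0$: then $\xi_{X_0}(t,u) = \int_0^t u_0 = t \geq 0$, since $u_0 \equiv 1$.

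\emph{Case 1: all factors good, $\seed(b) = X_0$, all $m_j$ even.} \cref{lem:coord2-rec} gives
\begin{equation*}
\xi_b(t,u) = \int_0^t \prod_{j=1}^r \frac{\xi_{a_j}(s,u)^{m_j}}{m_j!} \, u_0(s) \dd s .
\end{equation*}
Here $u_0 \equiv 1$ and every power $m_j$ is even. The integrand is therefore pointwise nonnegative, and so is $\xi_b(t,u)$. This case needs no induction hypothesis.

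\emph{Case 2: some factor is bad.} Let $j_0$ be the smallest index with $a_{j_0} \in \B_\bad$. By \cref{lem:facto-Hall}, $a_1 < \dotsb < a_r$. Since $\B_\good < \B_\bad$, every $a_j$ with $j \geq j_0$ is also bad. Set
\begin{equation*}
c := \ad_{a_{j_0-1}}^{m_{j_0-1}}\dotsb\ad_{a_1}^{m_1}(X_i),
\end{equation*}
which is the right subbracket lying in $\B$, so that $b = \ad_{a_r}^{m_r}\dotsb\ad_{a_{j_0}}^{m_{j_0}}(c)$. By \cref{lem:facto-Hall}, $a_{j_0} < c$, and hence $c \in \B_\bad$ because $\B_\good < \B_\bad$. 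Iterating \eqref{eq:def:coord2} layer by layer, or reading it off \cref{lem:coord2-rec} after grouping the inner factors, gives
\begin{equation*}
\xi_b(t,u) = \int_0^t \prod_{j \ge j_0} \frac{\xi_{a_j}(s,u)^{m_j}}{m_j!}\, \dot\xi_c(s,u) \dd s .
\end{equation*}
By induction, since $a_j$ and $c$ are shorter than $b$, each $\xi_{a_j} \geq 0$ for $j \geq j_0$, so the product is nonnegative. The sign of $\dot\xi_c$, however, is not controlled by the induction hypothesis. Only $\xi_c \geq 0$ is known, so this is the main obstacle.

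To get past it, I would strengthen the induction statement to include a monotonicity property: for $b \in \B_\bad$, $\dot\xi_b(t,u) \geq 0$ almost everywhere. This implies $\xi_b \geq 0$ because $\xi_b(0,u) = 0$. The strengthened statement does go through inductively. For $X_0$, $\dot\xi_{X_0} = 1$. In Case 1, $\dot\xi_b$ is the nonnegative integrand itself. In Case 2, $\dot\xi_b$ is the product of factors $\xi_{a_j}^{m_j} \geq 0$ (nonnegative as integrals of nonnegative derivatives, by induction) times $\dot\xi_c \geq 0$ (by induction applied to the bad bracket $c$, which is shorter than $b$ when $j_0 \leq r$). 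The key structural facts used are that bad factors are followed only by bad factors, and that the root $c$ is then itself bad; both follow from $\B_\good < \B_\bad$ together with the Hall ordering of \cref{lem:facto-Hall}.
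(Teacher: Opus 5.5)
Your proof is correct and is essentially the paper's: you strengthen the claim to $\dot\xi_b \geq 0$ a.e.\ and induct on $\abs{b}$, using that bad factors come last in the factorization and that the inner subbracket is then bad. The only difference is cosmetic: the paper peels off just the outermost layer $\ad_{a_r}^{m_r}$ (noting that either $a_r$ is bad or $m_r$ is even), whereas you cut at the first bad factor $a_{j_0}$ and treat the all-good-factors case directly. One small citation correction: $a_{j_0} < c$ comes from the second Hall axiom applied to $(a_{j_0}, c) \in \B$, not from the factorization lemma, which gives it only when $j_0 = 1$.
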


\begin{proof}
    We prove by induction on $|b|$ the stronger statement that $\dot{\xi}_b(\cdot,u) \ge 0$ a.e.\ on $(0,T)$ for all $b \in \B_\bad$.
    Since $\xi_b(0,u)=0$, this implies the desired conclusion.

    For $|b|=1$, one has $b=X_0$, hence $\dot{\xi}_b = 1$.
    Let now $b\in\B_\bad$ with $|b|>1$, and write
    \begin{equation}
        b
        =\ad_{a_r}^{m_r}\dotsb\ad_{a_1}^{m_1}(\seed(b))
        \quad \text{so that} \quad
        b = \ad_a^m(c)
    \end{equation}
    where $a := a_r$, $m := m_r$ and $c:=\ad_{a_{r-1}}^{m_{r-1}}\dotsb \ad_{a_1}^{m_1}(\seed(b))$ (with $c=\seed(b)$ when $r=1$).

    We claim that $c\in\B_\bad$ and that either $a\in\B_\bad$ or $m$ is even.
    Indeed, if $a\in\B_\bad$, then $a<c$ and $\B_\good<\B_\bad$, so necessarily $c\in\B_\bad$.
    If $a\in\B_\good$, then $a_1 < \dotsb < a_r = a$ implies $a_1,\dotsc,a_r\in\B_\good$, since $\B_\good<\B_\bad$.
    As $b\in\B_\bad$, the factor-parity property therefore forces $\seed(b) = X_0$ and $m_1, \dotsc, m_r$ to be even.
    In particular $m$ is even, and \eqref{eq:of} shows that $c\in\B_\bad$.

    By \eqref{eq:def:coord2},
    \begin{equation}
        \dot\xi_b
        =\frac{\xi_a^m}{m!}\dot\xi_c.
        \label{eq:bad-xi-induction}
    \end{equation}
    By the induction hypothesis, $\dot\xi_c\geq0$ a.e. Moreover, if $a\in\B_\bad$, then the induction hypothesis also gives $\xi_a\geq0$; while if $a\in\B_\good$, $m$ is even. 
    In either case, $\xi_a^m \ge 0$.
    Hence \eqref{eq:bad-xi-induction} yields $\dot\xi_b\geq0$ a.e. 
\end{proof}

\begin{proposition}[Observability result]
    \label{prop:observability}
    Let $\B$ be a factor-parity Hall set on $X = X_\good \sqcup X_\bad$.
    Let $K \subset \mathcal{L}(X)$ be a Lie subalgebra and $b \in \B_\bad$ satisfy
    \begin{equation}
        \label{eq:no-other-bad}
        \B_\bad\setminus\{b\}\subset K
    \end{equation}
    and
    \begin{equation}
        \label{eq:b-notin-supp-K}
        b\notin\supp_{\B}(K).
    \end{equation}
    Then
    \begin{equation}
        \label{eq:fac*b-notin-supp-K}
        \Fac^*(b)\cap\supp_{\B}(K)=\varnothing.
    \end{equation}
\end{proposition}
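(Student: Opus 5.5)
We argue by contradiction, propagating membership in $\supp_\B(K)$ upward along the factor tree of $b$. Every $d\in\Fac^*(b)\setminus\{b\}$ lies in $\Fac(y)$ for some $y\in\Fac^*(b)$, so the chain $d\in\Fac(y_1)$, $y_1\in\Fac(y_2)$, \dots, $y_s=b$ reaches $b$. It therefore suffices to prove the following \emph{propagation claim}: if $y\in\Fac^*(b)$, $x\in\Fac(y)$ and $x\in\supp_\B(K)$, then $y\in\supp_\B(K)$. Applying it $s$ times would give $b\in\supp_\B(K)$, contradicting \eqref{eq:b-notin-supp-K}. By \eqref{eq:nested}, $y=b$ itself is the only element of $\Fac^*(b)$ outside $\Fac^+(b)$, so this proves \eqref{eq:fac*b-notin-supp-K}.

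\textbf{Bad case.} Take $y$ bad with factorization $y=\ad_{a_r}^{m_r}\dotsb\ad_{a_1}^{m_1}(\seed(y))$, and let $c_k:=\ad_{a_k}^{m_k}\dotsb\ad_{a_1}^{m_1}(\seed(y))$ be its inner partial brackets. The argument in the proof of \cref{lem:xibad>0} shows that if $y$ is bad, then $c_{r-1}$ is bad, and iterating, every $c_k$ with $k<r$ is bad (including $\seed(y)=X_0$ when all factors are good). These are shorter than $y$, hence different from $b$, so by \eqref{eq:no-other-bad} they lie in $K$. Likewise every bad factor $a_k\neq b$ lies in $K$.
Now let $x=a_j$ and pick $z\in K$ with $\pair{z}{a_j}_\B\neq0$. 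Form the iterated bracket
\[
w := \ad_{a_r}^{m_r}\dotsb\ad_{a_{j+1}}^{m_{j+1}}\ad_{z}^{m_j}(c_{j-1}),
\]
where each outer factor $a_k$, $k>j$, is replaced by an element of $K$ having nonzero coordinate along $a_k$. For $k>j$ we may use $a_k$ itself when $a_k$ is bad. When $a_k$ is good and $a_k\in\supp_\B(K)$ already, we use the corresponding element of $K$; when $a_k$ is good and not in $\supp_\B(K)$, this bracketing is unavailable and the case is left open, see the last paragraph. Whenever the replacements exist, $w\in K$ since $K$ is a subalgebra.

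The core computation is then to show $\pair{w}{y}_\B\neq0$, by a leading-term argument in the Hall basis. Expand each replaced element in $\eval(\B)$ and use multilinearity. \cref{lem:left_fact} shows that brackets $[h,c]$ with $h<\lambda(\cdot)$-constraints only produce basis elements with $\lambda\ge\min$, and the basis elements of $\supp_\B$ that can equal $y$ must have $\lambda(\cdot)=a_r$ with exponent exactly $m_r$. A standard triangularity property of the Hall rewriting (as in \cite[Section 9]{Reutenauer2003}) then gives that the coordinate of $y$ in $w$ equals $\prod_k\pair{z_k}{a_k}_\B^{m_k}$, which is nonzero. Here $z_k$ denotes the element of $K$ used in place of $a_k$ (with $z_j=z$), and every other combination of Hall components either fails to match the multidegree of $y$ or produces elements whose left factor differs from $a_r$. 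I would prove this triangularity by induction on $m$, using the maximality of $m_k$ in the factorization.

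\textbf{Main obstacle.} The hard step is good intermediate nodes: $y$ good, or a good outer factor $a_k$ that is not already known to lie in $\supp_\B(K)$. Then neither the inner partials $c_k$ nor the factors are known to lie in $K$, so the bracketing above is unavailable. My first attempt is to avoid stopping at good nodes. Instead of climbing one step at a time, jump from $x$ directly to the first bad ancestor $y'$ on the chain, and run the bracketing construction at $y'$, whose inner partials are all in $K$. The difficulty is that the factor of $y'$ containing $x$ is then a good bracket $g$ with $x\in\Fac^+(g)$, and one must show $g$ appears with nonzero coefficient in some element of $K$. I would try to handle this by a joint induction on length over the statement ``$x\in\Fac^*(g)\cap\supp_\B(K)\Rightarrow$ some element of the subalgebra generated by $K$ and the bad partials of $y'$ has nonzero $g$-coordinate''. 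If this fails, the fallback is a degree-filtration argument projecting onto the multidegree of $g$.
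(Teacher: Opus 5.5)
There is a genuine gap, and it is exactly the case you flag as your ``main obstacle''. That case is in fact essentially the whole statement. Under \eqref{eq:no-other-bad} and \eqref{eq:b-notin-supp-K}, every factor of $b$ is good. Indeed, if some factor were bad, then $a_r$ would be bad. So would $b':=\ad_{a_r}^{m_r-1}\ad_{a_{r-1}}^{m_{r-1}}\dotsb\ad_{a_1}^{m_1}(\seed(b))$, since $a_r<b'$. Both are bad and shorter than $b$, hence lie in $K$, and then $b=(a_r,b')\in K$, a contradiction (this is \cref{lem:irreducible}). By factor-stability of $\B_\good$, all of $\Fac^+(b)$ is good.

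So your ``bad case'' is only ever used for the final step $y=b$. Every other step of your chain sits at a good node, where you have no argument. Even the final step is only partly covered. For $x=a_j$ with $j<r$, you need the good outer factors $a_{j+1},\dots,a_r$ to lie in $\supp_\B(K)$, and nothing provides this, so only $x=a_r$ is treated. There, the triangularity $\pair{w}{b}_\B=\prod_k\pair{z_k}{a_k}_\B^{m_k}$ is asserted, not proved. $K$ is not graded, and $z$ may carry other Hall components of the multidegree of $a_r$, or components whose multidegrees add up correctly. \cref{lem:left_fact} only gives $\lambda(c)\geq a$, which does not rule out contributions to the $b$-coordinate.

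More fundamentally, a local factor-to-parent propagation cannot work. The only leverage in the hypotheses is that bad brackets other than $b$ lie in $K$. The brackets that detect elements of $\Fac^*(b)$ are in general built from bad brackets that are not inner partials of any node of the factor tree. The paper's example $b=\ad_{(X_1,X_2)}^2(X_0)$ shows this. Starting from $p=X_1$, one reaches $b$ through $\ad_{X_1}^2\ad_{X_2}^2(X_0)$, which has coordinate $2$ on $b$. This uses the unrelated bad bracket $\ad_{X_2}^2(X_0)$ and skips the good node $(X_1,X_2)$ entirely.

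What is missing is the paper's mechanism, which works bottom-up:
\begin{enumerate}
\item Restrict to the letters of $b$, so that $p:=\min\Fac^*(b)$ is the minimal letter and is good.
\item Eliminate $p$ by Lazard elimination (\cref{lem:lazard}). This preserves the factor-parity structure and yields a new datum $(Y,\B_Y,\Phi^{-1}(K\cap I),\hat b)$ with $\abs{\Fac^*_Y(\hat b)}=\abs{\Fac^*(b)}-1$. Induction then excludes $\Fac^*(b)\setminus\{p\}$ from $\supp_\B(K\cap I)$.
\item Exclude $p$ itself using the support result \cref{thm:support}, a global statement proved via the dual PBW basis. It provides an iterated bracket of $p$ and of bad brackets $\neq b$ with nonzero coordinate on some $c\in\Fac^*(b)\setminus\{p\}$.
\item Substitute $h=p+r\in K$ for $p$ in that bracket. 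This keeps the $c$-coordinate precisely because $p$ is a letter and $r\in I$ has no component of multidegree $e_p$. The result lies in $K\cap I$, contradicting step 2.
\end{enumerate}
Step 4 is the clean replacement for your unproved triangularity. Your fallback suggestions, a joint induction or a degree filtration, are too vague to stand in for step 3.
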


\begin{proof}[Proof of \cref{thm:necessary}]
    Let $\B$ be a factor-parity Hall set on $X = X_\good \sqcup X_\bad$ where $X_\good = \{ X_1, \dotsc, X_q \}$ and $X_\bad = \{ X_0 \}$.
    Let $K_f \subset \mathcal{L}(X)$ be the kernel of the linear map $B \mapsto f_B(0) \in \R^d$.
    Since the Lie bracket of two vector fields vanishing at $0$ vanishes at~$0$, $K_f$ is a Lie subalgebra of $\mathcal{L}(X)$.
    For all $a \in \B_\bad \setminus \{b\}$, $f_a(0) = 0$ by \eqref{eq:syst-no-other-bad} so $a \in K_f$ and $K_f$ satisfies \eqref{eq:no-other-bad}.
    Moreover $K_f$ also satisfies~\eqref{eq:b-notin-supp-K}.
    Otherwise, there would exist $z \in K_f$ such that $b \in \supp_\B (z)$.
    Up to rescaling, one can assume that $\pair{z}{b}_\B = 1$.
    Hence $z = b + \sum_{a \in \B \setminus \{ b \}} \lambda_a a$ with a finite sum.
    Thus $0 = f_z(0) = f_b(0) + \sum_{a \in \B \setminus \{b \}} \lambda_a f_a(0)$ and $f_b(0) \in \vect \{ f_a(0) \mid a \in \B, \ a \neq b \}$, contradicting \eqref{eq:syst-b-notin-supp-K}.
    Thus \cref{prop:observability} applies and proves that $K_f \subset V$ where $V := \vect (\B \setminus \Fac^*(b)) \subset \mathcal{L}(X)$.
    
    By \cite[Lemma 5.32]{BeauchardLeBorgneMarbach2026}, $V$ is a Lie subalgebra of $\mathcal{L}(X)$ because $H := \Fac^*(b)$ is factor-stable.
    Let $\Sigma_H$ be the canonical system of the coordinates of the second kind associated with $H$, as in \cref{def:canonical}.
    By \cite[Proposition 5.7]{BeauchardLeBorgneMarbach2026}, $\Sigma_H$ is embedded in \eqref{eq:syst}, i.e.\ there exists a smooth local submersion $\theta : \R^d \to \R^H$ (with $\theta(0) = 0$ and $D\theta(0)$ onto) such that $y(t;u) = \theta(x(t;u))$, where $y$ is the solution to $\Sigma_H$ and $x$ the solution to~\eqref{eq:syst}.
    By \cref{lem:xibad>0}, $\xi_b(t,u) \ge 0$.
    Since $y_b(t;u) = \xi_b(t,u)$, the system $\Sigma_H$ is not $L^\infty$-STLC, and hence neither is \eqref{eq:syst}. 
\end{proof}

\subsection{Strategy for the proof of the observability result}

We plan to prove \cref{prop:observability} by induction on $n:=\abs{\Fac^*(b)}$, its statement being understood as universally quantified over all data $(X,\B,K,b)$.
The case $n=1$ is \eqref{eq:b-notin-supp-K}.
In the induction step, one first discards the letters not occurring in $b$ (\cref{lem:restriction}), so that $p:=\min\Fac^*(b)$ becomes the minimal letter of the alphabet, and is a good one (\cref{sec:irreducible}). 
Lazard elimination of~$p$ then manufactures a \emph{new} instance $(Y,\B_Y,K_Y,\hat{b})$ of the same statement, with $\abs{\Fac^*_Y(\hat{b})}=n-1$ (\cref{lem:lazard}).
Applying the induction hypothesis to it excludes $\Fac^*(b)\setminus\{p\}$ from $\supp_{\B}(K\cap\Lie(Y))$. 
It remains to exclude $p$ itself: if some $h\in K$ had $\pair{h}{p}_{\B}=1$, \cref{thm:support} below would produce $c\in\Fac^*(b)\setminus\{p\}$ detected by $\Lie(\{p\}\sqcup(\B_\bad\setminus\{b\}))$, and substituting $h$ for $p$ would move that detection inside $K\cap\Lie(Y)$, a contradiction.

The following technical result will be proved in \cref{sec:support}.

\begin{proposition}[Support result]
    \label{thm:support}
    Let $\B$ be a factor-parity Hall set on $X=X_\good\sqcup X_\bad$. Let $b\in\B_\bad\setminus X$ with $\Fac(b)\subset\B_\good$, and assume $p:=\min\Fac^*(b)\in X_\good$.
    Then
    \begin{equation}
        \label{eq:fac*b+p-cap}
        \bigl(\Fac^*(b)\setminus\{p\}\bigr)
        \cap
        \supp_{\B}\Lie\bigl(\{p\}\sqcup(\B_\bad\setminus\{b\})\bigr)
        \neq\varnothing .
    \end{equation}
\end{proposition}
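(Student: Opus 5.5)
We would prove \eqref{eq:fac*b+p-cap} by exhibiting a single explicit element of $\Lie\bigl(\{p\}\sqcup(\B_\bad\setminus\{b\})\bigr)$ whose $\B$-support meets $\Fac^*(b)\setminus\{p\}$. The natural target is $c:=b$ itself; note $b\neq p$ since $b\notin X$. We aim to write $\eval(b)$ modulo brackets in other Hall elements as an iterated bracket of $p$ with another bad bracket.

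\emph{Step 1 (normal form of $b$).} Write $b=\ad_{a_r}^{m_r}\dotsb\ad_{a_1}^{m_1}(\seed(b))$ as in \eqref{eq:b-factorization}. Since $\Fac(b)\subset\B_\good$ and $b\in\B_\bad$, \eqref{eq:of} forces $\seed(b)\in X_\bad$ and all $m_j$ even. By \cref{lem:min-fac}, $p$ is the minimal letter of $\supp\deg_X(b)$, and $p\in\Fac^*(b)$. Since $p$ is a good letter and $a_1<\seed(b)$, $p$ occurs as a left factor somewhere in the nested structure of $\Fac^*(b)$.

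\emph{Step 2 (an easy case).} Suppose $\supp\deg_X(b)\subset\{p\}\sqcup X_\bad$. Then $\eval(b)\in\Lie(\{p\}\sqcup X_\bad)$. Since $b\notin X$, we have $X_\bad\subset\B_\bad\setminus\{b\}$, so $b\in\supp_\B\Lie\bigl(\{p\}\sqcup(\B_\bad\setminus\{b\})\bigr)$ and we may take $c=b$. The same argument works for any $c\in\Fac^*(b)\setminus\{p\}$ built only from $p$ and bad letters, for instance a minimal non-letter element $\ad_p^k(y)$ of $\Fac^*(b)$ with $y\in X_\bad$.

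\emph{Step 3 (general case, induction on $\abs{b}$).} In general, good letters $y>p$ occur in $b$. The plan is to absorb every occurrence of a good letter other than $p$ into \emph{bad} auxiliary brackets. As a model, consider $b=\ad_{(p,y)}^2(X_0)$ with $y$ a good letter, $y\neq p$. Then $d:=\ad_y^2(X_0)$ belongs to $\B_\bad$: it is a Hall element because good letters precede bad ones, and all its exponents are even. Also $d\neq b$, and $\ad_p^2(d)$ has the same multidegree as $b$. We expect $\pair{\ad_p^{2}(d)}{b}_\B\neq0$.

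In general we take $d\in\B_\bad\setminus\{b\}$ with $\deg_X(d)=\deg_X(b)-n_p(b)\,p$, obtained by erasing $p$ from the factors of $b$ and re-doubling the exponents so they stay even. We then compute the coefficient $\pair{\ad_p^{n_p(b)}(d)}{b}_\B$. If this is awkward, we apply the induction hypothesis to a smaller bad bracket $\ad_{a_j}^{m_j}\dotsb\ad_{a_1}^{m_1}(\seed(b))$ and pick $c$ among the $a_j$ or their factors instead of $b$.

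\emph{Main obstacle.} The hard step is proving that this coefficient does not vanish: Hall rewriting (\cref{lem:left_fact}) can produce cancellations. We would first try to evaluate the pairing by duality with coordinates of the second kind. Take the canonical system $\Sigma_{\Fac^*(b)}$ of \cref{def:canonical} and controls in which only the letters of $b$ are active. Then the coefficient of $b$ is read off from the leading asymptotics of $\xi_b$. The positivity mechanism of \cref{lem:xibad>0} (all exponents even, bad seed) should give a strictly signed, hence nonzero, contribution. The triangularity of the Hall order (all terms $c$ in the expansion satisfy $\lambda(c)\ge p$) should rule out any cancelling contribution.
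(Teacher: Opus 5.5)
Your Step 2 is correct, and your model case is the paper's own worked example. The general case, however, has a genuine gap. The entire content of the statement is the non-vanishing of a Hall coefficient, and you leave it as an expectation. The tools you suggest do not supply it:
\begin{itemize}
\item \cref{lem:xibad>0} says $\xi_b\ge0$ for \emph{every} control. That is a sign constraint (it is precisely the obstruction to controllability), not a statement that some Lie element has a nonzero $b$-coordinate. Nothing in your sketch converts asymptotics of $\xi_b$ into $\pair{\ad_p^{n}(d)}{b}_\B$.
\item Your triangularity is vacuous. Every non-letter $c\in\B$ of multidegree $\deg_X(b)$ satisfies $\lambda(c)\ge\min\Fac^*(c)=\min\supp\deg_X(b)=p$ (\cref{lem:min-fac}), so it excludes nothing.
\item The recipe for $d$ is not well defined. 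Re-doubling exponents changes the multidegree, and erasing $p$ can kill the bracket. For instance, for $b=\ad^2_{(y,(p,y))}(X_0)$, with $y>p$ a good letter and $y<(p,y)$, erasure gives $\ad^2_{[y,y]}(X_0)=0$, whereas the choice that works is $d=\ad_y^4(X_0)$.
\item The fallback is unsound. The induction hypothesis for $b'=\ad_{a_j}^{m_j}\dotsb\ad_{a_1}^{m_1}(\seed(b))$ concerns $\Lie(\{p'\}\sqcup(\B_\bad\setminus\{b'\}))$. That algebra has $b$ itself among its generators, so it is not contained in $\mathfrak{h}:=\Lie(\{p\}\sqcup(\B_\bad\setminus\{b\}))$. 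Moreover, its output may be $b'$, which does not belong to $\Fac^*(b)$.
\end{itemize}

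The missing idea is duality with the PBW basis. The paper proceeds as follows:
\begin{itemize}
\item It writes $\pair{z}{b}_\B=\pair{z}{S_b}$, where $S_b=S_{M_b}\seed(b)$ is a shuffle of the duals of the factors.
\item It takes the $\prec$-largest PBW term $M$ of $\ell_p^{\nu(b)}S_{M_b}$. All multiplicities of $M$ are even because the $m_j$ are (\cref{lem:target}).
\item It splits $M=M_\bad M_\good$ and reassembles $M_\good$ into a bad bracket $d$ with seed $\seed(b)$.
\item By PBW triangularity, it shows that $Z=D_p^{\nu(b)}(M_\bad\, d)\in U(\mathfrak{h})$ pairs nontrivially with $S_b$.
\end{itemize}
When $M_\bad=\one$, $Z=\ad_p^{\nu(b)}(d)$ is exactly the kind of element you propose; this is where $\ad_y^4(X_0)$ above comes from, and then $b$ itself is detected.

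But $M_\bad\neq\one$ does occur. Take $b=\ad_c^2(X_0)$ with $c=\ad_y^2\ad_p(X_0)$, when $y<(p,X_0)$. Then $\ell_pS_c=yyX_0=S_{\ad_y^2(X_0)}$ is dual to a bad bracket, so $Z$ is a genuine product in $U(\mathfrak{h})$ rather than a Lie element. This is why the paper needs the annihilation step (\cref{prop:annihilation}, based on $\partial_aS_c=0$ for $a\notin\Fac^*(c)$). That step turns ``$\pair{\cdot}{S_b}\neq0$ on $U(\mathfrak{h})$'' into the conclusion that \emph{some} $c\in\Fac^*(b)\setminus\{p\}$ lies in $\supp_\B\mathfrak{h}$, which is all the statement asks for.

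Your stronger target $b\in\supp_\B\mathfrak{h}$ is in fact true, but only a posteriori. It follows from \cref{prop:observability} applied to $K=\mathfrak{h}\ni p$, and the proof of that proposition uses the very statement you are trying to prove.
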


Let us give examples to illustrate, in some particular cases, that this result holds.
Consider $X = \{ X_0, X_1, X_2 \}$ with $X_\good = \{ X_1, X_2 \}$ and $X_\bad = \{ X_0 \}$.
Let $\B$ be a factor-parity Hall set on $X$ whose order refines length within $\B_\good$ and $\B_\bad$.
Assume that $X_1 < X_2$.
\begin{itemize}
    \item \textbf{Case} $b = \ad_{X_1}^2(X_0)$.
    Then $\Fac^*(b) = \{ b, X_1 \}$ and $p = X_1$.
    Since $X_0 \in \B_\bad \setminus \{ b \}$, one has $\Lie(\{p\} \cup (\B_\bad \setminus \{b\})) \supset \Lie(\{X_0, X_1\})$.
    Thus $b$ belongs to the intersection in \eqref{eq:fac*b+p-cap}.
    More generally, this works as soon as $b \in \Lie(\{X_0,X_1\})$.
    
    \item \textbf{Case} $b = \ad_{(X_1,X_2)}^2(X_0)$.
    Then $\Fac^*(b) = \{ b, (X_1, X_2), X_1 \}$ and $p = X_1$.
    Then $\ad_{X_2}^2(X_0) \in \B_\bad \setminus \{ b \}$, so $z := \ad_{X_1}^2 \ad_{X_2}^2(X_0) \in \Lie(\{p\}\sqcup(\B_\bad\setminus\{b\}))$.
    Using the Jacobi identity, one obtains
    \begin{equation}
        \begin{aligned}
            z=
            2b&+\ad_{X_2}^2\ad_{X_1}^2(X_0)
            +4\bigl[[X_1,X_2],[X_2,[X_1,X_0]]\bigr] \\
            &-2\bigl[[X_1,X_0],[X_2,[X_1,X_2]]\bigr]
            +\bigl[[X_2,[X_1,[X_1,X_2]]],X_0\bigr] \\
            &-2\bigl[[X_2,X_0],[X_1,[X_1,X_2]]\bigr].
        \end{aligned}
    \end{equation}
    Every bracket on the right-hand side belongs to $\B$. 
    Thus this is the expansion of $z$ on $\B$ and $\pair{z}{b}_\B = 2$, which proves \eqref{eq:fac*b+p-cap} in this case.
\end{itemize}

\subsection{Irreducible bad brackets}
\label{sec:irreducible}

We start with an elementary observation on the structure of brackets satisfying \eqref{eq:no-other-bad} and \eqref{eq:b-notin-supp-K}.

\begin{lemma}
    \label{lem:irreducible}
    Let $\B$ be a factor-parity Hall set on $X=X_\good\sqcup X_\bad$.
    Let $K\subset\mathcal{L}(X)$ be a Lie subalgebra and $b\in\B_\bad$ satisfy \eqref{eq:no-other-bad} and \eqref{eq:b-notin-supp-K}.
    Then $\Fac(b) \subset \B_\good$.

    If moreover $\abs{\Fac^*(b)} > 1$, then $b \notin X$ and $p := \min \Fac^*(b) \in X_\good$.
\end{lemma}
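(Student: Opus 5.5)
The plan is to treat the two claims separately, using only the support condition \eqref{eq:b-notin-supp-K}, the inclusion \eqref{eq:no-other-bad}, and the structure of $\B$.

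\emph{First claim: $\Fac(b)\subset\B_\good$.} I argue by contradiction. Suppose some factor $a\in\Fac(b)$ is bad, and write the factorization \eqref{eq:b-factorization} of $b$. Let $j$ be the largest index with $a_j\in\B_\bad$.

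Set $c:=\ad_{a_{j-1}}^{m_{j-1}}\dotsb\ad_{a_1}^{m_1}(\seed(b))$, so that $b=\ad_{a_r}^{m_r}\dotsb\ad_{a_j}^{m_j}(c)$. Since $(a_j,c')\in\B$ for the appropriate inner bracket $c'$, the Hall axioms give $a_j<c'$. Hence every element from the $a_j$-level upward lies above a bad element. As $\B_\good<\B_\bad$, all of $c$'s superbrackets $\ad_{a_j}^{k}(c)$ with $k\ge1$ are bad. The same holds for the further brackets, which are all greater than $a_j$ by \cref{lem:facto-Hall}.

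The simpler route is the following. Since $a_j<b$ and both lie in $\B_\bad$, we have $a_j\neq b$, so $a_j\in K$ by \eqref{eq:no-other-bad}. Let $d:=\ad_{a_j}^{m_j-1}\dotsb(c)$, the bracket whose outermost layer is $a_j$ applied once fewer; it satisfies $d\ge a_j$. Two cases arise. If $d$ is bad and different from $b$, then $d\in K$. If instead $d$ is a letter or good, I would reduce further inward.

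To keep this manageable, I instead use the induction built into the structure. Take the innermost bad bracket $e:=\ad_{a_j}(d_0)$ in the chain $b=\ad_{a_r}\dotsb$, written as a sequence of single ad's. Every intermediate bracket $e=e_0,e_1,\dots,e_N=b$ satisfies $e_k\ge e>a_j$, hence is bad, and $e_k\neq b$ for $k<N$. Then $e_{N-1}\in K$ and $\lambda(b)\in\Fac(b)$.

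If $\lambda(b)$ is bad, then $\lambda(b)\in K$ and $b=(\lambda(b),e_{N-1})\in\B$. Because $K$ is a Lie subalgebra, $[\lambda(b),e_{N-1}]=b\in K$, contradicting \eqref{eq:b-notin-supp-K}.

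If $\lambda(b)$ is good, I induct on the number of outer good layers. I must show $b$ lies in the support of some element of $K$ anyway. This is the main obstacle, since good elements are not in $K$ a priori. I expect the resolution to go through the parity condition \eqref{eq:of}. The point is that a bracket having a bad factor is automatically bad, so any bad factor placement is allowed. The case I must exclude is really when the outermost factor $a_r$ is bad.

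So I would first check that if some factor is bad then $a_r$ is bad. This follows because $a_j$ bad and $a_j<a_r$ force $a_r\in\B_\bad$ via $\B_\good<\B_\bad$. That resolves the obstacle: $\lambda(b)=a_r$ is bad, and the bracketing argument above applies with $e_{N-1}=\ad_{a_r}^{m_r-1}(\dots)$. I still need $e_{N-1}$ bad and $\neq b$. It is bad since it is $>a_{r-1}$, or it is a bad letter or bracket greater than $a_r$. This yields the contradiction.

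\emph{Second claim.} If $\abs{\Fac^*(b)}>1$ then $b\notin X$, since the factor closure of a letter is the singleton $\{b\}$. By \cref{lem:min-fac}, $p=\min\Fac^*(b)\in X$. It remains to exclude $p=X_0$, the only bad letter.

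We have $p\le a_1<\seed(b)$, so $p\neq b$. The element $p$ lies in $\Fac^+(b)\subset\Fac^*(\Fac(b))$. By the first claim, $\Fac(b)\subset\B_\good$, and $\B_\good$ is factor-stable by \eqref{eq:of}, so $\Fac^+(b)\subset\B_\good$. Hence $p\in\B_\good\cap X=X_\good$ by \cref{rem:of}.
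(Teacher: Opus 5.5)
Your final argument is the paper's: a bad factor forces $a_r\in\B_\bad$, the inner bracket $b'$ with $b=(a_r,b')$ satisfies $a_r<b'$ by the second Hall axiom and is therefore bad, so $a_r,b'\in\B_\bad\setminus\{b\}\subset K$ gives $b\in K$, and the second claim follows from $\Fac^+(b)\subset\B_\good$ and \cref{rem:of}, as in the paper. In a write-up, drop the exploratory detours, which contain unjustified claims: $e_k\geq e$ for the intermediate brackets, ``bad since it is $>a_{r-1}$'', and $X_0$ being the only bad letter, which is not assumed here. Also obtain $p\neq b$ directly from $p\in X$ and $b\notin X$.
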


\begin{proof}
    Assume some $a_j\in\Fac(b)$ is bad.
    Since $a_1 < \dotsb < a_r$ and $\B_\good<\B_\bad$, $a_r \in \B_\bad$.
    Put
    \begin{equation}
        b':=\ad_{a_r}^{m_r-1}\ad_{a_{r-1}}^{m_{r-1}}\cdots\ad_{a_1}^{m_1}(\seed(b))\in\B,
        \qquad\text{so that}\qquad
        b=(a_r,b').
    \end{equation}
    By the second Hall axiom, $a_r < b'$, hence $b' \in \B_\bad$.
    Since $\abs{b} = \abs{a_r} + \abs{b'}$, both $a_r$ and $b'$ belong to $\B_\bad\setminus\{b\} \subset K$ by \eqref{eq:no-other-bad}.
    As $K$ is a Lie subalgebra, $b = (a_r,b') \in K$, contradicting \eqref{eq:b-notin-supp-K}.
    Hence $\Fac(b)\subset\B_\good$, and factor-stability of $\B_\good$ gives $\Fac^+(b)\subset\B_\good$, i.e.\ $\Fac^*(b)\cap\B_\bad=\{b\}$.
    By~\eqref{eq:of}, since $\Fac(b) \subset \B_\good$ and $b \in \B_\bad$, $\seed(b) \notin X_\good$ and no $m_j$ is odd.

    If $b \in X$, $\Fac^*(b) = \{ b \}$.
    Hence $\abs{\Fac^*(b)} > 1$ entails that $b \notin X$, while $p \in X$ by \cref{lem:min-fac}~\ref{it:min-fac-2}.
    Hence $p \neq b$ and $p \in \Fac^+(b)\subset\B_\good$, so $p\in\B_\good\cap X=X_\good$ by \cref{rem:of}.
\end{proof}

\subsection{Two elementary reductions}\label{sec:reductions}

The induction of \cref{sec:proof-main} produces, from a datum $(X,\B,K,b)$, a new datum of exactly the same nature, over a new alphabet. 
The two lemmas of this subsection manufacture it.

\begin{lemma}[Restriction]
    \label{lem:restriction}
    Let $\B$ be a factor-parity Hall set on $X=X_\good\sqcup X_\bad$ and let $S\subset X$.
    Set $\B_S:=\B\cap\Br(S)$, with the induced order. Then
    \begin{enumerate}[label=\textup{(\roman*)},leftmargin=2em]
        \item\label{it:restr-hall} $\B_S$ is a Hall set on $S$, and $\B_S=\{c\in\B \mid \supp\deg_X(c)\subset S\}$;
        \item\label{it:restr-fac} for $c\in\B_S$, the factorizations of $c$ on $S$ and on $X$ coincide; in particular $\Fac^*_S(c)=\Fac^*(c)$;
        \item\label{it:restr-of} $\B_S$ is of the factor-parity class for $S=(S\cap X_\good)\sqcup(S\cap X_\bad)$, with $(\B_S)_\bad=\B_\bad\cap\Br(S)$;
        \item\label{it:restr-rho} the Lie morphism $\rho_S:\mathcal{L}(X)\to\mathcal{L}(S)$ determined by $\rho_S|_S=\Id$ and $\rho_S|_{X\setminus S}=0$ satisfies
        \begin{equation}
            \label{eq:retraction}
            \pair{\rho_S(v)}{c}_{\B_S}=\pair{v}{c}_{\B}
            \quad \text{for all} \quad
            v\in\mathcal{L}(X),\ c\in\B_S.
        \end{equation}
    \end{enumerate}
\end{lemma}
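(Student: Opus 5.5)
The four items will be proved in order, each one relying on the previous ones.

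\emph{Item \ref{it:restr-hall}.} First, the two descriptions of $\B_S$ agree: for $c\in\Br(X)$, $c\in\Br(S)$ iff every letter occurring in $c$ lies in $S$, i.e.\ iff $\supp\deg_X(c)\subset S$. Then check the three Hall axioms of \cref{def:Hall} for $(\B_S,<)$.
\begin{itemize}
    \item $S\subset\B_S$ holds because $X\subset\B$.
    \item For $a,b\in\Br(S)$, the bracket $(a,b)$ lies in $\Br(S)$. Moreover $\mu(b)\in\Br(S)$ whenever $b\notin S$. Hence the condition ``$(a,b)\in\B$ iff $a,b\in\B$, $a<b$, and $b\in X$ or $\lambda(b)\le a$'' restricts verbatim to $\Br(S)$.
    \item A letter of $X\setminus S$ cannot occur in $b\in\Br(S)$, so the clause ``$b\in X$'' is the same as ``$b\in S$''.
    \item The third axiom is inherited directly.
\end{itemize}

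\emph{Item \ref{it:restr-fac}.} The factorization \eqref{eq:b-factorization} is a purely magma-theoretic decomposition and does not depend on the ambient alphabet. All the $a_j$ and the seed are subwords of $c$, hence lie in $\Br(S)$. By uniqueness in \cref{lem:factorization}, the two factorizations coincide, so $\Fac^*_S(c)=\Fac^*(c)$.

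\emph{Item \ref{it:restr-of}.} Set $(\B_S)_\good:=\B_\good\cap\Br(S)$ and $(\B_S)_\bad:=\B_\bad\cap\Br(S)$. The order condition $(\B_S)_\good<(\B_S)_\bad$ is inherited from $\B$. Condition \eqref{eq:of} for $c\in\B_S$ involves only three ingredients: the factors of $c$ (which lie in $\B_S$ by item \ref{it:restr-fac}), the seed (a letter of $S$), and the exponents $m_j$. Since $\seed(c)\in X_\good$ iff $\seed(c)\in S\cap X_\good$, the equivalence \eqref{eq:of} over $X$ transfers verbatim to $S$ with the partition $S=(S\cap X_\good)\sqcup(S\cap X_\bad)$.

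\emph{Item \ref{it:restr-rho}.} This is the only step with content.

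The key claim is that, for $c\in\B$,
\begin{equation*}
\rho_S(\eval(c))=\begin{cases}\eval(c) & \text{if } c\in\B_S,\\ 0 & \text{otherwise.}\end{cases}
\end{equation*}
This follows by induction on $|c|$:
\begin{itemize}
    \item for a letter, it is the definition of $\rho_S$;
    \item for $c=(a,b)$, we have $\rho_S(\eval(c))=[\rho_S(\eval(a)),\rho_S(\eval(b))]$;
    \item if $c\notin\Br(S)$, then $a$ or $b$ contains a letter outside $S$, so one of the two factors is $0$ by the induction hypothesis.
\end{itemize}

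To conclude, expand $v=\sum_{c\in\B}\pair{v}{c}_\B\,\eval(c)$. Applying $\rho_S$ gives $\rho_S(v)=\sum_{c\in\B_S}\pair{v}{c}_\B\,\eval(c)$. By item \ref{it:restr-hall} and \cref{thm:viennot}, $\eval(\B_S)$ is a basis of $\mathcal{L}(S)$, where $\mathcal{L}(S)$ is viewed as the subalgebra of $\mathcal{L}(X)$ generated by $S$ (it is free on $S$). Uniqueness of coordinates in this basis then yields \eqref{eq:retraction}.

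The main obstacle is the identification of $\mathcal{L}(S)$ with that subalgebra, together with the compatibility of $\eval$ with $\rho_S$. Here $\rho_S$ composed with the inclusion $\mathcal{L}(S)\hookrightarrow\mathcal{L}(X)$ is the identity on $\mathcal{L}(S)$, so $\rho_S$ is a retraction. Everything else is bookkeeping.
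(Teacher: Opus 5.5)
Your proof is correct and follows essentially the same route as the paper: the Hall axioms, the factorization and the factor-parity rule \eqref{eq:of} transfer verbatim to $\Br(S)$, and item (iv) follows by expanding $v$ in $\eval(\B)$ and applying $\rho_S$. The only difference is that you make explicit the induction showing that $\rho_S$ fixes $\eval(c)$ for $c\in\B_S$ and kills it for $c\in\B\setminus\B_S$, which the paper compresses into the identity $\pair{\rho_S(a)}{c}_{\B_S}=\delta_{a,c}$.
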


\begin{proof}
    \ref{it:restr-hall} For $c\in\Br(X)$, $c\in\Br(S)$ iff $\supp\deg_X(c)\subset S$. 
    The three axioms of \cref{def:Hall} for~$\B_S$ follow from those for $\B$, using $X\cap\Br(S)=S$.

    \ref{it:restr-fac} In \eqref{eq:b-factorization} for $c\in\B_S$, all $a_j$ and $\seed(c)$ lie in $\Br(S)$, hence in $\B_S$; uniqueness in \cref{lem:factorization} identifies the two factorizations, and \eqref{eq:nested} gives $\Fac^*_S(c)=\Fac^*(c)$ by induction on $\abs{c}$.

    \ref{it:restr-of} The order is induced, so $\B_\good\cap\Br(S)<\B_\bad\cap\Br(S)$. 
    By \ref{it:restr-fac}, the right-hand side of \eqref{eq:of} is the same on $S$ and on $X$, whence the statuses agree by induction on $\abs{c}$.

    \ref{it:restr-rho} 
    By linearity $\rho_S(v) = \sum_{a \in \B} \pair{v}{a}_\B \rho_S(a)$ and $\pair{\rho_S(a)}{c}_{\B_S} = \delta_{a,c}$.
\end{proof}

The following lemma, which states that Lazard elimination preserves factor-parity Hall sets, is a key argument of our inductive proof.

\begin{lemma}[Lazard elimination]
    \label{lem:lazard}
    Let $\B$ be a factor-parity Hall set on $X=X_\good\sqcup X_\bad$ and $p := \min X \in X_\good$.
    Define a new alphabet $Y = Y_\good \sqcup Y_\bad$ as the set of pairs
    \begin{equation}
        Y:= \big\{ y[n] : y\in X\setminus\{p\},\ n\in\N \big\},
        \quad \text{and} \quad
        Y_\bad := \big\{ y[n] : y \in X_\bad \text{ and $n$ even} \big\}.
    \end{equation}
    Let $\Psi:\Br(Y)\to\Br(X)$ be the magma morphism with $\Psi(y[n]):=\ad_p^n(y)$, $\Phi:\mathcal{L}(Y)\to\mathcal{L}(X)$ be the Lie morphism with $\Phi(y[n]):=\eval\ad_p^n(y)$.
    Set $\B_Y:=\Psi^{-1}(\B)$, ordered by $c<c'\iff\Psi(c)<\Psi(c')$. 
    Then
    \begin{enumerate}[label=\textup{(\roman*)},leftmargin=2em]
        \item\label{it:laz-hall} $\Psi$ is injective, $\Psi(\B_Y)=\B\setminus\{p\}$, and $\B_Y$ is a Hall set on $Y$;
        \item\label{it:laz-alg} $\Phi$ is injective, $\Phi\circ\eval=\eval\circ\Psi$, and $I:=\Phi(\mathcal{L}(Y))$ is an ideal of $\mathcal{L}(X)$ with $\mathcal{L}(X)=\R p\oplus I$;
        \item\label{it:laz-coord} $\eval(\B\setminus\{p\})$ is a basis of $I$ and
        \begin{equation}
            \label{eq:coord-transport}
            \pair{\Phi(v)}{\Psi(c)}_{\B}=\pair{v}{c}_{\B_Y}
            \quad \text{for all} \quad v\in\mathcal{L}(Y),\ c\in\B_Y;
        \end{equation}
        \item\label{it:laz-of} $\B_Y$ is of the factor-parity class for $Y=Y_\good\sqcup Y_\bad$, and $\Psi\bigl((\B_Y)_\bad\bigr)=\B_\bad$;
        \item\label{it:laz-fac} $\Psi\bigl(\Fac^*_Y(c)\bigr)=\Fac^*(\Psi(c))\setminus\{p\}$ for every $c\in\B_Y$.
    \end{enumerate}
\end{lemma}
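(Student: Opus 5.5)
The plan is to establish the five items essentially in the order (i), (ii)+(iii), (iv), (v), using throughout one structural description of the elements of $\B$ distinct from $p$. Because $p=\min X$ and $p\in\B$, every $b\in\B\setminus X$ has factorization $b=\ad_{a_r}^{m_r}\dotsb\ad_{a_1}^{m_1}(\seed(b))$ with $a_1<\seed(b)$ by \cref{lem:facto-Hall}. The first step is to show that for $b\in\B\setminus\{p\}$, unfolding every maximal block $\ad_p^n(y)$ with $y\neq p$ a letter produces a unique preimage under $\Psi$, and that $\Psi(c)=p$ never happens. Injectivity of $\Psi$ is then an induction on length: the image of a letter $y[n]$ is $\ad_p^n(y)$ with $y\neq p$, and the image of a pair $(c,c')$ has left factor $\Psi(c)\neq p$. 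Two images $\Psi(c)=\Psi(c')$ are therefore decomposed consistently, because letters $y[n]$ are exactly recognised as maximal left-$p$ towers on a non-$p$ letter.

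For (i) I will prove, by induction on length, that $\Psi(\Br(Y))$ is the set of $b\in\Br(X)\setminus\{p\}$ in which $p$ appears only as a left factor at positions $\ad_p(\cdot)$ whose right argument is a letter or another $\ad_p(\cdot)$ tower. Next I show that every $b\in\B\setminus\{p\}$ has this form, which is the standard Lazard/Hall compatibility. If $(a,c)\in\B$ with $c=p$, then $a<p$, which is impossible since $p$ is minimal in $\B$: indeed $p=\min X$ and $\min\B\in X$ by \cref{lem:min-fac}. If $(p,c)\in\B$ with $c=(c',c'')$, then $c'\le p$ forces $c'=p$. So $p$ only appears in towers $\ad_p^n(y)$. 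The Hall axioms for $\B_Y$ transfer through $\Psi$ because the order is pulled back. The condition ``$\mu(b)\in Y$ or $\lambda\mu(b)\le\lambda(b)$'' needs a short case check when $\mu(c)=y[n]$ with $n\ge1$: then $\Psi$ of it has left factor $p\le\Psi(\lambda c)$, which matches the letter clause in $Y$.

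For (ii) and (iii), $\Phi\circ\eval=\eval\circ\Psi$ holds on generators and hence on $\Br(Y)$. Then $\eval(\B\setminus\{p\})=\Phi(\eval(\B_Y))$ is linearly independent in $\mathcal{L}(X)$ by \cref{thm:viennot}, and $\eval(\B_Y)$ is a basis of $\mathcal{L}(Y)$. So $\Phi$ maps a basis injectively onto an independent family, which gives injectivity, $\mathcal{L}(X)=\R p\oplus I$, and \eqref{eq:coord-transport}. To see that $I$ is an ideal, note that $[p,\Phi(y[n])]=\Phi(y[n+1])$, and that $I$ is a subalgebra containing all the other generators.

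For (iv), the order is transported, so $(\B_Y)_\good<(\B_Y)_\bad$ is inherited once I show $\Psi^{-1}(\B_\good)=(\B_Y)_\good$. For $c\in\B_Y$ with factorization over $Y$, the factorization of $\Psi(c)$ over $X$ is obtained by replacing the seed $y[n]$ by $\ad_p^n(y)$: the factors become $\{p\}\cup\Psi(\Fac_Y(c))$ (adding $p$ when $n\ge1$) with exponent $n$ in front, and the seed becomes $y$. Since $p\in\B_\good$, condition \eqref{eq:of} for $\Psi(c)$ requires $\Psi(\Fac_Y(c))\subset\B_\good$, together with either $y\in X_\good$, or $n$ odd, or some $m_j$ odd. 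This is exactly \eqref{eq:of} for $c$ with the declared $Y_\good$, namely $y[n]\in Y_\good$ iff $y\in X_\good$ or $n$ odd. An induction on length then closes the argument. Finally, (v) follows from the same description of factorizations: $\Fac(\Psi(c))=\Psi(\Fac_Y(c))\cup\{p\}$ or $\Psi(\Fac_Y(c))$, and induction through \eqref{eq:nested} gives the result.

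The main obstacle is the bijection $\Psi(\B_Y)=\B\setminus\{p\}$ in (i), in particular the Hall-axiom case analysis when right factors are seeds $y[n]$ with $n\ge1$. I expect to lean there on the known Lazard elimination for Hall sets (e.g.\ as in \cite{Reutenauer2003}) and to check only the compatibility with our convention.
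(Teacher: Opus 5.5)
Your proposal is correct. Items (i), (iv) and (v) follow the paper's route. For (i), the paper cites Reutenauer and only sketches the induction from $\min\B=\min X=p$, whereas you verify it directly. Your argument is that $p$ can be neither a right factor nor the left factor of a non-tower in $\B$, so $\B\setminus\{p\}$ lies in the image of $\Psi$. The second Hall axiom then transfers, and for $(c_1,y[n])$ with $n\ge 1$ the condition over $X$ reduces to $p\le\Psi(c_1)$, which always holds. You should add two small checks:
\begin{itemize}
\item The first axiom $Y\subset\B_Y$ needs $\ad_p^n(y)\in\B$ for all $n$. This follows by induction from $p=\min\B$ and the second axiom.
\item The identity $\Psi((\B_Y)_\bad)=\B_\bad$ follows from $\Psi(\B_Y)=\B\setminus\{p\}$ together with $p\in\B_\good$.
\end{itemize}

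The genuine difference is in (ii)--(iii). The paper imports classical Lazard elimination as a black box: injectivity of $\Phi$, $I$ an ideal, and $\mathcal{L}(X)=\R p\oplus I$. It then gets (iii) by noting that $\eval(c)\in I$ for $c\neq p$ via a multidegree argument.

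You instead derive everything from (i) and the Hall basis theorem applied to both $\B_Y$ and $\B$. $\Phi$ sends the basis $\eval(\B_Y)$ of $\mathcal{L}(Y)$ bijectively onto $\eval(\B\setminus\{p\})$, which is the complement of $p$ in the basis $\eval(\B)$. This gives, in one stroke, injectivity, the direct sum, the basis of $I$ and the coordinate transport. The ideal property then reduces to $[p,I]\subset I$. That holds because $\ad_p$ is a derivation sending the generators $\Phi(y[n])$ to $\Phi(y[n+1])$.

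Your route is more self-contained. It effectively reproves Lazard elimination from Hall-set combinatorics, and it gets $\eval(c)\in I$ directly from $\Phi\circ\eval=\eval\circ\Psi$ rather than from gradings. The price is that (ii) depends on (i). The paper's route keeps the algebraic statement independent of the Hall-set combinatorics and is shorter by citation.
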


\begin{proof}
    \ref{it:laz-hall} is \cite[Lemma 4.19]{Reutenauer1993} (with reversed Hall set conventions).
    Note that $p\notin\Psi(\Br(Y))$, since each element of $\Psi(\Br(Y))$ involves a letter different from $p$.
    Injectivity of $\Psi$ and $\Psi(\Br(Y))\cap\B=\B\setminus\{p\}$ follow by induction on length, using $\min\B=\min X=p$ and \eqref{eq:b-factorization}.

    \ref{it:laz-alg} This is the classical Lazard elimination, see \cite[Section 0.3]{Reutenauer1993} or \cite[Chapter 1, Section 2]{Viennot1978}.
    
    \ref{it:laz-coord} For $c\in\B\setminus\{p\}$, $\deg_X(c) \notin \N e_p$, so $\eval(c)\in I$. 
    As $\eval(\B)$ is a basis of $\mathcal{L}(X)=\R p\oplus I$, $\eval(\B\setminus\{p\})$ is a basis of $I$. 
    By \ref{it:laz-hall}--\ref{it:laz-alg}, $\Phi$ maps the basis $\eval(\B_Y)$ of
    $\mathcal{L}(Y)$ onto it, which is \eqref{eq:coord-transport}.

    \ref{it:laz-of}
    We prove by induction on the $Y$-length of $c\in\B_Y$ that $c$ and $\Psi(c)$
    have the same status.
    $\Psi((\B_Y)_\bad)=\B_\bad$ then follows from \ref{it:laz-hall} and
    $p\in\B_\good$, and $(\B_Y)_\good < (\B_Y)_\bad$ follows from the fact that the order is induced from $\B$ and $\B_\good < \B_\bad$.

    If $c=y[n]$, the $X$-factorization of $\Psi(c)=\ad_p^n(y)$ has $\Fac(\Psi(c))\subset\{p\}\subset\B_\good$, seed $y$ and single multiplicity $n$.
    By \eqref{eq:of}, $\Psi(c)\in\B_\good$ iff $y\in X_\good$ or $n$ is odd, i.e.\ iff $c\in Y_\good$.

    Let now $c=\ad_{c_r}^{m_r}\cdots\ad_{c_1}^{m_1}(y[n])$ be the $Y$-factorization of $c$. 
    Then
    \begin{equation}
        \label{eq:facto-transport}
        \Psi(c)=\ad_{\Psi(c_r)}^{m_r}\cdots\ad_{\Psi(c_1)}^{m_1}\ad_p^{n}(y),
    \end{equation}
    and, since $\Psi(c_i)\in\B\setminus\{p\}$, the right-hand side is of the form
    \eqref{eq:b-factorization}.
    By uniqueness it is the $X$-factorization of $\Psi(c)$, with factors $\Psi(c_1),\dots,\Psi(c_r)$, together with $p$ if $n>0$, and seed $y$. 
    As $p$ is good, the induction hypothesis makes the first condition in \eqref{eq:of} read ``all $c_i$ are good'' for both alphabets. 
    For the second condition:
    \begin{equation}
        \begin{array}{lll}
            y\in X_\good: & \text{over }X\text{: satisfied (good seed)}; & \text{over }Y\text{: satisfied (}y[n]\in Y_\good\text{)};\\
            y\in X_\bad,\ n\ \text{odd}: & \text{over }X\text{: satisfied (}n\ \text{odd)}; & \text{over }Y\text{: satisfied (}y[n]\in Y_\good\text{)};\\
            y\in X_\bad,\ n\ \text{even}: & \text{over }X\text{: some }m_i\ \text{odd}; & \text{over }Y\text{: some }m_i\ \text{odd}.
        \end{array}
    \end{equation}

    \ref{it:laz-fac} 
    We proceed by induction on the $Y$-length of $c$. 
    For $c=y[n]$, $\Fac^*_Y(c)=\{c\}$ while $\Fac^*(\Psi(c))=\{\Psi(c)\}$ if $n = 0$ and  $\Fac^*(\Psi(c))=\{\Psi(c), p\}$ if $n > 0$. 
    For $c$ as in \eqref{eq:facto-transport},
    \eqref{eq:nested} gives
    \begin{equation}
        \Fac^*_Y(c)=\{c\}\cup\bigcup_i\Fac^*_Y(c_i),
        \qquad
        \Fac^*(\Psi(c)) = \{\Psi(c)\}\cup\bigcup_i\Fac^*(\Psi(c_i))\cup
        \begin{cases}
            \{ p \} & \text{if } n > 0, \\
            \varnothing & \text{otherwise}
        \end{cases}
    \end{equation}
    and one concludes by the induction hypothesis, since $p\notin\Psi(\Br(Y))$.
\end{proof}

\subsection{Proof of the observability result}\label{sec:proof-main}

\begin{proof}[Proof of \cref{prop:observability}]
    By \eqref{eq:nested} and induction on $\abs{b}$, the set $\Fac^*(b)$ is finite. We argue by
    induction on $n:=\abs{\Fac^*(b)}$, the statement of \cref{prop:observability} being quantified over
    all data $(X,\B,K,b)$. If $n=1$, then $\Fac^*(b)=\{b\}$ and \eqref{eq:fac*b-notin-supp-K} is
    \eqref{eq:b-notin-supp-K}. Assume $n>1$.

    \step{Restriction to the letters occurring in $b$}
    Put $S:=\supp\deg_X(b)$ and $\widetilde K:=\rho_S(K)$, a Lie subalgebra of $\mathcal{L}(S)$. 
    By \cref{lem:restriction}, $(S,\B_S,\widetilde K,b)$ is again a datum as in \cref{prop:observability}: an element of $(\B_S)_\bad\setminus\{b\}$ lies in $\B_\bad\setminus\{b\}\subset K$ and in $\mathcal{L}(S)$, hence is fixed by $\rho_S$ and lies in $\widetilde K$; and $\pair{\rho_S(v)}{b}_{\B_S}=\pair{v}{b}_{\B}=0$ for $v\in K$. 
    Since $\Fac^*_S(b)=\Fac^*(b)\subset\B_S$ by \cref{lem:min-fac} \ref{it:min-fac-1}, \eqref{eq:retraction} transfers the conclusion for
    $(S,\widetilde K)$ back to $(X,K)$. 
    We may therefore assume
    \begin{equation}
        \label{eq:full-support}
        X=\supp\deg_X(b).
    \end{equation}

    \step{The pivot}
    By \cref{lem:irreducible}, $b\notin X$ and $p := \min\Fac^*(b)\in X_\good$.
    By \cref{lem:min-fac} \ref{it:min-fac-3} and~\eqref{eq:full-support}, $p = \min X$. 
    Let $Y,\Psi,\Phi,I,\B_Y$ be as in the Lazard elimination \cref{lem:lazard} and put
    \begin{equation}
        \hat b:=\Psi^{-1}(b)\in\B_Y
        \quad \text{and} \quad
        K_Y:=\Phi^{-1}(K\cap I),
    \end{equation}
    which is legitimate since $b\neq p$, and which defines a Lie subalgebra of $\mathcal{L}(Y)$.

    \step{The induction hypothesis after elimination}
    The datum $(Y,\B_Y,K_Y,\hat b)$ satisfies the hypotheses of \cref{prop:observability}. 
    Indeed $\hat b\in(\B_Y)_\bad$ and $(\B_Y)_\bad\setminus\{\hat b\}=\Psi^{-1}(\B_\bad\setminus\{b\})$ by \cref{lem:lazard} \ref{it:laz-of}; the elements of $\B_\bad\setminus\{b\}$ lie in $K$ by \eqref{eq:no-other-bad} and in $I$ by \cref{lem:lazard} \ref{it:laz-coord}, hence $(\B_Y)_\bad\setminus\{\hat b\}\subset K_Y$. 
    Moreover, by \eqref{eq:coord-transport}, $\pair{v}{\hat b}_{\B_Y}=\pair{\Phi(v)}{b}_{\B}=0$ for $v\in K_Y$. 
    Finally \cref{lem:lazard}~\ref{it:laz-fac} gives $\abs{\Fac^*_Y(\hat b)}=n-1$. The induction hypothesis therefore yields
    $\Fac^*_Y(\hat b)\cap\supp_{\B_Y}(K_Y)=\varnothing$, that is, by \eqref{eq:coord-transport} and
    \cref{lem:lazard} \ref{it:laz-fac},
    \begin{equation}
        \label{eq:IH}
        \bigl(\Fac^*(b)\setminus\{p\}\bigr)\cap\supp_{\B}(K\cap I)=\varnothing .
    \end{equation}

    \step{Exclusion of the pivot}
    Assume by contradiction that $p\in\supp_{\B}(K)$. Choose $h\in K$ such that $\pair{h}{p}_{\B}=1$. By \cref{lem:lazard}~\ref{it:laz-coord}, write $h = p + r$ with $r \in I$.
    By \cref{lem:irreducible,thm:support}, there exists
    \begin{equation}
        c\in\bigl(\Fac^*(b)\setminus\{p\}\bigr)
        \cap\supp_{\B}\Lie\bigl(\{p\}\cup(\B_\bad\setminus\{b\})\bigr).
    \end{equation}
    Hence there exists an iterated Lie bracket $q = [\dotsb[[c_1,c_2],\dotsc], c_N]$, where $c_i \in \{p\} \cup (\B_\bad \setminus \{b\})$, and such that $\pair{q}{c}_{\B}\neq0$.
    Let $\beta := \deg_X(c) = \deg_X(q)$.
    
    In the same expression, replace every occurrence of $c_i = p$ by $h$, and denote the resulting element by $\widetilde q$. 
    Since $h\in K$ and $\B_\bad\setminus\{b\}\subset K$, one has $\widetilde q\in K$.
    Moreover, since $c\neq p$, at least one $c_i$ belongs to $\B_\bad\setminus\{b\}\subset I$. 
    As $I$ is an ideal, $\widetilde q\in I$. 
    Thus $\widetilde{q} \in K \cap I$.
    
    Finally, expanding $h=p+r$, we have $\widetilde q=q+R$. 
    Every nonzero $\deg_X$-homogeneous term of $R$ is obtained by replacing at least one occurrence of $p$ by a $\deg_X$-homogeneous component of $r$. 
    Since $r\in I$, it has no component of multidegree $e_p$.
    Thus no such term has multidegree $\beta$. 
    Therefore
    \begin{equation}
        \pair{\widetilde q}{c}_{\B}
        =\pair{q}{c}_{\B}\neq0.
    \end{equation}
    Hence $c \in \supp_{\B}(K\cap I)$, contradicting \eqref{eq:IH}. 
    Therefore
    \begin{equation}
        \label{eq:x-excluded}
        p\notin\supp_{\B}(K).
    \end{equation}

    \step{Conclusion}
    By \eqref{eq:x-excluded} and \cref{lem:lazard} \ref{it:laz-coord}, every $v\in K$ has $\pair{v}{p}_{\B}=0$, hence lies in $I$; so $K=K\cap I$. 
    Now \eqref{eq:IH} excludes $\Fac^*(b)\setminus\{p\}$ from $\supp_{\B}(K)$, and \eqref{eq:x-excluded} excludes $p$.
\end{proof}

\section{Proof of the support result}
\label{sec:support}

We prove \cref{thm:support}.
We need the following definition.

\begin{definition}[Free associative algebra]
    Let $\mathcal{A}(X)$ be the free associative $\R$-algebra on $X$, in which we identify $\mathcal{L}(X)$ with the Lie subalgebra generated by $X$ for the commutator bracket.
    We equip $\mathcal{A}(X)$ with the scalar product $\pair{\cdot}{\cdot}$ for which the words in $X$, including the empty word $\one$, form an orthonormal basis.
    Each $\deg_X$-homogeneous component is then spanned by a finite set of words, so the components are finite-dimensional, pairwise orthogonal, and $\pair{\cdot}{\cdot}$ restricts to a scalar product on each of them; in particular $\pair{\cdot}{\cdot}$ is nondegenerate.
\end{definition}

Throughout this section, $\B$ is a factor-parity Hall set on $X = X_\good \sqcup X_\bad$.
We fix $b \in \B_\bad \setminus X$ such that $\Fac(b) \subset \B_\good$.
We consider the following Lie subalgebra of $\mathcal{L}(X)$:
\begin{equation}
    \mathfrak{h} := \Lie\left(\{p\}\sqcup(\B_\bad\setminus\{b\})\right)
    \quad \text{where} \quad
    p := \min \Fac^*(b) \in X_\good.
\end{equation}
Let $U(\mathfrak{h})$ be the universal enveloping algebra of $\mathfrak{h}$, which we identify with its image in $\mathcal{A}(X)$.
In \cref{sec:duality}, we define a dual Hall coordinate $S_b \in \mathcal{A}(X)$.
In \cref{sec:detection,sec:annihilation} we prove the following incompatible statements.

\begin{proposition}[Detection]
    \label{prop:detection}
    There exists $Z\in U(\mathfrak{h})$ with $\pair{Z}{S_b}\neq0$.
\end{proposition}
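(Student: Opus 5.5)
The plan is to make $Z$ a (degree-truncated) product of exponentials of elements of $\mathfrak{h}$, and to read $\pair{Z}{S_b}$ as a coordinate of the second kind. I am assuming that $S_b$ is the element dual to the Poincar\'e--Birkhoff--Witt basis built on $\eval(\B)$, so that $\xi_b(t,u)=\pair{\mathrm{Chen}(u)}{S_b}$ for the Chen series of the input. Since $S_b$ is $\deg_X$-homogeneous, only finitely many homogeneous components of such a product matter. So for elements $h_1,\dotsc,h_N\in\mathfrak{h}$ and reals $t_1,\dotsc,t_N$, the truncation of $e^{t_1h_1}\dotsb e^{t_Nh_N}$ at the multidegree $\deg_X(b)$ lies in $U(\mathfrak{h})$. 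The pairing $P(t):=\pair{e^{t_1h_1}\dotsb e^{t_Nh_N}}{S_b}$ is then a polynomial in $t$, and it suffices to show it is not identically zero. Working with group-like products rather than Lie elements is essential. For a Lie element $Z\in\mathfrak{h}$ one has $\pair{Z}{S_b}=\pair{Z}{b}_\B$, so looking only at Lie elements would just restate \cref{thm:support}, which is what we want to prove.

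\textbf{Choice of the $h_i$.} Write $b=\ad_{a_r}^{m_r}\dotsb\ad_{a_1}^{m_1}(x)$. By \eqref{eq:of}, since $\Fac(b)\subset\B_\good$ and $b\in\B_\bad$, the seed $x$ lies in $X_\bad$ and every $m_j$ is even. The available elements of $\mathfrak{h}$ are $p$, the bad letter $x$ (which is not $b$, since $b\notin X$), and more generally all bad brackets other than $b$. In particular, $\mathfrak{h}$ contains bad brackets of the form $\ad_{a}^{2k}(c)$ with $a$ good and $c$ bad, whenever these are Hall elements. I would use the following elements as the $h_i$:
\begin{itemize}
\item $p$;
\item the bad brackets $\ad_{a_j}^{m_j}\dotsb\ad_{a_1}^{m_1}(x)$ for $j<r$, obtained by truncating the factorization of $b$ (these are bad by \eqref{eq:of}, since all multiplicities stay even);
\item further bad brackets whose factors are the factors of the $a_j$ with $p$ removed.
\end{itemize}
The aim is that the iterated-integral formula of \cref{lem:coord2-rec} for $\xi_b$ can be realized, up to lower-order terms, by ``virtual controls'' built from these elements.

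\textbf{Organization as an induction.} I would mirror the proof of \cref{prop:observability}.
\begin{enumerate}
\item First restrict to $S=\supp\deg_X(b)$ using \cref{lem:restriction}, so that $p=\min X$.
\item Then apply Lazard elimination (\cref{lem:lazard}). The bracket $b$ becomes $\hat b$ with $\abs{\Fac^*_Y(\hat b)}=n-1$. The bad elements other than $b$ correspond bijectively to those of $\B_Y$ other than $\hat b$. The letter $p$ disappears, and $\ad_p$ is absorbed into the new letters $y[n]$.
\item If $\hat b$ still has only good factors and a good minimal factor, the inductive hypothesis gives $\widehat Z\in U(\mathfrak{h}_Y)$ with $\pair{\widehat Z}{S_{\hat b}}\ne0$.
\item I would then transport $\widehat Z$ by $\Phi$. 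Each new letter $y[n]$ should be produced inside $U(\mathfrak{h})$ by conjugating with $e^{tp}$: since $e^{tp}\,y\,e^{-tp}=\sum_n \frac{t^n}{n!}\ad_p^n(y)$, one can isolate $y[n]$ by extracting the coefficient of $t^n$. Elements of $\B_\bad\setminus\{b\}$ lie in $I$, so this stays inside $U(\mathfrak{h})$.
\item Finally, compatibility of $S_b$ with $\Psi$ requires checking that dual PBW coordinates transport along Lazard elimination. This is the analogue of \eqref{eq:coord-transport} at the level of $U$, and it should follow from $\mathcal{L}(X)=\R p\oplus I$, which gives $U(\mathcal{L}(X))\cong \R[p]\otimes U(I)$.
\end{enumerate}

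\textbf{Base case and main obstacle.} The base of the induction occurs when elimination produces a bad letter, or a bracket $\hat b$ with a bad factor. In that case $\hat b$ itself is the bracket of two elements of $\B_\bad\setminus\{\hat b\}$ (as in \cref{lem:irreducible}), and the commutator of these two elements detects it directly. The main obstacle I expect is the leading-term computation. After substituting the conjugated exponentials, one must show that the coefficient of the relevant monomial in $t$ of $P(t)$ equals, up to a positive factor such as $\prod_j m_j!$, the inductive pairing $\pair{\widehat Z}{S_{\hat b}}$, with no cancellation from the other Hall elements of the same multidegree. To handle this I would first exploit a triangularity of the PBW expansion with respect to the Hall order: $b$ is the largest element of its multidegree that can appear among products of the chosen factors. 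If that fails, I would instead use the positivity of \cref{lem:xibad>0}, applied to the even powers, to rule out cancellation.
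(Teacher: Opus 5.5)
\textbf{Verdict: there is a genuine gap.} Your induction along Lazard elimination does not close at step 4, and the non-cancellation argument, which is the actual content of the proposition, is left open.

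\textbf{Why step 4 fails.} The inductive hypothesis for $(Y,\B_Y,\hat b)$ gives $\widehat Z\in U(\mathfrak{h}_Y)$, where $\mathfrak{h}_Y=\Lie\bigl(\{\hat p\}\sqcup((\B_Y)_\bad\setminus\{\hat b\})\bigr)$ and $\hat p:=\min\Fac^*_Y(\hat b)$. The bad generators transport harmlessly, since $\Phi$ sends them onto $\B_\bad\setminus\{b\}\subset\mathfrak{h}$. The pivot does not. $\hat p$ is a good letter $y[n]$, and $\Phi(\hat p)=\ad_p^n(y)$. When $y\in X_\good\setminus\{p\}$, this is never in $\mathfrak{h}$. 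Indeed, every bad Hall element has a bad seed (\cref{lem:bad-parity}), and $\mathfrak{h}$ is multigraded, so the only elements of $\mathfrak{h}$ whose multidegree involves no bad letter are the multiples of $p$. Conjugation by $e^{tp}$ produces $\ad_p^n(y)$ only from $y\in\mathfrak{h}$, that is, only for bad $y$.

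The paper's own example $b=\ad^2_{(X_1,X_2)}(X_0)$ already breaks this step. There $\hat b=\ad^2_{X_2[1]}(X_0[0])$, $\hat p=X_2[1]$, and $\Phi(\hat p)=[X_1,X_2]\notin\mathfrak{h}$. So $\Phi(U(\mathfrak{h}_Y))\not\subset U(\mathfrak{h})$. This is precisely where Detection differs from \cref{prop:observability}: there the hypothesis on $K_Y$ only involves bad brackets, whereas $\mathfrak{h}$ contains the pivot, which is not stable under elimination.

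\textbf{Smaller problems.}
\begin{itemize}
\item Your base case is misdescribed. Elimination preserves statuses, so $\hat b$ never acquires a bad factor. The only degenerate case is $b=\ad_p^{2k}(x)$, which is detected by the Lie element $b\in\mathfrak{h}$ itself.
\item The positivity fallback does not apply. \cref{lem:xibad>0} concerns Chen series of actual controls, not products of exponentials of elements of $\mathfrak{h}$. Nonnegativity would not exclude identical vanishing anyway.
\end{itemize}

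\textbf{The missing idea.} Good factors other than $p$ can enter $U(\mathfrak{h})$ only wrapped around a bad seed with even multiplicities. The paper uses no induction here. With $n=\nu(b)$, one has $(\ell_p-r_p)^nS_b=Fz$, where
$F=\ell_p^nS_{M_b}=\gamma_b\,T_{a_1}^{\shuff m_1}\shuff\cdots\shuff T_{a_r}^{\shuff m_r}$ with $\gamma_b>0$. Because every $m_j$ is even, the $\prec$-largest dual-PBW index $M$ of $F$ has all exponents even (\cref{lem:target}).
\begin{itemize}
\item Split $M=M_\bad M_\good$ with $M_\good=g_t^{2k_t}\cdots g_1^{2k_1}$.
\item The bracket $d=\ad_{g_t}^{2k_t}\cdots\ad_{g_1}^{2k_1}(z)$ is then a bad Hall element of degree $<\abs{b}$, so $W=M_\bad d\in U(\mathfrak{h})$.
\item PBW triangularity gives $r_z(W)=M+(\text{terms}\succ M)$.
\item Hence $Z:=D_p^nW$ satisfies $\pair{Z}{S_b}=\pair{r_z(W)}{F}=\pair{M}{F}\neq0$.
\end{itemize}
Your conjugation by $e^{tp}$ is the same device, since the $t^n$-coefficient of $e^{tp}We^{-tp}$ is $D_p^nW/n!$. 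It works when applied once to this specially built $W$, not letter by letter inside an elimination induction.
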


\begin{proposition}[Annihilation]
    \label{prop:annihilation}
    If $\supp_{\B}(\mathfrak{h})\cap(\Fac^*(b)\setminus\{p\})=\varnothing$, then $\pair{\cdot}{S_b}=0$ on $U(\mathfrak{h})$.
\end{proposition}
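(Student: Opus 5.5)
The plan is to exploit the Hopf-algebraic structure behind $S_b$. I assume that the dual Hall coordinate of \cref{sec:duality} is the classical Schützenberger dual of the PBW basis. It should then satisfy the recursion mirroring \cref{lem:coord2-rec}:
\[
S_b=\Bigl(\tfrac{1}{m_r!}S_{a_r}^{\shuff m_r}\shuff\dotsb\shuff\tfrac{1}{m_1!}S_{a_1}^{\shuff m_1}\Bigr)\,\seed(b),
\]
up to the side on which the seed letter is concatenated. It should also satisfy $\pair{P}{S_c}=\pair{P}{c}_\B$ for every Lie element $P\in\mathcal{L}(X)$ and every $c\in\B$. I would use two standard facts:
\begin{itemize}
    \item $\mathcal{A}(X)$ with the shuffle product and deconcatenation coproduct $\Delta$ is dual to the concatenation algebra with the coproduct for which Lie elements are primitive, so $\pair{z_1\dotsm z_k}{S}=\pair{z_1\dotsm z_{k-1}\otimes z_k}{\Delta S}$;
    \item a Lie element is orthogonal to every shuffle $w\shuff w'$ with $w,w'$ nonempty.
\end{itemize}
Since $\mathfrak{h}$ is $\deg_X$-graded, $U(\mathfrak{h})$ is spanned by products $z_1\dotsm z_k$ with each $z_j\in\mathfrak{h}$ homogeneous. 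By hypothesis, the $\B$-support of each $z_j$ avoids $\Fac^*(b)\setminus\{p\}$, and in particular avoids $b$. The case $k=1$ is then immediate, since $\pair{z_1}{S_b}=\pair{z_1}{b}_\B=0$.

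For $k\ge2$, I would compute $\Delta S_b$ by induction on the factorization. Deconcatenation is a shuffle morphism, and concatenating a letter on the right gives $\Delta(wx)=\Delta(w)(\one\otimes x)+wx\otimes\one$. From this, $\Delta S_b$ should be a sum of tensors $L\otimes R$ where either $R=\one$, or $R$ is a shuffle of duals $S_c$ of elements $c\in\Fac^*(b)$, possibly concatenated with the seed. Pairing with the last factor $z_k$ kills every $R$ that is a genuine shuffle of at least two nonempty pieces. The surviving right factors should be single duals $S_c$ with $c\in\Fac^*(b)$, or coordinates of brackets sitting inside $b$. For these, $\pair{z_k}{S_c}=\pair{z_k}{c}_\B$ vanishes unless $c=p$. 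So the only contributions come from stripping one occurrence of the letter $p$, paired against the $p$-coordinate of $z_k$.

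The main obstacle is to organize this into a closed induction. After stripping a $p$, the left tensor factor is no longer of the form $S_{b'}$ for a bad Hall element. It is a shuffle polynomial in the $S_c$, $c\in\Fac^*(b)$, in which one factor $S_{a_j}$ has been replaced by the dual of a bracket with one fewer $p$. My first attempt would be a strengthened statement. Let $\mathcal{S}_b$ be the span of the expressions obtained from $S_b$ by repeatedly deconcatenating the letter $p$ off Hall factors. The claim would be that $\pair{\cdot}{T}$ vanishes on $U(\mathfrak{h})$ for every $T\in\mathcal{S}_b$ whose $\deg_p$ is smaller than that of $S_b$ but which still ``contains the bad seed structure'' (seed $X_0$ with all remaining multiplicities even). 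I would prove this by induction on $k$ and on $\deg_p$. The parity condition \eqref{eq:of}, combined with $\Fac(b)\subset\B_\good$ and $p=\min\Fac^*(b)$, should guarantee two things. First, stripping $p$ never lands on a coordinate of $\Fac^*(b)\setminus\{p\}$ that could be paired nontrivially against elements of $\mathfrak{h}$. Second, the shuffle-annihilation argument applies again at each step.

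If this bookkeeping becomes unwieldy, my fallback would be Lazard elimination of $p$ as in \cref{lem:lazard}. Write $U(\mathfrak{h})$ as spanned by products $p^{j}\,u$ with $u\in U(\mathfrak{h}\cap I)$, and transport $S_b$ through $\Phi$. The claim then reduces to the case without $p$, where only the $k=1$ argument and the shuffle orthogonality are needed.
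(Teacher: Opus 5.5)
Your $k=1$ case and the two duality facts are fine, but there is a genuine gap at the step where you peel off the last factor $z_k$. As you guessed, \cref{lem:dual-hall} gives $S_b=G_bz$ with $G_b$ a shuffle of the $S_{a_j}$. However, the seed $z=\seed(b)\in X_\bad$ is concatenated on the \emph{right}, so every nonempty right tensor factor in the deconcatenation of $S_b$ has the form $R'z$. Such a factor is no longer a shuffle of nonempty pieces, and Lie elements are not orthogonal to it. Already $S_b=G_bz$ itself satisfies $\pair{b}{S_b}=1$, although $G_b$ is a shuffle of at least two nonempty pieces because the $m_j$ are even. Equivalently, the adjoint of right multiplication by a Lie element $a$ does not pass through the final letter: $\pair{Ua}{Fz}=\pair{U\,r_z(a)}{F}$, and $r_z(a)$ is not a Lie element. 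Moreover, such a right factor is never the letter $p$, since it ends with $z\neq p$, so your bookkeeping in terms of ``stripping one occurrence of $p$'' does not describe what actually happens. Concretely, the deconcatenation contains the term $G_b\otimes z$, which pairs with the $z$-coordinate of $z_k$. Since $z\in\B_\bad\setminus\{b\}\subset\mathfrak{h}$ and $z\notin\Fac^*(b)$, the hypothesis does not remove this term, and you are left with $\pair{z_1\cdots z_{k-1}}{G_b}$, about which your argument says nothing. The strengthened induction you then sketch has no precise statement. Its appeal to parity and to the bad seed is also a red herring: this proposition uses neither.

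The missing idea is to choose the PBW ordering of $U(\mathfrak{h})$ and the side of peeling together, so that $p$ never has to be peeled. Put $\mathfrak{h}_0:=\{u\in\mathfrak{h}\mid\pair{u}{p}_\B=0\}$. Then $\mathfrak{h}=\R p\oplus\mathfrak{h}_0$, and the hypothesis gives $\supp_\B(\mathfrak{h}_0)\cap\Fac^*(b)=\varnothing$. By PBW with $p$ as the smallest basis element, $U(\mathfrak{h})$ is spanned by the products $u_1\cdots u_kp^m$ with $u_i\in\mathfrak{h}_0$. All the $p$'s thus sit on the seed side, and one peels from the \emph{left}: $\pair{u_1\cdots u_kp^m}{S_b}=\pair{u_2\cdots u_kp^m}{\partial_{u_1}S_b}$. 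Since $\partial_a$ is a shuffle derivation for Lie $a$, and $\partial_a(Fy)=(\partial_aF)y$ when $\deg F\geq\deg a$, an induction on $\abs{c}$ through $S_c=G_c\seed(c)$ yields $\partial_aS_c=0$ whenever $a\notin\Fac^*(c)$ (\cref{lem:factor-residual}). Hence $\partial_{u_1}S_b=0$ when $k\geq1$. When $k=0$, $\pair{p^m}{S_b}=0$ because $p^m$ and $b$ are distinct PBW-monomials ($b\notin X$).

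Your Lazard fallback gestures toward the splitting $\mathfrak{h}=\R p\oplus(\mathfrak{h}\cap I)$, but it has three problems. First, it puts $p^j$ on the left, which is the wrong side when the seed sits on the right. Second, \cref{lem:lazard} requires $p=\min X$, whereas here only $p=\min\Fac^*(b)$. Third, products of several elements of $\mathfrak{h}\cap I$ are not disposed of by the case $k=1$ plus shuffle orthogonality; you still need the residual lemma above.
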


\begin{proof}[Proof of \cref{thm:support}]
    Since the conclusions of \cref{prop:detection,prop:annihilation} are incompatible, one has $\supp_{\B}(\mathfrak{h})\cap(\Fac^*(b)\setminus\{p\})\neq\varnothing$, which is exactly \eqref{eq:fac*b+p-cap}, the conclusion of \cref{thm:support}.
\end{proof}

\subsection{Dual Hall coordinates and residuals}\label{sec:duality}

\subsubsection{Scalar product, coproduct and shuffle}

We consider the tensor product $\mathcal{A}(X) \otimes \mathcal{A}(X)$ which we endow with $(U \otimes V)(U' \otimes V') := UU' \otimes VV'$, and with the scalar product for which the $w \otimes w'$ (for $w,w'$ words) form an orthonormal basis, so that $\pair{U \otimes V}{U' \otimes V'} = \pair{U}{U'}\pair{V}{V'}$.
Let $\Delta : \mathcal{A}(X) \to \mathcal{A}(X) \otimes \mathcal{A}(X)$ be the unique algebra homomorphism such that $\Delta y = y \otimes \one + \one \otimes y$ for $y \in X$.
Every Lie element $z \in \mathcal{L}(X)$ is primitive (i.e.\ satisfies $\Delta z = z \otimes \one + \one \otimes z$) by \cite[Theorem 1.4]{Reutenauer1993}.

For $\deg_X$-homogeneous $F, G \in \mathcal{A}(X)$, the linear form $U \mapsto \pair{\Delta U}{F \otimes G}$ vanishes on every $\deg_X$-homogeneous component except that of multidegree $\deg_X F + \deg_X G$.
By nondegeneracy, there is thus a unique $F \shuff G \in \mathcal{A}(X)$, the \emph{shuffle} of $F$ and $G$, such that
\begin{equation}
    \label{eq:shuffle}
    \pair{U}{F \shuff G} = \pair{\Delta U}{F \otimes G}
    \qquad \text{for all } U \in \mathcal{A}(X).
\end{equation}
The product $\shuff$ is commutative and associative with unit $\one$ and $\deg_X(F \shuff G) = \deg_X F + \deg_X G$ for $F,G$ $\deg_X$-homogeneous.
We then extend it by bilinearity.

\subsubsection{PBW basis and its dual}

A \emph{PBW-monomial} is a product $P=\prod_{c\in\B}^{\searrow}c^{e_c(P)}$ taken in decreasing Hall order, where $e(P)\in\N^{(\B)}$ has finite support.
For PBW-monomials $P,Q$, we write $P\odot Q$ for the PBW-monomial with $e(P\odot Q)=e(P)+e(Q)$.
Since $\mathcal{A}(X)$ is the universal enveloping algebra of $\mathcal{L}(X)$ \cite[Theorem~0.5]{Reutenauer1993}, the Poincaré--Birkhoff--Witt theorem asserts that the PBW-monomials form a basis of $\mathcal{A}(X)$.
They are $\deg_X$-homogeneous, with $\deg_X(P)=\sum_c e_c(P) \deg_X(c)$, so that, for each multidegree $\alpha \in \N^{(X)}$, the finitely many PBW-monomials of multidegree $\alpha$ form a basis of this component of~$\mathcal{A}(X)$.

Consequently, for every PBW-monomial $P$ there is a unique $S_P\in\mathcal{A}(X)$ such that $\pair{Q}{S_P}=\delta_{P,Q}$ for every PBW-monomial $Q$.
It is $\deg_X$-homogeneous of multidegree $\deg_X(P)$, $S_\one=\one$, and $(S_P)_P$ is again a basis of $\mathcal{A}(X)$.
We abbreviate $S_c$ for $c\in\B$.
Thus $\pair{U}{S_P}$ is the coefficient of $P$ in the PBW expansion of $U\in\mathcal{A}(X)$.
In particular, a Lie element expands on the one-term PBW-monomials only, whence
\begin{equation}
    \label{eq:hall-vs-dual}
    \pair{z}{c}_{\B}=\pair{z}{S_c}
    \qquad\text{for all } z\in\mathcal{L}(X),\ c\in\B .
\end{equation}
In other words, $(S_P)_P$ is the bi-orthogonal family to the PBW-monomials for the scalar product $\pair{\cdot}{\cdot}$ on $\mathcal{A}(X)$.
As an example, in a Hall set $\B$ on $X = \{ X_0, X_1 \}$ for which $X_1 < X_0$, one has $(X_1, X_0) \in \B$ and
\begin{equation}
    S_{(X_1, X_0)} = X_1 X_0.
\end{equation}
Indeed, for the one-term PBW-monomial $(X_1, X_0)$ and the two-terms PBW monomial $X_0 X_1$,
\begin{equation}
    \pair{[X_1, X_0]}{S_{(X_1, X_0)}}
    = \pair{X_1 X_0 - X_0 X_1}{X_1 X_0} = 1
    \quad \text{and} \quad 
    \pair{X_0 X_1}{S_{(X_1, X_0)}}
    = 0.
\end{equation}

We now recall two properties of the basis dual to the PBW basis associated with a Hall set; see \cite[Theorem 5.3]{Reutenauer1993}.
Since \cite{Reutenauer1993} uses a different Hall set convention, we provide a proof in \cref{app:dual}.

\begin{lemma}
    \label{lem:dual-product}
    For PBW-monomials $P,Q$,
    \begin{equation}
        \label{eq:dual-product}
        S_P\shuff S_Q=\Bigl(\prod_{c\in\B}\binom{e_c(P)+e_c(Q)}{e_c(P)}\Bigr)S_{P \odot Q},
        \qquad\text{hence}\qquad
        S_P=\mathop{\shuff}_{c\in\B}\frac{S_c^{\shuff e_c(P)}}{e_c(P)!}.
    \end{equation}
\end{lemma}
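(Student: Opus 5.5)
The plan is to identify both sides through their pairings with the PBW basis. Since the PBW-monomials form a basis of $\mathcal{A}(X)$ and $\pair{\cdot}{\cdot}$ is nondegenerate, an element $F$ is determined by the numbers $\pair{R}{F}$ for all PBW-monomials $R$. For $S_{P\odot Q}$ these numbers are $\delta_{R,P\odot Q}$. It therefore suffices to show, for every PBW-monomial $R$,
\[
\pair{R}{S_P\shuff S_Q}=\delta_{R,P\odot Q}\prod_{c\in\B}\binom{e_c(P)+e_c(Q)}{e_c(P)} .
\]
By the definition \eqref{eq:shuffle}, the left-hand side equals $\pair{\Delta R}{S_P\otimes S_Q}$, so everything reduces to computing $\Delta R$ on the PBW basis.

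Write $R=\prod^{\searrow}_{c}c^{e_c}$ with $e_c:=e_c(R)$. Since $\Delta$ is an algebra homomorphism and every Hall element $c$ is a Lie element, hence primitive (\cite[Theorem 1.4]{Reutenauer1993}), we get
\[
\Delta R=\prod^{\searrow}_{c}(c\otimes\one+\one\otimes c)^{e_c}.
\]
The two summands $c\otimes\one$ and $\one\otimes c$ commute in $\mathcal{A}(X)\otimes\mathcal{A}(X)$, so the binomial theorem gives
\[
(c\otimes\one+\one\otimes c)^{e_c}=\sum_{k_c=0}^{e_c}\binom{e_c}{k_c}\,c^{k_c}\otimes c^{e_c-k_c}.
\]
We then multiply these expansions in decreasing Hall order. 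Because the product on the tensor algebra is componentwise, each term is $\bigl(\prod_c\binom{e_c}{k_c}\bigr)\,P'\otimes Q'$, where $P'=\prod^{\searrow}c^{k_c}$ and $Q'=\prod^{\searrow}c^{e_c-k_c}$. The key point is that the decreasing order survives in each tensor factor, so $P'$ and $Q'$ are again PBW-monomials, with $P'\odot Q'=R$.

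Pairing with $S_P\otimes S_Q$ and using $\pair{P'\otimes Q'}{S_P\otimes S_Q}=\delta_{P',P}\,\delta_{Q',Q}$ keeps only the term with $P'=P$ and $Q'=Q$. Such a term exists if and only if $R=P\odot Q$, and then its coefficient is $\prod_c\binom{e_c(P)+e_c(Q)}{e_c(P)}$. This proves the first identity.

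For the factorized formula I will argue by induction. First, taking $P=c^{k}$ and $Q=c$ in the first identity gives $S_{c^k}\shuff S_c=(k+1)S_{c^{k+1}}$, hence $S_c^{\shuff e}=e!\,S_{c^e}$. Second, for PBW-monomials with disjoint supports every binomial coefficient equals $1$, so $S_P\shuff S_Q=S_{P\odot Q}$. Combining these two facts over the finitely many $c$ with $e_c(P)>0$, and using that $\shuff$ is commutative and associative, yields $S_P=\mathop{\shuff}_{c}S_c^{\shuff e_c(P)}/e_c(P)!$.

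I do not expect a genuine obstacle. The only delicate step is the reordering argument: one must check that expanding the ordered product in $\mathcal{A}(X)\otimes\mathcal{A}(X)$ yields PBW-monomials in \emph{both} tensor factors without any rewriting. This is what makes the Hall-set convention irrelevant here, and it holds because multiplication is performed separately in each factor.
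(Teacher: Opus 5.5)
Your proof is correct and follows essentially the same route as the paper. You expand $\Delta R=\prod^{\searrow}_c(c\otimes\one+\one\otimes c)^{e_c(R)}$ using primitivity and the binomial theorem, observe that both tensor factors stay PBW-monomials, and pair with $S_P\otimes S_Q$. Your derivation of the factorized formula, via $S_c^{\shuff e}=e!\,S_{c^e}$ and the disjoint-support case, simply spells out the paper's ``by iteration''.
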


\begin{lemma}
    \label{lem:dual-hall}
    Let $c\in\B$ and write its factorization as
    $c=\ad_{c_r}^{m_r}\dotsb\ad_{c_1}^{m_1}(\seed(c))$.
    By \cref{lem:facto-Hall}, $c_1<\dotsb<c_r$, so that
    $M_c:=c_r^{m_r}\cdots c_1^{m_1}$ is a PBW-monomial.
    Then
    \begin{equation}
        \label{eq:dual-hall}
        S_c = G_c\seed(c),
        \quad\text{where}\quad
        G_c := S_{M_c}
        \overset{\eqref{eq:dual-product}}{=}
        \frac{S_{c_1}^{\shuff m_1}}{m_1!}\shuff\cdots\shuff
        \frac{S_{c_r}^{\shuff m_r}}{m_r!},
    \end{equation}
    the multiplication by $\seed(c)$ being the concatenation by the letter $\seed(c)$.
    In particular, every word occurring in $S_c$ ends with $\seed(c)$.
    For $c\in X$, one has $r=0$, $M_c=\one$, $G_c=\one$ and $S_c=c$.
\end{lemma}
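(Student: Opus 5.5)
We prove \cref{lem:dual-hall} by induction on the length of $c$, in the spirit of \cite[Theorem 5.3]{Reutenauer1993} but with Viennot's conventions. For $c \in X$, the PBW-monomials of multidegree $e_c$ are reduced to the single letter $c$, so $S_c = c$, as claimed.

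For $\abs{c} > 1$, set $T := G_c \seed(c)$ with $G_c = S_{M_c}$. The identification of $S_{M_c}$ with the shuffle product is \cref{lem:dual-product}, so it only remains to prove $S_c = T$. By uniqueness of the dual family, it suffices to show that $\pair{Q}{T} = \delta_{Q,c}$ for every PBW-monomial $Q$ of multidegree $\deg_X(c)$. The adjoint of right concatenation by a letter $x$ is the right residual $w x \mapsto w$, which kills words not ending with $x$. Hence
\begin{equation}
    \pair{Q}{T} = \pair{Q \triangleleft \seed(c)}{S_{M_c}},
\end{equation}
where $\triangleleft x$ denotes this residual. Thus the key step is to compute $Q \triangleleft x$ in the PBW basis, for $x := \seed(c)$.

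First I would treat one-term monomials, i.e.\ Hall elements $d \in \B$, and show that the PBW expansion of $d \triangleleft x$ has the following form. If $d = \ad_{d_s}^{n_s} \dotsb \ad_{d_1}^{n_1}(x)$ has seed $x$, then $d \triangleleft x = d_s^{n_s} \dotsb d_1^{n_1}$ plus PBW-monomials that are not of the form $M_{c'}$ for any $c' \in \B$ with seed $x$ and the relevant multidegree. If the seed of $d$ is not $x$, then $d \triangleleft x$ has no such terms.

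The induction uses $(a \cdot b) \triangleleft x = a (b \triangleleft x)$ and $[a,b] \triangleleft x = a(b \triangleleft x) - b (a \triangleleft x)$. Writing $d = (d_s, d')$, one gets $d \triangleleft x = d_s (d' \triangleleft x) - d' (d_s \triangleleft x)$. The first term starts with $d_s$, which is $\ge$ the factors of $d'$ by \cref{lem:facto-Hall}, so it is already in decreasing PBW order. The second term must be shown to rewrite, via the commutation relations, into monomials whose exponent vector cannot equal $e(M_{c'})$. Here \cref{lem:left_fact}, which gives $\lambda \ge$ the relevant element after rewriting, controls which Hall elements appear.

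For a general PBW-monomial $Q = q_1 \dotsb q_k$ in decreasing order, one has $Q \triangleleft x = q_1 \dotsb q_{k-1}(q_k \triangleleft x)$. Its leading part is $q_1 \dotsb q_{k-1}\, M_{q_k}$ when $\seed(q_k) = x$. Reordering this product into PBW form yields $M_c$ with coefficient $1$ only when $k = 1$ and $q_1 = c$; the uniqueness of the factorization (\cref{lem:factorization}) is used here.

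I expect the main obstacle to be this triangularity bookkeeping. One must show that no lower-order term of any $Q \neq c$ produces exactly the monomial $M_c$ after reordering. I would handle it with a filtration by the Hall order of the smallest factor together with the standard Hall rewriting algorithm, as in \cite[Section 9]{Reutenauer2003}. Once $\pair{Q}{T} = \delta_{Q,c}$ is established, $S_c = T$ follows, and the final claim (every word of $S_c$ ends with $\seed(c)$) is immediate from the form $G_c \seed(c)$.
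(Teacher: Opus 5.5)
Your reduction is correct. With $x:=\seed(c)$ and $r_x$ the right residual, the identity $S_c=S_{M_c}x$ is equivalent to $\pair{r_x(Q)}{S_{M_c}}=\delta_{Q,c}$ for every PBW-monomial $Q$ of multidegree $\deg_X(c)$. But this identity is the whole content of the lemma, and your sketch does not establish it.

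The invariant you propose for Hall elements ($r_x(d)$ equals $M_d$ plus monomials not of the form $M_{c'}$) does not propagate, because it is not stable under left multiplication followed by straightening. In particular, the claim that $d_s\,r_x(d')$ is ``already in decreasing PBW order'' is false: only the leading monomial $M_{d'}$ of $r_x(d')$ is guaranteed to have all its factors $\leq d_s$. For instance, take $X=\{a,x\}$ with $a<(a,x)<x$, as forced in a factor-parity Hall set with $a$ good and $x$ bad. Then $r_x\bigl(((a,x),x)\bigr)=(a,x)-xa$. For $d=\ad_{(a,x)}^2(x)$ the product $(a,x)\cdot xa$ must be straightened, which creates the new monomial $((a,x),x)\,a$. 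Nothing in your invariant prevents such newly created monomials from being equal to $M_c$, here or after the further left multiplication by $q_1\cdots q_{k-1}$.

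The remedy you suggest, a filtration by the Hall order, would at best give unitriangularity: $r_x(Q)$ is a leading monomial plus larger ones. This does not give the \emph{exact} vanishing of the coefficient of $M_c$ in $r_x(Q)$ for those $Q\neq c$ whose leading monomial precedes $M_c$. An example of such a $Q$ is another Hall element with seed $x$ and the same multidegree.

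The missing idea is to dualize in the other direction, as the paper does. Test $S_c$ against words $uy$ and expand $u=\sum_P\pair{u}{S_P}P$. Then one only needs the coefficient of the one-factor monomial $c$ in the PBW expansion of $Py$, for $P=p_1\cdots p_m$ decreasing and $y\in X$. This coefficient is computable exactly by contracting from the right. Set $d_{m+1}:=y$ and, while $p_i<d_{i+1}$, write $p_id_{i+1}=d_{i+1}p_i+d_i$ with $d_i:=(p_i,d_{i+1})\in\B$.

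Consider the swapped term $p_1\cdots p_{i-1}d_{i+1}p_i$, and also $p_1\cdots p_i\,d_{i+1}$ as soon as $p_i\geq d_{i+1}$. In both, all factors preceding the last one are at least as large as it. By \cref{lem:left_fact}, straightening two adjacent factors either swaps them or replaces them by Hall elements larger than the smaller one, so straightening never touches that last factor. These terms therefore only produce PBW-monomials with at least two factors.

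Hence the one-factor part of $Py$ is either $0$ or $d_1=\ad_{p_1}\cdots\ad_{p_m}(y)$ with coefficient $1$. Uniqueness of the factorization (\cref{lem:factorization}) then gives $\pair{Py}{S_c}=\delta_{P,M_c}\delta_{y,\seed(c)}$. Therefore $\pair{uy}{S_c}=\delta_{y,\seed(c)}\pair{u}{S_{M_c}}=\pair{uy}{S_{M_c}\seed(c)}$. Since every word of degree $\abs{c}\geq2$ has the form $uy$, this proves $S_c=S_{M_c}\seed(c)$.
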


\subsubsection{Residuals}

For $y\in X$, the residuals $\ell_y,r_y:\A(X)\to\A(X)$ are the linear maps defined on words by $\ell_y(yw)=w$, $r_y(wy)=w$, and by $0$ on words not having the indicated first (resp.\ last) letter.
For $a\in\mathcal{L}(X)$, let $\partial_a:\A(X)\to\A(X)$ be the adjoint of the left multiplication by $a$, i.e.\ the unique linear map such that
\begin{equation}
    \label{eq:def-partial}
    \pair{U}{\partial_a F}=\pair{a U}{F}
    \qquad \text{for all} \quad U, F \in \A(X);
\end{equation}
it is well defined because the $\deg_X$-homogeneous components of $\A(X)$ are finite-dimensional and pairwise orthogonal.
Finally, for $a\in\A(X)$, set $D_a(U):=aU-Ua$, so that $D_a=\ad_a$ on $\mathcal{L}(X)$.

\begin{lemma}
    \label{lem:residuals}
    Let $y\in X$, $a\in\mathcal{L}(X)$, $F,G,U,V\in\A(X)$ and $n\in\N$.
    Then
    \begin{align}
        &\pair{yU}{F}=\pair{U}{\ell_yF},
        \quad
        \pair{Uy}{F}=\pair{U}{r_yF};
        \label{eq:transpose}\\
        &\partial_y=\ell_y
        \quad \text{and} \quad
        \pair{D_y^n(U)}{F}=\pair{U}{(\ell_y-r_y)^nF};
        \label{eq:Dp}\\
        &r_y(UV)=U\,r_y(V)
        \quad\text{if $V$ has zero constant term};
        \label{eq:right-factor}\\
        &\partial_a(F\shuff G)=(\partial_a F)\shuff G+F\shuff(\partial_a G);
        \label{eq:derivation}\\
        &\partial_a (Fy)=(\partial_a F)y
        \quad\text{if $a$ and $F$ are $\deg$-homogeneous with }\deg F\geq\deg a ,
        \label{eq:past-seed}
    \end{align}
    where $\deg$ denotes the total degree.
\end{lemma}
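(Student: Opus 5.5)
Each identity is checked on words and then extended by linearity, using the orthonormality of words and the homogeneity of all operators involved.

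For \eqref{eq:transpose}, take $U=w$ and $F=w'$ words. Then $\pair{yw}{w'}=1$ iff $w'=yw$, which holds iff $\ell_y w'=w$, i.e.\ iff $\pair{w}{\ell_y w'}=1$. The same argument with $r_y$ handles right multiplication.

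For \eqref{eq:Dp}, the first claim is immediate: $\partial_y$ is by definition the adjoint of left multiplication by $y$, and \eqref{eq:transpose} identifies that adjoint as $\ell_y$. Similarly, the adjoint of right multiplication by $y$ is $r_y$. Hence the adjoint of $D_y$ is $\ell_y-r_y$, and iterating gives the $n$-th power. Only the commutation of left and right operations is implicitly used, and it holds automatically.

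For \eqref{eq:right-factor}, it suffices to take $V$ a nonempty word. Then the last letter of $UV$ is the last letter of $V$, so removing a final $y$ commutes with left concatenation by $U$.

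For \eqref{eq:derivation}, I argue by duality. For all $U$,
\[
\pair{U}{\partial_a(F\shuff G)}=\pair{aU}{F\shuff G}=\pair{\Delta(a)\Delta(U)}{F\otimes G}.
\]
Since $a$ is a Lie element it is primitive, so $\Delta a=a\otimes\one+\one\otimes a$. Then $\Delta(a)\Delta(U)=(a\otimes\one)\Delta U+(\one\otimes a)\Delta U$. The scalar product factorizes on tensors, and left multiplication by $a\otimes\one$ has adjoint $\partial_a\otimes\Id$. This gives
\[
\pair{\Delta U}{\partial_aF\otimes G+F\otimes\partial_aG},
\]
which equals $\pair{U}{\partial_aF\shuff G+F\shuff\partial_aG}$ by \eqref{eq:shuffle}. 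The adjoint claim for $a\otimes\one$ is checked on pure tensors $U_1\otimes U_2$ by bilinearity.

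The main subtle point is \eqref{eq:past-seed}. By duality,
\[
\pair{U}{\partial_a(Fy)}=\pair{aU}{Fy}=\pair{r_y(aU)}{F}.
\]
By homogeneity, only $U$ with $\deg U=\deg F+1-\deg a\geq1$ contribute, so $U$ has zero constant term. Then \eqref{eq:right-factor} gives $r_y(aU)=a\,r_yU$, and therefore
\[
\pair{U}{\partial_a(Fy)}=\pair{a\,r_yU}{F}=\pair{r_yU}{\partial_aF}=\pair{U}{(\partial_aF)y}.
\]
For the remaining $U$, both sides vanish by degree count; note that $(\partial_aF)y$ is homogeneous of degree $\deg F-\deg a+1$, so its pairing with such $U$ is also zero. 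The hypothesis $\deg F\ge\deg a$ is used precisely to exclude $U=\one$: there $r_y(a)$ could be nonzero while $r_y(\one)=0$.
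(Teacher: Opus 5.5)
Your proof is correct and follows the paper's argument essentially step by step. The transposition and right-factor identities are checked on words, $\partial_y=\ell_y$ and the formula for $D_y^n$ come from adjointness, and the derivation rule uses the primitivity of $a$ together with the defining property of the shuffle. For \eqref{eq:past-seed} you use the same chain $\pair{r_y(aU)}{F}=\pair{a\,r_y(U)}{F}=\pair{r_y(U)}{\partial_aF}$ after restricting by homogeneity to $U$ of positive degree. Your remark about commutation of left and right operations in the $D_y^n$ step is unnecessary, since the adjoint of a power of a single operator is just the power of its adjoint.
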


\begin{proof}
    The two identities \eqref{eq:transpose} are read off the definitions on words.
    \eqref{eq:right-factor} is likewise immediate on words.
    By \eqref{eq:def-partial} and \eqref{eq:transpose}, $\partial_y=\ell_y$ and $\ell_y-r_y$ is the adjoint of $D_y$, whence \eqref{eq:Dp}, the adjoint of $D_y^n$  being the $n$-th power of the adjoint of $D_y$.

    For \eqref{eq:derivation}, let $U\in\A(X)$.
    Since $\Delta$ is a homomorphism and $a$ is primitive, \eqref{eq:def-partial} and \eqref{eq:shuffle} give
    \begin{equation}
        \begin{split}
        \pair{U}{\partial_a(F\shuff G)}
        =\pair{\Delta(a U)}{F\otimes G}
        &=\pair{(a\otimes\one+\one\otimes a)\Delta U}{F\otimes G}
        \\&=\pair{\Delta U}{(\partial_aF)\otimes G+F\otimes(\partial_aG)},
        \end{split}
    \end{equation}
    using \eqref{eq:def-partial} on each tensor factor.
    By \eqref{eq:shuffle} again, this is $\pair{U}{(\partial_a F)\shuff G+F\shuff(\partial_a G)}$.

    For \eqref{eq:past-seed}, both sides are $\deg$-homogeneous of degree $\deg F+1-\deg a\geq1$, hence determined by their scalar products against $\deg$-homogeneous elements $U$ of positive degree. 
    For such $U$, \eqref{eq:transpose}, \eqref{eq:right-factor} and~\eqref{eq:def-partial} give $\pair{U}{\partial_a(Fy)}=\pair{r_y(a U)}{F}=\pair{a r_y(U)}{F} = \pair{r_y(U)}{\partial_a F}=\pair{U}{(\partial_a F)y}$.
\end{proof}

\subsection{Annihilation}\label{sec:annihilation}

\begin{lemma}[Factor residual]
    \label{lem:factor-residual}
    Let $a, c \in \B$ with $a \notin \Fac^*(c)$.
    Then $\partial_a S_c = 0$.
    
    Consequently $\partial_u S_c = 0$ for every $u \in \mathcal{L}(X)$ such that $\supp_\B(u) \cap \Fac^*(c) = \varnothing$.
\end{lemma}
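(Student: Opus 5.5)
We argue by induction on $\abs{c}$, using the explicit form of $S_c$ from \cref{lem:dual-hall} together with the derivation rules of \cref{lem:residuals}. Without loss of generality, $a$ and $c$ are $\deg_X$-homogeneous. If $\deg a > \deg c$ in total degree, or if $\deg_X a$ is not dominated by $\deg_X c$, then $\partial_a S_c$ vanishes for degree reasons. Indeed, $\pair{U}{\partial_a S_c} = \pair{aU}{S_c}$ forces $\deg_X(aU) = \deg_X(c)$.

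\emph{Degree cases.} If $\deg a = \deg c$, then $U$ must be a constant. Hence $\pair{a}{S_c} = \pair{a}{c}_\B = \delta_{a,c}$ by \eqref{eq:hall-vs-dual}, which is $0$ since $a \neq c$ (as $c \in \Fac^*(c)$). In particular, the base case $c \in X$ is covered: there $\deg c = 1 \le \deg a$, so either the degrees match or $\partial_a S_c = 0$ trivially. So assume $\deg a < \deg c$, and write $S_c = G_c \seed(c)$ with $\deg G_c = \deg c - 1 \ge \deg a$.

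\emph{Inductive step.} By \eqref{eq:past-seed}, $\partial_a S_c = (\partial_a G_c)\seed(c)$, so it suffices to show $\partial_a G_c = 0$. By \cref{lem:dual-hall}, $G_c$ is a shuffle product of the powers $S_{c_j}^{\shuff m_j}/m_j!$, where $c_j \in \Fac(c)$. Applying the derivation rule \eqref{eq:derivation} repeatedly, $\partial_a G_c$ is a sum of terms, each containing a factor $\partial_a S_{c_j}$. Now $a \notin \Fac^*(c) \supset \Fac^*(c_j)$ by \eqref{eq:nested}, and $\abs{c_j} < \abs{c}$. The induction hypothesis therefore gives $\partial_a S_{c_j} = 0$, so every term vanishes and $\partial_a S_c = 0$.

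\emph{Consequence.} Write $u = \sum_{a} \pair{u}{a}_\B\, a$, a finite sum over $a \in \supp_\B(u)$. The map $a \mapsto \partial_a$ is linear, since left multiplication is linear in $a$ and adjoints are linear. Each $a$ in the support lies outside $\Fac^*(c)$, so $\partial_u S_c = \sum_a \pair{u}{a}_\B\, \partial_a S_c = 0$.

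The only delicate point is the hypothesis $\deg S_{c} - 1 \ge \deg a$ needed for \eqref{eq:past-seed}. This is why the equal-degree case must be treated separately via \eqref{eq:hall-vs-dual} before the seed is peeled off.
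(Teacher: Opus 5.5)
Your proof is correct and follows the paper's argument essentially step for step. You induct on $\abs{c}$, dispose of $\abs{a}\ge\abs{c}$ by degree counting with the equal-degree case giving $\pair{a}{S_c}=\delta_{a,c}=0$, and then peel off the seed via \eqref{eq:past-seed}. You kill $\partial_a G_c$ using the shuffle-derivation rule \eqref{eq:derivation} together with the induction hypothesis on the factors $c_j$, and conclude by linearity. Your remarks on $\deg_X$-homogeneity and multidegree domination are harmless extras that the paper does not need.
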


\begin{proof}
    Note that $a \neq c$, since $c \in \Fac^*(c)$.
    We fix $a$ and proceed by induction on $\abs{c}$.

    If $\abs{a} \geq \abs{c}$, then $\partial_a S_c$ is $\deg$-homogeneous of degree $\abs{c}-\abs{a} \leq 0$,
    hence vanishes unless $\abs{a} = \abs{c}$, in which case
    $\partial_a S_c = \pair{a}{S_c}\one = \delta_{a,c}\one = 0$,
    since $a$ is a one-term PBW-monomial.

    If $\abs{a} < \abs{c}$, then $c \notin X$.
    Write $c = \ad_{c_r}^{m_r} \dotsb \ad_{c_1}^{m_1}(\seed(c))$ with $r \geq 1$.
    By \eqref{eq:nested}, $\Fac^*(c_j) \subset \Fac^*(c)$, so $a \notin \Fac^*(c_j)$ and
    $\partial_a S_{c_j} = 0$ by the induction assumption.
    Since $G_c$ is a shuffle product of the $S_{c_j}$ by \eqref{eq:dual-hall} and $\partial_a$ is a
    shuffle derivation by \eqref{eq:derivation}, $\partial_a G_c = 0$.
    As $\deg G_c = \abs{c} - 1 \geq \abs{a}$, \eqref{eq:past-seed} applies and yields
    $\partial_a S_c = \partial_a(G_c \seed(c)) = (\partial_a G_c)\seed(c) = 0$.

    The last assertion follows by linearity, expanding $u$ in the basis $\eval(\B)$.
\end{proof}

\begin{proof}[Proof of \cref{prop:annihilation}]
    Let $\mathfrak{h}_0 := \{ u \in \mathfrak{h} \mid \pair{u}{p}_\B = 0 \}$.
    Since $p \in \mathfrak{h}$ and $\pair{p}{p}_\B = 1$, the linear form $\pair{\cdot}{p}_\B$ is nonzero on $\mathfrak{h}$ and $\mathfrak{h} = \R p \oplus \mathfrak{h}_0$.
    For $u \in \mathfrak{h}_0$, one has $p \notin \supp_\B(u)$ by definition of $\mathfrak{h}_0$, while $\supp_\B(u) \cap (\Fac^*(b) \setminus \{p\}) = \varnothing$ by assumption.
    Hence
    \begin{equation}
        \label{eq:h0-support}
        \supp_{\B}(\mathfrak{h}_0) \cap \Fac^*(b) = \varnothing .
    \end{equation}
    Choose a totally ordered basis of $\mathfrak{h}$ consisting of $p$ and of a basis of $\mathfrak{h}_0$, with $p$ as its
    smallest element.
    Since PBW-monomials are written in decreasing order, the PBW theorem for
    $U(\mathfrak{h}) \subset \mathcal{A}(X)$ shows that $U(\mathfrak{h})$ is spanned by the products
    $u_1 \cdots u_k\, p^m$ with $m, k \geq 0$ and $u_1, \dotsc, u_k \in \mathfrak{h}_0$.
    It therefore suffices to prove that $\pair{u_1 \cdots u_k p^m}{S_b} = 0$ for such products.

    If $k \geq 1$, then $\partial_{u_1} S_b = 0$ by \eqref{eq:h0-support} and
    \cref{lem:factor-residual}, so
    $\pair{u_1 \cdots u_k p^m}{S_b} = \pair{u_2 \cdots u_k p^m}{\partial_{u_1} S_b} = 0$.
    If $k = 0$, then $p^m$ is a PBW-monomial with $m$ factors whereas $b$ has exactly one, and $p \neq b$ because $b \notin X$.
    Hence $p^m \neq b$ and $\pair{p^m}{S_b} = 0$.
\end{proof}

\subsection{Detection}\label{sec:detection}

We prove \cref{prop:detection}.
Let us fix some notation.
Write the factorization
\begin{equation}
    b = \ad_{a_r}^{m_r} \dotsb \ad_{a_1}^{m_1}(z)
    \quad \text{where} \quad 
    z := \seed(b) \in X.
\end{equation}
Since $b\in\B_\bad\setminus X$ and $\Fac(b)\subset\B_\good$, \eqref{eq:of} gives
\begin{equation}
    \label{eq:even}
    r\geq1,\qquad z\in X_\bad,\qquad m_1,\dotsc,m_r\in2\N.
\end{equation}
Moreover, $\nu(b) := \deg_p(b) \ge 1$, since $p\in\Fac^*(b)$ occurs in $b$ by \cref{lem:min-fac}~\ref{it:min-fac-1}.

We introduce
\begin{equation}
    \label{eq:Mb-F}
    M_b:=a_r^{m_r}\cdots a_1^{m_1}
    \quad \text{and} \quad
    F := \ell_p^{\nu(b)} S_{M_b}.
\end{equation}
The proof of \cref{prop:detection} uses crucially the following total order on the PBW-monomials.

\begin{definition}[PBW order]
    \label{def:pbw-order}
    Let $P\neq Q$ be PBW-monomials. 
    The set $\{c\in\B\mid e_c(P)\neq e_c(Q)\}$ is nonempty and finite.
    Let $c$ be its largest element.
    We set $P \prec Q \iff e_c(P)<e_c(Q)$.
    
    In particular, if $P \prec Q$, then $P \odot R \prec Q \odot R$ for any PBW-monomial $R$.
\end{definition}

We will prove \cref{prop:detection} from the following results.

\begin{lemma}[Target]
    \label{lem:target}
    One has $F\neq0$. 
    Let $M$ be the $\prec$-largest PBW-monomial such that $\pair{M}{F}\neq0$. 
    Then $\deg{M} = \abs{b} - 1 - \nu(b)$ and $e_c(M)$ is even for all $c \in \B$.
\end{lemma}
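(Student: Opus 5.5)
The degree claim is immediate, so the substance is $F\neq0$ and the parity of the exponents of $M$. By \cref{lem:dual-hall} and \eqref{eq:dual-product}, $S_{M_b}$ is $\deg$-homogeneous of degree $\sum_j m_j\abs{a_j}=\abs{b}-1$, and $\ell_p$ lowers the degree by one. Hence every PBW-monomial paired nontrivially with $F$ has degree $\abs{b}-1-\nu(b)$. For the rest, I would write
\begin{equation*}
    S_{M_b}=\mathop{\shuff}_{j=1}^{r}\frac{S_{a_j}^{\shuff m_j}}{m_j!},
\end{equation*}
which is \eqref{eq:dual-product} applied to $M_b$. Since $\ell_p=\partial_p$ by \eqref{eq:Dp} and $\partial_p$ is a shuffle derivation by \eqref{eq:derivation}, the Leibniz rule expands $F$. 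Viewing $S_{M_b}$ as a shuffle of $m_1+\dots+m_r$ labelled copies, $F$ becomes a sum, over distributions $k=(k_i)$ of the $\nu(b)$ derivatives among the copies with $\sum_i k_i=\nu(b)$, of the shuffle products $\shuff_i \partial_p^{k_i}S_{a_{j(i)}}$. Each factor is homogeneous and multiplied by a positive multinomial coefficient. Only $k_i\le\deg_p(a_{j(i)})$ contribute.

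Next I would establish a leading-term calculus for $\prec$. For a nonzero $G\in\A(X)$, let $\mathrm{lt}(G)$ be the $\prec$-largest PBW-monomial $P$ with $\pair{P}{G}\neq0$. Expanding $G=\sum_P\pair{P}{G}S_P$ and using \cref{lem:dual-product}, every $S_P\shuff S_Q$ is a positive multiple of $S_{P\odot Q}$. The map $(P,Q)\mapsto P\odot Q$ is strictly monotone in each variable for $\prec$ (\cref{def:pbw-order}). Hence $\mathrm{lt}(G\shuff G')=\mathrm{lt}(G)\odot\mathrm{lt}(G')$, and the leading coefficient of the product is the product of the leading coefficients. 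In particular every nonzero summand of the Leibniz expansion has leading monomial $\bigodot_i L_{j(i),k_i}$ with $L_{j,k}:=\mathrm{lt}(\partial_p^kS_{a_j})$. I need $\partial_p^kS_{a_j}\neq0$ for some admissible choices. This follows from $\pair{p^kU}{S_{a_j}}\neq0$ for a suitable $U$, for instance the PBW-monomial $a_j$ with $p^k$ removed via $\ad_p$, using that $p=\min X$ is minimal in $\B$.

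The key step, and the main obstacle, is to control the maximum over distributions and the cancellation at the top. For fixed $j$ and $k$, all copies of $a_j$ with $k_i=k$ give the same leading coefficient $\lambda_{j,k}$. So the coefficient of a candidate top monomial $M$ is a sum of products of the $\lambda_{j,k}$ over the optimal distributions, weighted by positive multinomials. I would first show that the set of optimal multisets $\{k_i\}$ is a single one, hence there is no cancellation and $F\neq0$.

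I would then show that this optimal multiset is invariant under a fixed-point-free pairing of the copies of each $a_j$, which makes each exponent $e_c(M)$ a sum of doubled contributions, hence even. I would use that all $m_j$ are even by \eqref{eq:even} and that $\nu(b)=\sum_j m_j\deg_p(a_j)$ is even. Split the copies into two halves $A$ and $B$, each containing $m_j/2$ copies of every $a_j$. Any distribution then has leading term $L_A\odot L_B$ with $\nu_A+\nu_B=\nu(b)$.
\begin{itemize}
    \item If $\nu_A=\nu_B$, then by monotonicity of $\odot$ the doubled distributions $A\cup A$ or $B\cup B$ are at least as large, and strictly larger unless $L_A=L_B$.
    \item If $\nu_A\neq\nu_B$, I would first try an exchange argument: move one derivative between matched copies and compare through the order on $\B$, exploiting that $p$ is minimal. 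The aim is to show that unbalanced distributions are strictly $\prec$-dominated.
\end{itemize}
This exchange step is where I expect the real work, possibly requiring explicit knowledge of $L_{j,k}$ through the recursion $S_c=G_c\seed(c)$ and \eqref{eq:past-seed}.
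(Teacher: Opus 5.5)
\textbf{There is a genuine gap.} The step you flag as the main obstacle is left open: maximizing over distributions of the $\nu(b)$ residuals, excluding cancellation at the top, and the exchange argument for unbalanced halves. That step is in fact vacuous, because the Leibniz expansion collapses to a single distribution. You already have both ingredients but do not combine them:
\begin{itemize}
    \item a copy of $S_{a_j}$ receiving $k_i>\nu(a_j)$ residuals vanishes, by $\deg_X$-homogeneity;
    \item $\sum_i k_i=\nu(b)=\sum_j m_j\nu(a_j)$, because $\seed(b)=z\in X_\bad$ differs from $p$.
\end{itemize}
Summing $k_i\le\nu(a_{j(i)})$ over all copies and comparing with the total forces $k_i=\nu(a_{j(i)})$ for \emph{every} copy, so
\[
F=\gamma_b\mathop{\shuff}_{j=1}^{r}T_{a_j}^{\shuff m_j},
\qquad
T_{a_j}:=\ell_p^{\nu(a_j)}S_{a_j},
\qquad
\gamma_b=\frac{\nu(b)!}{\prod_j m_j!\,(\nu(a_j)!)^{m_j}}>0 .
\]
Your own leading-term calculus (which is \cref{lem:max-shuffle}) then gives $F\neq0$ and $M=\bigodot_j\mathrm{lt}(T_{a_j})^{\odot m_j}$ at once. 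Hence $e_c(M)=\sum_j m_j\,e_c(\mathrm{lt}(T_{a_j}))$, which is even because every $m_j$ is even. There is no competition between distributions and no possible cancellation. Neither the halving into $A,B$ nor the parity of $\nu(b)$ plays any role. This is exactly the paper's route via \cref{lem:saturation}.

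\textbf{The other missing piece is $T_{a_j}\neq0$.} You need this for the forced value $k=\nu(a_j)$, not merely ``for some admissible choices''. Your suggested witness (the monomial $a_j$ ``with $p^k$ removed via $\ad_p$'') is not well defined when $p$ sits inside nested factors of $a_j$. Also, here $p=\min\Fac^*(b)$, not necessarily $\min X$. The paper proves $T_c\neq0$ for all $c\in\Fac^*(b)$ by induction on $\abs{c}$:
\begin{itemize}
    \item for $c\notin X$ one has $p\le c_1<\seed(c)$, so $\seed(c)\neq p$;
    \item \eqref{eq:past-seed} then lets the residuals pass the final letter, and the same collapse gives $T_c=\bigl(\gamma_c\mathop{\shuff}_j T_{c_j}^{\shuff q_j}\bigr)\seed(c)$;
    \item a shuffle of nonzero elements is nonzero, and right concatenation by a letter is injective.
\end{itemize}
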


\begin{lemma}[Detector]
    \label{lem:detector}
    Let $M$ be as in \cref{lem:target}.
    There exists $W\in U(\mathfrak{h})$ such that
    \begin{equation}
        \label{eq:rzW=M}
        r_z(W) = M + \sum_{M \prec Q} \alpha_Q Q
    \end{equation}
    where the sum is finite, indexed by PBW-monomials $Q$, and $\alpha_Q \in \R$.
\end{lemma}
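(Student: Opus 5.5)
The plan is to take for $W$ a single bad Hall bracket built from the factors of $M$, placed on the seed $z$, and then to check that $r_z$ of it produces $M$ plus only PBW-larger terms.

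\textbf{Construction of $W$.} By \cref{lem:target}, write $M = c_s^{2k_s}\cdots c_1^{2k_1}$ with $c_1<\dotsb<c_s$ in $\B$ and $k_j\geq1$. Since $F=\ell_p^{\nu(b)}S_{M_b}$ and $S_{M_b}$ is a shuffle of the $S_{a_j}$ with $a_j\in\B_\good$ (by \eqref{eq:dual-hall}), I expect the $c_j$ to be good: they should lie in $\Fac^*(b)\setminus\{b\}\subset\B_\good$, or at least in $\B_\good$, and I would establish this first. I then set
\begin{equation*}
    w:=\ad_{c_s}^{2k_s}\cdots\ad_{c_1}^{2k_1}(z).
\end{equation*}
Because $c_1<z$ (good $<$ bad) and $c_1<\dotsb<c_s$, the Hall axioms give $w\in\B$, and this is its factorization. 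All factors are good, the seed $z$ lies in $X_\bad$, and all multiplicities are even, so \eqref{eq:of} gives $w\in\B_\bad$. Its length is $\deg M+1=\abs{b}-\nu(b)<\abs{b}$, so $w\neq b$. Hence $w\in\B_\bad\setminus\{b\}\subset\mathfrak{h}$, and I take $W:=w$. If this single bracket turns out not to work, the fallback is to add corrections in $U(\mathfrak{h})$ using also $p$ and other bad brackets.

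\textbf{Computing $r_z(W)$.} Expanding the iterated commutators gives
\begin{equation*}
    W=\sum \pm\, U\,z\,V,
\end{equation*}
where $U$ is a product of copies of the $c_j$ and $V$ is a product of $c_j$'s. The unique term with $V=\one$ is $c_s^{2k_s}\cdots c_1^{2k_1}z=Mz$, and $r_z(Mz)=M$. For every other term $V$ has zero constant term, so \eqref{eq:right-factor} gives $r_z(UzV)=Uz\,r_z(V)$. It therefore remains to show that every PBW-monomial occurring in the expansion of such a product $Uz\,r_z(V)$ is $\succ M$.

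\textbf{Main obstacle: the PBW comparison.} I would compare in the order of \cref{def:pbw-order}. The monomial $M$ has support only in the good brackets $c_j$. Each product $Uz\,r_z(V)$ contains the bad letter $z$ as an explicit factor. If straightening it into the PBW basis always kept at least one bad Hall element in every monomial, then, since $\B_\good<\B_\bad$, the largest index where that monomial and $M$ differ would be a bad element with positive exponent only on the right, giving $M\prec Q$ as required.

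The difficulty is that straightening creates commutators $[c,z]$ and, more generally, $[c,d]$ with $c$ good and $d$ bad. \Cref{prop:Hall-dichotomic-bis} only guarantees that $\supp_\B[a,b]\subset\B_\bad$ when both arguments are bad. For the mixed case I would use \cref{lem:left_fact}: every element of $\supp_\B[c,d]$ has left factor at least $\min(c,d)$ and is larger than that left factor. I would try to run an induction on degree showing that the straightened monomials of $UzV'$ are $\succ$ the sorted monomial of its good part, which is already $\succeq M$.

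If this fails, my alternative is a filtration argument. Every term $Uz\,r_z(V)$ has some $c_j$ factors moved to the right of $z$, so its good part has strictly smaller exponent vector than $M$ at some $c_j$. The multidegree is nevertheless fixed, so the missing degree must be carried by Hall elements involving $z$, and these are larger than the $c_j$ involved. I would then need to verify that the largest differing index favors $Q$. This step is where I expect the real work to lie.
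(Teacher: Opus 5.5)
Your construction assumes that every Hall element occurring in $M$ is good, and this is false. The residual $\ell_p$ strips letters, and the dual-PBW expansion of $T_{a_j}=\ell_p^{\nu(a_j)}S_{a_j}$ involves new Hall elements that need not be factors of $b$, nor good. For instance, take $q=1$ and $b=\ad_{(X_1,X_0)}^2(X_0)$. Then $p=X_1$, $\nu(b)=2$, $S_{(X_1,X_0)}=X_1X_0$, and
\begin{equation*}
    F=\ell_{X_1}^2\Bigl(\tfrac12\,X_1X_0\shuff X_1X_0\Bigr)=\ell_{X_1}^2\bigl(X_1X_0X_1X_0+2X_1X_1X_0X_0\bigr)=2X_0X_0=2S_{X_0^2}.
\end{equation*}
So $M=X_0^2$ consists of the bad letter $z$ itself. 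In fact, for $q=1$, $M$ is always a power of $X_0$, since $F$ contains no occurrence of $p=X_1$. In this example $c_1<z$ fails, $\ad_{X_0}^2(X_0)\notin\B$, and your $W$ is $0$ in $\mathcal{L}(X)$, so $r_z(W)=0\neq M$.

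The missing idea is the following. Split $M=M_\bad M_\good$, which is possible because $\B_\good<\B_\bad$. Keep $M_\bad$ as a left factor and bracket only the good part onto $z$: set $W:=M_\bad\,d$ with $d:=\ad_{g_t}^{2k_t}\cdots\ad_{g_1}^{2k_1}(z)$. The factors of $M_\bad$ are bad of degree at most $\deg M<\abs{b}$, so they lie in $\B_\bad\setminus\{b\}\subset\mathfrak{h}$, and hence $W\in U(\mathfrak{h})$. In the example, $W=X_0^3$ and $r_z(W)=X_0^2=M$. When $M_\bad=\one$, your $w$ is exactly this $d$.

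Second, you leave the PBW comparison open, and it cannot rest on the invariant you suggest, because straightening does not preserve the presence of a bad Hall element. In $Uz\,r_z(V)$, the good factors $c$ of $U$ sit to the left of $z$ with $c<z$. Straightening them produces $[c,z]=(c,z)$, which is a good Hall element by \eqref{eq:of}; for example $X_1X_0=X_0X_1+(X_1,X_0)$. Moreover, once $M_\bad\neq\one$, containing a bad element no longer forces $\succ M$. Finally, in your full expansion $\sum\pm UzV$, a term where some $u_i$ was sent to the right may first deviate from $M$ by a strictly smaller element, so no termwise argument is available.

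The paper instead telescopes. List the factors of $M_\good$ as $u_1\le\dots\le u_K$ and set $d^{(0)}:=z$, $d^{(i)}:=(u_i,d^{(i-1)})$ and $A_i:=M_\bad u_K\cdots u_{i+1}r_z(d^{(i)})$. The identity $r_z([u,v])=u\,r_z(v)-v\,r_z(u)$ gives $A_0=M$, $A_K=r_z(W)$ and
\begin{equation*}
    A_i-A_{i-1}=-M_\bad u_K\cdots u_{i+1}\,d^{(i-1)}\,r_z(u_i).
\end{equation*}
Each such product agrees with $M$ up to the slot of $u_i$, where the bad element $d^{(i-1)}>u_i$ stands. \Cref{lem:app-pbw-triangular} then shows that every resulting PBW-monomial is $\succ M$: a first strict increase over a decreasing PBW-monomial survives straightening, because every $c\in\supp_\B[e_i,e_{i+1}]$ satisfies $c>e_i$. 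Note also that the corrections come only from $r_z(u_i)$, that is, from good factors of $M$ that contain $z$.
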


\begin{proof}[Proof of \cref{prop:detection}]
    Let $W$ be given by \cref{lem:detector}.
    Set $n := \nu(b)$ and $Z := D_p^n W$.
    Since $W \in U(\mathfrak{h})$ and $p \in \mathfrak{h}$, one has $Z \in U(\mathfrak{h})$.
    Moreover, using \eqref{eq:Dp},
    \begin{equation}
        \pair{Z}{S_b}
        = \pair{D_p^n W}{S_b}
        = \pair{W}{(\ell_p - r_p)^n S_b}.
    \end{equation}
    By \cref{lem:dual-hall}, $S_b = S_{M_b} z$, and $z\neq p$ because $z\in X_\bad$ and $p\in X_\good$.
    Hence, by \eqref{eq:Mb-F},
    \begin{equation}
        (\ell_p - r_p)^n S_b = \ell_p^n S_b = (\ell_p^n S_{M_b}) z = F z
    \end{equation}
    Thus $\pair{Z}{S_b} = \pair{W}{F z} = \pair{r_z(W)}{F}$ by \eqref{eq:transpose}.
    Since $M$ is by definition the $\prec$-largest PBW-monomial such that $\pair{M}{F} \neq 0$, \eqref{eq:rzW=M} entails that $\pair{Z}{S_b} = \pair{r_z(W)}{F} = \pair{M}{F} \neq 0$.
\end{proof}

\subsubsection{The target}\label{sec:target}

We prove \cref{lem:target}.
We start with the following intermediate lemma.

\begin{lemma}[Leading term of a shuffle]
    \label{lem:max-shuffle}
    Let $F_1,\dots,F_N\in\A(X)\setminus\{0\}$, with finite dual-PBW expansions $F_j=\sum_P\alpha_{j,P}S_P$, and set $P_j:=\max_\prec\{P\mid\alpha_{j,P}\neq0\}$ and $P_\star:=P_1\odot\cdots\odot P_N$. Then
    \begin{equation}
        \label{eq:max-shuffle}
        F_1\shuff\cdots\shuff F_N=\alpha S_{P_\star}+\sum_{Q\prec P_\star}\alpha_QS_Q,
        \quad \text{where} \quad
        \alpha=\Bigl(\prod_{j=1}^N\alpha_{j,P_j}\Bigr)
        \prod_{c\in\B}\frac{\bigl(\sum_je_c(P_j)\bigr)!}{\prod_je_c(P_j)!}\neq0 .
    \end{equation}
    In particular, a shuffle product of nonzero elements is nonzero.
\end{lemma}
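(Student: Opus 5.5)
The plan is to expand everything in the dual PBW basis with \cref{lem:dual-product}, and to read off the coefficient of the monomial $P_\star$ by an order argument. By bilinearity and associativity of $\shuff$, and by induction on $N$, it suffices to treat $N=2$. Indeed, for $N\geq 3$, apply the case $N=2$ to $F_1\shuff\cdots\shuff F_{N-1}$ and $F_N$. The induction hypothesis gives the leading monomial $P_1\odot\cdots\odot P_{N-1}$ and its coefficient. The product of multinomial factors telescopes, since
\begin{equation*}
\frac{(s+e)!}{s!\,e!}\cdot\frac{s!}{\prod_{j<N} e_c(P_j)!}=\frac{(s+e)!}{\prod_{j\le N} e_c(P_j)!},
\end{equation*}
where $s=\sum_{j<N}e_c(P_j)$ and $e=e_c(P_N)$.

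For $N=2$, write
\begin{equation*}
F_1\shuff F_2=\sum_{P,Q}\alpha_{1,P}\alpha_{2,Q}\,S_P\shuff S_Q=\sum_{P,Q}\alpha_{1,P}\alpha_{2,Q}\,\beta_{P,Q}\,S_{P\odot Q},
\end{equation*}
where $\beta_{P,Q}=\prod_c\binom{e_c(P)+e_c(Q)}{e_c(P)}>0$ by \cref{lem:dual-product}. The key claim is the following. If $P\preceq P_1$ and $Q\preceq P_2$, then $P\odot Q\preceq P_1\odot P_2$, with equality only when $P=P_1$ and $Q=P_2$.

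To prove it, I use the translation invariance stated in \cref{def:pbw-order} together with transitivity of $\prec$. One has $P\odot Q\preceq P_1\odot Q$, with strict inequality if $P\neq P_1$. Likewise $P_1\odot Q\preceq P_1\odot P_2$, with strict inequality if $Q\neq P_2$. I also need $\prec$ to be a genuine total order; the only nontrivial part is transitivity. If $P\prec Q\prec R$, compare the largest differing indices $c$ (for $P,Q$) and $c'$ (for $Q,R$). The largest index at which $P$ and $R$ differ is $\max(c,c')$. At that index the exponent strictly increases in one step and is unchanged or increases in the other, which gives $P\prec R$. Cancellation, i.e.\ $P\odot R=Q\odot R\Rightarrow P=Q$, is immediate from additivity of $e$.

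Hence the coefficient of $S_{P_\star}$ receives exactly one contribution, namely $\alpha_{1,P_1}\alpha_{2,P_2}\beta_{P_1,P_2}$. This is nonzero and equals the claimed $\alpha$, since $\beta_{P_1,P_2}=\prod_c\frac{(e_c(P_1)+e_c(P_2))!}{e_c(P_1)!\,e_c(P_2)!}$. All other terms are $S_{P\odot Q}$ with $P\odot Q\prec P_\star$, and there are finitely many of them. Since $(S_P)_P$ is a basis, $F_1\shuff F_2\neq 0$, which gives the final assertion.

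The main obstacle is only the careful verification that $\prec$ is a total order compatible with $\odot$, including the strict versions. Everything else is bookkeeping with \cref{lem:dual-product}.
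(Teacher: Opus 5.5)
Your proposal is correct and follows essentially the paper's route: expand in the dual PBW basis, apply \cref{lem:dual-product}, and use the compatibility of $\prec$ with $\odot$ to see that $S_{P_\star}$ arises from the single tuple $(P_1,\dots,P_N)$. The only differences are cosmetic. You reduce to $N=2$ by induction with the telescoping multinomial, where the paper expands over $N$-tuples directly. You also check transitivity of $\prec$ explicitly, which the paper takes for granted.
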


\begin{proof}
    Expand each $F_j$ in the dual-PBW basis. 
    By \eqref{eq:dual-product}, every tuple $(Q_1,\dots,Q_N)$ with $\alpha_{j,Q_j}\neq0$ contributes a nonzero scalar multiple of $S_{Q_1\odot\cdots\odot Q_N}$, and $Q_j\preceq P_j$ by maximality so $Q_1\odot\cdots\odot Q_N\preceq P_\star$, with strict inequality as soon as one $Q_j\prec P_j$. 
    Hence $S_{P_\star}$ is produced by the single tuple $(P_1,\dots,P_N)$, and iterating \eqref{eq:dual-product} gives $\alpha$.
\end{proof}

For $c\in\B$, we define $T_c := \ell_p^{\nu(c)} S_c$, where $\nu(c) := \deg_p(c)$.

\begin{lemma}[Saturation formula]
    \label{lem:saturation}
    Let $c\in\B\setminus X$, with factorization $c=\ad_{c_s}^{q_s}\dotsb\ad_{c_1}^{q_1}(\seed(c))$, and set
    $M_c:=c_s^{q_s}\cdots c_1^{q_1}$.
    Assume $\seed(c)\neq p$.
    Then $\nu(c)=\sum_{j=1}^s q_j\nu(c_j)\leq\abs{c}-1$ and
    \begin{equation}
        \label{eq:saturation}
        T_c=\bigl(\ell_p^{\nu(c)}S_{M_c}\bigr)\seed(c),
        \qquad
        \ell_p^{\nu(c)}S_{M_c}=\gamma_c\mathop{\shuff}_{j=1}^s T_{c_j}^{\shuff q_j},
        \qquad
        \gamma_c:=\frac{\nu(c)!}{\prod_j q_j!\,(\nu(c_j)!)^{q_j}}>0 .
    \end{equation}
\end{lemma}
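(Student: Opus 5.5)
We would prove the three assertions of \cref{lem:saturation} in the order stated, starting from the dual Hall formula of \cref{lem:dual-hall}, $S_c = G_c\,\seed(c)$ with $G_c = S_{M_c} = \mathop{\shuff}_j S_{c_j}^{\shuff q_j}/q_j!$.

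\emph{Degree count.} By additivity of the multidegree along the factorization, $\deg_p(c) = \sum_j q_j \deg_p(c_j) + \deg_p(\seed(c))$. Since $\seed(c)\neq p$, the last term vanishes, which gives $\nu(c)=\sum_j q_j\nu(c_j)$. Moreover $\nu(c)\le \abs{c}-\deg_{\seed(c)}(c)\le\abs{c}-1$.

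\emph{First identity.} We need $\ell_p^{\nu(c)}(G_c\,\seed(c)) = (\ell_p^{\nu(c)}G_c)\,\seed(c)$. Apply \eqref{eq:past-seed} iteratively with $a=p$ (recall $\partial_p=\ell_p$ by \eqref{eq:Dp}). The hypothesis $\deg F\ge\deg a=1$ must hold at each step, i.e.\ $\deg(\ell_p^k G_c)=\abs{c}-1-k\ge 1$ for $k<\nu(c)$. This follows from $\nu(c)\le\abs{c}-1$. In the degenerate case where the degree reaches $0$ exactly at the last step, the identity is still fine, because \eqref{eq:past-seed} only needs $\deg F\ge 1$ before applying $\ell_p$. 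Alternatively one checks directly on words: $\ell_p$ acts on the first letter, and words of $G_c\seed(c)$ have length at least $2$ when $\ell_p$ is applied.

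\emph{Second identity (main step).} The key observation is that $\ell_p=\partial_p$ is a shuffle derivation by \eqref{eq:derivation}. Hence the Leibniz rule for $\ell_p^{\nu}$ on a shuffle product of $N=\sum_j q_j$ factors gives
\[
\ell_p^{\nu}\Bigl(\mathop{\shuff}_{i=1}^N F_i\Bigr)=\sum_{k_1+\dots+k_N=\nu}\frac{\nu!}{k_1!\cdots k_N!}\mathop{\shuff}_i \ell_p^{k_i}F_i,
\]
with $F_i$ running over the list containing $S_{c_j}$ repeated $q_j$ times. Next we show $\ell_p^{k}S_{c_j}=0$ whenever $k>\nu(c_j)$. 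This holds because $S_{c_j}$ is homogeneous of multidegree $\deg_X(c_j)$, so every word contains exactly $\nu(c_j)$ letters $p$. Since $\sum_i k_i=\nu=\sum_i \nu(F_i)$, the only surviving term is $k_i=\nu(F_i)$ for all $i$. It carries the multinomial coefficient $\nu!/\prod_j(\nu(c_j)!)^{q_j}$. Dividing by $\prod_j q_j!$ gives exactly $\gamma_c\mathop{\shuff}_j T_{c_j}^{\shuff q_j}$. Positivity of $\gamma_c$ is evident.

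The only delicate point we expect is the bookkeeping in the first identity, namely justifying that $\ell_p$ never reaches the final letter $\seed(c)$. The cleanest route is the word-level argument: every word of $S_c$ has exactly $\nu(c)$ occurrences of $p$ and ends with $\seed(c)\neq p$. Hence $\ell_p^{\nu(c)}$ strips only letters lying before the last one, which commutes with right concatenation by $\seed(c)$.
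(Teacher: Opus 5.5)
Your proposal is correct and matches the paper's proof: you get the degree count from the factorization, you obtain the first identity by iterating \eqref{eq:past-seed} (legitimate since $\deg \ell_p^k S_{M_c}=\abs{c}-1-k\geq 1$ for $k<\nu(c)$), and you obtain the second from the multinomial Leibniz rule for the shuffle derivation $\ell_p$, with $\deg_X$-homogeneity killing every term except $k_i=\nu(F_i)$. Your word-level alternative for the first identity is also valid, and in fact needs no degree condition, since $\ell_p(wy)=\ell_p(w)\,y$ for every word $w$ (including the empty one) whenever $y\neq p$.
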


\begin{proof}
    Since $\seed(c)\neq p$, counting occurrences of $p$ in the factorization of $c$ gives $\nu(c)=\sum_jq_j\nu(c_j)\leq\abs{c}-1$.
    By \cref{lem:dual-hall}, $S_c=S_{M_c}\seed(c)$ and $\deg S_{M_c}=\abs{c}-1$.
    Hence \eqref{eq:past-seed} may be applied successively $\nu(c)$ times, yielding the first identity in
    \eqref{eq:saturation}.

    Moreover, by \eqref{eq:dual-product},
    \begin{equation}
        S_{M_c}
        =\frac1{\prod_jq_j!}
        \mathop{\shuff}_{j=1}^s S_{c_j}^{\shuff q_j}.
    \end{equation}
    Since $\ell_p=\partial_p$ is a shuffle derivation, its $\nu(c)$-th power expands by the multinomial Leibniz rule.
    By $\deg_X$-homogeneity, $\ell_p^mS_{c_j}=0$ for $m>\nu(c_j)$; since $\nu(c)=\sum_jq_j\nu(c_j)$, the only surviving terms are those in which each copy of $S_{c_j}$ receives exactly $\nu(c_j)$ residuals. 
    Their multinomial coefficient is $\nu(c)! / \prod_j (\nu(c_j)!)^{q_j}$, which gives the second identity.
\end{proof}

\begin{lemma}[Non-vanishing]
    \label{lem:saturation-nonzero}
    For every $c\in\Fac^*(b)$, one has $T_c\neq0$, and $T_p=\one$.
\end{lemma}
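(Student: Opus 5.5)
We argue by induction on $\abs{c}$ over $c\in\Fac^*(b)$, using the saturation formula of \cref{lem:saturation} together with the fact (\cref{lem:max-shuffle}) that a shuffle product of nonzero elements is nonzero.

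\emph{Base case and letters.} For $c=p$ we have $\nu(p)=1$ and $S_p=p$ by \cref{lem:dual-hall}, so $T_p=\ell_p(p)=\one$. For a letter $c\in\Fac^*(b)$ with $c\neq p$, we have $\nu(c)=0$ and $T_c=S_c=c\neq0$.

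\emph{Non-letters.} Let $c\in\Fac^*(b)\setminus X$. We first check that $\seed(c)\neq p$. Since $c\in\B$, \cref{lem:facto-Hall} gives $a_1<\seed(c)$ for the innermost factor $a_1$ of $c$. That factor lies in $\Fac^*(c)\subset\Fac^*(b)$ by \eqref{eq:nested}, so $a_1\geq\min\Fac^*(b)=p$. Hence $\seed(c)>p$, and in particular $\seed(c)\neq p$.

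We may therefore apply \cref{lem:saturation}:
\[
T_c=\gamma_c\Bigl(\mathop{\shuff}_{j} T_{c_j}^{\shuff q_j}\Bigr)\seed(c).
\]
Each factor $c_j$ belongs to $\Fac(c)\subset\Fac^*(b)$ and satisfies $\abs{c_j}<\abs{c}$, so $T_{c_j}\neq0$ by induction. By \cref{lem:max-shuffle}, the shuffle product is then nonzero, and $\gamma_c>0$. Right concatenation by the letter $\seed(c)$ is injective on $\A(X)$, since it maps distinct words to distinct words. Hence $T_c\neq0$.

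The only delicate point is ensuring $\seed(c)\neq p$, which is needed to invoke \cref{lem:saturation}. This is exactly where minimality of $p$ in $\Fac^*(b)$ and the inequality $a_1<\seed(c)$ from \cref{lem:facto-Hall} are used. Everything else follows directly from the results already established.
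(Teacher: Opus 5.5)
Your proof is correct and follows the paper's argument essentially step for step. It uses the same induction on $\abs{c}$, the same chain $p=\min\Fac^*(b)\leq c_1<\seed(c)$ to justify applying \cref{lem:saturation}, and the same conclusion via \cref{lem:max-shuffle} and injectivity of right concatenation by $\seed(c)$.
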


\begin{proof}
    We argue by induction on $\abs{c}$. 
    If $c\in X$, then $T_p=\ell_p p=\one$ and $T_c=c$ otherwise.
    Let $c\in\Fac^*(b)\setminus X$. Its factors $c_j$ lie in $\Fac^*(b)$ by \eqref{eq:nested}, and
    $p=\min\Fac^*(b)\leq c_1<\seed(c)$ by \cref{lem:facto-Hall}, so $\seed(c)\neq p$ and \cref{lem:saturation}
    applies.
    The $T_{c_j}$ are nonzero by the induction hypothesis, hence so is their shuffle by \cref{lem:max-shuffle}.
    The first identity in \eqref{eq:saturation}, together with injectivity of right concatenation by $\seed(c)$, therefore gives $T_c\neq0$.
\end{proof}

\begin{proof}[Proof of \cref{lem:target}]
    By \cref{lem:saturation} applied to $c=b$ and \eqref{eq:Mb-F}, one has $F = \gamma_b \mathop{\shuff}_j T_{a_j}^{\shuff m_j}$ with $\gamma_b>0$.
    By \cref{lem:saturation-nonzero}, $T_{a_j}\neq0$ for every $j$. 
    Let $P_j$ be the $\prec$-largest PBW-monomial with $\pair{P_j}{T_{a_j}}\neq0$.
    By \cref{lem:max-shuffle}, $F\neq0$ and its largest dual-PBW term is indexed by
    \begin{equation}
        \label{eq:M-shuffle}
        M=P_1^{\odot m_1}\odot\cdots\odot P_r^{\odot m_r},
        \qquad\text{so that}\qquad
        e_c(M)=\sum_{j=1}^rm_je_c(P_j),
    \end{equation}
    which is even for every $c\in\B$ because every $m_j$ is even by \eqref{eq:even}. 
    Finally $\deg P_j=\deg T_{a_j}=\abs{a_j}-\nu(a_j)$, while $z\neq p$ gives $\abs{b}=1+\sum_j m_j\abs{a_j}$ and $\nu(b) = \sum_j m_j\nu(a_j)$, so $\deg M=\abs{b}-1-\nu(b)$.
\end{proof}

\subsubsection{The detector}\label{sec:detector}

Since $\B_\good < \B_\bad$, the PBW-monomial $M$ factors uniquely as $M=M_\bad M_\good$, every term in $M_\bad$ being bad, with
\begin{equation}
    \label{eq:Mgood}
    M_\good=g_t^{2k_t}\cdots g_1^{2k_1},
    \qquad
    g_1<\cdots<g_t\ \text{in }\B_\good,
    \qquad k_j\geq1.
\end{equation}
We define
\begin{equation}
    \label{eq:def-W}
    d := \ad_{g_t}^{2k_t}\cdots\ad_{g_1}^{2k_1}(z)
    \quad \text{and} \quad
    W := M_\bad d.
\end{equation}
When $t = 0$, $M_\good = \one$ and $d = z$.

\begin{proof}[Proof of \cref{lem:detector}]
    \step{The element $W$ lies in $U(\mathfrak{h})$}
    First, $d \in \B_\bad$.
    This is immediate when $t = 0$.
    Assume $t>0$, let $u_1\leq\cdots\leq u_K$ be the sequence containing $2k_j$ copies of $g_j$ for each $j$, and set $d^{(0)}:=z$ and $d^{(i)}:=(u_i,d^{(i-1)})$, so that $d=d^{(K)}$. Every $d^{(i)}$ lies in $\B$: for $i=1$ this follows from $u_1\in\B_\good<z\in\B_\bad$ and $z\in X$; for $i\geq2$ one has $\lambda(d^{(i-1)})=u_{i-1}\leq u_i$, and $u_i<d^{(i-1)}$ either by the third Hall axiom when $u_i=u_{i-1}$, or, when $u_i>u_{i-1}$, because $d^{(i-1)}$ then has good factors, even multiplicities and seed $z\in X_\bad$, hence lies in $\B_\bad$ by \eqref{eq:of} while $\B_\good<\B_\bad$; in both cases the second Hall axiom concludes. 
    The same application of \eqref{eq:of} at $i=K$ gives $d\in\B_\bad$.

    By \cref{lem:target}, $\deg W=\deg M+1=\abs{b}-\nu(b)<\abs{b}$. 
    Every factor of $M_\bad$, as well as $d$, is therefore in $\B_\bad$ and of degree strictly smaller than $\abs{b}$, hence lies in $\B_\bad\setminus\{b\}\subset\mathfrak{h}$, and $W\in U(\mathfrak{h})$.

    \step{Smallest PBW term of $r_z(W)$} 
    By \eqref{eq:right-factor}, $r_z(W) = M_\bad r_z(d)$ and, for $\deg$-homogeneous $u,v$ of positive degree, $r_z([u,v]) = u r_z(v) - v r_z(u)$.
    If $t=0$, then $r_z(d)=r_z(z)=\one$, so $r_z(W)=M$. 
    Assume $t>0$. 
    For $0\leq i\leq K$, set
    \begin{equation}
        A_i:=M_\bad u_K\cdots u_{i+1}r_z(d^{(i)}).
    \end{equation}
    Then $A_0 = M$ since $r_z(z)=\one$ and $A_K = r_z(W)$.
    Moreover, for $1\leq i\leq K$,
    \begin{equation}
        A_i-A_{i-1} = -M_\bad u_K\cdots u_{i+1}d^{(i-1)}r_z(u_i).
    \end{equation}
    Expand $r_z(u_i)$ in the PBW basis. For every PBW-monomial $R$ occurring in this expansion, the product $M_\bad u_K\cdots u_{i+1}d^{(i-1)}R$ agrees with $M=M_\bad u_K\cdots u_1$ up to the position occupied by $u_i$, where $u_i$ is replaced by the strictly larger element $d^{(i-1)}$. 
    By \cref{lem:app-pbw-triangular}, every PBW-monomial occurring in its PBW expansion is therefore strictly larger than $M$. 
    Thus every PBW-monomial occurring in $A_i-A_{i-1}$ is $\succ M$. 
    Summing over $i$ yields \eqref{eq:rzW=M}.
\end{proof}

\appendix

\section{Comparison with classical sufficient conditions}
\label{sec:compare-sufficient}

We first show that, in our sufficient condition, only the bad brackets having no bad factor need to be compensated.
We then compare this irreducible family with the bad brackets occurring in the conditions of Sussmann, Agrachev--Gamkrelidze and Krastanov.

Throughout the appendix, $X_\good = \{ X_1, \dotsc, X_q \}$, $X_\bad = \{ X_0 \}$ and $\B = \B_\good \sqcup \B_\bad$ is a factor-parity Hall set on $X$.
We recall the family of Sussmann-bad brackets
\begin{equation}
    \label{eq:Sbad-app}
    S_\bad
    :=
    \left\{ b \in \Br(X) \mid n_0(b) \text{ is odd and } n_i(b) \text{ is even for } 1 \leq i \leq q \right\}.
\end{equation}

\subsection{Parity of bad brackets}

All the parity properties we need follow from the following lemma, which we state over an arbitrary alphabet since we will also apply it after Lazard elimination.

\begin{lemma}[Parity]
    \label{lem:bad-parity}
    Let $\mathcal{H} = \mathcal{H}_\good \sqcup \mathcal{H}_\bad$ be a factor-parity Hall set on an alphabet $Y = Y_\good \sqcup Y_\bad$ and let $b \in \mathcal{H}_\bad$, with factorization $b = \ad_{a_r}^{m_r} \dotsb \ad_{a_1}^{m_1}(\seed(b))$.
    Then
    \begin{enumerate}[label=\textup{(\roman*)},leftmargin=2em]
        \item \label{it:parity-seed} $\seed(b) \in Y_\bad$, and $m_j$ is even for every good factor $a_j$;
        \item \label{it:parity-letters} $n_y(b)$ is even for every $y \in Y_\good$;
        \item \label{it:parity-irr} if moreover $\Fac(b) \subset \mathcal{H}_\good$, then every $m_j$ is even and $\sum_{y \in Y_\bad} n_y(b)$ is odd.
    \end{enumerate}
\end{lemma}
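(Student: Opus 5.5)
The plan is a single induction on the length $\abs{b}$, proving \ref{it:parity-seed}, \ref{it:parity-letters} and \ref{it:parity-irr} simultaneously for all $b \in \mathcal{H}_\bad$. The main tool is the characterization \eqref{eq:of}, together with two facts: $\mathcal{H}_\good < \mathcal{H}_\bad$, and the increasing chain $a_1 < \dotsb < a_r$ from \cref{lem:facto-Hall}. The letter case $b \in Y$ is immediate. By \cref{rem:of}, $b \in Y_\bad$, there are no factors, $n_y(b) = 0$ for $y \in Y_\good$, and $\sum_{y \in Y_\bad} n_y(b) = 1$.

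For \ref{it:parity-seed} with $b \notin Y$, I would first use the chain $a_1 < \dotsb < a_r$ and $\mathcal{H}_\good < \mathcal{H}_\bad$. These show that the good factors form an initial segment $a_1, \dotsc, a_k$ and the bad factors form a final segment $a_{k+1}, \dotsc, a_r$.

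If $k = r$, so that all factors are good, then \eqref{eq:of} forces $\seed(b) \in Y_\bad$ and all $m_j$ even.

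If $k < r$, I would use the same peeling argument as in the proof of \cref{lem:xibad>0} and in \cref{lem:irreducible}. Consider the bracket $c_k := \ad_{a_k}^{m_k} \dotsb \ad_{a_1}^{m_1}(\seed(b))$. Since $(a_{k+1}, \cdot)$ is applied to it, the second Hall axiom gives that $c_k$ is an element of $\mathcal{H}$ greater than $a_{k+1} \in \mathcal{H}_\bad$, so $c_k \in \mathcal{H}_\bad$. Its factorization is the truncated one and all its factors are good. By the all-good case, applied through \eqref{eq:of} directly to $c_k$ (no induction needed), $\seed(b) = \seed(c_k) \in Y_\bad$ and $m_1, \dotsc, m_k$ are even. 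This proves \ref{it:parity-seed}.

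For \ref{it:parity-letters}, I would count letters additively:
\begin{equation}
    n_y(b) = \delta_{y,\seed(b)} + \sum_j m_j n_y(a_j).
\end{equation}
For $y \in Y_\good$, the seed term vanishes by \ref{it:parity-seed}. For a good factor $a_j$, the term $m_j n_y(a_j)$ is even since $m_j$ is even. For a bad factor $a_j$, the induction hypothesis (as $\abs{a_j} < \abs{b}$) gives $n_y(a_j)$ even. So $n_y(b)$ is even.

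For \ref{it:parity-irr}, all factors are good, so every $m_j$ is even by \ref{it:parity-seed}. Then
\begin{equation}
    \sum_{y \in Y_\bad} n_y(b) = 1 + \sum_j m_j \sum_{y \in Y_\bad} n_y(a_j),
\end{equation}
which is odd.

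The only delicate step is the peeling argument in \ref{it:parity-seed}. One must justify that the inner bracket $c_k$ belongs to $\mathcal{H}$, which follows from the second Hall axiom along the factorization, and that it is bad because it lies above the bad element $a_{k+1}$. Everything else is bookkeeping.
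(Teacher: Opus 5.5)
Your proposal is correct and follows essentially the same route as the paper. The paper also peels off the bad factors to reach the inner bracket with only good factors, shows it is bad via the second Hall axiom and $\mathcal{H}_\good < \mathcal{H}_\bad$, and applies \eqref{eq:of} to it for (i); it then uses the same additive letter count, with induction on length for the bad factors, for (ii) and (iii).
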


\begin{proof}
    \ref{it:parity-seed}
    Let $k$ be minimal such that $a_k \in \mathcal{H}_\bad$, with $k := r+1$ if all factors are good, and set
    \begin{equation}
        c := \ad_{a_{k-1}}^{m_{k-1}} \dotsb \ad_{a_1}^{m_1}(\seed(b)) \in \mathcal{H}.
    \end{equation}
    Then $c \in \mathcal{H}_\bad$: either $c = b$, or $a_k < c$ by the second Hall axiom and $\mathcal{H}_\good < \mathcal{H}_\bad$.
    All the factors of $c$ are good, so \eqref{eq:of} applied to $c$ yields $\seed(b) = \seed(c) \in Y_\bad$ and $m_1, \dotsc, m_{k-1}$ even.
    Since $\mathcal{H}_\good < \mathcal{H}_\bad$, the brackets $a_1, \dotsc, a_{k-1}$ are exactly the good factors of $b$.

    \ref{it:parity-letters} Induction on $\abs{b}$, using $n_y(b) = n_y(\seed(b)) + \sum_j m_j n_y(a_j)$: by \ref{it:parity-seed} the seed contributes nothing, the good factors occur with even multiplicity, and each bad factor contains an even number of occurrences of $y$ by the induction hypothesis.

    \ref{it:parity-irr} Here $k = r+1$, so all the $m_j$ are even by \ref{it:parity-seed}.
    Writing $\sigma(a) := \sum_{y \in Y_\bad} n_y(a)$, one gets $\sigma(b) = 1 + \sum_j m_j \sigma(a_j)$, which is odd.
\end{proof}

Specialized to $X_\bad = \{ X_0 \}$, items \ref{it:parity-letters} and \ref{it:parity-irr} read as follows.

\begin{corollary}
    \label{cor:bad-in-Sbad}
    If $b \in \B_\bad$ satisfies $\Fac(b) \subset \B_\good$, then $b \in S_\bad$.
\end{corollary}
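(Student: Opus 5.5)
This follows directly from \cref{lem:bad-parity}, applied with $Y = X$, $Y_\good = \{X_1,\dotsc,X_q\}$, $Y_\bad = \{X_0\}$ and $\mathcal{H} = \B$. The only thing to check beforehand is that this is a legitimate instance of the lemma, i.e.\ that $\B$ is a factor-parity Hall set for this partition. That is our standing assumption in the appendix, so nothing needs proving there.

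Let $b \in \B_\bad$ with $\Fac(b) \subset \B_\good$. Item \ref{it:parity-letters} of \cref{lem:bad-parity} gives that $n_y(b)$ is even for every $y \in Y_\good$. In other words, $n_i(b)$ is even for each $1 \le i \le q$, which is the second requirement in the definition \eqref{eq:Sbad-app} of $S_\bad$. Since every factor of $b$ is good, item \ref{it:parity-irr} also applies and gives that $\sum_{y \in Y_\bad} n_y(b)$ is odd. Because $Y_\bad = \{X_0\}$, this sum is just $n_0(b)$, so $n_0(b)$ is odd, which is the first requirement. Hence $b \in S_\bad$.

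No step here is a real obstacle, since all the content sits in \cref{lem:bad-parity}. The only point to watch is the bookkeeping convention that $n_i(b)$ counts occurrences of $X_i$ in the bracket $b$, viewed in $\Br(X)$. This matches the $n_y$ used in the lemma, so the identification is immediate.
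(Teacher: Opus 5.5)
Your proposal is correct and is the paper's argument: the paper obtains this corollary by specializing items \ref{it:parity-letters} and \ref{it:parity-irr} of \cref{lem:bad-parity} to $X_\bad = \{X_0\}$. As in your write-up, the first gives that each $n_i(b)$ is even for $1 \le i \le q$, and the second gives that $n_0(b)$ is odd.
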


\subsection{Irreducible bad brackets}

We define the set of \emph{irreducible} bad brackets as
\begin{equation}
    \label{eq:def-Pbad}
    \B_\bad^\irr := \B_\bad \setminus (\B_\bad, \B_\bad).
\end{equation}

\begin{lemma}
    \label{lem:bad-irreducible-decomposition}
    Every $b \in \B_\bad^\irr$ satisfies $\Fac(b) \subset \B_\good$.
\end{lemma}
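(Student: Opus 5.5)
We argue by contraposition: take $b \in \B_\bad$ with a bad factor, and show that $b \in (\B_\bad,\B_\bad)$, so that $b \notin \B_\bad^\irr$. This is the same argument used at the start of the proof of \cref{lem:irreducible}, now stripped of the subalgebra $K$.

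First write the factorization \eqref{eq:b-factorization}, $b = \ad_{a_r}^{m_r}\dotsb\ad_{a_1}^{m_1}(\seed(b))$, and suppose some $a_j \in \B_\bad$. By \cref{lem:facto-Hall}, $a_1<\dotsb<a_r$. Since $\B_\good<\B_\bad$, the largest factor $a_r$ is then bad: otherwise every $a_i \le a_r$ would be good.

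Next set
\begin{equation*}
b' := \ad_{a_r}^{m_r-1}\ad_{a_{r-1}}^{m_{r-1}}\dotsb\ad_{a_1}^{m_1}(\seed(b)),
\end{equation*}
so that $b = (a_r, b')$. Since $b\in\B$, the second Hall axiom gives $b'\in\B$ and $a_r<b'$. Because $a_r\in\B_\bad$ and $\B_\good<\B_\bad$, we get $b'\in\B_\bad$, for otherwise $b'<a_r$. Hence $b=(a_r,b')\in(\B_\bad,\B_\bad)$, which contradicts $b\in\B_\bad^\irr$.

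Therefore every factor of $b$ is good, that is, $\Fac(b)\subset\B_\good$. No step is a real obstacle. The only point to check is that the Hall axiom yields $b'\in\B$ together with the ordering $a_r<b'$, and this holds because $b$ is itself a Hall element whose left factor is $a_r$.
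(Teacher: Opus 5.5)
Your proof is correct and is essentially identical to the paper's argument: the largest factor $a_r$ is bad because $a_1<\dotsb<a_r$ and $\B_\good<\B_\bad$; the second Hall axiom applied to $b=(a_r,b')$ gives $b'\in\B$ with $a_r<b'$, hence $b'\in\B_\bad$; and so $b\in(\B_\bad,\B_\bad)$, a contradiction.
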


\begin{proof}
    Write $b = \ad_{a_r}^{m_r} \dotsb \ad_{a_1}^{m_1}(\seed(b))$ and assume that some factor is bad; then so is $a_r$, since $a_1 < \dotsb < a_r$ and $\B_\good < \B_\bad$.
    Set $c := \ad_{a_r}^{m_r-1} \ad_{a_{r-1}}^{m_{r-1}} \dotsb \ad_{a_1}^{m_1}(\seed(b)) \in \B$.
    The second Hall axiom gives $a_r < c$, hence $c \in \B_\bad$ and $b = (a_r,c) \in (\B_\bad,\B_\bad)$, a contradiction.
\end{proof}

We now show that the \emph{reducible} bad brackets of $(\B_\bad,\B_\bad)$ do not have to be compensated separately. 
Fix now $\theta \in (0,1)$, recall the weight $\omega_\theta(b) = \theta n_0(b) + n_1(b) + \dotsb + n_q(b)$ and set, for $r > 0$,
\begin{align}
    \mathcal{L}_{<r}
    &:=
    \vect\{a\in\B\mid\omega_\theta(a)<r\},
    \\
    \mathcal{L}_{\leq r}
    &:=
    \vect\{a\in\B\mid\omega_\theta(a)\leq r\},
    \\
    \mathcal{G}_{<r}
    &:=
    \vect\{a\in\B_\good\mid\omega_\theta(a)<r\}.
\end{align}
Since $\omega_\theta$ is additive with respect to the multidegree, the Hall expansion of a bracket preserves the weight, so that
\begin{equation}
    \label{eq:weight-filtration-bracket}
    [\mathcal{L}_{\leq r}, \mathcal{L}_{<s}] \subset \mathcal{L}_{<r+s}.
\end{equation}
Finally, for a control-affine system \eqref{eq:syst}, let
\begin{equation}
    \label{eq:def-Kf}
    K_f := \{ z \in \mathcal{L}(X) \mid f_z(0) = 0 \},
\end{equation}
which is a Lie subalgebra of $\mathcal{L}(X)$, as already used in the proof of \cref{thm:necessary}.

For $b \in \B_\bad$, our compensation condition \eqref{eq:fb0-compensated} reads
\begin{equation}
    \label{eq:compensation-mod-Kf}
    b \in K_f + \mathcal{G}_{<\omega_\theta(b)},
\end{equation}
while Sussmann's condition of \cref{thm:sussmann} reads
\begin{equation}
    \label{eq:sussmann-mod-Kf}
    \forall b \in S_\bad, \qquad b \in K_f + \mathcal{L}_{<\omega_\theta(b)}.
\end{equation}

The next lemma contains both comparisons: it upgrades a compensation of only the irreducible bad brackets by \emph{arbitrary} brackets of lower weight into a compensation of \emph{all} bad brackets by \emph{good} brackets of lower weight.

\begin{lemma}[Absorption]
    \label{lem:absorption}
    Let $\theta \in (0,1)$ and assume that $b \in K_f + \mathcal{L}_{<\omega_\theta(b)}$ for every $b \in \B_\bad^\irr$.
    Then $b \in K_f + \mathcal{G}_{<\omega_\theta(b)}$ for every $b \in \B_\bad$.
\end{lemma}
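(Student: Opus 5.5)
We argue by strong induction on the weight $\omega_\theta(b)$, or more concretely on $\abs{b}$, and prove the stronger statement: for every $b \in \B_\bad$, $b \in K_f + \mathcal{G}_{<\omega_\theta(b)}$. As a preliminary step we upgrade the hypothesis itself. For every $r$, we will show $\mathcal{L}_{<r} \subset K_f + \mathcal{G}_{<r}$ \emph{provided} all bad brackets of weight $<r$ satisfy the conclusion. Indeed, expand an element of $\mathcal{L}_{<r}$ on $\B$; each good term already lies in $\mathcal{G}_{<r}$. Each bad term $a$ with $\omega_\theta(a) < r$ lies in $K_f + \mathcal{G}_{<\omega_\theta(a)} \subset K_f + \mathcal{G}_{<r}$ by induction. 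So the induction should be organised on the (discrete, since weights take values in $\theta\N + \N$ and only finitely many multidegrees lie below a given weight) set of values of $\omega_\theta$.

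Fix $b \in \B_\bad$ and assume the conclusion for all bad brackets of smaller weight. We split into two cases. If $b \in \B_\bad^\irr$, the hypothesis gives $b \in K_f + \mathcal{L}_{<\omega_\theta(b)}$, and the preliminary step turns $\mathcal{L}_{<\omega_\theta(b)}$ into $K_f + \mathcal{G}_{<\omega_\theta(b)}$. Otherwise $b = (a,c)$ with $a,c \in \B_\bad$, and $\omega_\theta(b) = \omega_\theta(a)+\omega_\theta(c)$. Both weights are strictly smaller, since every bad bracket contains $X_0$ (its seed is in $X_\bad$ by \cref{lem:bad-parity}) and $\theta>0$. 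By induction, $a = k_a + g_a$ and $c = k_c + g_c$ with $k_a,k_c \in K_f$, $g_a \in \mathcal{G}_{<\omega_\theta(a)}$ and $g_c \in \mathcal{G}_{<\omega_\theta(c)}$. In $\mathcal{L}(X)$, $\eval(b) = [a,c] = [k_a,k_c] + [k_a,g_c] + [g_a,k_c] + [g_a,g_c]$. The first term lies in $K_f$ since it is a Lie subalgebra. The last term lies in $[\mathcal{L}_{\le \omega_\theta(a)}, \mathcal{L}_{<\omega_\theta(c)}] \subset \mathcal{L}_{<\omega_\theta(b)}$ by \eqref{eq:weight-filtration-bracket}, and the preliminary step handles it.

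The main obstacle is the mixed terms $[k_a, g_c]$, since $K_f$ is not an ideal. To avoid this, we rewrite $k_a = a - g_a$, so that $[k_a,g_c] = [a,g_c] - [g_a,g_c]$. Here $[a,g_c] \in [\mathcal{L}_{\le\omega_\theta(a)},\mathcal{L}_{<\omega_\theta(c)}] \subset \mathcal{L}_{<\omega_\theta(b)}$, and similarly for the other pieces. Equivalently, we write directly $[a,c] = [a, k_c] + [a, g_c]$ and $[a,k_c] = [k_a,k_c] + [g_a,k_c]$, with $[g_a,k_c] = [g_a, c] - [g_a,g_c] \in \mathcal{L}_{<\omega_\theta(b)}$. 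So every non-$K_f$ piece is a bracket with one argument of strictly lower weight, hence lies in $\mathcal{L}_{<\omega_\theta(b)}$, which the preliminary step absorbs into $K_f + \mathcal{G}_{<\omega_\theta(b)}$. We will check carefully that the preliminary step only invokes bad brackets of weight strictly below $\omega_\theta(b)$, so that the induction is well founded.
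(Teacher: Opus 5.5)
Your proof is correct and follows essentially the paper's route. The paper phrases your strong induction on weight as a minimal-weight counterexample, splits $b \in \B_\bad^\irr$ versus $b=(a,c)$ with $a,c\in\B_\bad$, handles the mixed terms through exactly your observation $k_a = a - g_a \in \mathcal{L}_{\leq \omega_\theta(a)}$ combined with \eqref{eq:weight-filtration-bracket}, and concludes with your ``preliminary step'' of re-expanding the $\mathcal{L}_{<\omega_\theta(b)}$ part on $\B$. Drop the aside about inducting on $\abs{b}$: a bad bracket of smaller weight can be longer than $b$, so the induction must run over weight values, as you eventually decide.
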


\begin{proof}
    Assume by contradiction that some bad bracket fails \eqref{eq:compensation-mod-Kf} and choose one, say $b$, of minimal weight $\omega := \omega_\theta(b)$.
    Such a choice is possible because $\omega_\theta(a) \geq \theta \abs{a}$, so that only finitely many brackets have weight below a given bound.
    We claim that, in any case,
    \begin{equation}
        \label{eq:b-in-Kf-L}
        b \in K_f + \mathcal{L}_{<\omega}.
    \end{equation}
    If $b \in \B_\bad^\irr$, this is the assumption.
    Otherwise $b = (a,c)$ with $a, c \in \B_\bad$, and $\omega_\theta(a), \omega_\theta(c) < \omega$ by positivity and additivity of the weight.
    By minimality of $b$, one can write $a = k_a + g_a$ and $c = k_c + g_c$ with $k_a, k_c \in K_f$, $g_a \in \mathcal{G}_{<\omega_\theta(a)}$ and $g_c \in \mathcal{G}_{<\omega_\theta(c)}$.
    In particular $k_a \in \mathcal{L}_{\leq \omega_\theta(a)}$ and $k_c \in \mathcal{L}_{\leq \omega_\theta(c)}$.
    Hence
    \begin{equation}
        b = [a,c] = [k_a,k_c] + \bigl( [k_a,g_c] + [g_a,k_c] + [g_a,g_c] \bigr),
    \end{equation}
    where $[k_a,k_c] \in K_f$ since $K_f$ is a Lie subalgebra, while the parenthesis lies in $\mathcal{L}_{<\omega}$ by \eqref{eq:weight-filtration-bracket}.
    This proves \eqref{eq:b-in-Kf-L}.

    Expand now in the Hall basis the component of $b$ lying in $\mathcal{L}_{<\omega}$.
    Its good terms belong to $\mathcal{G}_{<\omega}$, and each of its bad terms $d$ satisfies $\omega_\theta(d) < \omega$, hence $d \in K_f + \mathcal{G}_{<\omega_\theta(d)} \subset K_f + \mathcal{G}_{<\omega}$ by minimality of $b$.
    Therefore $b \in K_f + \mathcal{G}_{<\omega}$, a contradiction.
\end{proof}

\begin{proposition}[Compensation of irreducible bad brackets]
    \label{prop:irreducible-compensation}
    Let $\theta \in (0,1)$.
    Then \eqref{eq:compensation-mod-Kf} holds for every $b \in \B_\bad$ if and only if it holds for every $b \in \B_\bad^\irr$.
\end{proposition}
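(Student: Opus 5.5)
One direction is immediate: since $\B_\bad^\irr \subset \B_\bad$, if \eqref{eq:compensation-mod-Kf} holds for every $b \in \B_\bad$, it holds in particular for every irreducible bad bracket. The content lies in the converse, which I plan to deduce directly from \cref{lem:absorption}.

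For the converse, assume that \eqref{eq:compensation-mod-Kf} holds for every $b \in \B_\bad^\irr$, i.e.\ $b \in K_f + \mathcal{G}_{<\omega_\theta(b)}$. Since $\B_\good \subset \B$, one has the inclusion $\mathcal{G}_{<r} \subset \mathcal{L}_{<r}$ for every $r>0$. Hence every $b \in \B_\bad^\irr$ satisfies the weaker condition $b \in K_f + \mathcal{L}_{<\omega_\theta(b)}$, which is exactly the hypothesis of \cref{lem:absorption}. That lemma then yields $b \in K_f + \mathcal{G}_{<\omega_\theta(b)}$ for every $b \in \B_\bad$, which is \eqref{eq:compensation-mod-Kf} for all bad brackets.

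I expect no real obstacle. The only points to check are that $\theta$ is the same fixed value throughout, and that the reformulation of \eqref{eq:fb0-compensated} as \eqref{eq:compensation-mod-Kf} is legitimate. That reformulation holds because $f_b(0) - \sum \lambda_a f_a(0) = 0$ is equivalent to $b - \sum \lambda_a a \in K_f$, by linearity of $B \mapsto f_B(0)$. The genuine work, namely the minimal-weight argument handling reducible brackets $(a,c)$ with $a,c$ bad, is already carried out in \cref{lem:absorption}.
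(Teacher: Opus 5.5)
Your proposal is correct and matches the paper's proof. The forward direction is the trivial restriction to $\B_\bad^\irr \subset \B_\bad$. For the converse, you weaken the hypothesis through $\mathcal{G}_{<r} \subset \mathcal{L}_{<r}$ and then apply \cref{lem:absorption}, which is exactly how the paper argues.
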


\begin{proof}
    The converse implication follows from \cref{lem:absorption} and $\mathcal{G}_{<r} \subset \mathcal{L}_{<r}$.
\end{proof}

Thus the family of bad brackets requiring \emph{direct} compensation in our sufficient condition is $\B_\bad^\irr$, not the whole of $\B_\bad$: reducible bad brackets are obtained by bracketing two smaller bad brackets, and their compensation follows recursively from the fact that $K_f$ is a Lie subalgebra.

\subsection{Sussmann's bad brackets}
\label{sec:compare-sussmann}

It is not true that $\B_\bad \subset S_\bad$: bracketing two bad brackets need not preserve the parity of $n_0$.
By \cref{lem:bad-irreducible-decomposition,cor:bad-in-Sbad}, however, all the bad brackets requiring direct compensation $\B_\bad^\irr$ do belong to $S_\bad$, which yields the following comparison.

\begin{corollary}
    \label{cor:sussmann-implies-ours}
    If a system \eqref{eq:syst} satisfies \eqref{eq:sussmann-mod-Kf} for some $\theta \in (0,1)$, then it satisfies \eqref{eq:fb0-compensated} for the same $\theta$.
    Hence \cref{thm:relaxed-sufficient} applies to every system to which \cref{thm:sussmann} applies.
\end{corollary}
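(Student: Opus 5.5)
Sussmann's condition \eqref{eq:sussmann-mod-Kf} asks that every $b\in S_\bad$ lie in $K_f+\mathcal{L}_{<\omega_\theta(b)}$. My plan is to reduce to the irreducible bad brackets via \cref{lem:absorption}. Concretely, I will check that its hypothesis holds: for every $b\in\B_\bad^\irr$, $b\in K_f+\mathcal{L}_{<\omega_\theta(b)}$. Then \cref{lem:absorption} gives $b\in K_f+\mathcal{G}_{<\omega_\theta(b)}$ for every $b\in\B_\bad$. By the definition \eqref{eq:def-Kf} of $K_f$, this is exactly \eqref{eq:fb0-compensated}.

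To verify the hypothesis, take $b\in\B_\bad^\irr$. By \cref{lem:bad-irreducible-decomposition}, $\Fac(b)\subset\B_\good$. Then \cref{cor:bad-in-Sbad} yields $b\in S_\bad$, that is, $n_0(b)$ is odd and every $n_i(b)$, $i\ge1$, is even. This corollary is the specialization of \cref{lem:bad-parity}~\ref{it:parity-letters} and~\ref{it:parity-irr} to $Y_\bad=\{X_0\}$. Applying \eqref{eq:sussmann-mod-Kf} to $b$ now gives $b\in K_f+\mathcal{L}_{<\omega_\theta(b)}$.

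One point of bookkeeping needs care. In \cref{thm:sussmann} the compensating brackets range over $\Br(X)$, whereas $\mathcal{L}_{<r}$ is spanned by Hall elements. I will note that $\vect\{\eval(a)\mid a\in\Br(X),\ \omega_\theta(a)<r\}=\mathcal{L}_{<r}$. This holds because the Hall expansion of $\eval(a)$ only involves Hall elements of the same multidegree, hence of the same weight. Consequently the two formulations of Sussmann's condition coincide, both modulo $K_f$ and after evaluation at $0$.

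I expect no real obstacle here, since all the substance sits in \cref{lem:absorption} and the parity lemma. The only step that deserves attention is the translation between ``$f_b(0)\in\vect\{\dots\}$'' and ``$b\in K_f+\dots$''. For this I will use linearity of $B\mapsto f_B(0)$ and the fact that $K_f$ is its kernel. The final sentence of the statement then follows from \cref{thm:relaxed-sufficient}, since the LARC assumption is common to both theorems.
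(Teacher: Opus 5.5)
Your proposal is correct and follows the paper's proof. The paper also shows $\B_\bad^\irr\subset S_\bad$ via \cref{lem:bad-irreducible-decomposition,cor:bad-in-Sbad}, so that \eqref{eq:sussmann-mod-Kf} supplies the hypothesis of \cref{lem:absorption}. Your extra bookkeeping, namely that the span over $\Br(X)$ equals $\mathcal{L}_{<r}$ by multidegree homogeneity, together with the translation through the kernel $K_f$, just makes explicit what the paper leaves implicit.
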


\begin{proof}
    By \cref{lem:bad-irreducible-decomposition,cor:bad-in-Sbad}, $\B_\bad^\irr \subset S_\bad$, so \eqref{eq:sussmann-mod-Kf} provides the assumption of \cref{lem:absorption}.
\end{proof}

\subsection{Agrachev--Gamkrelidze and Krastanov filters}
\label{sec:compare-AGK}

Both the Agrachev--Gamkrelidze sufficient condition of~\cite{AgrachevGamkrelidze1993_Semigroups} and the Krastanov sufficient condition of~\cite{Krastanov2009} start from a given set $\Pi$ satisfying some properties.
A natural way to construct this set $\Pi$ is to obtain it by Lazard elimination, as done by the authors in their examples.
The purpose of this paragraph is to compare, when $\Pi$ is obtained in that way, the sets of brackets considered as bad by Agrachev--Gamkrelidze $\mathrm{AG}_\bad$, by Krastanov $\mathrm{K}_\bad$ and by a factor-parity Hall set $\B_\bad^\irr$.
We do not claim that our \cref{thm:relaxed-sufficient} implies theirs for two different reasons:
\begin{itemize}
    \item Although not illustrated by examples, their theorems allow more general sets $\Pi$.
    \item Their compensation condition uses a more general weight than the Sussmann weight of \eqref{eq:wtheta}.
\end{itemize}

Recall that $\B$ denotes a factor-parity Hall set on $X$ with $X_\good = \{ X_1, \dotsc, X_q \}$ and $X_\bad = \{ X_0 \}$.
Let $\Pi$ be an alphabet obtained from $(X,\B)$ by successive Hall-compatible Lazard eliminations, and let $E \subset \B$ be the set of eliminated Hall elements.
In other words, $E$ is an \emph{initial segment} of $\B$.
We assume (as in \cite{AgrachevGamkrelidze1993_Semigroups,Krastanov2009}) that
\begin{equation}
    \label{eq:Sbad-in-LiePi}
    S_\bad \subset \Lie(\Pi).
\end{equation}
Repeated application of \cref{lem:lazard} shows that $E$ is factor-stable, that $\Fac(\Pi) \subset E$, and that the Hall set induced on $\Pi$ is again of the factor-parity class (so $\Pi = \Pi_\good \sqcup \Pi_\bad$), the good/bad status of every non-eliminated Hall element being preserved.
We identify this final Hall set with $\B \setminus E$.
For $b \in \B \setminus E$, we denote by $\deg_\Pi(b)$ its total degree over $\Pi$ and by $n_\pi(b)$ the multiplicity of the letter $\pi \in \Pi$ in its expression over $\Pi$.
Since $\Lie(\Pi)$ is spanned by $\eval(\B \setminus E)$, an eliminated Hall element cannot lie in $\Lie(\Pi)$, so \eqref{eq:Sbad-in-LiePi} forces
\begin{equation}
    \label{eq:E-cap-Sbad}
    E \cap S_\bad = \varnothing.
\end{equation}

The definitions of the bad brackets to be compensated in \cite{AgrachevGamkrelidze1993_Semigroups,Krastanov2009} are as follows:
\begin{align}
    \label{eq:AGbad-app}
    \operatorname{AG}_\bad(\Pi)
    &:= \left\{ b \in (\B \setminus E) \cap S_\bad \;\middle|\; \deg_\Pi(b) \text{ is odd} \right\},
    \\
    \label{eq:Kbad-app}
    \operatorname{K}_\bad(\Pi)
    &:= \left\{ b \in \operatorname{AG}_\bad(\Pi) \;\middle|\; n_\pi(b) \text{ is even for every } \pi \in \Pi \setminus S_\bad \right\}.
\end{align}

\begin{lemma}
    \label{lem:E-good-app}
    One has $E \subset \B_\good$ and $\Pi_\bad \subset S_\bad$.
    In particular $\Pi \setminus S_\bad \subset \Pi_\good$.
\end{lemma}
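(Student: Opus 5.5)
The plan is to handle the two inclusions separately. Both reduce to \cref{cor:bad-in-Sbad}, which says that a bad bracket whose factors are all good lies in $S_\bad$. The structural inputs are three facts recalled just before the statement: $E$ is an initial segment of $\B$, $\Fac(\Pi) \subset E$, and $E \cap S_\bad = \varnothing$ from \eqref{eq:E-cap-Sbad}.

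\emph{First inclusion $E \subset \B_\good$.} Argue by contradiction and assume $E \cap \B_\bad \neq \varnothing$. Let $e$ be the smallest element of $E \cap \B_\bad$ for the Hall order; every element of $E$ is at finite position since $E$ is an initial segment, so this minimum exists. Consider any factor $a \in \Fac(e)$. By \cref{lem:facto-Hall}, $a < e$, and since $E$ is an initial segment, $a \in E$. If $a$ were bad, it would be an element of $E \cap \B_\bad$ strictly smaller than $e$, contradicting minimality. Hence $\Fac(e) \subset \B_\good$. \Cref{cor:bad-in-Sbad} then gives $e \in S_\bad$, so $e \in E \cap S_\bad$, which contradicts \eqref{eq:E-cap-Sbad}. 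Therefore $E \subset \B_\good$.

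\emph{Second inclusion $\Pi_\bad \subset S_\bad$.} Letters of $\Pi$ are identified with elements of $\B \setminus E$, and the good/bad status is preserved under this identification. So a letter $\pi \in \Pi_\bad$ corresponds to an element of $\B_\bad$. Its factors in $X$ satisfy $\Fac(\pi) \subset \Fac(\Pi) \subset E \subset \B_\good$, using the first inclusion. \Cref{cor:bad-in-Sbad} then yields $\pi \in S_\bad$.

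\emph{The final claim.} Since $\Pi = \Pi_\good \sqcup \Pi_\bad$ and $\Pi_\bad \subset S_\bad$, any $\pi \in \Pi \setminus S_\bad$ is not in $\Pi_\bad$, so $\pi \in \Pi_\good$.

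The only delicate point is the identification of letters of $\Pi$ with Hall elements of $\B \setminus E$ that have the same status. This is exactly what repeated application of \cref{lem:lazard}~\ref{it:laz-of} provides, and I take it as given from the discussion preceding the statement.
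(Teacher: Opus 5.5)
Your proof is essentially the paper's. Take a minimal element of $E\cap\B_\bad$; its factors lie in $E$ and are therefore good, so \cref{cor:bad-in-Sbad} places it in $S_\bad$, which contradicts \eqref{eq:E-cap-Sbad}. The same corollary, together with $\Fac(\Pi)\subset E\subset\B_\good$, then handles $\Pi_\bad$, and the final claim follows as you say.

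The one weak point is your reason for the existence of the Hall-order minimum of $E\cap\B_\bad$. An initial segment of a totally ordered set need not be well-ordered, and Hall orders are not assumed to be well-founded, so ``every element of $E$ is at finite position since $E$ is an initial segment'' does not follow.

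The claim does hold in this setting, but for a different reason: each Lazard elimination removes the current minimum, so $E$ is listed increasingly in elimination order.

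The paper avoids the issue by choosing $e\in E\cap\B_\bad$ of minimal \emph{length}, which always exists. Your argument goes through unchanged with that choice. Any $a\in\Fac(e)$ satisfies $a<e$, hence $a\in E$, and $\abs{a}<\abs{e}$, hence $a$ is good by minimality.
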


\begin{proof}
    If $E \cap \B_\bad \neq \varnothing$, pick $e$ in it of minimal length.
    Since $E$ is factor-stable, $\Fac(e) \subset E$, so no factor of $e$ is bad, by minimality of $\abs{e}$.
    Hence $e \in S_\bad$ by \cref{cor:bad-in-Sbad}, contradicting \eqref{eq:E-cap-Sbad}.
    Thus $E \subset \B_\good$.
    Let now $\pi \in \Pi_\bad$: then $\Fac(\pi) \subset E \subset \B_\good$, so $\pi \in S_\bad$, again by \cref{cor:bad-in-Sbad}.
\end{proof}

\begin{proposition}
    \label{prop:three-inclusions}
    Under the assumptions above,
    \begin{equation}
        \label{eq:three-inclusions}
        \B_\bad^\irr
        \subset
        \operatorname{K}_\bad(\Pi)
        \subset
        \operatorname{AG}_\bad(\Pi)
        \subset
        S_\bad.
    \end{equation}
\end{proposition}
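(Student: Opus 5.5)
The last two inclusions are immediate from the definitions \eqref{eq:AGbad-app} and \eqref{eq:Kbad-app}: $\operatorname{K}_\bad(\Pi)$ is cut out of $\operatorname{AG}_\bad(\Pi)$ by an extra condition, and $\operatorname{AG}_\bad(\Pi)$ is by definition a subset of $S_\bad$. The only real work is the first inclusion $\B_\bad^\irr \subset \operatorname{K}_\bad(\Pi)$. Fix $b \in \B_\bad^\irr$. We have to check four things: $b \notin E$, $b \in S_\bad$, $\deg_\Pi(b)$ is odd, and $n_\pi(b)$ is even for every $\pi \in \Pi \setminus S_\bad$.

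The first two are quick. Lemma~\ref{lem:E-good-app} gives $E \subset \B_\good$, and $b$ is bad, so $b \in \B \setminus E$. By Lemma~\ref{lem:bad-irreducible-decomposition}, $\Fac(b) \subset \B_\good$, so Corollary~\ref{cor:bad-in-Sbad} gives $b \in S_\bad$.

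For the two parity conditions, we view $b$ in the final Hall set $\B \setminus E$ over the alphabet $\Pi = \Pi_\good \sqcup \Pi_\bad$. By the repeated application of Lemma~\ref{lem:lazard} recalled above, this Hall set is of factor-parity class and preserves good/bad status. Its $\Pi$-factorization comes from transporting the $X$-factorization through the successive maps $\Psi$, so its $\Pi$-factors are the $X$-factors of $b$ that are not eliminated, in the sense of Lemma~\ref{lem:lazard}\,\ref{it:laz-fac}. Hence the $\Pi$-factors of $b$ are still good, and $b$ remains bad with only good factors over $\Pi$. We then apply Lemma~\ref{lem:bad-parity} over the alphabet $\Pi$. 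Item~\ref{it:parity-letters} gives that $n_\pi(b)$ is even for every $\pi \in \Pi_\good$, and Lemma~\ref{lem:E-good-app} gives $\Pi \setminus S_\bad \subset \Pi_\good$, which settles the K-condition. Item~\ref{it:parity-irr} gives that $\sum_{\pi \in \Pi_\bad} n_\pi(b)$ is odd. Since $\deg_\Pi(b) = \sum_{\pi \in \Pi_\good} n_\pi(b) + \sum_{\pi \in \Pi_\bad} n_\pi(b)$ is an even number plus an odd number, $\deg_\Pi(b)$ is odd.

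The main obstacle is justifying that the $\Pi$-factors of $b$ are good. Elimination can absorb factors into new letters: a single step turns $\ad_p^n(y)$ into the letter $y[n]$. So the hypothesis of item~\ref{it:parity-irr} over $\Pi$ has to be checked rather than assumed. I would argue one elimination step at a time. By Lemma~\ref{lem:lazard}\,\ref{it:laz-fac}, the $Y$-factors of $\Psi^{-1}(b)$ map to $X$-factors of $b$ other than $p$, and each of them is good. Status is preserved by Lemma~\ref{lem:lazard}\,\ref{it:laz-of}, so the $Y$-factors are good. Inducting over the sequence of eliminations then gives that all $\Pi$-factors of $b$ are good.
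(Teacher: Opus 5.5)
Your proof is correct. It follows the paper's proof except in how you show that $\deg_\Pi(b)$ is odd.

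\textbf{The paper's route.} It applies only item~\ref{it:parity-letters} of \cref{lem:bad-parity} over $\Pi$, which needs nothing beyond $b$ being bad over $\Pi$. It then transfers the oddness of $n_0(b)$, known from $b \in S_\bad$, to $\deg_\Pi(b)$ through the substitution congruence
\[
n_0(b) = \sum_{\pi \in \Pi} n_\pi(b)\, n_0(\pi) \equiv \sum_{\pi \in \Pi_\bad} n_\pi(b) \equiv \deg_\Pi(b) \pmod 2 .
\]
This congruence uses the evenness of $n_\pi(b)$ for $\pi \in \Pi_\good$ and the oddness of $n_0(\pi)$ for $\pi \in \Pi_\bad \subset S_\bad$ (\cref{lem:E-good-app}).

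\textbf{Your route.} You apply item~\ref{it:parity-irr} over $\Pi$ instead. This forces you to check that $b$ still has only good factors after the eliminations, and your step-by-step argument does this correctly.

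One small precision: \cref{lem:lazard}~\ref{it:laz-fac} is a statement about $\Fac^*$. So it only places the $Y$-factors of $\Psi^{-1}(b)$ in $\Fac^+(b)\setminus\{p\}$. That is still enough, since $\Fac^+(b)\subset\B_\good$ by factor-stability of $\B_\good$. The sharper claim that they map onto $\Fac(b)\setminus\{p\}$ comes from the transported factorization \eqref{eq:facto-transport}.

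\textbf{What each route buys.} Your argument is intrinsic to the alphabet $\Pi$ and avoids the substitution formula for $n_0$. It also shows the slightly stronger fact that $b$ remains an irreducible bad bracket of the induced Hall set on $\Pi$. The paper's argument never needs to know what the $\Pi$-factors of $b$ are. It thereby sidesteps exactly the bookkeeping you flagged as the main obstacle, at the cost of returning to $X$-multidegrees.
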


\begin{proof}
    The last two inclusions are immediate from \eqref{eq:AGbad-app}--\eqref{eq:Kbad-app}.
    Let $b \in \B_\bad^\irr$.
    By \cref{lem:bad-irreducible-decomposition}, $b \in S_\bad$, hence $b \notin E$ by \eqref{eq:E-cap-Sbad}, so $b$ is a bad Hall bracket over $\Pi$.
    Applying \cref{lem:bad-parity} \ref{it:parity-letters} to the factor-parity Hall set on $\Pi$ and using \cref{lem:E-good-app},
    \begin{equation}
        \label{eq:K-even-condition-app}
        n_\pi(b) \equiv 0 \pmod 2
        \qquad \text{for every } \pi \in \Pi_\good \supset \Pi \setminus S_\bad,
    \end{equation}
    which is the parity condition of \eqref{eq:Kbad-app}.
    Moreover, by additivity of the $X$-multidegree under substitution of the generators,
    \begin{equation}
        n_0(b)
        = \sum_{\pi \in \Pi} n_\pi(b) n_0(\pi)
        \equiv \sum_{\pi \in \Pi_\bad} n_\pi(b)
        \equiv \sum_{\pi \in \Pi} n_\pi(b)
        = \deg_\Pi(b)
        \pmod 2,
    \end{equation}
    the two congruences following from \eqref{eq:K-even-condition-app}, together with the oddness of $n_0(\pi)$ for $\pi \in \Pi_\bad \subset S_\bad$.
    Since $b \in S_\bad$, $n_0(b)$ is odd, hence so is $\deg_\Pi(b)$ and $b \in \operatorname{K}_\bad(\Pi)$.
\end{proof}

To summarize, Sussmann's symmetry argument singles out the parity set $S_\bad$.
Agrachev--Gamkrelidze first change the free generating family, then retain the elements of odd total degree in the new generators.
Krastanov retains in addition the parity of each individual generator outside the Sussmann set.
Factor-parity Hall sets make the same mechanism recursive: once a good generator has been eliminated, the induced alphabet is again split into good and bad generators, and the parity rule is applied anew.

\section{PBW triangularity and the dual Hall factorization}
\label{app:dual}

\subsection{PBW triangularity}
\begin{lemma}[PBW triangularity]
    \label{lem:app-pbw-triangular} Let $D=d_1\cdots d_k$ be a decreasing PBW-monomial and let $e_1,\dots,e_\ell\in\B$. 
    Assume that, for some $t\leq\min(k,\ell)$, $e_t > d_t$ and $e_j = d_j$ for all $j < t$.
    Then every PBW-monomial occurring in the PBW expansion of $e_1\cdots e_\ell$ is strictly larger than $D$.
\end{lemma}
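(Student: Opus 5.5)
The plan is to run the classical PBW straightening algorithm on $e_1\cdots e_\ell$ and track an invariant that certifies every produced monomial is $\succ D$. First I would reduce to the case $t=1$. Write $e_1\cdots e_\ell = d_1\cdots d_{t-1}\,(e_t\cdots e_\ell)$. Suppose every PBW-monomial $Q'$ in the expansion of $e_t\cdots e_\ell$ is $\succ D':=d_t\cdots d_k$. The product $d_1\cdots d_{t-1}Q'$ is not automatically a PBW-monomial, since $Q'$ may contain elements larger than $d_{t-1}$. So the reduction needs one more fact:
\begin{equation*}
P\prec Q \implies \text{every PBW-monomial in the expansion of } d_1\cdots d_{t-1}Q \text{ is } \succ d_1\cdots d_{t-1}\odot P,
\end{equation*}
at least when $d_{t-1}\geq$ all factors of $P$. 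I would therefore prove a single statement by induction that covers both: \emph{if $Q\succ P$, with $P$ a PBW-monomial and $Q$ any product of Hall elements whose sorted multiset exceeds $P$ in the sense of \cref{def:pbw-order}, then every PBW-monomial in the expansion of $Q$ exceeds $P$}. The lemma then follows because the sorted multiset of $(e_1,\dots,e_\ell)$ already exceeds $D$. Indeed, the largest element where the multiplicities differ is found by scanning $d_1\geq d_2\geq\cdots$, and $e_t>d_t$ with equal prefixes gives an excess at some element $\geq e_t$.

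The induction would be on the pair (total degree, number of inversions of the sequence), in lexicographic order. If the sequence is already decreasing, it is its own PBW-monomial, namely the sorted multiset, and we are done. Otherwise, choose an adjacent pair $x<y$ and use $xy=yx+[x,y]$. The term $yx$ has the same multiset and one fewer inversion, so the induction applies directly. For the term $[x,y]$, \cref{lem:left_fact} says every $c\in\supp_\B[x,y]$ satisfies $\lambda(c)\geq x$, hence $c>x$ by the third Hall axiom. So the pair $x,y$ is replaced by a single element $c>x$, of the same multidegree and fewer factors. This lowers the sequence length at fixed degree, so the induction applies once I check that the new multiset still exceeds $P$.

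That check is the main obstacle. The multiset comparison looks at the \emph{largest} differing element, and we lose a copy of $y$, which may be larger than $c$. My first attempt would be to refine the choice of the inversion: always straighten at the rightmost position where the sequence fails to be decreasing, so that everything to its right is already a decreasing block $\leq$ the relevant factor. I would also strengthen the invariant from a comparison of multisets to a comparison of the \emph{prefix}, as in the statement with index $t$. The argument would then show that, at the first position where the straightened sequence departs from $D$, the new entry is still strictly larger: either $y$ moves left, or $c>x$ sits where $x$ was. If that prefix form is hard to maintain, the fallback is to prove the dual statement instead: $\pair{e_1\cdots e_\ell}{S_Q}=0$ for $Q\preceq D$, using \cref{lem:dual-product}, the factorization $S_c=G_c\seed(c)$ of \cref{lem:dual-hall}, and the shuffle derivation property of the residuals.
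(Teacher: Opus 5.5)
The unifying statement you propose to prove by induction is false. That statement is: if the sorted multiset of a product $Q$ of Hall elements exceeds $P$, then every PBW-monomial in the expansion of $Q$ exceeds $P$. So the obstacle you flag cannot be removed by choosing the inversion cleverly.

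Take letters $a<b$ and set $w:=(a,b)$, $y:=(a,w)$, $c:=(a,y)$. All three are in $\B$ since $\lambda(w)=\lambda(y)=a$. Use a Hall order with $a<c<w<y$; this is admissible: take a length order and move $c$ just above $a$, which is harmless because $\lambda(c)=a$. Then $ay=ya+c$ is the PBW expansion of $ay$. The PBW-monomial $P:=w\,a\,a$ has the same multidegree and satisfies $P\prec ya$, the largest discrepancy being at $y$. Yet $c\prec P$, the largest discrepancy being at $w$. There is a single inversion, so no straightening strategy (rightmost or otherwise) rescues the multiset invariant: it only remembers the largest discrepancy, and the contraction term can destroy it. Drop the reduction to $t=1$ and the multiset formulation altogether.

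Your fallback, keeping the positional hypothesis of the statement itself as the invariant, is exactly the paper's proof, and it works for any choice of inversion.

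\emph{Induction measure.} Induct lexicographically on the length $\ell$ and then on the number of inversions. It must be the length, not the total degree, since straightening preserves the total degree.

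\emph{Decreasing case.} This is your observation: the sequence is its own PBW-monomial, and it exceeds $D$.

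\emph{Straightening step.} Otherwise pick any $i$ with $e_i<e_{i+1}$ and write
$e_ie_{i+1}=e_{i+1}e_i+\sum_{c\in\supp_\B[e_i,e_{i+1}]}\lambda_c\,c$.
Every such $c$ satisfies $c>e_i$ by \cref{lem:left_fact} and the third Hall axiom.

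\emph{Missing check.} Your sketch omits the observation that $i\leq t-2$ cannot occur, since there $e_i=d_i\geq d_{i+1}=e_{i+1}$. The remaining cases are:
\begin{itemize}
\item If $i=t-1$, both new sequences already exceed $D$ at position $t-1$, because $e_t>e_{t-1}=d_{t-1}$ and $c>e_{t-1}=d_{t-1}$.
\item If $i=t$, both exceed $D$ at position $t$, because $e_{t+1}>e_t>d_t$ and $c>e_t>d_t$.
\item If $i\geq t+1$, the first $t$ entries are untouched, and the shortened sequence still has length $\ell-1\geq t$.
\end{itemize}
This is your ``either $y$ moves left, or $c>x$ sits where $x$ was'' made precise, and no dual or shuffle argument is needed.
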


\begin{proof}
    We argue lexicographically on $\ell$ and the number of inversions of the sequence $e_1,\dots,e_\ell$. 
    If the sequence is decreasing, then for every $c>e_t$ the multiplicities of $c$ in $D$ and in $e_1\cdots e_\ell$ agree, while the latter has strictly larger multiplicity at $e_t$. 
    Hence $D\prec e_1\cdots e_\ell$ by the definition of the PBW order. 
    Otherwise choose $i$ with $e_i<e_{i+1}$. 
    Straightening this adjacent inversion gives
    \begin{equation}
        e_1\cdots e_\ell = e_1\cdots e_{i+1}e_i\cdots e_\ell + \sum_{c\in\supp_\B[e_i,e_{i+1}]} \lambda_c\, e_1\cdots e_{i-1}c\,e_{i+2}\cdots e_\ell .
    \end{equation}
    Every occurring $c$ satisfies $c>e_i$ by \cref{lem:left_fact} and the third Hall axiom. 
    The first term has one fewer inversion, while every term in the sum has one fewer term. 
    It remains only to check that the hypothesis is preserved. 
    One cannot have $i\leq t-2$, since $e_i=d_i\geq d_{i+1}=e_{i+1}$ there. 
    If $i=t-1$, both kinds of terms have their first strict increase over $D$ already at position $t-1$. 
    If $i=t$, they have it at position $t$. If $i\geq t+1$, their first $t$ factors are unchanged. 
    The induction hypothesis therefore applies to every term.
\end{proof}

\subsection{Dual of a PBW-monomial}

\begin{proof}[Proof of \cref{lem:dual-product}]
    For $c\in\B$, $\Delta c=c\otimes\one+\one\otimes c$, and $c\otimes\one$ commutes with $\one\otimes c$.
    Hence, for a PBW-monomial~$R$,
    \begin{equation}
        \Delta R=\prod_{c\in\B}^{\searrow}\bigl(c\otimes\one+\one\otimes c\bigr)^{e_c(R)}
        =\sum_{A \odot B=R}\Bigl(\prod_{c\in\B}\binom{e_c(R)}{e_c(A)}\Bigr)A\otimes B,
    \end{equation}
    the sum being over pairs of PBW-monomials with $e(A)+e(B)=e(R)$. 
    Taking the scalar product with $S_P\otimes S_Q$ and using \eqref{eq:shuffle} gives the first identity.
    The second follows by iteration.
\end{proof}

\subsection{Dual of a Hall element}

We prove \cref{lem:dual-hall} for the convention of \cref{def:Hall}.

\begin{lemma}[Terminal contraction]
    \label{lem:app-contraction} Let $P=p_1\cdots p_m$ be a decreasing PBW-monomial with $m\geq1$, let $y\in X$, and define recursively $d_{m+1}:=y$ and, as long as $p_i<d_{i+1}$, $d_i:=(p_i,d_{i+1})$.
    Then:
    \begin{enumerate}[label=\textup{(\alph*)},leftmargin=2em]
        \item each $d_i$ so defined belongs to $\B$; \item if $p_i\geq d_{i+1}$ for some $i$, then no one-factor PBW-monomial occurs in $Py$; \item if $p_i<d_{i+1}$ for all $i\in\{1,\dots,m\}$, then the only one-factor PBW-monomial occurring in $Py$ is $d_1$, with coefficient $1$.
    \end{enumerate}
\end{lemma}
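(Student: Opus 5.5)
We argue by induction on $m$, the length of $P$, following the recursive structure of the definition of the $d_i$. The basic tool is the rewriting identity for a product $p\,w$ where $p\in\B$ and $w$ is a PBW expansion: to put $p\,w$ into PBW form, we move $p$ rightward past the factors of $w$ that are larger than $p$, generating brackets at each step. The key fact we use is \cref{lem:left_fact}. For $a<c$ in $\B$, either $(a,c)\in\B$, or every $e\in\supp_\B[a,c]$ satisfies $\lambda(e)>a$. In both cases every $e$ in the support has $e>a$, by the third Hall axiom.

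\emph{Item (a).} This is a direct induction from $i=m$ down to $i=1$. We have $d_{m+1}=y\in X\subset\B$. Suppose $d_{i+1}=(p_{i+1},d_{i+2})\in\B$ is already known, and $p_i<d_{i+1}$. Since $P$ is decreasing, $\lambda(d_{i+1})=p_{i+1}\le p_i$. The second Hall axiom then gives $(p_i,d_{i+1})\in\B$. For $i=m$ we use instead that $d_{m+1}=y$ is a letter.

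\emph{Items (b) and (c).} We study the one-factor part of $p_i\,(p_{i+1}\cdots p_m y)$ by descending induction on $i$. Write $\pi_1(U)$ for the component of $U$ along one-factor PBW-monomials; this is a Lie element. The inductive claim is the following: $\pi_1(p_{i}\cdots p_m y)$ equals $d_i$ if $d_i$ is defined, and equals $0$ otherwise.

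To prove the step, let $V:=p_{i+1}\cdots p_m y$ and expand it in the PBW basis as $V=\pi_1(V)+V_{\ge2}$. We then claim $\pi_1(p_iV)=\pi_1(p_i\,\pi_1(V))$. For a PBW-monomial $Q=q_1\cdots q_k$ with $k\ge2$, straightening $p_iQ$ produces one-factor terms only through iterated brackets $[\dotsb[p_i,q_1],\dotsc]$. These all arise from the expansion $p_iQ=Qp_i+[p_i,Q]$ together with the derivation rule. Each such term is a product of at least two PBW factors unless $k$ brackets are absorbed. We therefore need a degree or PBW-length count showing that the Lie part of $p_iQ$ vanishes when $Q$ has $\ge2$ factors. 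I would argue this using that $\pi_1$ is the Eulerian-type projection: $\pi_1(p_iQ)$ is determined by the coproduct, and $p_iQ$ with $Q$ a product of at least two primitives has zero primitive component unless cancellations occur.

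This is the main obstacle. A cleaner route is duality. We have $\pair{U}{S_c}=\pair{\pi_1U}{S_c}$ for $c\in\B$, and by \cref{lem:dual-product} $S_c$ for a Hall element is never a nontrivial shuffle. Hence $\pair{aQ}{S_c}=\pair{Q}{\partial_aS_c}$, and $\partial_a S_c$ can be computed from the explicit form of the dual. I would try first to show directly that $\pi_1(p_iQ)=0$ for $\abs{Q}_{\mathrm{PBW}}\ge2$ with decreasing $Q$ via this duality.

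Granting the claim, only $\pi_1(V)$ matters, and by induction it is $d_{i+1}$ or $0$. If it is $0$, we are done. If it is $d_{i+1}$ and $p_i<d_{i+1}$, then $p_id_{i+1}=d_{i+1}p_i+[p_i,d_{i+1}]$. The first term is a two-factor PBW-monomial with no one-factor part, and $[p_i,d_{i+1}]=d_i\in\B$ by (a). So $\pi_1=d_i$ with coefficient $1$. If instead $p_i\ge d_{i+1}$, then $p_id_{i+1}$ is already a decreasing two-factor monomial (or $p_i^2$ when the two are equal), so $\pi_1=0$.

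Once $\pi_1$ vanishes at some index, it stays $0$ for all smaller $i$ by the same claim. Setting $i=1$ gives (b) and (c).
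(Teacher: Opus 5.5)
Your part (a) is correct and matches the paper's argument. The gap is in (b)--(c). Everything there rests on the claim that $\pi_1(p_iQ)=0$ for every decreasing PBW-monomial $Q$ with at least two factors, and this claim is false in general. Take $X=\{x,y\}$ with a Hall order satisfying $x<y<(x,y)$, so that $(y,(x,y))\in\B$. Straightening in $\A(X)$ gives
\[
x\,y\,y \;=\; y\,y\,x \;+\; 2\,(x,y)\,y \;+\; (y,(x,y)),
\]
so the one-factor part of $x\cdot y^2$ is nonzero, even though $y^2$ has two factors. Neither of your suggested routes can therefore prove the claim as you formulate it. The duality route has a further problem: it would use the explicit form $S_c=G_c\seed(c)$ of \cref{lem:dual-hall}, which the paper derives from the present lemma via \cref{lem:app-recognition}, so it is circular.

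What is missing is an ordering constraint that your induction hypothesis does not carry. Every PBW-monomial in the multi-factor part of $p_{i+1}\cdots p_m y$ has its last (smallest) factor $f$ satisfying $f\leq p_{i+1}\leq p_i$. With this, straightening $p_iq_1\cdots q_{k-1}f$ never touches $f$. An adjacent inversion $e<e'$ among the factors before $f$ is replaced either by $e'e$ or by Hall elements $c\in\supp_\B[e,e']$ with $c>e\geq f$ (\cref{lem:left_fact} and the third Hall axiom). So $f$ stays last with at least one factor before it, and no one-factor monomial appears.

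You need to add this invariant to the induction and check that it propagates. The newly created multi-factor term is either $d_{i+1}p_i$, ending with $p_i$, or $p_id_{i+1}$, ending with $d_{i+1}\leq p_i$. After this repair your suffix induction is essentially the paper's proof. The paper avoids the invariant by keeping the whole prefix: it writes $Py=p_1\cdots p_i\,d_{i+1}+R_i$ and observes that in $p_1\cdots p_{i-1}d_{i+1}p_i$ every factor before the final $p_i$ is at least $p_i$, so that $p_i$ is never involved in the straightening.
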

\begin{proof}
    \textup{(a)} By the second Hall axiom: $p_m<d_{m+1}=y\in X$, and, for $i<m$,
    \begin{equation}
        \lambda(d_{i+1})=p_{i+1}\leq p_i<d_{i+1}.
    \end{equation}
    \textup{(b)--(c)} We prove by descending induction on $i\in\{m,\dots,0\}$ that, as long as $d_{m+1},\dots,d_{i+1}$ are defined,
    \begin{equation}
        \label{eq:app-induction} Py=p_1\cdots p_i\,d_{i+1}+R_i,
    \end{equation}
    where every PBW-monomial occurring in $R_i$ has at least two factors. 
    For $i=m$, this holds with $R_m=0$. Assume \eqref{eq:app-induction} for some $i\geq1$.
    If $p_i\geq d_{i+1}$, then $p_1\geq\cdots\geq p_i\geq d_{i+1}$.
    In straightening $p_1\cdots p_i d_{i+1}$, the final factor $d_{i+1}$ is never involved: straightening two preceding factors either interchanges them or replaces them by a Hall element larger than the smaller one, hence still at least $d_{i+1}$. 
    Consequently every resulting PBW-monomial retains $d_{i+1}$ as its final factor and has at least two factors. 
    This proves \textup{(b)}. 
    Suppose instead that $p_i<d_{i+1}$. 
    Then $p_id_{i+1}=d_{i+1}p_i+d_i$ and therefore
    \begin{equation}
        p_1\cdots p_i\,d_{i+1} = p_1\cdots p_{i-1}d_{i+1}p_i + p_1\cdots p_{i-1}d_i.
    \end{equation}
    In the first summand every factor preceding the final $p_i$ is at least $p_i$. 
    By the same straightening argument, that final $p_i$ is never involved, so every PBW-monomial arising from the first summand has at least two factors. 
    This proves \eqref{eq:app-induction} at $i-1$. 
    At $i=0$ we obtain $Py=d_1+R_0$, where every PBW-monomial occurring in $R_0$ has at least two factors, which proves \textup{(c)}.
\end{proof}

\begin{lemma}[Recognition]
    \label{lem:app-recognition} Let $c\in\B\setminus X$ and write its factorization as $c = \ad_{c_r}^{m_r}\dotsb \ad_{c_1}^{m_1}(\seed(c))$.
    Set $M_c:=c_r^{m_r}\cdots c_1^{m_1}$.
    Then, for every PBW-monomial $P$ and every $y\in X$,
    \begin{equation}
        \pair{Py}{S_c} = \delta_{P,M_c}\,\delta_{y,\seed(c)}.
    \end{equation}
\end{lemma}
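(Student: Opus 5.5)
By definition of the dual family, $\pair{Py}{S_c}$ is the coefficient of the one-factor PBW-monomial $c$ in the PBW expansion of $Py$. The plan is therefore to compute the one-factor part of the PBW expansion of $Py$ using the terminal contraction \cref{lem:app-contraction}, and then to compare with $c$ using uniqueness of the factorization \cref{lem:factorization}. I will first dispose of the trivial case $P=\one$: then $Py=y$ is itself a one-factor PBW-monomial, and $y\neq c$ because $c\notin X$, so the pairing vanishes. Since $M_c\neq\one$ (as $r\ge1$), the right-hand side also vanishes.

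For $P=p_1\cdots p_m$ with $m\geq1$, written in decreasing order, I apply \cref{lem:app-contraction}. In case (b), no one-factor PBW-monomial occurs in $Py$, so $\pair{Py}{S_c}=0$. In case (c), the only one-factor PBW-monomial is $d_1=\ad_{p_1}\cdots\ad_{p_m}(y)$, with coefficient $1$. Hence $\pair{Py}{S_c}=\delta_{d_1,c}$.

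It then remains to show that, under the condition of case (c), $d_1=c$ holds if and only if $P=M_c$ and $y=\seed(c)$. Grouping equal consecutive $p_i$ writes $d_1=\ad_{p'_s}^{n_s}\cdots\ad_{p'_1}^{n_1}(y)$ with $p'_1<\dots<p'_s$ distinct. By \cref{lem:factorization}, this is exactly the factorization of $d_1$. So $d_1=c$ forces $s=r$, $p'_j=c_j$, $n_j=m_j$ and $y=\seed(c)$, that is, $P=M_c$ and $y=\seed(c)$.

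Conversely, if $P=M_c$ and $y=\seed(c)$, I must check that we are in case (c) rather than case (b), i.e.\ that $p_i<d_{i+1}$ for every $i$. The intermediate $d_{i+1}$ are the inner brackets of $c$, which lie in $\B$ by the second Hall axiom applied recursively, and the required inequalities are precisely the statement that each inner pair $(p_i,d_{i+1})$ of $c$ belongs to $\B$. Then $d_1=c$.

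The main subtlety is making sure the cases of \cref{lem:app-contraction} are exhaustive and that a mismatch between $P$ and $M_c$ cannot produce $c$ via case (c) with a different grouping. Both points are handled by the uniqueness of the factorization, since $P$ is decreasing and its grouping into distinct factors is forced.
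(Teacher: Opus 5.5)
Your proof is correct and follows essentially the same route as the paper. You dispose of $P=\one$, use \cref{lem:app-contraction} to reduce $\pair{Py}{S_c}$ to $\delta_{d_1,c}$, identify $d_1=c$ with $P=M_c$ and $y=\seed(c)$ via uniqueness in \cref{lem:factorization}, and obtain the converse by observing that the inner brackets of $c$ realize the contractions; the only cosmetic difference is that you rely directly on the purely magma-level uniqueness, so you do not need the paper's side remark $e_i\leq p_1<c$.
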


\begin{proof}
    For $P=\one$ both sides vanish, since $\abs{c}\geq2$. Let $P=p_1\cdots p_m$, $m\geq1$. By \cref{lem:app-contraction}, $\pair{Py}{S_c}\neq0$ forces all contractions to be defined and $c=d_1$, and then $\pair{Py}{S_c}=1$. Grouping equal consecutive factors of $P$ as $e_j^{r_j}\cdots e_1^{r_1}$, with $e_1<\cdots<e_j$, we get
    \begin{equation}
        c = \ad_{e_j}^{r_j}\cdots \ad_{e_1}^{r_1}(y), \qquad e_i\leq p_1<c,
    \end{equation}
    the last inequality by the third Hall axiom. This is the Hall factorization of $c$, so uniqueness in \cref{lem:factorization} gives $P=M_c$ and $y=\seed(c)$. Conversely, for $P=M_c$ and $y=\seed(c)$, the contractions of \cref{lem:app-contraction} rebuild $c=d_1$ with coefficient $1$.
\end{proof}

\begin{proof}[Proof of \cref{lem:dual-hall}]
    Let $c\in\B\setminus X$ and $M_c$ be as above. 
    For every word $u$ and every $y\in X$, expanding $u=\sum_P\pair{u}{S_P}P$, and using \cref{lem:app-recognition}, we obtain
    \begin{equation}
        \pair{uy}{S_c} = \sum_P \pair{u}{S_P}\pair{Py}{S_c} = \delta_{y,\seed(c)}\pair{u}{S_{M_c}} = \pair{uy}{S_{M_c}\seed(c)}.
    \end{equation}
    Every word of degree $\abs{c}\geq2$ is of the form $uy$, and $S_c$ and $S_{M_c}\seed(c)$ are $\deg$-homogeneous of that degree. 
    Hence $S_c=S_{M_c}\seed(c)$.
\end{proof}

\section*{Acknowledgments}

We acknowledge support from the Fondation Simone et Cino Del Duca -- Institut de France.

\bibliographystyle{plain}
\bibliography{control}

\end{document}